\renewcommand*\backref[1]{\ifx#1\relax \else (Cited on p.#1) \fi}
\pgfplotsset{compat=1.17}
\newtheorem{theorem}{Theorem}[section]
\newtheorem{proposition}[theorem]{Proposition}
\newtheorem{corollary}[theorem]{Corollary}
\newtheorem{lemma}[theorem]{Lemma}
\newtheorem{definition}[theorem]{Definition}
\newenvironment{customthm}[1]
  {\innercustomthm}
  {\endinnercustomthm}
\newtheorem{mainthm}{Theorem}
\theoremstyle{remark} 
\newtheorem{remark}[theorem]{Remark}
\newtheorem{example}[theorem]{Example}
\newtheorem{question}[]{Question}
\crefname{equation}{Equation}{}
\crefname{figure}{Figure}{Figures}
\crefname{question}{Question}{Question}
\crefname{section}{Section}{Sections}
\crefname{subsection}{Subsection}{Subsections}
\crefname{lemma}{Lemma}{Lemmas}
\crefname{proposition}{Proposition}{Propositions}
\crefname{theorem}{Theorem}{Theorems}
\crefname{innercustomthm}{Theorem}{Theorems}
\crefname{mainthm}{Theorem}{Theorems}
\crefname{corollary}{Corollary}{Corollaries}
\crefname{definition}{Definition}{Definitions}
\crefname{remark}{Remark}{Remarks}
\crefname{proposition}{Proposition}{Proposition}
\crefname{corollary}{Corollary}{Corollaries}
\crefname{example}{Example}{Examples}
\crefname{conjecture}{Conjecture}{Conjectures}
\crefname{enumi}{}{}
\numberwithin{equation}{section}
\newcommand{\leonew}[1]{{\textcolor{WildStrawberry}{#1}}}
\newcommand{\miknew}[1]{{\textcolor{Green}{#1}}}
\newcommand{\R}{\mathbb{R}}
\newcommand{\N}{\mathbb{N}}
\newcommand{\PP}{\mathbb{P}}
\newcommand{\ZZ}{\mathscr{Z}}
\newcommand{\ZZo}{\mathscr{Z}_0}
\newcommand{\dd}{\mathrm{d}}
\newcommand{\EE}{\mathbb{E}}
\newcommand{\F}{\mathcal{F}}
\newcommand{\f}{\varphi}
\renewcommand{\a}{\alpha}
\newcommand{\uball}{b}
\newcommand{\usph}{s}
\newcommand{\Vint}{\mathcal{V}}
\def\randin{%
  \mathchoice%
    {\raisebox{-.35ex}{$\displaystyle{^\subset}$}\mkern-11.5mu\raisebox{+.45ex}{$\displaystyle{_\subset}$}}
    {\mkern+1mu\raisebox{-.27ex}{$\textstyle{^\subset}$}\mkern-11.7mu\raisebox{+.45ex}{$\textstyle{_\subset}$}}
    {\raisebox{.35ex}{$\scriptstyle\subset$}\mkern-14mu\raisebox{-.15ex}{$\scriptstyle\subset$}}
    {\raisebox{.3ex}{$\scriptscriptstyle\subset$}\mkern-13.5mu\raisebox{-.10ex}{$\scriptscriptstyle\subset$}}
}
\newcommand{\maschera}{\textcolor{white}{\scalebox{0.3}{$\blacktriangle$}}}
\def\FlatOmega{%
  \mathchoice{%
    \displaystyle{\Omega}\mkern-14mu\raisebox{+.166ex}{$\displaystyle{\maschera}$}
    \mkern+7mu\raisebox{+.166ex}{$\displaystyle{\maschera}$}}{
    \hbox{$\textstyle{\Omega}$}\mkern-14mu\raisebox{+.166ex}{\hbox{$\textstyle{\maschera}$}}
    \mkern+7mu\raisebox{+.166ex}{\hbox{$\textstyle{\maschera}$}}
    }
    {
   \scriptstyle{\Omega}
    \mkern-14mu\raisebox{+.13ex}{$\scriptstyle{\maschera}$}
    \mkern+5mu\raisebox{+.13ex}{$\scriptstyle{\maschera}$}
    }
    {
    \scriptscriptstyle{\Omega}
    \mkern-14mu\raisebox{+.13ex}{$\scriptscriptstyle{\maschera}$}
    \mkern+5mu\raisebox{+.13ex}{$\scriptscriptstyle{\maschera}$}
    }
}
\newcommand{\scaledFlatOmega}{{\scalebox{0.8}{$_{\FlatOmega}$}}}
\def\cleanrandto{%
  \mathchoice%
    {
    \raisebox{-.101ex}{$\displaystyle{-}$}\mkern-4.4mu
    \raisebox{.729ex}{$\displaystyle{\scaledFlatOmega}$}\mkern-5.2mu
    \raisebox{-.101ex}{$\displaystyle\to$}
    }
    {
    \raisebox{-.101ex}{\hbox{$\textstyle{-}$}}\mkern-4.4mu
    \raisebox{.729ex}{\hbox{$\textstyle{\scaledFlatOmega}$}}\mkern-5.2mu
    \raisebox{-.101ex}{\hbox{$\textstyle\to$}}
    }
    {
    \raisebox{-.101ex}{$\scriptstyle{-}$}\mkern-4.4mu
    \raisebox{.729ex}{$\scriptstyle{\scaledFlatOmega}$}\mkern-5.2mu
    \raisebox{-.101ex}{$\scriptstyle\to$}
    }
    {
    \raisebox{-.101ex}{$\scriptscriptstyle{-}$}\mkern-4.4mu
    \raisebox{.729ex}{$\scriptscriptstyle{\scaledFlatOmega}$}\mkern-5.2mu
    \raisebox{-.101ex}{$\scriptscriptstyle\to$}
    }
}
\newcommand{\randto}{\cleanrandto}
\newcommand{\eps}{\varepsilon}
\newcommand{\set}[2]{\left\{ #1 \ | \ #2\right\}}
\newcommand{\Me}{\mathcal{M}}
\newcommand{\Mev}{\mathcal{M}_{even}}
\newcommand{\one}{\mathds{1}}
\newcommand{\GZ}{\mathcal{G}}
\newcommand{\GZo}{\mathcal{G}_0}
\newcommand{\zkrok}{\emph{z-KROK}\xspace}
\newcommand{\simto}{\overset{\sim}{\longrightarrow}}
\newcommand{\y}{\mathbbm{y}}
\newcommand\restr[2]{{
  \left.\kern-\nulldelimiterspace 
  #1 
  \vphantom{\big|} 
  \right|_{#2} 
  }}
\DeclareMathOperator{\vol}{vol}
\DeclareMathOperator{\Span}{Span}
\DeclareMathOperator{\MV}{MV}
\DeclareMathOperator{\Id}{Id}
\newcommand{\seg}{\underline}
\newcommand{\transv}{\mathrel{\text{\tpitchfork}}}
\newcommand{\tpitchfork}{%
  \raise-0.1ex\vbox{
    \baselineskip\z@skip
    \lineskip-.52ex
    \lineskiplimit\maxdimen
    \m@th
    \ialign{##\crcr\hidewidth\smash{$-$}\hidewidth\crcr$\pitchfork$\crcr}
  }%
}
\newcommand{\mC}{\mathcal{C}}
\newcommand{\be}{\begin{equation}}
\newcommand{\ee}{\end{equation}}
\newcommand{\bega}{\begin{equation}\begin{aligned}}
\newcommand{\eega}{\end{aligned}\end{equation}}
\newcommand{\kop}{\left\{}
\newcommand{\pok}{\right\}}
\newcommand{\tyu}{\left(}
\newcommand{\uyt}{\right)}
\newcommand{\qwe}{\left[}
\newcommand{\ewq}{\right]}
\title{Expectation of a random submanifold: the zonoid section}
\author{Léo Mathis \and Michele Stecconi}
\date{1/2/2023}
\begin{document}
\maketitle
\begin{abstract}
We develop a calculus based on zonoids -- a special class of convex bodies -- for the expectation of functionals related to a random submanifold $Z$ defined as the zero set of a smooth vector valued random field on a Riemannian manifold. We identify a convenient set of hypotheses on the random field under which we define its \emph{zonoid section}, an assignment of a zonoid $\zeta(p)$ in the exterior algebra of the cotangent space at each point $p$ of the manifold. We prove that the first intrinsic volume of $\zeta(p)$ is the Kac-Rice density of the expected volume of $Z$, while its center computes the expected current of integration over $Z$. We show that the intersection of random submanifolds corresponds to the wedge product of the zonoid sections and that the preimage corresponds to the pull-back.

Combining this with the recently developed \emph{zonoid algebra}, it allows to give a multiplication structure to the Kac-Rice formulas, resembling that of the cohomology ring of a manifold. Moreover, it establishes a connection with the theory of convex bodies and valuations, which includes deep results such as the Alexandrov-Fenchel inequality and the Brunn-Minkowsky inequality. We export them to this context to prove two analogous new inequalities for random submanifolds. Applying our results in the context of \emph{Finsler geometry}, we prove some new Crofton formulas for the length of curves and the Holmes-Thompson volumes of submanifolds in a Finsler manifold.
\end{abstract}
\tableofcontents
\section{Introduction}\label{sec:intro}
\subsection{Overview}

Let $X\colon M\to \mathbb{R}^k$ be a random smooth function on a smooth Riemannian manifold $M$. Under the hypothesis that the random subset $Z:=X^{-1}(0)$ is almost surely a submanifold, we study the following functionals:
\begin{equation}\label{eq:rhoalpha1}
    A\mapsto\EE\left\{\vol_{(m-k)}(Z\cap A)\right\}, \qquad \omega\mapsto \EE\left\{\int_{Z} \omega\right\},
\end{equation} 
where $A\subset M$ is any Borel subset and $\omega$ is any smooth differential $(m-k)$-form with compact support, that is, $\omega\in \Omega_c^{m-k}(M)$.
In more fancy words, the former is the measure obtained by taking the expectation of the random measure "$(m-k)$-volume of the intersection with $Z$"; while the latter, which is defined whenever $Z$ is oriented, is the current obtained by taking the expectation of the random current $\int_Z\in \Omega_c^{m-k}(M)^*$.
Our aim is not just to find formulas for them two, but to establish a framework to understand the relations among them for multiple instances of $Z$.

\subsubsection{The examples that we have in mind}\label{subsub:exmind}

There is a vast literature dedicated to the study of nodal sets of random fields \cite{AdlerTaylor, AzaisWscheborbook, marinucci_peccati_2011, bogachev}. The first example in our mind is Kostlan polynomials \cite{kostlan:93}, studied in relation with real algebraic geometry \cite{ShSm1,shsm,ShSm3}, \cite{GaWe1, GaWe2,GaWe3},  \cite{NazarovSodin2009,NazarovSodin2016}, \cite{Lerariolemniscate, LeLu:gap,FyLeLu,basu2019random,kozhasov2020number,breiding2018geometry}, \cite{stec2019MaxTyp};
then, random submanifolds in homogenous spaces and integral geometry \cite{Brgisser2016ProbabilisticSC,lerario2021probabilistic, bernig2014integral};
random eigenfunctions and Riemannian random waves \cite{Zelditch2009RealAC,Berry_1977}, a topic that in the current years is at the center of a lot of attention, see \cite{Wigman_2010, MaWi2011defect, MaWi2011excarea, Wig2013arith, MaWi2014, CammarotaM2015, MaPeRoWi2015, WigRud2016,MAFFUCCI2017, CamMari2018Berry, SarnakWigman2019, NourdinPeccatiRossi2019, MaRoWi2020Berry, CaHa2020,Gass2020,WigYesha2021,MarRossVidotto2021}
and the surveys \cite{WigSurvey2010,WigSurvey2022,canzani2019probabilistic, Ma2021surveyECS}.
 The vast majority of these works deals with Gaussian random fields \cite{NazarovSodin2016,dtgrf, Nicolaescu, NicSavale, MaxNotarnickHermite,bogachev}.  The methods and the results proposed in this paper are aimed to a general study of random fields including non-Gaussian situations, see for instance \cite{KabluSartoWig2022,stecconi2021isotropic}.

Our results are also to be compared with the work of Akhiezer and Kazarnovskii \cite{KazaAveragezeroes}. Their \emph{average number of zeros}, corresponds, in our case, to the average number of zeroes of a system of independent scalar Gaussian random fields in finite dimensional function spaces. In \cite{Kazarnovskii2020}, a more general distribution than Gaussian is covered although it remains in the setting of scalar fields in finite dimensional function spaces.
It is yet unclear for us if Kazarnovskii's ``B-bodies'' correspond to our zonoid section.
\subsection{Main results}

\subsubsection{Expected length and currents}

We propose to study the functionals in \cref{eq:rhoalpha1} using zonoids - a special family of convex bodies (see \cref{sec:zonoids}). A convex body is a zonoid if it can be approximated, in the Hausdorff topology, by a finite Minkowski sums of segments. To any regular enough random function $X\colon M\to \mathbb{R}^k$ we associate a field of convex bodies in the exterior algebra of the cotangent space:
\begin{equation}
M\ni p\mapsto \zeta_X(p)\subset \Lambda^kT^*_pM.
\end{equation}
For any $p\in M$, the convex body $\zeta_X(p)$ is a zonoid defined as the expectation of a random segment, via the following formula (\cref{def:zonoidsec}):
\begin{equation}\label{eq:introzonsec}
    \zeta_X(p):=\EE\kop\qwe0,\dd_pX^1\wedge\cdots\wedge\dd_p X^k\ewq\Big|X(p)=0\pok\rho_{X(p)}(0),
\end{equation}
where $\rho_{X(p)}:\R^k\to [0,+\infty]$ is the density of the random vector $X(p)$.
Every convex body $K$ has a well defined \emph{length} $\ell(K)$, that is, the first intrinsic volume (\cref{def:zlength}) of $K$, also called the first Lipschitz-Killing curvature \cite{AdlerTaylor}. Moreover, a zonoid $K$ always has a center of symmetry $c(K)$. For technical reasons we will have to consider the point $e(K):=2c(K)$, which we named \emph{nigiro}, see \cref{def:nigiro}.
Finally, we identify a set of desired condition on the random field $X$ under which we can apply a Kac-Rice formula. We call those the \zkrok conditions, see below after \cref{thm:1}.
The first main result of the paper is the following theorem.
\begin{mainthm}\label{thm:1}
Let $X\colon M \to \R^k$ be a \zkrok random field and let $Z:=X^{-1}(0)$. Then there is a continuous section of zonoids $\zeta_X$ as in \eqref{eq:introzonsec} such that:
\begin{equation}\label{eq:rhoalpha}
    \EE\left\{\vol_{(m-k)}(Z\cap A)\right\}=\int_A \delta_Z dM, \qquad \EE\left\{\int_{Z} \omega\right\}=\int_M e_Z\wedge \omega,
\end{equation}
where $\delta_Z(p)=\ell(\zeta_X(p))\in \R$ and $e_Z(p)=e(\zeta_X(p))\in \Lambda^kT^*_pM$ are a continuous function and a continuous $k$-form, respectively. We call $\zeta_X$ the \emph{zonoid section} of $X$.
\end{mainthm}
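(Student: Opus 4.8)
The plan is to reduce both identities to a single Kac-Rice (co-area) computation, exploiting the fact that the two functionals extracting $\delta_Z$ and $e_Z$ from $\zeta_X$ — the first intrinsic volume $\ell$ and the nigiro $e=2c$ — are Minkowski-linear, hence commute with the expectation that defines the zonoid section in \eqref{eq:introzonsec}. Concretely, for a segment one has $\ell([0,v])=\|v\|$ and $e([0,v])=v$, and since both $\ell$ (proportional to the mean width) and the center $c$ are additive under Minkowski sum and homogeneous of degree one, for any integrable random segment $[0,w]$ one gets $\ell(\EE\{[0,w]\})=\EE\{\|w\|\}$ and $e(\EE\{[0,w]\})=\EE\{w\}$. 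Applying this with $w=\dd_p X^1\wedge\cdots\wedge\dd_p X^k$, conditioned on $X(p)=0$ and weighted by $\rho_{X(p)}(0)$, yields closed pointwise expressions for $\delta_Z(p)$ and $e_Z(p)$ in terms of conditional expectations, which is exactly the shape of a Kac-Rice density.

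First I would settle the well-definedness and continuity of $\zeta_X$. Working in a local trivialization of $\Lambda^kT^*M$, the body $\zeta_X(p)$ is the vector-measure integral of the random segment in \eqref{eq:introzonsec}; the \zkrok hypotheses guarantee that $\EE\{\|w\|\mid X(p)=0\}\,\rho_{X(p)}(0)<\infty$ and depends continuously on $p$, so $\zeta_X$ is a well-defined continuous section of zonoids. Continuity of $\delta_Z=\ell\circ\zeta_X$ and of the $k$-form $e_Z=e\circ\zeta_X$ is then automatic, since $\ell$ and $e$ are continuous with respect to the Hausdorff metric. For the volume identity I would invoke the Kac-Rice formula available under the \zkrok conditions,
\[
\EE\{\vol_{m-k}(Z\cap A)\}=\int_A \EE\{\|\dd_p X^1\wedge\cdots\wedge\dd_p X^k\|\mid X(p)=0\}\,\rho_{X(p)}(0)\,dM(p),
\]
and recognize the integrand as $\ell(\zeta_X(p))=\delta_Z(p)$ by the Minkowski-linearity discussed above; the normalization matches because the first intrinsic volume of a segment is exactly its length.

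For the current identity the argument is the same in spirit but requires an approximation of the current of integration. Choosing mollifiers $\phi_\eps\to\delta_0$ on $\R^k$ and letting $dy=dy^1\wedge\cdots\wedge dy^k$, the smoothed pullbacks $\phi_\eps(X)\,X^*dy=\phi_\eps(X)\,\dd X^1\wedge\cdots\wedge\dd X^k$ converge, as currents, to $[Z]$ with its natural coorientation, so that $\int_Z\omega=\lim_{\eps\to0}\int_M \phi_\eps(X(p))\,\dd_p X^1\wedge\cdots\wedge\dd_p X^k\wedge\omega$. Taking expectations, exchanging limit and expectation, and disintegrating the inner expectation along $X(p)$ gives $\EE\{\phi_\eps(X(p))\,\dd_p X^1\wedge\cdots\wedge\dd_p X^k\}\to \EE\{\dd_p X^1\wedge\cdots\wedge\dd_p X^k\mid X(p)=0\}\,\rho_{X(p)}(0)=e(\zeta_X(p))=e_Z(p)$, whence $\EE\{\int_Z\omega\}=\int_M e_Z\wedge\omega$. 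I expect the main obstacle to be precisely this last step: justifying the current approximation together with the exchange of limit, expectation and conditioning, uniformly enough that the mollified densities converge to the conditional density at $0$ — this is where the quantitative integrability and regularity built into the \zkrok conditions must be used, and where the orientation and sign bookkeeping of the coorientation given by $\dd X^1\wedge\cdots\wedge\dd X^k$ has to be handled with care. By contrast, the volume formula rests on the classical Kac-Rice theorem, and the genuinely new, but elementary, ingredient throughout is the observation that $\ell$ and $e$ pass through the zonoid expectation by Minkowski-linearity.
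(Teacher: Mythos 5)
Your overall reduction is the right one---both $\delta_Z$ and $e_Z$ are obtained from $\zeta_X$ by Minkowski-linear functionals that commute with the expectation defining the zonoid section, so everything comes down to identifying a Kac-Rice density---and your treatment of well-definedness and continuity is consistent with what \cref{prop:hcontinuity} actually establishes. But there is a genuine gap at the foundation: you invoke ``the Kac-Rice formula available under the \zkrok conditions'' for a zero set of codimension $k<m$ as if it were classical. It is not. Under the \zkrok hypotheses the only formula available from the literature is the zero-dimensional case $k=m$ (\cref{thm:alphaKR}, imported from \cite{KRStec}); the general-codimension version, \cref{thm:Alphaca}, is the main technical content of the paper and is proved by intersecting $Z$ with $d=m-k$ independent auxiliary normal Gaussian fields $Y^i\sim\mathcal N(M,g)$ (so that $Z\cap Y^{-1}(0)$ is a.s.\ discrete and \cref{thm:alphaKR} applies), integrating out the $Y$'s, and fixing the constant by testing on spheres. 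The textbook statements (Aza\"is--Wschebor, Adler--Taylor) carry hypotheses that neither contain nor are contained in \zkrok, so citing them here assumes exactly what has to be proved.

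For the expected current you take a genuinely different route from the paper: mollification $\phi_\eps(X)\,\dd X^1\wedge\cdots\wedge\dd X^k\to[Z]$ followed by an exchange of limit, expectation and conditioning---and you yourself leave that exchange open. The step is not cosmetic: besides a.s.\ convergence of the smoothed currents (which does follow from $0$ being a.s.\ a regular value), you need a domination of $\EE\{\phi_\eps(X(p))\,J_pX\}$ uniform in $\eps$, i.e.\ local boundedness of $(p,x)\mapsto\EE\{J_pX\mid X(p)=x\}\,\rho_{X(p)}(x)$ near $x=0$; \zkrok-\ref{itm:krok5} gives continuity at $(p,0)$, from which this can be extracted, but none of it is carried out, and the sign/coorientation bookkeeping is also deferred. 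The paper avoids the approximation entirely: it feeds into the \emph{same} Alpha formula the weight $\a(f,p)$ determined by $\a(f,p)J_pf\,\Omega_M(p)=\omega(p)\wedge d_pf^1\wedge\cdots\wedge d_pf^k$ (i.e.\ $\omega$ evaluated on a positively oriented orthonormal frame of $T_p(f^{-1}(0))$), so the volume and current identities are two instances of one formula rather than two separate arguments. As written, your proposal establishes neither half under the \zkrok hypotheses until \cref{thm:Alphaca} (or an equivalent) is supplied.
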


In the main body of the paper, \cref{thm:1} is divided into \cref{thm:Evol} and \cref{thm:Ecurrent}.

The description of the {\zkrok} hypotheses (\cref{def:zkrok}) is an important part of this work (see \cref{sec:zkrokhypotheses}) in that they are the conditions that are required to employ our version of the Kac-Rice formula (\cref{thm:Alphaca}), on which \cref{thm:1} is ultimately based.
Roughly speaking, a random field $X\colon M\to \R^k$ is \zkrok if (Compare with \cite[2.1]{KRStec}):
\begin{enumerate}
    \item $X$ is almost surely of class $\mC^1$.
    \item $0$ is a regular value of $X$, almost surely. This is to guarantee that $Z=X^{-1}(0)$ is almost surely a submanifold.
    \item The law of $X(p)$ on $\R^k$ is absolutely continuous and \dots
    \item \dots its density $\rho_{X(p)}(x)$ is continuous in both variables at $(p,0)$.
    \item The conditional expectation $\EE\{J_pX|X(p)=0\}$ makes sense and it is regular enough, where for every $f=(f^1,\ldots,f^k)\in C^1(M,\R^k),$ we write $J_pf:=\|d_pf^1\wedge\cdots\wedge d_pf^k\|$.
\end{enumerate}
 If $X$ is Gaussian, then it is very easy to check the \zkrok conditions (see \cref{prop:gauskrok1} and \cref{prop:Gausmthkrok}) and in this case the zonoids $\zeta_X(p)$ are ellipsoids. 
 
We can express the length and the nigiro of the zonoid section as follows.
\bega\label{eq:krmain}
\ell(\zeta_X(p))&=\EE\kop J_pX\big|X(p)=0\pok\rho_{X(p)}(0), \\
e(\zeta_X(p))&=\EE\kop d_pX^1\wedge \dots \wedge d_pX^k\big|X(p)=0\pok\rho_{X(p)}(0),
\eega
where $X=(X^1,\ldots,X^k)$ and $J_pX$ denotes the Jacobian determinant of $X$, that is, $J_pX=\|d_pX^1\wedge \dots \wedge d_pX^k\|$. From the first equation in \eqref{eq:krmain}, the reader that is familiar with Kac-Rice formulas, can recognize that the first identity in \eqref{eq:rhoalpha} is in fact a translation of the most common version of it (see \cite{AzaisWscheborbook}). On the contrary, the formula obtained by combining the second identities in \eqref{eq:rhoalpha} and \eqref{eq:krmain} is new.
\be\label{eq:newEcurrent}
\EE\left\{\int_{Z} \omega\right\}=\int_M\tyu\EE\kop d_pX^1\wedge \dots \wedge d_pX^k\big|X(p)=0\pok\rho_{X(p)}(0)\uyt\wedge \omega,
\ee
Although it is based on Kac-Rice formula, to the authors' knowledge such a general result for the expected current was not available in the literature. In particular, under our hypotheses, the resulting current is represented by a continuous differential form. Other works which study the expected current of a random submanifold are  \cite{Nicolaescu2016, NicSavale,DangRiv2018,LETENDRE2016}.
\begin{remark}
If $X(p) $ and $d_pX$ are stochastically independent, then the conditioning disappears: $\zeta_X(p)=\EE\kop\qwe0,\dd_pX^1\wedge\cdots\wedge\dd_p X^k\ewq \pok\rho_{X(p)}(0),$ see \cref{rem:indcond}.
\end{remark}

\subsubsection{The wedge and pull-back properties}

Given two independent random fields $X_1,X_2$, with zero sets $Z_i:=X_i^{-1}(0)$, $i=1,2$, one can study the intersection $Z_0:=Z_1\cap Z_2$ as the zero set of the random field $X_0:=(X_1,X_2)$. The idea behind this paper is to answer to the following questions:
\begin{question}\label{q:tomorrow}
    Suppose that you are given $X_1$ and you know that \emph{tomorrow} you will have to compute $\delta_{Z_1\cap Z_2}$ or $e_{Z_1\cap Z_2}$ for some yet unknown $X_2$. What can you do \emph{today} to start simplifying \emph{tomorrow}'s work?
\end{question}
In more formal terms, we want to identify some objects associated to $X_1$ and $X_2$ that are sufficient to determine the density $\delta_{Z_1\cap Z_2}$ and the form $e_{Z_1\cap Z_2}$ and a set of rules to compute them.

In the case of the expected current the answer is pretty simple since, by linearity, we have $e_{Z_1\cap Z_2}=e_{Z_1}\wedge e_{Z_2}$, so the answer to \cref{q:tomorrow} is that one needs to compute the form $e_{Z_1}$ in this case.  

In the volume case things are more subtle in that the couple $(\delta_{Z_1},\delta_{Z_2})$ is not a sufficient data to determine $\delta_{Z_1\cap Z_2}$. This is where the zonoid section really comes into play as an elegant answer to \cref{q:tomorrow}.

For example, if $S\subset M$ is a submanifold and the field $Y=X|_S$ is {\zkrok}, then $e_{Y}=e_X|_{S}$, but the density of expected volume $\delta_Y$ is not determined by $\delta_X$. However, 
the zonoid section of $Y$ is determined by that of $X$, via pull-back.
\begin{customthm}{thm:pullback}[Pull-back property]\label{thm:3}
Let $X\colon M \to\R^k$ be \zkrok. Let $S$ be a smooth manifold and let $\f\colon S\to M$ be a smooth map such that $\f\transv X^{-1}(0)$ almost surely. Then $X\circ \f\colon S\to \R^k$ is {\zkrok} and 
\be\label{eq:intropullback} 
\zeta_{X\circ \f}(q)=d_q\f^*\tyu\zeta_X\tyu\f(q)\uyt\uyt, \quad \forall q\in S.
\ee
\end{customthm}

Recently in \cite{ZA} a framework was developed by the first author together with Breiding, Bürgisser and Lerario to build multilinear maps on zonoids from multilinear maps on the underlying vector spaces, see \cref{thm:FTZC} or \cite[Theorem~4.1]{ZA} 
In particular, the \emph{wedge product} of two zonoids $\zeta_1\subset \Lambda^{k_1} T^*_pM$ and $\zeta_2\subset \Lambda^{k_2} T^*_pM$ is defined and lives in $\Lambda^{k_1+k_2}T^*_pM$.
\begin{customthm}{thm:wedge}[Wedge property]\label{thm:2}
Let $X_i\colon M\to \R^{k_i}$ be independent {\zkrok} random fields. Then $X_0:=(X_1,X_2)\colon M\to \R^{k_1+k_2}$ is {\zkrok} and
\be 
\zeta_{X_0}=\zeta_{X_1}\wedge \zeta_{X_2}.
\ee
\end{customthm}
In other words, an answer to \cref{q:tomorrow} above is to compute the zonoid section of $X_1$, so that \emph{tomorrow} it will be sufficient to apply \cref{thm:1} and \cref{thm:2} to get $\delta_{Z_1\cap Z_2}=\ell(\zeta_{X_1}\wedge \zeta_{X_2})$. The passage from $X$, a probability law on $\mC^1(M,\R^k)$, to $\zeta_{X}$ is a big reduction of data since the zonoid $\zeta_X(p)$ is defined pointwise (\cref{def:zonoidsec}) and depends only on the law of
\be 
(X(p),d_pX^1\wedge \dots\wedge d_pX^k)
\text{ random vector in }\R^k\times \Lambda^k T^*_pM,
\ee
hence the zonoid section does not remember the whole correlation structure of the field $X$. This is the same spirit as that of Kac-Rice formula. 

\begin{remark}
It is important that the {\zkrok} hypotheses are stable enough to allow the operations in both \cref{thm:2} and  \cref{thm:3}, while keeping \cref{thm:1} true.
\end{remark}

\subsubsection{Alexandrov-Fenchel and Brunn-Minkowsky}
The results just discussed create a bridge between random fields and the very rich theory of convex bodies. Such connection allows to draw on deep results such as the Alexandrov-Fenchel inequality (\cref{prop:AFi} and \cite[Theorem 7.3.1]{bible}) and the Brunn-Minkowsky inequality (\cref{prop:BMi} and \cite[p 372 (e)]{bible}) to obtain relations between different instances of $\delta_Z$. The former allows to deduce \cref{cor:KRAF} which, in the case $M$ is a surface, says the following. 
\begin{customthm}{cor:KRAF}[KRAF for surfaces]\label{thm:D}
Let $\dim M=2$ and let $Z_1,Z_2$ be random curves defined by independent {\zkrok} fields, then, for all $p\in M$,
\begin{equation}\label{eq:KRAFintro}
    \delta_{ Z_1\cap Z_2}(p)\geq \sqrt{\delta_{ Z_1\cap Z'_1}(p)\cdot \delta_{ Z_2\cap Z_2'}(p)},
\end{equation}
where $Z_i'$ is an independent copy of $Z_i$.
\end{customthm}
Similarly, from the Brunn-Minkowsky inequality we deduce \cref{cor:KRBM}.
\begin{customthm}{cor:KRBM}[KRBM for surfaces]\label{thm:E}
Let $\dim M=2$ and let $Z_1,Z_2$ be random curves defined by independent {\zkrok} fields. For $t\in [0,1]$, let $Z_t$ be the random curve such that $Z_t=Z_2$ with probability $t$ and $Z_t=Z_1$ otherwise. Then, for all $p\in M$,
\be\label{eq:KRBMintro} 
\delta_{Z_t\cap Z_t'}(p)\ge \delta_{Z_1\cap Z_1'}^{(1-t)}(p)\delta_{Z_2\cap Z_2'}^{t}(p)
\ee 
where $Z_i'$ is an independent copy of $Z_i$.
\end{customthm}
This result is based on the observation that $Z_t$ is the zero set of another field $X_t$ that, if \zkrok, has for zonoid section the \emph{Minkowsky sum} of the other two: $\zeta_{X_t}=(1-t)\zeta_{X_1}+t\zeta_{X_2}$, see \cref{prop:bern}.
\begin{remark}
The inequality \eqref{eq:KRBMintro} actually involves the same three terms as \eqref{eq:KRAFintro}. Indeed from the definition of $Z_t$ it is immediate to deduce that: \be 
\delta_{Z_t\cap Z_t'}=(1-t)^2\delta_{Z_1\cap Z_1'}+t^2\delta_{Z_2\cap Z_2'}+2t(1-t)\delta_{Z_1\cap Z_2}.
\ee
\end{remark}

In the full statements of \cref{thm:D} and \cref{thm:E} (see \cref{sub:AFBM}) there is no assumption on the dimension of $M$.

\subsubsection{Comment on the proof of \cref{thm:1}}
The main technical result that we need and that is the content of \cref{thm:Alphaca} is the following version of Kac-Rice formula expressing the expectation of the integral of some functional $\a\colon \mC^1(M,\R^k)\times M\to \R$ over the submanifold $Z=X^{-1}(0)$ defined by a random field $X\randin \mC^1(M,\R^k)$:
\be\label{eq:ourKR}
\EE\kop\int_{ Z}\alpha(X,p)dZ(p)\pok=\int_M
\EE\kop \a(X,p)J_pX \Big| X(p)=0\pok \rho_{X(p)}(0)dM(p).
\ee 
We don't consider this an original result, since this formula is essentially known as one of the many variations of Kac-Rice. Nevertheless, we remark that we couldn't find any reference in the literature for a statement equivalent to \cref{thm:Alphaca}, which is crucial for us since it shows the validity of \eqref{eq:ourKR} under the hypothesis that $X$ is a {\zkrok} random field, except for the case when $k=\dim M$, that is \cref{thm:alphaKR} and for which we refer to \cite{KRStec} (see also \cref{apx:comp}). 

We also remark that to obtain \cref{thm:Alphaca} we use an argument that is new in this context and which shows that the validity of Formula \eqref{eq:ourKR} just in the case $k=\dim M$, when $Z$ is discrete, implies its validity for all cases. For this we exploit the properties of a class of Gaussian random fields on a Riemannian manifold $(M,g)$, that we call \emph{normal}, defined as those for which $g$ is the associated metric in the sense of \cite{AdlerTaylor}, see \cref{sec:ATfield}.
This strategy reflects the philosophy of this paper in that it exploits the interplay between different instances of the Kac-Rice formula.

\subsection{Other results}\label{sub:intro:secondary}
\subsubsection{Density of intersection in terms of mixed volumes}
To a convex body $K\subset \R^d$, one can associate $d+1$ numbers $\Vint_0(K),\ldots,\Vint_d(K)$ called the \emph{intrinsic volumes} of $K$ (also called \emph{Lipschitz-Killing curvatures} in more general contexts \cite{AdlerTaylor}). They are the coefficients in Steiner's formula \cite{bible}:
$ 
\vol_d(K+tB_d)=\sum_{i=0}^d\Vint_{d-i}(K)\vol_{i}(tB_{i}),
$ where $B_i\subset \R^i$ is the unit ball.
The \emph{length} $\Vint_1(K)=\ell(K)$ is the one appearing in \cref{thm:1}. Then, the Euler characteristic $\Vint_0(K)=\chi(K)\in\{0,1\}$ only tells if $K$ is empty or not and $\Vint_d(K)=\vol_d(K)$ is the usual volume.

 The role of the intrinsic volumes in our picture is clarified by the wedge product of zonoids \cite{ZA}. In particular, if $K=\zeta$ is a zonoid, we have
$ 
i!\Vint_i(\zeta)=\ell(\zeta^{\wedge i})
$, see \cref{prop:kintvolpow}. Combining it with \cref{thm:1} and \cref{thm:2}, this yields \cref{cor:Evolvol}:
\be 
\EE\kop\vol_d(Z_1\cap \dots \cap Z_k)\pok=k!\int_M\Vint_k(\zeta_{X})dM,
\ee
whenever $Z_i$ are i.i.d. zero sets of a scalar {\zkrok} random field $X\colon M\to \R$. The notion of intrinsic volume for zonoids is related to that of \emph{mixed volume}. The mixed volume of $m$ convex bodies $K_1,\ldots,K_m\subset \R^m$, denoted $\MV(K_1,\dots,K_m)$, is defined as the coefficient of $t_1\cdots t_m$ in the polynomial $\vol_d(t_1K_1+\dots t_mK_m)$, see \cite[Theorem 5.1.7]{bible}. If $Z_1,\ldots,Z_m$ are random level sets of $m$ independent scalar {\zkrok} field $X_1,\ldots,X_m$ respectively, on a $m$ dimensional manifold $M$, then \cref{cor:Evolvol} states also that 
\be 
\EE\kop\#(Z_1\cap \dots \cap Z_m)\pok=m!\int_M\MV(\zeta_{X_1}, \dots, \zeta_{X_m})dM.
\ee
\subsubsection{What does the zonoid section know?}

The zonoid section can be separated into two parts as follows, see \cref{def:nigiro}.
\be\label{eq:separation}
\zeta_X(p)=\frac12 e(\zeta_X(p))+\underline{\zeta_X(p)}
\ee
where $\underline{\zeta_X(p)}$ has its center of symmetry at the origin.
The length, and thus the density of expected volume, depends only the centered zonoid, that is, on  $\underline{\zeta_X(p)}$.
In general, the centered zonoid is a sufficient data to compute the expectation of all quantities of the form $\int_ZF(T_pZ)dZ$. 
More precisely, given a measurable function $F:G(m-k,TM)\to \R$, we have
   \be\label{eq:sexy}
    \EE\kop\int_Z F\tyu T_pZ\uyt dZ(p)\pok=\int_{G(m-k,TM)}F\, d V_{\seg{\zeta_X}},
    \ee
where $V_{\seg{\zeta_X}}$ is a measure on $G(m-k,TM)$ associated to the centered zonoid section $\seg{\zeta_X}$ via the \emph{cosine transform}, see \cref{sec:zonandmeas}. The function $\seg{\zeta_X}\mapsto V_{\seg{\zeta_X}}$ is, in fact, injective

    We will discuss this in more details in \cref{sub:whatdoesitknow}. 
    In particular, we will show that the centered zonoid section $\seg{\zeta_X}$ depends only on the law of the random submanifold $Z=X^{-1}(0)$, see \cref{prop:onlydeponrandsubm}.

\subsubsection{The zonoid section as the expectation of a random varifold}
A $d$-Varifold in $M$ is a positive Borel measure on the total space of the Grassmann bundle 
\be G(d,TM)=\kop V\subset T_pM: p\in M, V\text{ is a linear subspace of dimension $d$}\pok.
\ee
We thus can think of a $d$-varifold $V$ as a linear continuous  functional $F\mapsto  V(F)$, defined for every bounded continuous function $F:G(d,TM)\to\R$ and such that $V(F)\le \sup|F|.$
Traditionally, varifold are introduced as a non-oriented variant of the concept o currents. Indeed, any non-necessarily-oriented $d$ dimensional compact submanifold $Z\subset M$ of a Riemannian manifold $M$ canonically defines a varifold $V_Z(F):=\int_M F(T_pZ) dM(p).$ 

On the other hand, a classical result in the theory of zonoids (see \cite{bible}) is that centered zonoids in a Euclidean space $V$ are in $1-1$ correspondence with even measures on the sphere $S(V).$ 
In our case, the zonoid $\zeta_X(p)$ of a \zkrok field $X:M\randto \R^k$, lives in $V=\Lambda^kT^*_pM$ and it is special in that the associated measure is supported on the space of simple vectors, which can be indentified with $G(k,T_p^*M)\cong G(d,T_pM),$ where we set $d=m-k.$ Because of this observation, a zonoid section $\zeta=\kop\zeta(p)\pok_{p\in M}$, is  uniquely associated to a section of measures $\{ \mu_{\zeta(p)} \}_{p\in M}$ and we can use this data to construct a $d$-varifold $V_{\zeta}$ via the formula (see \cref{eq:defimuX})
\be 
V_{\zeta}(F)=\int_M \int_{G(d,T_pM)}F(V)d\mu_{\zeta(p)}(V) dM(p).
\ee
We have the following.
\begin{customthm}{thm:title}[Expectation of a random varifold]\label{thm:titlethm}
Let $X\randin \mC^1(M,\R^k)$ be a \zkrok random field, and let $d=m-k$ be the dimension of the random submanifold $Z:=X^{-1}(0).$ Then
\be 
\EE V_Z=V_{\zeta_X}.
\ee
\end{customthm}
We will prove that (see \cref{lem:zonoidvarifold}), in the case in which $\zeta=\zeta_X$ is the zonoid section of a \zkrok field, one can recover the zonoid section $\zeta_X$ from the varifold $V_{\zeta_X}$ and viceversa. In this sense, \cref{thm:title} explains the title of the paper.

\subsubsection{Many representatives of the Euler class}

All the previous results extend naturally to random sections of vector bundles (\cref{thm:megathmvb}); if $\pi\colon E\to M$ is a smooth vector bundle of rank $k$ and $X\colon M\to E$ is a random section that is {\zkrok} in any local trivialization (in this case we say that it is \emph{locally} \zkrok, see \cref{de:loczkrok}) then the zonoid section is defined (\cref{def:zonoidsecvb}) as a function of the form:
\be 
M\ni p\mapsto \zeta_X(p)\subset \Lambda^k T^*_pM\otimes \det E_p,
\ee
where we recall that $\det E:=\Lambda^k E$ is a real line bundle, trivial if and only if $E$ is orientable. 
The reader who is familiar with algebraic topology will recognize a strong analogy between such extensions of \cref{thm:2} and \cref{thm:3} with the axiomatic properties of characteristic classes of vector bundles. Indeed, in the case in which both $M$ and $E$ are orientable the expected current $e(\zeta_X)=\EE\int_Z$, if smooth, is in fact a closed $k$-form representing the De Rham Euler class of $E$:
\be\label{eq:GaussBonnet} 
\qwe e(\zeta_X)\ewq=e(E)\in H_{DR}^k(M),
\ee
see \cref{thm:megathmvb}.(5). A more subtle version of this fact holds without any orientability assumption, see \cref{cor:nonorientEcurrent} and \cref{rem:nonorient}.
Equation \eqref{eq:GaussBonnet} can be regarded as a generalized Gauss-Bonnet-Chern theorem (see \cite{spivak,nicolaescu2020lectures}) in that on the left there is a \emph{local} object that depends on the structure of the random field, while on the right hand side we have a \emph{global} topological quantity depending only on the bundle. In other words, a random section specifies a way to distribute the Euler class of $E$ over the manifold $M$. For instance in the case when $k=m$ the Euler class becomes a number: the Euler characteristic $\chi(E)\in\mathbb{Z}$ and Equation \eqref{eq:GaussBonnet} reads
\be\label{eq:pointsGaussBonnet}
\int_M e(\zeta_X)=\chi(E).
\ee
The classical statement of Gauss-Bonnet-Chern Theorem for a vector bundle $E$ endowed with a metric $h$ and a connection $\nabla$ can be recovered from \cref{eq:GaussBonnet} by taking $X$ to be a suitable Gaussian random section. This has been proved, by direct computations, in \cite{Nicolaescu2016}. 

\subsubsection{Finsler Crofton formula}
In \cref{sec:Finsler} we give an interpretation of our results in the context of \emph{Finsler Geometry} \cite{introRiemannFinsler}. Given a scalar \zkrok random field $X\randin \mC^1(M)$ on $M$, the convex body $\zeta(p):=\seg{\zeta_X(p)}$, if full dimensional, defines a norm $F_p:=h_{\zeta(p)}:T_pM\to\R$, that is continuous with respect to $p\in M$. This norm is such that the convex body $\zeta(p)$ is the dual of the unit ball, see \cref{def:finsler}. Such an assignment is called a \emph{Finsler structure}\footnote{In general the norm of a Finsler structure is also assumed to have some $C^2$ regularity that we won't assume here.}. In our case the convex body $\zeta(p)$ always contains the origin and depends continuously on $p$ but may not be full dimensional , thus $h_{\zeta(p)}$ only defines a \emph{semi} norm. We will call a \emph{semi Finsler} structure the choice of a semi norm $F_p:T_pM\to\R$ that depends continuously on $p\in M$. Then we have that a scalar \zkrok random field $X\randin C^1(M,\R)$ defines a semi Finsler structure $F^X$, see \cref{def:finsler}.

Given a (semi) Finsler structure $F$ on $M$, the usual definition of the length of a curve as the integral of the norm of the velocity still makes sense, see \cref{eq:lengthFinsler}. Combining the pull-back property (\cref{thm:3}) with \cref{thm:1} we are able to produce a Crofton formula, that is, to relate the length of a curve with the expectation of the number of points of intersection with an hypersurface. More precisely, if $X:M\to\R$ is \zkrok, $Z=X^{-1}(0)$ and $\gamma$ is a $C^1$ curve in $M$ almost surely transversal to $Z$, then we have, see \cref{propcroftoniftrs}:
\begin{equation}
    \EE\#(\gamma\cap Z)=2\,\ell^{F^X}(\gamma).
\end{equation}
Unlike for the length, there are several notions of the volume  of a $k$ dimensional submanifold $S\subset M$ in Finsler geometry, see \cite{volumesFinsler}. One of the most common is the \emph{Holmes-Thompson volume}, which is still defined in the semi Finsler case and we denote it as $\vol^F_k(S)$. It turns out that in the case in which the semi Finsler structure $F^X$ is defined by a scalar \zkrok field $X$ we can also prove a Crofton formula for the Holmes-Thompson volume (\cref{prop:FinsCroftonVol}):
\begin{equation}
    \EE\kop\#(S\cap Z_1\cap \dots \cap Z_k)\pok=k! \uball_k \vol_k^{F^{X}}(S),
\end{equation}
where $Z_i$ are independent copies of $Z=X^{-1}(0)$ and $S\subset M$ is any $k$ dimensional submanifold almost surely transversal to $Z$. Constructions of Finsler structures that admit a Crofton formula are known for random hyperplanes in projective space, see \cite{PaivaGelfCrof,BernigCrofton,SchneiderCrofton}. Moreover, a more general result very similar to \cref{propcroftoniftrs} can be found in \cite[Theorem~A]{paivageod} although the {\zkrok} hypotheses are significantly less restrictive and the construction of the metric $F^X$ is explicit (see \cref{eqdefFX}).
\subsubsection{Examples}
With \cref{thm:abundance} we show that any random field ${Y\randin}\mC^\infty(M,\R^k)$ can be \emph{approximated} by a {\zkrok} random field, with the only condition being that $\EE\{J_pY\}$ should be finite and continuous with respect to $p\in M$. Such operation is obtained by means of what can be described as a convolution with a constant field, that is, a random vector $\lambda \randin \R^k$, provided that the latter has a continuous, bounded and non vanishing density. In this case,
\be\label{eq:perturb} 
X:= Y-\lambda \text{ is \zkrok.}
\ee
This result, while demonstrating the abundance of {\zkrok} fields, suggests that they could be used to study more wild random fields via perturbative techniques. The study of the behavior of the results obtained in this paper when $\lambda\to 0$ in \eqref{eq:perturb} will be object of future work by the authors. 

A particular case of \eqref{eq:perturb} is when $Y=f$ is a deterministic smooth function, so that $Z=Y^{-1}(\lambda)$ is a random level set of $f$. We discuss this example in \cref{sub:abu}

In \cref{sub:finifields} we discuss the case when the law of the random field $X$ is supported on a finite dimensional linear subspace $\mathcal{F}\subset \mC^\infty(M,\R^k)$ and has a density $\rho_X\colon \mathcal{F}\to [0,+\infty)$. 
This is the most typical situation in the existing literature (see \cref{subsub:exmind}). It includes especially the case of random eigenfunctions of elliptic operators, Riemannian random waves and random band limited functions, not necessarily Gaussian. It also naturally applies to random polynomials.

We show (see \cref{prop:finitezkrokcond} and \cref{prop:zonfinitetype}) that such $X$ is always {\zkrok} as long as $\mathcal{F}$ is ample, meaning that for any $p\in M$ the set $\{f(p)\colon f\in\mathcal{F}\}$ spans the whole $\R^k$ (i.e., $\mathcal{F}$ generates $\mC^\infty(M,\R^k)$ as a $\mC^\infty(M)$-module), and if the density satisfies the integrability condition $\rho_X(f)=O(\|f\|^{-\dim \mathcal{F}})$ as $\f\to\infty$.  

\subsection{Structure of the paper}
\cref{sec:zonoids} contains a brief survey on the theory of convex bodies and zonoids, with emphasis on the formulas and the notations that are needed in the following sections. This section is essentially based on the monograph \cite{bible} and on the recent paper \cite{ZA}.
In \cref{sec:zkrokhypotheses} we define the {\zkrok} hypotheses in details, discussing alternative formulations and special cases. We give the definition of the zonoid section in \cref{sec:zonoidsection} and the proof of \cref{thm:2} and \cref{thm:3}. In \ref{sec:proofAlpha} we establish the Kac-Rice formula (\cref{thm:Alphaca}) that we need to prove \cref{thm:1}. The latter is divided into two statements, \cref{thm:Evol} and \cref{thm:Ecurrent}, both proved in \cref{sec:mainres}. In \cref{sub:AFBM} we report the full statements of \cref{cor:KRAF} and \cref{cor:KRBM}, which are obtained as corollaries of \cref{thm:Evol}. 
The subsequent sections cover the material discussed in \cref{sub:intro:secondary} above, in particular, the proof of \cref{thm:title} is given in \cref{sub:whatdoesitknow}.
\subsection{Aknowledgements}
This  work is partially supported by the grant TROPICOUNT of Région Pays de la Loire, and the ANR project ENUMGEOM NR-18-CE40-0009-02.
\section{Notations}\label{sec:notations}
\newcommand{\Prob}{(\Omega, \mathfrak{S},\PP)}

Here below, a list of the main notations used in this paper, for the reader's convenience.
\begin{enumerate}[$\bullet$]
\item We say that $X$ is a \emph{random element} (see \cite{Billingsley}) of the topological space $T$ if $X$ is a measurable map $X\colon \Omega\to T$, defined on some probability space $\Prob$. In this case we will write
    \be 
    X\randin T
    \ee 
    and we denote by $[X]=\PP X^{-1}$ the Borel probability measure on $T$ induced by pushforward. We will use the following notation:
\be 
\PP\{X\in U\}:=
\PP X^{-1}(U)
\ee 
to denote the probability that $X\in U$, for some measurable subset $U\subset T$, and
\be 
\EE\{f(X)\}:=\int_{T}f(t)d[X](t),
\ee
to denote the integral of a measurable vector-valued function $f\colon T\to \R^k$.
We call $X$ a \emph{random variable}, \emph{random vector} or \emph{random map} if $T$ is the real line, a vector space or a space of continuous functions $\mC(M,N)$, 
respectively.
\item Given topological spaces $M$ and $N$, we write
\be 
X\colon M \randto N,
\ee 
to say that $X$ is a random map, i.e., a random element of $\mC(M,N)$. The symbol winks at the fact that $X$ can be seen as a function $X\colon M\times \Omega\to N$.
\item The sentence: ``$X$ has the property $\mathcal{P}$ almost surely'' (abbreviated ``a.s.'') means that the set  $S=\{t\in T| t \text{ has the property }\mathcal{P}\}$ contains a Borel set of $[X]$-measure $1$. It follows, in particular, that the set $S$ is $[X]$-measurable, i.e. it belongs to the $\sigma$-algebra obtained from the completion of the measure space $(T,\mathcal{B}(T),[X])$.
 \item We write $\#(S)$ for the cardinality of the set $S$.
\item We use the symbol $A\transv B$ to say that objects $A$ and $B$ are in transverse position, in the usual sense of differential topology (as in \cite{Hirsch}).
    \item The space of $\mC^r$ functions between two manifolds $M$ and $N$ is denoted by $\mC^r(M,N)$. We just write $\mC^r(M)$ in the case $N=\R$.
    If $E\to M$ is a vector bundle, we denote the space of its $\mC^r$ sections by $\mC^r(M|E)$. In both cases, we consider it to be a topological space endowed with the weak Whitney's topology (see \cite{Hirsch}).
    
\item We use $\Gamma(\mathcal{Z})$ for the space of continuous sections of a continuous fiber bundle $\mathcal{Z}\to M$.
\item Given a topological space $T$, we denote by $\Me(T)$ the topological vector space of finite signed Borel measures, endowed with the weak-$*$ topology induced by the inclusion $\Me(T)\subset \mC_b(T)^*$. We write $\Me^+(T)$ for the subset of positive finite measures and $\mathscr{P}(T)$ for that of probability measures, both considered with the subspace topology, if not otherwise specified.
    \item If $V$ is a vector space and $x,y\in V$, we write $[x,y]:=\set{(1-t)x+ty}{t\in[0,1]}$. Moreover, we abbreviate
    \begin{equation}\label{eq:notseg}
        \seg{x}:=\frac {1}{2} [-x,x].
    \end{equation}
    \item We use $\uball_k$ for the $k$ dimensional volume of the unit ball in $\R^{k}$ and $\usph_{k}=\frac{2^{k+1}\pi^{k}}{k!b_k}$ for the $k$ dimensional volume of the unit sphere in $\R^{k+1}$. 
\end{enumerate}

\section{Zonoids}\label{sec:zonoids}

Throughout this section $(V, \langle \cdot, \cdot \rangle)$ is a (real) Euclidean space of dimension $m$, $V^*$ its dual and $S(V)$ is the unit sphere of $V$.

\subsection{Basic definitions}
 A subset $K$ of $V$ is \emph{convex} if for every $x,y\in K$, the segment $[x,y]=\set{(1-t)x+ty}{t\in[0,1]}$ is contained in $K$. A \emph{convex body} is a non empty compact convex subset. If $K\subset V$ is a convex body, its \emph{support function} is the positively homogeneous function $h_K: V^*\to \R$ given by
\begin{equation}
  h_K(u):=\sup\set{\langle u, x\rangle}{x\in K}.
\end{equation}
A function $h:V^*\to \R$ is the support function of a convex body in $V$ if and only if it is \emph{sublinear}, that is if $h(\lambda u)=\lambda h(u)$ for all $u\in V^*, \lambda \geq 0$ and $h(u+v)\leq h(u)+h(v)$ for all $u,v\in V^*$; see \cite[Theorem~1.7.1]{bible}.

The norm on $V^*$ induces a complete distance on the space of convex bodies of $V$ called the \emph{Hausdorff distance} \cite{hausdorff1914grundzüge}. This is equivalent to the supremum distance of the support functions, given for all $K_1,K_2\subset V$ convex bodies by (see\cite[ Lemma 1.8.14]{bible}):
\begin{equation}\label{eq:Hausdist}
  \dd(K_1,K_2)=\sup\set{|h_{K_1}(u)-h_{K_2}(u)|}{\|u\|=1}.
\end{equation}
The \emph{Minkowski sum} of two convex bodies $K_1,K_2\subset V$ is the convex body defined as:
\begin{equation}
  K_1+K_2:=\set{x_1+x_2}{x_1\in K_1, \, x_2\in K_2}.
\end{equation}
Finally we define for every $\lambda\in \R$ and convex body $K$, the convex body $\lambda K:=\set{\lambda x}{x\in K}$.

The support function satisfy some useful properties that we summarize in the next proposition. Those are direct consequences of the definition and for this reason we omit the proof.
\begin{proposition}\label{prop:suppfctprop}
  Let $K, L$ be convex bodies in a vector space $V$ and let $h_K$, respectively $h_L$ be their support functions. We have the following.
  \begin{enumerate}
      \item For all $t,s\geq0$ we have $h_{tK+sL}=th_K+sh_L$.
     \item \label{itm:linh} If $W$ is a vector space and $T:V\to W$ is a linear map then $h_{T(K)}=h_K\circ T^t$.
  \end{enumerate}
\end{proposition}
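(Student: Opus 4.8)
The plan is to derive both identities directly from the definition $h_K(u) = \sup\{\langle u, x\rangle : x\in K\}$, since each reduces to an elementary manipulation of suprema. I would treat the two items separately and, for the first, establish the scalar-multiplication rule and the additivity rule before combining them.

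For item (1), I would first record the homogeneity $h_{\lambda K} = \lambda h_K$ for $\lambda\geq 0$: since $\lambda K = \{\lambda x : x\in K\}$, for any $u\in V^*$ one has $h_{\lambda K}(u) = \sup_{x\in K}\langle u,\lambda x\rangle = \lambda\,\sup_{x\in K}\langle u,x\rangle = \lambda h_K(u)$, where pulling the factor $\lambda\geq 0$ out of the supremum is exactly the place non-negativity is used. Next I would prove additivity $h_{K+L} = h_K + h_L$: by definition of the Minkowski sum, $h_{K+L}(u) = \sup\{\langle u, x+y\rangle : x\in K,\, y\in L\} = \sup\{\langle u,x\rangle + \langle u,y\rangle : x\in K,\, y\in L\}$, and since the two summands depend on variables ranging independently over $K$ and $L$, the supremum of the sum splits as the sum of the suprema, yielding $h_K(u)+h_L(u)$. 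Applying additivity to $tK+sL$ and then homogeneity to each summand gives $h_{tK+sL} = t h_K + s h_L$, which is the claim.

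For item (2), I would compute $h_{T(K)}$ on a covector $w\in W^*$ directly. Since $T(K) = \{Tx : x\in K\}$, one has $h_{T(K)}(w) = \sup_{x\in K}\langle w, Tx\rangle_W$; then, using the defining property of the transpose $T^t\colon W^*\to V^*$, namely $\langle w, Tx\rangle_W = \langle T^t w, x\rangle_V$, this rewrites as $\sup_{x\in K}\langle T^t w, x\rangle_V = h_K(T^t w) = (h_K\circ T^t)(w)$. The only point requiring attention is the bookkeeping of dual spaces: one must read $T^t$ as a map $W^*\to V^*$ so that the composition $h_K\circ T^t$ is well-typed and agrees on each $w\in W^*$.

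There is no genuine obstacle here, and indeed the text already flags these as direct consequences of the definition. The two places a careful reader might want a word of justification are the splitting $\sup(A+B)=\sup A + \sup B$ in the additivity step — which holds precisely because the Minkowski sum lets the two arguments vary independently — and the correct identification of the adjoint in item (2). Both are routine, so the argument is short.
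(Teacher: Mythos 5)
Your argument is correct and complete: both identities follow exactly as you describe from the definition of the support function, with the supremum splitting over the independently varying Minkowski summands and the adjoint identity handling the linear image. The paper itself omits the proof as a direct consequence of the definitions, and your writeup supplies precisely those routine details, so there is nothing to compare beyond noting agreement.
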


We are interested in a particular class of convex bodies.

\begin{definition}
  A \emph{zonotope} is a finite Minkowski sum of segments. A \emph{zonoid} is a limit, in the Hausdorff distance, of zonotopes.
\end{definition}
 Segments are always centrally symmetric and we can write $[x,y]=\seg{x-y}+\tfrac{1}{2}\{x+y\}$ where we recall the notation defined in~\cref{eq:notseg}. It follows that zonotopes, and thus zonoids are centrally symmetric. Moreover $K$ is a zonotope if and only if there exist $x_1,\ldots,x_N,e\in V$ such that $K=\seg{x_1}+\cdots+\seg{x_N}+\tfrac{1}{2}\{e\}$. This implies that for every zonoid $K$ there is a zonoid $\seg{K}$ with $(-1)\seg{K}=\seg{K}$ and a vector $c$ such that
 \begin{equation}
   K=\seg{K}+\frac{1}{2}\{e\}.
 \end{equation}
 
 \begin{definition}\label{def:nigiro}
   The point $e$ will be called the \emph{nigiro}\footnote{The nigiro $e(K)$ is symmetric to the \emph{origin} with respect to the center of $K$. In other words, as a vector, it is twice the center of $K$.} of $K$ and denoted $e(K)$. Moreover, for every zonoid $K$, we write $\seg{K}$ for the unique zonoid such that $K=\seg{K}+\tfrac{1}{2}\{e(K)\}.$ 
 \end{definition}

 We write $\ZZ(V)$ for the space of zonoids of $V$ and $\ZZo(V)$ for the space of \emph{centered} zonoids, i.e. $\ZZo(V):=\set{K\in \ZZ(V)}{(-1)K=K}$. By the discussion above we have
 \begin{equation}
   \ZZ(V)=\ZZo(V)\oplus V
 \end{equation}
 In the sense of the monoid structure given by the Minkowski sum. Elements of $\ZZo(V)$ are called \emph{centered zonoids}.

\subsection{Zonoids and random vectors.}\label{sec:zonandrv}
If $\Lambda$ is a random zonoid in $V$, that is a map from some probability space to $\ZZ(V)$, such that $\EE |d(0,\Lambda)|<\infty$ then we define the \emph{expected zonoid} $\EE\Lambda$ to be the convex body with support function given for all $u\in V^*$ by
\begin{equation}
  h_{\EE\Lambda}(u):=\EE\left\{ h_\Lambda(u)\right\}.
\end{equation}
It follows from a strong law of large number for compact sets from \cite{ArsVit} that if $\Lambda_1,\ldots,\Lambda_n$ are i.i.d. copies of $\Lambda$, then the random zonoid $\tfrac{1}{n}\left(\Lambda_1+\cdots+\Lambda_n\right)$ converges almost surely as $n\to\infty$ to $\EE\Lambda$. In particular the expected zonoid $\EE\Lambda$ is indeed a zonoid.

We will, in the following, consider mostly two examples. Let $X\randin V$ be a random vector such that $\EE\|X\|<\infty$. We say that $X$ is \emph{integrable} and we consider $\EE[0,X]$ and $\EE\seg{X}$. Their support function is given for all $u\in V^*$ by
\begin{align}\label{eq:suppofexpseg}
  h_{\EE[0,X]}(u)&=\EE\max\{0,\langle u, X\rangle\};& h_{\EE\seg{X}}(u)&=\frac{1}{2}\EE|\langle u, X\rangle|.
\end{align}
Next, we show that they are translate of one another.
\begin{lemma}\label{lem:centnotcent}
  Let $X\randin V$ be integrable. We have
  \begin{equation}
    \EE[0,X]=\EE\seg{X}+\frac{1}{2}\left\{\EE X\right\}.
  \end{equation}
 With the notation introduced above, this means that $e(\EE[0,X])=\EE X$. In particular $\EE[0,X]=\EE\seg{X}$ if and only if $\EE X=0$.
\end{lemma}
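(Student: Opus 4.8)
The plan is to prove the identity by comparing support functions, since two convex bodies of $V$ coincide precisely when their support functions agree on all of $V^*$. The whole computation rests on the elementary scalar identity $\max\{0,t\}=\tfrac12\left(|t|+t\right)$, valid for every $t\in\R$, which one checks by treating the cases $t\ge 0$ and $t<0$ separately.

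Fix $u\in V^*$, substitute $t=\langle u,X\rangle$ into this identity, and take expectations. Integrability gives $\EE|\langle u,X\rangle|\le \|u\|\,\EE\|X\|<\infty$, so every expectation below is finite and linearity of $\EE$ applies:
\[
\EE\max\{0,\langle u,X\rangle\}=\tfrac12\EE|\langle u,X\rangle|+\tfrac12\EE\langle u,X\rangle=\tfrac12\EE|\langle u,X\rangle|+\tfrac12\langle u,\EE X\rangle.
\]
By \eqref{eq:suppofexpseg} the left-hand side is $h_{\EE[0,X]}(u)$, the first term on the right is $h_{\EE\seg{X}}(u)$, and the last term is $h_{\frac12\{\EE X\}}(u)$; by \cref{prop:suppfctprop}(1) their sum is the support function of $\EE\seg{X}+\tfrac12\{\EE X\}$. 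As $u$ was arbitrary, the two bodies have equal support functions and hence are equal, which is exactly the asserted identity.

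To read off $e(\EE[0,X])=\EE X$, I would observe that $\EE\seg{X}$ is centered: from \eqref{eq:suppofexpseg}, $h_{\EE\seg{X}}(-u)=\tfrac12\EE|\langle -u,X\rangle|=h_{\EE\seg{X}}(u)$, so $(-1)\EE\seg{X}=\EE\seg{X}$. Thus the identity just proved is precisely the canonical decomposition of \cref{def:nigiro}, and uniqueness of that decomposition identifies its nigiro component as $e(\EE[0,X])=\EE X$. The final claim then follows at once, since $\EE[0,X]=\EE\seg{X}$ forces $\tfrac12\{\EE X\}=\tfrac12\{0\}$, i.e. $\EE X=0$, and the converse is immediate. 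Every step is an equality, so there is no genuine obstacle; the only points requiring care are invoking integrability to justify finiteness and the exchange of $\EE$ with the linear functional $\langle u,\cdot\rangle$, and confirming the central symmetry of $\EE\seg{X}$ so that the decomposition matches \cref{def:nigiro}.
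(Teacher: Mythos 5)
Your proof is correct and follows exactly the paper's argument: the scalar identity $\max\{0,t\}=\tfrac12(|t|+t)$ applied to $t=\langle u,X\rangle$, combined with the support function expressions in \eqref{eq:suppofexpseg} and $h_{\{c\}}=\langle\cdot,c\rangle$. You simply spell out the details (finiteness from integrability, central symmetry of $\EE\seg{X}$, uniqueness of the decomposition in \cref{def:nigiro}) that the paper leaves implicit.
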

\begin{proof}
  It is enough to see that for every $t\in \R$ we have $\max\{0,t\}=\tfrac{1}{2}\left(|t|+t\right)$.Then use the expressions in~\cref{eq:suppofexpseg} and the fact that $h_{\{c\}}=\langle\cdot,c\rangle$.
\end{proof}
These constructions behave well under linear mappings.

\begin{lemma}\label{lem:lineartransfofVitale}
  Let $X\randin V$ be integrable, let $W$ be a finite dimensional Euclidean space and let $T:V\to W$ be a linear map. Then $T(X)\randin W$ is integrable and we have
  \begin{align}
      \EE[0,T(X)]=T\EE[0,X] && \EE\seg{T(X)}=T\EE\seg{X}
  \end{align}
\end{lemma}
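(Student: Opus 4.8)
The plan is to reduce both identities to an equality of support functions, exploiting the explicit formulas in \eqref{eq:suppofexpseg} together with the behavior of support functions under linear maps recorded in \cref{prop:suppfctprop}. A convex body is determined by its support function, so it will suffice to check that the support functions of the two sides agree as functions on $W^*$.

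First I would verify that $T(X)$ is integrable, which is needed only so that the zonoids $\EE[0,T(X)]$ and $\EE\seg{T(X)}$ are defined in the first place. Since $V$ and $W$ are finite dimensional, $T$ is bounded, say $\|Tv\|\le \|T\|_{\mathrm{op}}\|v\|$, whence $\EE\|T(X)\|\le \|T\|_{\mathrm{op}}\,\EE\|X\|<\infty$.

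Next, fix $u\in W^*$ and let $T^t\colon W^*\to V^*$ denote the transpose, so that $\langle u, T(X)\rangle=\langle T^t u, X\rangle$. Applying the first formula of \eqref{eq:suppofexpseg} to the random vector $T(X)\randin W$ gives
\[
h_{\EE[0,T(X)]}(u)=\EE\max\{0,\langle u,T(X)\rangle\}=\EE\max\{0,\langle T^t u,X\rangle\}=h_{\EE[0,X]}(T^t u).
\]
On the other hand, the second part of \cref{prop:suppfctprop} yields $h_{T\EE[0,X]}(u)=h_{\EE[0,X]}(T^t u)$. Comparing the two expressions shows $h_{\EE[0,T(X)]}=h_{T\EE[0,X]}$ on all of $W^*$, hence $\EE[0,T(X)]=T\EE[0,X]$. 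The identity for $\seg{X}$ follows from the identical computation, replacing $\max\{0,\cdot\}$ by $\tfrac12|\cdot|$ throughout and using the second formula of \eqref{eq:suppofexpseg}.

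There is essentially no serious obstacle here: the argument is a verification via support functions. The only point that deserves care is the adjoint bookkeeping, namely the identity $\langle u,T(X)\rangle=\langle T^t u,X\rangle$ and the consistent appearance of $T^t$, so that the linear-map rule for support functions matches exactly the substitution performed inside the expectation. I note in passing that one need not separately argue that $T$ sends zonoids to zonoids (it does, as it maps segments to segments and is continuous for the Hausdorff distance), since equality of support functions on all of $W^*$ already identifies the two convex bodies.
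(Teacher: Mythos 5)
Your proposal is correct and follows essentially the same route as the paper: compare support functions via \eqref{eq:suppofexpseg} and the linear-map rule of \cref{prop:suppfctprop}, concluding from the fact that a convex body is determined by its support function. The only addition is your explicit check that $T(X)$ is integrable, which the paper leaves implicit but which is the right thing to note.
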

\begin{proof}
By~\cref{eq:suppofexpseg} we have $ h_{\EE[0,T(X)]}(u)=\EE\max\{0,\langle u, T(X)\rangle\}=h_{\EE[0,X]}(T^t(u))$. By \cref{prop:suppfctprop}-\ref{itm:linh} this is the support function of $T\EE[0,X].$ The other case is done similarly.
\end{proof}

\begin{example}
  Let $x_1,\ldots,x_N\in\R^m$ and let $X\randin \R^m$ be the random vector that is equal to $Nx_i$ with probability $1/N$ for $i=1,\ldots,N$. Then computing the expression in~\cref{eq:suppofexpseg}, we find,
  \begin{align}
    \EE[0,X]=\sum_{i=1}^N[0,x_i]; && \EE\seg{X}=\sum_{i=1}^N\seg{x_i}.
  \end{align}
\end{example}

\begin{example}\label{eg:gausszonisball}
  Let $\xi\randin\R^m$ be a standard Gaussian vector and let $B^m:=B(\R^m)$. Then we have
  \begin{equation}
    \EE\seg{\xi}=\frac{1}{\sqrt{2\pi}}B^m.
  \end{equation}
  Indeed, since $\xi$ is $O(m)$--invariant, by \cref{lem:lineartransfofVitale}, $\EE\seg{\xi}$ must also be $O(m)$--invariant and thus is a ball. To compute its radius, it is enough to compute the support function at $e_1$, the first vector of the standard basis of $\R^m$. Since $\langle \xi, e_1\rangle\randin\R$ is a standard Gaussian variable, we obtain
  \begin{equation}
    h_{\EE\seg{\xi}}(e_1)=\frac{1}{2}\EE|\langle \xi, e_1\rangle|=\frac{1}{2}\sqrt{\frac{2}{\pi}}=\frac{1}{\sqrt{2\pi}}.
  \end{equation}
\end{example}

Vitale in \cite[Theorem~3.1]{Vitale} shows that every zonoid can be obtained via the above construction, i.e. for every $K\in\ZZ(V)$ there is an integrable $X\randin V$ and a vector $e\in V$ such that $K=\EE\seg{X}+\tfrac{1}{2}\{e\}$. However, the integrable random vector $X$ defining the zonoid $K:=\EE\seg{X}$ is not unique. This defines an equivalence relation on the integrable random vectors of a vector space known as the \emph{zonoid equivalence}, see \cite{MSSzoneq}. The following is \cite[Corollary~3]{MSSzoneq}.
\begin{proposition}\label{prop:zoneq}
  Let $X,Y\randin V$ be integrable. Then $\EE\seg{X}=\EE\seg{Y}$ if and only if for every one-homogeneous even measurable function $f: V\to \R_+$, we have:
  \begin{equation}
    \EE \left[f(X)\right]=\EE \left[f(Y)\right].
  \end{equation}
\end{proposition}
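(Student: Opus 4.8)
The plan is to route the proof through the \emph{generating measures} of the two zonoids on the sphere $S(V)$ and reduce everything to the injectivity of the cosine transform. Recall from \eqref{eq:suppofexpseg} that $h_{\EE\seg{X}}(u)=\tfrac12\EE|\langle u,X\rangle|$, so that the equality $\EE\seg{X}=\EE\seg{Y}$ of convex bodies is equivalent, via the Hausdorff distance formula, to $\EE|\langle u,X\rangle|=\EE|\langle u,Y\rangle|$ for all $u\in V^*$. One implication is then immediate: the function $f(x)=|\langle u,x\rangle|$ is even, positively one-homogeneous and nonnegative, so if $\EE[f(X)]=\EE[f(Y)]$ holds for \emph{every} such $f$, it holds in particular for these, and $\EE\seg{X}=\EE\seg{Y}$ follows.

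For the converse the key step is to encode each integrable $X$ by a finite even measure on $S(V)$. I would let $T\colon V\setminus\{0\}\to S(V)$ be $T(x)=x/\|x\|$ and define $\sigma_X$ as the symmetrization of the pushforward $T_*\big(\|x\|\,d[X](x)\big)$; this is finite of total mass $\EE\|X\|<\infty$ by integrability, and the atom at the origin is annihilated by the weight $\|x\|$. Since a positively one-homogeneous even measurable $f\colon V\to\R_+$ satisfies $f(0)=0$, $f(x)=\|x\|f(x/\|x\|)$ and $f(-v)=f(v)$, passing to polar coordinates gives
\[
\EE[f(X)]=\int_{V\setminus\{0\}}\|x\|\,f\big(x/\|x\|\big)\,d[X](x)=\int_{S(V)}f\,d\sigma_X .
\]
Specializing to $f(v)=|\langle u,v\rangle|$ identifies the function $u\mapsto\int_{S(V)}|\langle u,v\rangle|\,d\sigma_X(v)$ with $2\,h_{\EE\seg{X}}(u)$; that is, $2\,h_{\EE\seg{X}}$ is precisely the cosine transform of $\sigma_X$.

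Consequently, the hypothesis $\EE\seg{X}=\EE\seg{Y}$ says exactly that $\sigma_X$ and $\sigma_Y$ have the same cosine transform. At this point I would invoke the classical fact that the cosine transform is injective on finite even Borel measures of the sphere, equivalently that a centered zonoid determines its generating measure uniquely (see \cite{bible}), to conclude $\sigma_X=\sigma_Y$ as measures. Once the two measures coincide, the displayed identity yields, for every positively one-homogeneous even measurable $f$,
\[
\EE[f(X)]=\int_{S(V)}f\,d\sigma_X=\int_{S(V)}f\,d\sigma_Y=\EE[f(Y)],
\]
which is the desired conclusion.

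The only genuinely nontrivial ingredient, and hence the main obstacle, is the injectivity of the cosine transform on even measures; everything else is bookkeeping with the generating measure and the change to polar coordinates. This injectivity is standard -- it follows, for instance, from the spherical-harmonic expansion of the cosine transform, whose eigenvalues on the even harmonics are all nonzero -- but it is the one place where the argument is not purely formal.
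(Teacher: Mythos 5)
Your proof is correct. Note that the paper does not actually prove this statement: it is quoted verbatim as \cite[Corollary~3]{MSSzoneq}, so there is no internal argument to compare against. Your route --- encode $X$ by the even measure $\sigma_X$ obtained by symmetrizing the pushforward of $\|x\|\,d[X](x)$ to $S(V)$, observe that $2h_{\EE\seg{X}}$ is the cosine transform of $\sigma_X$, and invoke injectivity of the cosine transform on finite even measures --- is the standard proof of this equivalence, and every ingredient is one the paper itself already uses: the uniqueness of the generating measure is exactly \cite[Theorem~3.5.3]{bible} as quoted around \eqref{eq:costrans}, and your polar-coordinate identity $\EE[f(X)]=\int_{S(V)}f\,d\sigma_X$ is the same computation as \cref{prop:fromrvtomeas} and \eqref{eq:intwrtgenmeas}. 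One point worth making explicit: the paper \emph{derives} \cref{prop:fromrvtomeas} from \cref{prop:zoneq}, whereas you build $\sigma_X$ directly from $X$ and get well-definedness from the classical uniqueness theorem rather than from the proposition being proved, so there is no circularity. The only loose thread is a measurability technicality (for a merely Lebesgue-measurable $f$ one should integrate against the completed measures on both sides), which does not affect the substance; the easy direction, via $f(x)=|\langle u,x\rangle|$ and \eqref{eq:Hausdist}, is exactly right.
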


This shows that the following is well defined.
\begin{definition}\label{def:zlength}
  Let $X\in V$ be an integrable random vector and let $K:=\EE\seg{X}$. Then the \emph{length} of $K$ is defined to be
  \begin{equation}
    \ell(K):=\EE\|X\|.
  \end{equation}
\end{definition}
This functional is actually something very well known, see \cite[Theorem 5.2]{ZA}.
\begin{lemma}
  The length of a zonoid is equal to its first intrinsic volume (see \cref{eq:defvint} below).
\end{lemma}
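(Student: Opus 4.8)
The plan is to reduce everything to support functions and a single spherical integral. Recall from \cref{def:zlength} that $\ell(K)=\EE\|X\|$ for any integrable $X\randin V$ with $K=\EE\seg{X}$, this value being independent of the chosen representative $X$ by \cref{prop:zoneq} applied to the one-homogeneous even function $f=\|\cdot\|$. First I would recall the classical expression of the first intrinsic volume as a constant multiple of the mean width: for every convex body $K\subset V$ with $\dim V=m$,
\begin{equation}
\Vint_1(K)=\frac{1}{\uball_{m-1}}\int_{S(V)}h_K(u)\,d\sigma(u),
\end{equation}
where $d\sigma$ is the (unnormalized) surface measure on $S(V)$. This is a standard consequence of Steiner's formula together with Kubota's projection formula (see \cite{bible}); the normalizing constant is pinned down by testing on a unit segment, for which both sides equal $1$.

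Next I would insert the support function of $\EE\seg{X}$ computed in \cref{eq:suppofexpseg}, namely $h_{\EE\seg{X}}(u)=\tfrac12\EE|\langle u,X\rangle|$, obtaining
\begin{equation}
\Vint_1(\EE\seg{X})=\frac{1}{2\,\uball_{m-1}}\int_{S(V)}\EE|\langle u,X\rangle|\,d\sigma(u).
\end{equation}
Since $X$ is integrable, Fubini's theorem allows exchanging the expectation with the spherical integral. The only genuine computation is the elementary identity
\begin{equation}
\int_{S(V)}|\langle u,v\rangle|\,d\sigma(u)=2\,\uball_{m-1}\,\|v\|,\qquad v\in V,
\end{equation}
which follows by rotational invariance: reducing to $v=\|v\|e_1$ and evaluating $\int_{S(V)}|u_1|\,d\sigma(u)=2\uball_{m-1}$ via the slicing $\int_{-1}^{1}|t|(1-t^2)^{(m-3)/2}\,dt$. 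Applying this pointwise with $v=X(\omega)$ and taking expectation yields $\Vint_1(\EE\seg{X})=\EE\|X\|=\ell(\EE\seg{X})$, which is the claim.

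The step I expect to require the most care is not the computation but fixing the exact normalization constant relating $\Vint_1$ to the mean width, i.e. verifying the first displayed formula with the correct constant; once that is settled the rest is a one-line Fubini argument. As an alternative, purely structural route, one could instead observe that both $\ell$ and $\Vint_1$ are rotation-invariant, continuous, and Minkowski additive on zonoids — additivity of $\ell$ being checked by representing $K_1+K_2$ as $\EE\seg{W}$, where $W$ equals $2X_1$ or $2X_2$ with equal probability and $X_i$ are independent representatives — so that they agree on segments (where each equals $\|x\|$ on $\seg{x}$), hence on all zonotopes by additivity, hence on all zonoids by density and continuity. This avoids computing the constant, but requires establishing continuity of $\ell$, which in the end reduces to the same support-function integral.
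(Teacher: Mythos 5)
Your argument is correct. Note, however, that the paper does not actually prove this lemma: it simply cites \cite[Theorem~5.2]{ZA}, where the identity $\ell(K)=\Vint_1(K)$ is established (there as the case $k=1$ of $\ell(K^{\wedge k})=k!\,\Vint_k(K)$, cf.\ \cref{prop:kintvolpow}). What you supply is the standard self-contained proof of the classical fact: the mean-width formula $\Vint_1(K)=\uball_{m-1}^{-1}\int_{S(V)}h_K\,d\sigma$ with the correct constant (your check on a unit segment pins it down, and your evaluation $\int_{S(V)}|\langle u,v\rangle|\,d\sigma(u)=2\uball_{m-1}\|v\|$ is right), followed by Fubini — justified since $|\langle u,X\rangle|\le\|X\|$ with $\EE\|X\|<\infty$ — applied to $h_{\EE\seg{X}}(u)=\tfrac12\EE|\langle u,X\rangle|$. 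Two small remarks: the statement concerns arbitrary zonoids, whereas your computation treats the centered ones $K=\EE\seg{X}$; this is harmless because $\Vint_1$ is translation invariant and the paper extends $\ell$ to non-centered zonoids by $\ell(\EE[0,X])=\EE\|X\|$ via \cref{lem:centnotcent}, but it is worth saying. Your alternative structural route (Minkowski additivity plus continuity plus agreement on segments, with the Bernoulli-type representation of $K_1+K_2$) is also sound and is closer in spirit to how \cite{ZA} organizes such identities, though, as you observe, the continuity of $\ell$ ultimately rests on the same cosine-transform integral.
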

Despite this result, we will continue to use the name \emph{length} and the notation $\ell$ to emphasize that we are thinking of \cref{def:zlength}. Since the first intrinsic volume is Minkowski linear and vanishes on zero dimensional bodies we also have, by \cref{lem:centnotcent},
\begin{equation}
  \ell(\EE[0,X])=\EE\|X\|.
\end{equation}

Finally, there is a simple trick to express the Minkowski sum of two zonoids in terms of random vectors. The proof is straightforward and thus omitted.

\begin{lemma}[Bernoulli trick] Let $X_0,X_1\randin\R^m$ be integrable and let $\epsilon\randin\{0,1\}$ be a Bernoulli random variable of parameter $t\in[0,1]$ independent of $X_0$ and $X_1$, that is $\epsilon=0$ with probability $t$ and $\epsilon=1$ with probability $1-t$. Let $X_t:=\epsilon X_0 + (1-\epsilon)X_1$. Then we have 
\begin{align}
    \EE[0,X_t]=(1-t)\EE[0,X_0]+t\EE[0,X_1]; && \EE\seg{X_t}=(1-t)\EE\seg{X_0}+t\EE\seg{X_1}.
\end{align}
  
\end{lemma}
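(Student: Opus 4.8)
The plan is to verify both identities at the level of support functions. Recall that a convex body is uniquely determined by its support function (if two support functions agree, the Hausdorff distance \eqref{eq:Hausdist} between the bodies vanishes), and that the formulas \eqref{eq:suppofexpseg} give the support functions of the expected zonoids $\EE[0,X]$ and $\EE\seg{X}$ explicitly as expectations of one-dimensional quantities. Since both sides of each claimed equality are convex bodies, it therefore suffices to check that their support functions coincide at every $u\in V^*$. First one should note that the expected zonoids on the left are well defined, i.e. that $X_t$ is integrable: because $\epsilon\in\{0,1\}$ we have $\|X_t\|\le\|X_0\|+\|X_1\|$ pointwise, so integrability of $X_0$ and $X_1$ gives that of $X_t$.

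The key elementary observation is that, since $\epsilon$ takes values in $\{0,1\}$, the vector $X_t$ is really a \emph{selector}: on the event $\{\epsilon=0\}$, of probability $t$, one has $X_t=X_1$, while on $\{\epsilon=1\}$, of probability $1-t$, one has $X_t=X_0$. Fixing $u\in V^*$ and applying \eqref{eq:suppofexpseg}, I would condition on $\epsilon$ and use its independence from $(X_0,X_1)$ to split the expectation:
\be
h_{\EE[0,X_t]}(u)=\EE\max\{0,\langle u,X_t\rangle\}=(1-t)\,\EE\max\{0,\langle u,X_0\rangle\}+t\,\EE\max\{0,\langle u,X_1\rangle\}.
\ee
By \eqref{eq:suppofexpseg} once more, the right-hand side equals $(1-t)\,h_{\EE[0,X_0]}(u)+t\,h_{\EE[0,X_1]}(u)$, which by \cref{prop:suppfctprop} is precisely the support function of $(1-t)\EE[0,X_0]+t\EE[0,X_1]$. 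As $u$ was arbitrary, the first identity follows.

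The second identity is obtained by the identical argument, replacing the function $s\mapsto\max\{0,s\}$ by $s\mapsto\tfrac12|s|$ throughout and invoking the corresponding formula for $h_{\EE\seg{X}}$ in \eqref{eq:suppofexpseg}. I do not expect any genuine obstacle here, which is why the paper omits the proof: the entire content is the selector observation combined with independence. The only point requiring care is the bookkeeping between the parameter $t$ and the value of $\epsilon$, so that the weights $1-t$ and $t$ are attached to the correct terms (note that $X_t=X_0$ precisely when $\epsilon=1$, which occurs with probability $1-t$).
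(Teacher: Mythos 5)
Your proof is correct, and since the paper explicitly omits the proof as straightforward, your argument — passing to support functions via \eqref{eq:suppofexpseg}, conditioning on $\epsilon$ and using independence, then invoking \cref{prop:suppfctprop} — is exactly the intended one. The bookkeeping ($X_t=X_0$ on $\{\epsilon=1\}$, probability $1-t$) and the integrability check are both handled correctly.
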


\subsection{Zonoids and measures: the classical viewpoint}\label{sec:zonandmeas}
It is most common to approach centered zonoids with even measures on the sphere. We recall here this point of view and describe how this approach relates to Vitale's construction. The space of even signed measures on the unit sphere $S(V)$ is denoted by $\Mev(S(V))$ and the cone of non negative even measures by $\Mev^+(S(V))$.

It is a classical result (see~\cite[Theorem~3.5.3]{bible}) that for every centered zonoid $K\in\ZZo(V)$ there is a unique $\mu_K\in\Mev^+(S(V))$ such that
 \begin{equation}\label{eq:costrans}
   h_K(u)=\frac{1}{2}\int_{S(V)}|\langle u, x \rangle| \ \dd\mu_K(x).
 \end{equation}
The function $h_K$ is also called the \emph{cosine transform} of $\mu_K$. We also denote by $\mu_K$ the measure on $S(V^*)$ defined by \eqref{eq:costrans} with the scalar product replaced by the duality pairing.
If a centered zonoid is given by a random vector, it is possible to retrieve the corresponding measure on the sphere.
\begin{proposition}\label{prop:fromrvtomeas}
  Let $X\randin V$ be integrable and let $K:=\EE\seg{X}$. Then $\mu_K$ is the measure such that for every continuous function $f:S(V)\to\R$ we have
  \begin{equation}\label{eq:forMUla}
    \int_{S(V)}f \dd\mu_K:=\EE\left\{\|X\|f\left(\frac{X}{\|X\|}\right)\one_{X\neq 0}\right\}
  \end{equation}
\end{proposition}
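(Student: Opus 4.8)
The plan is to establish the formula by combining Vitale's characterization of the measure $\mu_K$ via the cosine transform with the expression for the support function of $\EE\seg{X}$ given in \eqref{eq:suppofexpseg}. The strategy is to show that the candidate measure defined by the right-hand side of \eqref{eq:forMUla} produces the correct cosine transform, and then invoke the uniqueness statement from \cite[Theorem~3.5.3]{bible} (recalled just above) to conclude that it must coincide with $\mu_K$.

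First I would define a measure $\nu$ on $S(V)$ by setting $\int_{S(V)} f\,\dd\nu := \EE\{\|X\| f(X/\|X\|)\one_{X\neq 0}\}$ for every continuous $f\colon S(V)\to\R$, and check that this is a well-defined finite positive measure: positivity is clear when $f\ge 0$, finiteness follows from integrability of $X$ (taking $f\equiv 1$ gives total mass $\EE\|X\|<\infty$), and linearity and continuity in $f$ are immediate, so by Riesz representation $\nu\in\Me^+(S(V))$. I would then verify that $\nu$ is \emph{even}: since $\seg{X}$ is centrally symmetric, or more directly since one may replace $X$ by $-X$ in the defining expectation using that $\EE\seg{X}=\EE\seg{-X}$, the measure $\nu$ is invariant under the antipodal map.

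The main computation is to show that $\nu$ has the right cosine transform. I would take $u\in V^*$ and compute, for the even measure $\nu$,
\begin{equation}
\frac{1}{2}\int_{S(V)}|\langle u, x\rangle|\,\dd\nu(x)=\frac{1}{2}\,\EE\left\{\|X\|\left|\left\langle u,\frac{X}{\|X\|}\right\rangle\right|\one_{X\neq 0}\right\}=\frac{1}{2}\,\EE|\langle u, X\rangle|,
\end{equation}
where the factor $\|X\|$ cancels the normalization in the argument of $f(x)=|\langle u,x\rangle|$, and the event $\{X=0\}$ contributes nothing since $\langle u,X\rangle=0$ there. By the second identity in \eqref{eq:suppofexpseg}, the last expression equals $h_{\EE\seg{X}}(u)=h_K(u)$. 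Thus $\nu$ satisfies the defining relation \eqref{eq:costrans} for $\mu_K$.

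Finally, by the uniqueness in the correspondence between centered zonoids and even positive measures \cite[Theorem~3.5.3]{bible}, the measure realizing the cosine transform of $h_K$ is unique, so $\nu=\mu_K$, which is exactly \eqref{eq:forMUla}. I do not expect a serious obstacle here; the only point requiring mild care is the treatment of the indicator $\one_{X\neq 0}$ and the measurability of $x\mapsto |\langle u,x\rangle|$ so that the test-function computation extends from continuous $f$ to the particular (continuous) integrand $|\langle u,\cdot\rangle|$, but this is routine and the cancellation of $\|X\|$ makes the identification transparent.
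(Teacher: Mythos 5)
Your proof is correct and follows essentially the same route as the paper's: both identify the candidate measure by checking that its cosine transform, obtained by applying the defining formula to $f=|\langle u,\cdot\rangle|$, reproduces $h_K$ via \eqref{eq:suppofexpseg}, and then appeal to the uniqueness of the representing even measure from \cite[Theorem~3.5.3]{bible} (the paper additionally cites \cref{prop:zoneq} to see that the right-hand side depends only on $K$, a fact you instead recover as a consequence of uniqueness). The one caveat is that the measure $\nu$ you define is \emph{not} even for general $X$ (take $X$ deterministic and nonzero), so strictly one should replace $\nu$ by its even part or restrict to even $f$ --- but this imprecision is already present in the statement itself and does not affect the substance of the argument.
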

\begin{proof}
  The function $x\mapsto\|x\|f\left(\frac{x}{\|x\|}\right)\one_{x\neq 0}$ is a one homogeneous continuous function on $V$. Thus by \cref{prop:zoneq} the term on the right only depends on $K$.
  To see that it satisfies~\cref{eq:costrans} apply it to $f=|\langle u, \cdot\rangle|$ for any $u\in V^*$.
\end{proof}
In particular, note that we have $\mu_K(S(V))=\ell(K)$. More generally, if $f:V\to\R_+$ is measurable and one homogeneous, we get
\begin{equation}\label{eq:intwrtgenmeas}
    \EE f(X)=\int_{S(V)}f\, d\mu_K
\end{equation}
where $X\randin V$ is integrable and $K:=\EE\seg{X}$.

\subsection{Zonoid calculus}\label{sub:zonoidcalc} In the recent paper \cite{ZA} the first author together with P. Breiding P. Bürgisser and A. Lerario proved that multilinear maps between vector spaces give rise to multilinear maps on the corresponding spaces of centered zonoids. The following is \cite[Theorem~4.1]{ZA}.

\begin{proposition}\label{thm:FTZC}
  Let $M:V_1\times\cdots\times V_k\to W$ be a multilinear map between finite dimensional vector spaces. There is a unique Minkowski multilinear continuous map
  \begin{equation}
    \widehat{M}:\ZZo(V_1)\times\cdots\times\ZZo(V_k)\to \ZZo(V)
  \end{equation}
  such that for all $v_1\in V_1,\ldots,v_k\in V_k$ we have
  \begin{equation}
    \widehat{M}\left(\seg{v_1},\ldots,\seg{v_k}\right)=\seg{M(v_1,\ldots,v_k)}.
  \end{equation}
\end{proposition}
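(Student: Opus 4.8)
The plan is to realise $\widehat{M}$ through Vitale's construction and then push the multilinearity of $M$ through the expectation. First I would invoke Vitale's theorem (\cite[Theorem~3.1]{Vitale}) to pick, for each $K_i\in\ZZo(V_i)$, an integrable random vector $X_i\randin V_i$ with $\EE\seg{X_i}=K_i$, realised so that $X_1,\dots,X_k$ are \emph{independent} (put them on a product probability space; this changes neither the marginal laws nor the $K_i$). Since a multilinear map between finite dimensional spaces is bounded, $\|M(x_1,\dots,x_k)\|\le\|M\|\,\|x_1\|\cdots\|x_k\|$, so by independence $\EE\|M(X_1,\dots,X_k)\|\le\|M\|\prod_i\EE\|X_i\|<\infty$ and $M(X_1,\dots,X_k)$ is integrable. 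I then set
\begin{equation}
\widehat{M}(K_1,\dots,K_k):=\EE\seg{M(X_1,\dots,X_k)}\in\ZZo(W).
\end{equation}
Taking deterministic $X_i=v_i$ gives $\widehat{M}(\seg{v_1},\dots,\seg{v_k})=\EE\seg{M(v_1,\dots,v_k)}=\seg{M(v_1,\dots,v_k)}$, which is the required normalisation.

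The crucial point is that the right-hand side depends only on the zonoids $K_i$ and not on the chosen representatives. By \eqref{eq:suppofexpseg} one has $h_{\widehat{M}(K_1,\dots,K_k)}(u)=\tfrac12\EE|\langle u,M(X_1,\dots,X_k)\rangle|$. Fixing a slot $i$ and writing $M_{x_{-i}}\colon V_i\to W$ for the partial linear map $y\mapsto M(x_1,\dots,y,\dots,x_k)$, the independence of $X_i$ from $X_{-i}:=(X_j)_{j\ne i}$ together with Fubini yields the conditional formula
\begin{equation}\label{eq:condsupp}
h_{\widehat{M}(K_1,\dots,K_k)}(u)=\EE_{X_{-i}}\Big[\,h_{K_i}\big(M_{X_{-i}}^t u\big)\,\Big],
\end{equation}
since for fixed $x_{-i}$ we have $\tfrac12\EE_{X_i}|\langle M_{x_{-i}}^t u,\,X_i\rangle|=h_{\EE\seg{X_i}}(M_{x_{-i}}^t u)=h_{K_i}(M_{x_{-i}}^t u)$. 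For well-definedness I then argue one variable at a time: if $\EE\seg{X_i}=\EE\seg{X_i'}$ for all $i$, then for any one-homogeneous even measurable $f\colon W\to\R_+$ and any fixed $x_{-1}$, the function $v\mapsto f(M(v,x_{-1}))$ is again nonnegative, one-homogeneous, even and measurable on $V_1$, so \cref{prop:zoneq} gives $\EE f(M(X_1,x_{-1}))=\EE f(M(X_1',x_{-1}))$; integrating over $x_{-1}$ and iterating over the slots yields $\EE f(M(X_1,\dots,X_k))=\EE f(M(X_1',\dots,X_k'))$ for all such $f$, whence $\widehat{M}(K_1,\dots,K_k)$ is independent of the representatives, again by \cref{prop:zoneq}.

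Minkowski multilinearity and continuity both fall out of \eqref{eq:condsupp}. Positive homogeneity in slot $i$ is immediate from $\seg{\lambda w}=\lambda\seg{w}$ and \cref{lem:lineartransfofVitale}, while additivity $\widehat{M}(\dots,K_i+K_i',\dots)=\widehat{M}(\dots,K_i,\dots)+\widehat{M}(\dots,K_i',\dots)$ follows by summing support functions in \eqref{eq:condsupp}, using $h_{K_i+K_i'}=h_{K_i}+h_{K_i'}$ (\cref{prop:suppfctprop}). For continuity I would estimate, in the $i$-th slot,
\begin{equation}
\big|h_{\widehat{M}(\dots,K_i,\dots)}(u)-h_{\widehat{M}(\dots,K_i',\dots)}(u)\big|\le\EE\big|h_{K_i}(M_{X_{-i}}^t u)-h_{K_i'}(M_{X_{-i}}^t u)\big|\le \dd(K_i,K_i')\,\EE\|M_{X_{-i}}^t u\|,
\end{equation}
using $|h_A(w)-h_B(w)|\le\|w\|\,\dd(A,B)$. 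Since $\|M_{x_{-i}}^t\|=\|M_{x_{-i}}\|\le\|M\|\prod_{j\ne i}\|x_j\|$, we get $\EE\|M_{X_{-i}}^t\|\le\|M\|\prod_{j\ne i}\EE\|X_j\|=\|M\|\prod_{j\ne i}\ell(K_j)$ by \cref{def:zlength}, hence the Lipschitz bound $\dd\big(\widehat{M}(\dots,K_i,\dots),\widehat{M}(\dots,K_i',\dots)\big)\le\|M\|\big(\prod_{j\ne i}\ell(K_j)\big)\dd(K_i,K_i')$. Telescoping over the $k$ slots (the lengths are finite and, being the first intrinsic volume, stay bounded along Hausdorff-convergent sequences) gives joint continuity of $\widehat{M}$.

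Finally, uniqueness: by definition every centered zonoid is a Hausdorff limit of centered zonotopes $\sum_j\seg{x_j}$, and any Minkowski-multilinear map is determined on such sums by its values on segments; so two continuous Minkowski-multilinear maps with the same normalisation on segments agree on all centered zonotopes, hence on all of $\ZZo(V_1)\times\cdots\times\ZZo(V_k)$ by continuity. I expect the main obstacle to be the well-definedness step: the quantity $\EE\seg{M(X_1,\dots,X_k)}$ must be shown to be a zonoid invariant of the $K_i$ despite the genuinely nonlinear dependence of $\seg{\cdot}$ on its random vector, and this is precisely what the variable-by-variable application of the zonoid-equivalence criterion \cref{prop:zoneq}, enabled by the independence of the representatives, resolves.
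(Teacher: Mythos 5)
Your construction is correct, but note that the paper itself gives no proof of this proposition: it is imported verbatim as \cite[Theorem~4.1]{ZA}, and your argument is essentially a self-contained reconstruction of the proof given there (representation of each $K_i$ by independent integrable random vectors via Vitale, definition of $\widehat{M}(K_1,\ldots,K_k)$ as $\EE\seg{M(X_1,\ldots,X_k)}$, and well-definedness via the slot-by-slot application of the zonoid-equivalence criterion of \cref{prop:zoneq}). The Lipschitz estimate with constant $\|M\|\prod_{j\neq i}\ell(K_j)$ in each slot and the density-of-zonotopes uniqueness argument are both sound, so nothing is missing.
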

We extend the map $\widehat{M}$ to general zonoids by setting for all $K_1\in\ZZo(V_1)$, $\ldots$, $K_k\in\ZZo(V_k)$ and every $c_1\in V_1,\cdots, c_k\in V_k$:
\begin{equation}\label{eq:Mnotcent}
    \widehat{M}\left(K_1+\tfrac{1}{2}\{c_1\},\ldots, K_k+\tfrac{1}{2}\{c_k\}\right):=\widehat{M}\left(K_1,\ldots, K_k\right)+\frac{1}{2}\left\{M(c_1,\ldots,c_k)\right\}.
  \end{equation}

One can check that this map is still Minkowski multilinear. Moreover, it behaves well under the Vitale construction.

\begin{proposition}\label{prop:multandVit}
  Let $M:V_1\times\cdots\times V_k\to W$ be a multilinear map between finite dimensional vector spaces and let $X_1\randin V_1,\ldots, X_k\randin V_k$ be integrable and independents. We have
  \begin{align}
    \widehat{M}\left(\EE\seg{X_1},\ldots,\EE\seg{X_k}\right)&=\seg{\EE M(X_1,\ldots,X_k)};& \widehat{M}\left(\EE[0,X_1],\ldots,\EE[0,X_k]\right)&=\EE[0,M(X_1,\ldots,X_k)].
  \end{align}
\end{proposition}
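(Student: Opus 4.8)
The plan is to prove the centered identity $\widehat{M}(\EE\seg{X_1},\dots,\EE\seg{X_k})=\EE\seg{M(X_1,\dots,X_k)}$ first, reducing it to discrete random vectors where only the universal property of $\widehat{M}$ is needed, and then to obtain the non-centered identity as a formal consequence. Before anything, I would record that the right-hand sides are well defined: writing $\|M\|$ for the operator norm of the multilinear map, one has $\|M(a_1,\dots,a_k)\|\le\|M\|\prod_i\|a_i\|$, so by independence $\EE\|M(X_1,\dots,X_k)\|\le\|M\|\prod_i\EE\|X_i\|<\infty$ and $M(X_1,\dots,X_k)$ is integrable.

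First I would treat the case where each $X_i$ is discrete, say $X_i=x_{i,j}$ with probability $p_{i,j}$. From \cref{eq:suppofexpseg} and \cref{prop:suppfctprop} one gets $\EE\seg{X_i}=\sum_j p_{i,j}\seg{x_{i,j}}$, a Minkowski combination of centred segments. Expanding by the Minkowski multilinearity of $\widehat{M}$ and using the defining relation $\widehat{M}(\seg{v_1},\dots,\seg{v_k})=\seg{M(v_1,\dots,v_k)}$ of \cref{thm:FTZC} gives
\[
\widehat{M}(\EE\seg{X_1},\dots,\EE\seg{X_k})=\sum_{j_1,\dots,j_k}\Big(\prod_i p_{i,j_i}\Big)\seg{M(x_{1,j_1},\dots,x_{k,j_k})}.
\]
By independence, $M(X_1,\dots,X_k)$ takes the value $M(x_{1,j_1},\dots,x_{k,j_k})$ with probability $\prod_i p_{i,j_i}$, so the right-hand side is exactly $\EE\seg{M(X_1,\dots,X_k)}$, settling the discrete case.

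Next I would pass to the limit. For each $i$ choose measurable finite-range maps $\phi_N\colon V_i\to V_i$ with $\EE\|\phi_N(X_i)-X_i\|\to 0$ (simple-function approximation of integrable vectors) and set $X_i^{(N)}:=\phi_N(X_i)$; then $\sup_N\EE\|X_i^{(N)}\|<\infty$ automatically. Crucially, each $X_i^{(N)}$ depends only on $X_i$, so the $X_i^{(N)}$ stay independent and the discrete identity applies to them. The bound $\big|\,|\langle u,a\rangle|-|\langle u,b\rangle|\,\big|\le\|a-b\|$ for $\|u\|=1$ combined with \cref{eq:suppofexpseg} shows that $L^1$ convergence of random vectors forces Hausdorff convergence of their centred zonoids; hence $\EE\seg{X_i^{(N)}}\to\EE\seg{X_i}$, and continuity of $\widehat{M}$ (\cref{thm:FTZC}) handles the left-hand sides. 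The hard part will be the convergence of the right-hand sides, i.e. $\EE\seg{M(X_1^{(N)},\dots,X_k^{(N)})}\to\EE\seg{M(X_1,\dots,X_k)}$, which amounts to $L^1$ convergence of the multilinear images. I would prove this with the telescoping identity
\[
M(X_1^{(N)},\dots,X_k^{(N)})-M(X_1,\dots,X_k)=\sum_{i=1}^k M\big(X_1,\dots,X_{i-1},X_i^{(N)}-X_i,X_{i+1}^{(N)},\dots,X_k^{(N)}\big),
\]
taking norms and factoring each expectation by independence: the $i$-th term is at most $\|M\|\,\big(\prod_{j<i}\EE\|X_j\|\big)\,\EE\|X_i^{(N)}-X_i\|\,\big(\prod_{j>i}\EE\|X_j^{(N)}\|\big)\to 0$. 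This is exactly where preserving independence matters, and why I discretise via $\phi_N(X_i)$ rather than by drawing independent empirical samples; it is the main obstacle of the argument. Passing to the limit in the discrete identity then yields the centred identity in general.

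Finally I would deduce the non-centered identity. By \cref{lem:centnotcent}, $\EE[0,X_i]=\EE\seg{X_i}+\tfrac12\{\EE X_i\}$, so the extension rule \cref{eq:Mnotcent} gives
\[
\widehat{M}(\EE[0,X_1],\dots,\EE[0,X_k])=\widehat{M}(\EE\seg{X_1},\dots,\EE\seg{X_k})+\tfrac12\{M(\EE X_1,\dots,\EE X_k)\}.
\]
The first summand equals $\EE\seg{M(X_1,\dots,X_k)}$ by the centred case; expanding $M$ in coordinates and using independence gives $M(\EE X_1,\dots,\EE X_k)=\EE M(X_1,\dots,X_k)$, and a last application of \cref{lem:centnotcent} identifies the whole expression with $\EE[0,M(X_1,\dots,X_k)]$, completing the proof.
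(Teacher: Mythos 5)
Your proof is correct, but it takes a more self-contained route than the paper for the first identity. The paper's proof of the centered statement $\widehat{M}(\EE\seg{X_1},\ldots,\EE\seg{X_k})=\seg{\EE M(X_1,\ldots,X_k)}$ is a single citation to \cite[Corollary~4.3]{ZA}, whereas you reprove it from the universal property of $\widehat{M}$ by reducing to finite-range random vectors and passing to the limit. Your reduction is sound: discretising each $X_i$ through a map $\phi_N(X_i)$ preserves independence, the bound $|h_{\EE\seg{X}}(u)-h_{\EE\seg{Y}}(u)|\le\tfrac12\EE\|X-Y\|$ for $\|u\|=1$ converts $L^1$ convergence into Hausdorff convergence via \cref{eq:Hausdist}, and the telescoping estimate with independence-based factorisation correctly handles the convergence of $\EE\seg{M(X_1^{(N)},\ldots,X_k^{(N)})}$. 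What this buys is a proof readable without opening \cite{ZA}, at the cost of a page of analysis that the paper outsources. For the second identity your argument coincides with the paper's: both combine the centered case with \cref{lem:centnotcent} and the extension rule \cref{eq:Mnotcent}, together with the observation that independence gives $M(\EE X_1,\ldots,\EE X_k)=\EE M(X_1,\ldots,X_k)$.
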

\begin{proof}
  The first statement about centered zonoids is~\cite[Corollary~4.3]{ZA}. The second one follows from it, \cref{lem:centnotcent} and~\cref{eq:Mnotcent}.
\end{proof}

Consider the exterior powers $\Lambda^k V$, $0\leq k \leq m$, where we recall that $m=\dim V$. There is a collection of bilinear maps $wedge_{k,l}:\Lambda^k V\times \Lambda^l V\to \Lambda^{k+l} V$
given for all $w\in \Lambda^k V$, $w'\in\Lambda^l V$ by $wedge_{k,l}(w,w'):=w\wedge w'$.
We consider the bilinear map induced on zonoids and if $A\in\ZZ(\Lambda^k V),A'\in\ZZ(\Lambda^l V)$ we write
\begin{equation}
  A\wedge A':=\widehat{wedge_{k,l}}(A,A').
\end{equation}
We will call this operation the \emph{wedge product of zonoids}. Using \cref{prop:multandVit} we have for $X$ and $Y$ \emph{independent} integrable random vectors:
\begin{align}\label{eq:wedgeandVit}
  \EE\seg{X}\wedge\EE\seg{Y}&=\EE\seg{X\wedge Y};& \EE[0,X]\wedge \EE[0,Y]=\EE[0,X\wedge Y].
\end{align}
\begin{remark}
  Note that the wedge product on centered zonoids is commutative, this follows from~\cref{eq:wedgeandVit} and the fact that $\seg{x}=\seg{-x}$.
\end{remark}
Finally, in the notation introduced in \cref{def:nigiro}, and using \cref{eq:Mnotcent}, we get that for every zonoids $K\in\ZZ(\Lambda^kV),L\in\ZZ(\Lambda^lV),$ we have
\begin{equation}
    \seg{K\wedge L}=\seg{K}\wedge\seg{L}\in \ZZo(\Lambda^{k+l}V)\quad\text{ and }\quad e(K\wedge L)=e(K)\wedge e(L)\in\Lambda^{k+l}V .
\end{equation}

\subsection{Mixed volume and inequalities}\label{sec:MV}
A fundamental result by Minkowski \cite[Theorem~5.1.7]{bible} states that, given convex bodies $K_1,\ldots,K_m\subset \R^m$, the function $(t_1,\ldots,t_m)\mapsto \vol_m(t_1K_1+\cdots+t_mK_m)$ is a polynomial in $t_1,\ldots t_m\geq0$. The coefficient of $t_1\cdots t_m$ is called the \emph{mixed volume} of $K_1,\ldots, K_m$ and will be denoted here by $\MV(K_1,\ldots, K_m).$ It relates to the wedge product of zonoids as follows.
\begin{proposition}[{\cite[Theorem~5.1]{ZA}}]
\label{prop:MVandwedge}
  Let $K_1,\ldots,K_m\in\ZZ(\R^m)$. We have the following.
  \begin{equation}
    \frac{1}{m!}\ell(K_1\wedge\cdots\wedge K_m )=\MV(K_1,\ldots,K_m).
  \end{equation}
\end{proposition}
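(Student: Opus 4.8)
The plan is to exploit that \emph{both} sides of the claimed identity are symmetric, Minkowski-multilinear and continuous functionals of the tuple $(K_1,\ldots,K_m)$, and then to reduce to the case of segments. On the right, $\MV$ is Minkowski-multilinear and continuous in each argument by the classical theory of mixed volumes \cite{bible}. On the left, the wedge product of zonoids is Minkowski-multilinear and continuous by \cref{thm:FTZC}, while $\ell$ is Minkowski-linear and continuous, being the first intrinsic volume; hence $\ell(K_1\wedge\cdots\wedge K_m)$ is Minkowski-multilinear and continuous. Both functionals are translation invariant, so by the decomposition of \cref{def:nigiro} they depend only on the centered parts $\seg{K_i}$; using the identity $\seg{K_1\wedge\cdots\wedge K_m}=\seg{K_1}\wedge\cdots\wedge\seg{K_m}$ from \cref{sub:zonoidcalc} we may assume all $K_i$ are centered. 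Since by definition every centered zonoid is a Hausdorff limit of finite Minkowski sums of centered segments $\seg{v}$, multilinearity and continuity reduce the whole statement to the case $K_i=\seg{v_i}$ with $v_i\in\R^m$.

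For $K_i=\seg{v_i}$ I would compute each side explicitly. On the left, the defining property of the induced multilinear map (\cref{thm:FTZC}), applied to the wedge, gives $\seg{v_1}\wedge\cdots\wedge\seg{v_m}=\seg{v_1\wedge\cdots\wedge v_m}$, a centered segment in the one-dimensional space $\Lambda^m\R^m$. By \cref{def:zlength} its length is the norm of its generating vector, and under the inner product induced on $\Lambda^m\R^m$ one has $\|v_1\wedge\cdots\wedge v_m\|=|\det(v_1,\ldots,v_m)|$; hence $\ell(\seg{v_1}\wedge\cdots\wedge\seg{v_m})=|\det(v_1,\ldots,v_m)|$. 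On the right, $\MV$ is translation invariant, so $\MV(\seg{v_1},\ldots,\seg{v_m})=\MV([0,v_1],\ldots,[0,v_m])$, and since $\sum_i t_i[0,v_i]$ is the parallelepiped with edge vectors $t_iv_i$ one has $\vol_m(\sum_i t_i[0,v_i])=t_1\cdots t_m\,|\det(v_1,\ldots,v_m)|$ (the linearly dependent case is trivial, both determinant and volume vanishing). Reading off the mixed volume from this polynomial, with the symmetrizing factor built into the polarization of the volume polynomial, yields $\MV(\seg{v_1},\ldots,\seg{v_m})=\tfrac{1}{m!}|\det(v_1,\ldots,v_m)|$. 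Combining the two computations gives $\tfrac{1}{m!}\ell(\seg{v_1}\wedge\cdots\wedge\seg{v_m})=\tfrac{1}{m!}|\det(v_1,\ldots,v_m)|=\MV(\seg{v_1},\ldots,\seg{v_m})$, which is the desired identity on segments.

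Once \cref{thm:FTZC} and the Minkowski-linearity of the first intrinsic volume are granted, the conceptual steps are formal; the one point that must be handled with care is the \textbf{normalization}, and this I expect to be the main obstacle. The factor $\tfrac{1}{m!}$ does not come from the wedge side but entirely from the symmetrization implicit in the mixed volume: the coefficient of $t_1\cdots t_m$ in $\vol_m(\sum_i t_iK_i)$ is $m!$ times $\MV(K_1,\ldots,K_m)$, and tracking this bookkeeping correctly, together with fixing the inner product on $\Lambda^m\R^m$ so that $\|v_1\wedge\cdots\wedge v_m\|=|\det(v_1,\ldots,v_m)|$, is what makes the two $\tfrac{1}{m!}$'s match. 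As an alternative to the segment reduction one could argue probabilistically: writing $K_i=\EE\seg{X_i}$ for independent integrable $X_i$, iterating \cref{eq:wedgeandVit} gives $K_1\wedge\cdots\wedge K_m=\EE\seg{X_1\wedge\cdots\wedge X_m}$, whence $\ell(K_1\wedge\cdots\wedge K_m)=\EE|\det(X_1,\ldots,X_m)|$, so the statement becomes the classical formula $\MV(\EE\seg{X_1},\ldots,\EE\seg{X_m})=\tfrac{1}{m!}\EE|\det(X_1,\ldots,X_m)|$, which itself follows from the same segment computation by multilinearity.
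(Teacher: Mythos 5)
The paper does not actually prove this proposition: it is imported verbatim from \cite[Theorem~5.1]{ZA}, so there is no internal proof to compare against. Your argument is a correct, self-contained proof by the standard polarization strategy: both sides are symmetric, Minkowski-multilinear, continuous and translation-invariant, so via $\seg{K_1\wedge\cdots\wedge K_m}=\seg{K_1}\wedge\cdots\wedge\seg{K_m}$ and the density of zonotopes everything reduces to segments, where $\ell(\seg{v_1}\wedge\cdots\wedge\seg{v_m})=\|v_1\wedge\cdots\wedge v_m\|=|\det(v_1,\ldots,v_m)|$ while $\MV(\seg{v_1},\ldots,\seg{v_m})=\tfrac{1}{m!}|\det(v_1,\ldots,v_m)|$. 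Your closing probabilistic reformulation, $\ell(K_1\wedge\cdots\wedge K_m)=\EE|\det(X_1,\ldots,X_m)|$ for independent Vitale representatives $K_i=\EE\seg{X_i}$, is essentially the form in which \cite{ZA} states and proves the result (Vitale's random-determinant formula), so the two routes are the same identity read in opposite directions.

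One remark on the normalization, which you rightly isolate as the delicate point. Your convention --- that the coefficient of $t_1\cdots t_m$ in $\vol_m(\sum_i t_iK_i)$ equals $m!$ times $\MV(K_1,\ldots,K_m)$ --- is the one under which the proposition is true, and it is the one the paper actually uses elsewhere (e.g.\ in \cref{eq:defvint}, and in the identity $\ell((B^m)^{\wedge m})=m!\,\uball_m$ invoked in the proof of \cref{lem:hate}). Be aware, though, that the literal definition of $\MV$ given at the start of \cref{sec:MV} --- the raw coefficient of $t_1\cdots t_m$ --- is $m!$ times the symmetric mixed volume of \cite[Theorem~5.1.7]{bible}; taken at face value it would make the displayed identity fail already for $K_i=\seg{e_i}$ (left side $1/m!$, right side $1$). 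So your bookkeeping is not merely careful: it silently corrects a factor-of-$m!$ slip in the paper's own definition.
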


From Minkowski's result, one can also build the \emph{intrinsic volumes} of a convex body $K\subset \R^m$ which are the coefficient (suitably normalized) of the Steiner polynomial $t\mapsto \vol_m(K+tB_m)$ where $B_m\subset \R^m$ is the unit ball. In our context we define the $k$--th intrinsic volume to be 
\begin{equation}\label{eq:defvint}
    \Vint_k(K):=\frac{\binom{m}{k}}{\uball_{m-k}}\MV(K[k],B_m[m-k])
\end{equation}
where $K[k]$ denotes the convex body $K$ repeated $k$ times in the argument.

From the previous Lemma one can deduce the following, which is \cite[Theorem~5.2]{ZA} and will be used later in the proof of \cref{cor:Evolvol}.

\begin{proposition}\label{prop:kintvolpow}
  Let $K\in\ZZ(\R^m)$. We have the following.
  \begin{equation}
    \frac{1}{k!}\ell(K^{\wedge k} )=\Vint_k(K)
  \end{equation}
  Moreover for all $k>\dim(K)$, $K^{\wedge k} =0$.
\end{proposition}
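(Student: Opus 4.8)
The plan is to reduce the statement, via \cref{prop:MVandwedge}, to a single computation comparing $\ell(K^{\wedge k})$ with a mixed volume against balls, and then to carry that out using Vitale's random-vector model. First I would dispose of the ``moreover'' clause and reduce to the centered case. Since $\ell$ ignores the nigiro and $\seg{K^{\wedge k}}=(\seg K)^{\wedge k}$ (see \cref{sub:zonoidcalc}), while $\Vint_k$ is translation invariant (being built from mixed volumes), both sides depend only on $\seg K$; so I may assume $K=\EE\seg X$ for an integrable $X\randin\R^m$. If $k>\dim K$, then $X$ takes values almost surely in the linear span of its support, a space of dimension $\dim K<k$, so $k$ independent copies $X_1,\dots,X_k$ are almost surely linearly dependent and $X_1\wedge\cdots\wedge X_k=0$; hence $K^{\wedge k}=\EE\seg 0=\{0\}$, proving the last assertion.

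For the main identity, writing $B_m$ for the unit ball and combining its defining formula \eqref{eq:defvint} with \cref{prop:MVandwedge} applied to $K[k],B_m[m-k]$, I obtain
\begin{equation}
\Vint_k(K)=\frac{\binom{m}{k}}{\uball_{m-k}}\,\MV(K[k],B_m[m-k])=\frac{\binom{m}{k}}{m!\,\uball_{m-k}}\,\ell\!\left(K^{\wedge k}\wedge B_m^{\wedge(m-k)}\right).
\end{equation}
Since $\binom{m}{k}(m-k)!/m!=1/k!$, the whole proposition reduces to the factorization lemma
\begin{equation}\label{eq:planlemma}
\ell\!\left(K^{\wedge k}\wedge B_m^{\wedge(m-k)}\right)=(m-k)!\,\uball_{m-k}\,\ell(K^{\wedge k}).
\end{equation}

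To prove \eqref{eq:planlemma} I would pass to random vectors. By \cref{eg:gausszonisball} one has $B_m=\EE\seg{\sqrt{2\pi}\,\xi}$ for a standard Gaussian $\xi\randin\R^m$; taking independent copies $X_1,\dots,X_k$ of $X$ and independent standard Gaussians $\xi_1,\dots,\xi_{m-k}$, the iterated wedge formula \eqref{eq:wedgeandVit} gives $K^{\wedge k}\wedge B_m^{\wedge(m-k)}=(2\pi)^{(m-k)/2}\EE\seg{X_1\wedge\cdots\wedge X_k\wedge\xi_1\wedge\cdots\wedge\xi_{m-k}}$, whence by \cref{def:zlength}
\begin{equation}
\ell\!\left(K^{\wedge k}\wedge B_m^{\wedge(m-k)}\right)=(2\pi)^{(m-k)/2}\,\EE\left\|X_1\wedge\cdots\wedge X_k\wedge\xi_1\wedge\cdots\wedge\xi_{m-k}\right\|.
\end{equation}
Now I would condition on $X_1,\dots,X_k$: setting $w=X_1\wedge\cdots\wedge X_k$ and $W=\Span(X_1,\dots,X_k)$, the norm of $w\wedge\xi_1\wedge\cdots\wedge\xi_{m-k}\in\Lambda^m\R^m\cong\R$ is the $m$-volume of the associated parallelepiped, which factors as $\|w\|$ times the $(m-k)$-volume of the orthogonal projections of $\xi_1,\dots,\xi_{m-k}$ onto $W^{\perp}$. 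By rotation invariance of the Gaussian, these projections are independent standard Gaussians in $W^{\perp}\cong\R^{m-k}$, so the conditional expectation over the $\xi_i$ equals $\|w\|\,c_{m-k}$, where $c_j:=\EE\|\eta_1\wedge\cdots\wedge\eta_j\|$ for $\eta_i$ i.i.d. standard Gaussian in $\R^j$. Taking expectation over $X$ yields $\ell(K^{\wedge k}\wedge B_m^{\wedge(m-k)})=(2\pi)^{(m-k)/2}c_{m-k}\,\ell(K^{\wedge k})$.

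It remains to identify the constant, and here I would avoid Gamma-function bookkeeping by bootstrapping from the top-dimensional case of the proposition itself, which is immediate from \cref{prop:MVandwedge}: for $B_j\subset\R^j$ one has $\tfrac1{j!}\ell(B_j^{\wedge j})=\MV(B_j[j])=\vol_j(B_j)=\uball_j$. On the other hand $(2\pi)^{j/2}c_j=\EE\|(\sqrt{2\pi}\,\eta_1)\wedge\cdots\wedge(\sqrt{2\pi}\,\eta_j)\|=\ell(B_j^{\wedge j})$, so $(2\pi)^{j/2}c_j=j!\,\uball_j$; with $j=m-k$ this is exactly \eqref{eq:planlemma}. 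I expect the main obstacle to be the factorization step: one must justify carefully that, conditionally, the projected Gaussians remain i.i.d. standard (using that $W$ is almost surely $k$-dimensional when $k\le\dim K$, the degenerate case contributing $w=0$ to both sides) and that the volume splits as base times height. Everything else is Minkowski-linearity and the dictionary between zonoids and random vectors.
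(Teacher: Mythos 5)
Your proof is correct, but note that the paper offers no internal proof to compare against: it quotes this proposition as \cite[Theorem~5.2]{ZA}. Your argument is a legitimate self-contained derivation from ingredients the paper does provide. The reduction $\Vint_k(K)=\tfrac{\binom{m}{k}}{m!\,\uball_{m-k}}\,\ell\bigl(K^{\wedge k}\wedge B_m^{\wedge(m-k)}\bigr)$ via \cref{prop:MVandwedge} is right, and your factorization identity $\ell\bigl(K^{\wedge k}\wedge B_m^{\wedge(m-k)}\bigr)=(m-k)!\,\uball_{m-k}\,\ell(K^{\wedge k})$ is precisely the content of \cref{lem:lengthwithballs} applied to $C=K^{\wedge k}$ --- but you could not have cited that lemma, since its proof in the paper invokes \cref{prop:kintvolpow} to evaluate the Gaussian constant. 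You break this circularity cleanly: the conditional base-times-height factorization (valid almost surely, with the degenerate event $X_1\wedge\cdots\wedge X_k=0$ contributing zero to both sides) together with rotation invariance of the standard Gaussian reduces everything to the constant $c_{m-k}$, which you then evaluate as $(2\pi)^{j/2}c_j=\ell(B_j^{\wedge j})=j!\,\uball_j$ from the all-equal, top-dimensional case of \cref{prop:MVandwedge}, where the mixed volume degenerates to $\vol_j(B_j)$. This avoids any explicit moment computation, which is what the approach buys. The centered reduction and the clause $K^{\wedge k}=0$ for $k>\dim K$ are also handled correctly; the only cosmetic caveat is that for $k\ge 2$ the nigiro of $K^{\wedge k}$ is $e(K)^{\wedge k}=0$ automatically, so nothing is lost in passing to $\seg{K}$, while for $k=1$ the clause only concerns the case where $K$ is a single point.
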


Moreover the support function on simple vectors takes the following form which will be used in \cref{lem:HTandwedge} to link zonoid calculus to the notion of \emph{Holmes-Thompson volume}.

\begin{lemma}\label{lem:supofsimplek}
  Let $K\in\ZZo(\R^m)$ be a centered zonoid and let $u=u_1\wedge\cdots \wedge u_k\in\Lambda^k \R^m$. We have
  \begin{equation}
      h_{K^{\wedge k}}(u_1\wedge\cdots\wedge u_k)=\frac{\|u_1\wedge\cdots\wedge u_k\Vert}{2}k!\vol_k(\pi_u(K)) 
  \end{equation}
  where $\pi_u:\R^m\to \Span(u_1,\ldots,u_k)$ denotes the orthogonal projection.
\end{lemma}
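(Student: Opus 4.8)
The plan is to pass from the zonoid $K$ to a random-vector representative via Vitale's theorem, compute the support function of the wedge power through the probabilistic formula \eqref{eq:suppofexpseg}, and then reduce everything to a one-dimensional exterior power where the computation becomes a Cauchy--Schwarz equality. Concretely, by Vitale's construction write $K=\EE\seg{X}$ for an integrable $X\randin\R^m$. Applying \eqref{eq:wedgeandVit} iteratively (with $X_1,\ldots,X_k$ i.i.d.\ copies of $X$) gives $K^{\wedge k}=\EE\seg{X_1\wedge\cdots\wedge X_k}$, so by the second identity in \eqref{eq:suppofexpseg},
\begin{equation}
  h_{K^{\wedge k}}(u_1\wedge\cdots\wedge u_k)=\tfrac12\,\EE\big|\langle u_1\wedge\cdots\wedge u_k,\,X_1\wedge\cdots\wedge X_k\rangle\big|.
\end{equation}
Using the standard identity for the induced inner product on $\Lambda^k\R^m$, the pairing inside equals $\det(\langle u_i,X_j\rangle)_{i,j}$.

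The key step is a linear-algebra reduction. Since each $u_i$ lies in $U:=\Span(u_1,\ldots,u_k)$, we have $\langle u_i,X_j\rangle=\langle u_i,\pi_u(X_j)\rangle$; writing $Y_j:=\pi_u(X_j)$, the determinant becomes $\det(\langle u_i,Y_j\rangle)=\langle u_1\wedge\cdots\wedge u_k,\,Y_1\wedge\cdots\wedge Y_k\rangle$, now a pairing of two vectors of $\Lambda^k U$. The point is that $\Lambda^k U$ is one-dimensional, so Cauchy--Schwarz is an equality and
\begin{equation}
  \big|\langle u_1\wedge\cdots\wedge u_k,\,Y_1\wedge\cdots\wedge Y_k\rangle\big|=\|u_1\wedge\cdots\wedge u_k\|\;\|Y_1\wedge\cdots\wedge Y_k\|.
\end{equation}
(In the degenerate case where the $u_i$ are dependent, $u_1\wedge\cdots\wedge u_k=0$ and both sides of the claimed formula vanish, so one may assume the $u_i$ independent.) Substituting back, the factor $\|u_1\wedge\cdots\wedge u_k\|$ comes out of the expectation and we are left with $\tfrac12\|u_1\wedge\cdots\wedge u_k\|\,\EE\|Y_1\wedge\cdots\wedge Y_k\|$.

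It remains to identify $\EE\|Y_1\wedge\cdots\wedge Y_k\|$ with $k!\,\vol_k(\pi_u(K))$. By \cref{lem:lineartransfofVitale} applied to $T=\pi_u$, we have $\pi_u(K)=\EE\seg{Y}$ as a centered zonoid in $U\cong\R^k$; then \eqref{eq:wedgeandVit} again gives $\pi_u(K)^{\wedge k}=\EE\seg{Y_1\wedge\cdots\wedge Y_k}$ in the one-dimensional space $\Lambda^k U$, whence $\ell(\pi_u(K)^{\wedge k})=\EE\|Y_1\wedge\cdots\wedge Y_k\|$ by \cref{def:zlength}. Finally \cref{prop:kintvolpow}, in the ambient dimension $k$ where the top intrinsic volume is the volume, yields $\ell(\pi_u(K)^{\wedge k})=k!\,\Vint_k(\pi_u(K))=k!\,\vol_k(\pi_u(K))$. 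Assembling the three paragraphs gives exactly the asserted formula. I expect no serious obstacle here: the only step demanding care is the passage to $\Lambda^k U$ and the observation that one-dimensionality turns the absolute value of the pairing into a product of norms, together with correctly tracking that the inner product on $\Lambda^k\R^m$ restricts isometrically to $\Lambda^k U$ so that $\|Y_1\wedge\cdots\wedge Y_k\|$ is the same whether computed in $U$ or in $\R^m$.
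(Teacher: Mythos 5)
Your proposal is correct and follows essentially the same route as the paper's proof: represent $K=\EE\seg{X}$, compute $h_{K^{\wedge k}}(u)=\tfrac12\EE|\langle X_1\wedge\cdots\wedge X_k,u\rangle|$, factor out $\|u_1\wedge\cdots\wedge u_k\|$ via the projection $\pi_u$, and identify $\EE\|\pi_u(X_1)\wedge\cdots\wedge\pi_u(X_k)\|=\ell(\pi_u(K)^{\wedge k})=k!\vol_k(\pi_u(K))$. You merely make explicit the one-dimensionality/Cauchy--Schwarz step that the paper leaves implicit, and you invoke \cref{prop:kintvolpow} where the paper cites \cref{prop:MVandwedge}; these are interchangeable here.
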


\begin{proof}
  Let $X\randin\R^m$ be such that $K=\EE\seg{X}$ and let $X_1,\ldots,X_k$ be iid copies of $X$. Then we have 
  \begin{align}
      h_{K^{\wedge k}}(u)&=\frac{1}{2}\EE|\langle X_1\wedge \cdots \wedge X_k , u_1\wedge \cdots\wedge u_k\rangle|   \\
      &=\frac{\Vert u_1\wedge \cdots \wedge u_k\Vert}{2}\EE\Vert \pi_u(X_1)\wedge \cdots \wedge \pi_u(X_k)\Vert \\
      &=\frac{\Vert u_1\wedge \cdots \wedge u_k\Vert}{2}\ell(\pi_u(K)^{\wedge k}).
  \end{align}
  Finally, by \cref{prop:MVandwedge}, we have $\ell(\pi_u(K)^{\wedge k})=k!\vol_k(\pi_u(K))$ which concludes the proof.
\end{proof}

\subsubsection{Alexandrov-Fenchel and Brunn-Minkowsky inequalities}
One of the most important inequality of convex geometry (if not the most important) involves the mixed volume and is known as the \emph{Alexandrov--Fenchel inequality} (AF), see~\cite[Theorem~7.3.1]{bible}.
\begin{proposition}[AF]\label{prop:AFi}
  Let $K_3,\ldots,K_m\subset\R^m$ be convex bodies and let us denote by $\mathfrak{K}$, the tuple $(K_3,\ldots,K_m)$. For all convex bodies $K,L\subset \R^m$ we have 
  \begin{equation}
      \MV(K,L,\mathfrak{K})\geq \sqrt{\MV(K,K,\mathfrak{K})\MV(L,L,\mathfrak{K})}.
  \end{equation}
\end{proposition}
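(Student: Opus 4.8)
The inequality of \cref{prop:AFi} is one of the deepest results in convex geometry, and I do not expect a short self-contained argument; the plan is to recall the classical strategy due to Alexandrov (see \cite[Section~7.3]{bible}) and to point out the genuine shortcut available in the zonoid case that is relevant to the present paper. First I would reformulate the statement as a spectral property of a quadratic form. Fixing the tuple $\mathfrak{K}=(K_3,\dots,K_m)$, the assignment $(K,L)\mapsto \MV(K,L,\mathfrak{K})$ is a symmetric Minkowski-bilinear functional, and the claimed inequality
\[
\MV(K,L,\mathfrak{K})^2\ge \MV(K,K,\mathfrak{K})\,\MV(L,L,\mathfrak{K})
\]
is exactly a reverse Cauchy--Schwarz inequality. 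Such an inequality holds precisely when the associated bilinear form has Lorentzian (hyperbolic) signature, i.e.\ at most one positive eigenvalue on the cone where $\MV(K,K,\mathfrak{K})>0$.

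Since mixed volumes are continuous with respect to the Hausdorff distance \eqref{eq:Hausdist} and polytopes are dense among convex bodies, I would reduce to the case where $K,L,K_3,\dots,K_m$ are polytopes. Restricting further to polytopes sharing a fixed normal fan, the support function is encoded by finitely many support numbers ranging over a finite-dimensional space $W$, the mixed volume becomes a polynomial in these numbers, and $\MV(\,\cdot\,,\,\cdot\,,\mathfrak{K})$ becomes an honest symmetric bilinear form on $W$. The whole problem thus reduces to showing that this form has exactly one positive eigenvalue.

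This signature statement I would then prove by induction on the dimension $m$. The base case is $m=2$, where $\mathfrak{K}$ is empty: here $\MV(K,L)$ is the planar mixed area and the inequality is Minkowski's first inequality $\MV(K,L)^2\ge \vol_2(K)\vol_2(L)$, itself equivalent to the two-dimensional Brunn--Minkowski inequality (\cref{prop:BMi}). For the inductive step one expresses the form through support functions and analyses the associated second-order (elliptic-type) operator on the sphere, using the inductive hypothesis applied to the lower-dimensional mixed areas obtained by differentiating in a facet direction; this localisation on each facet is the mechanism that controls the count of positive eigenvalues.

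The main obstacle is precisely this inductive step: controlling the signature requires the delicate analytic (or, in modern treatments, Hodge-theoretic, as in the Shenfeld--van Handel approach) core of the argument, and I see no way to avoid invoking a substantial theorem. I would, however, flag the simplification visible from the zonoid calculus of \cref{sub:zonoidcalc}: when $K_3,\dots,K_m$ are segments, projecting $\R^m$ onto the orthogonal complement of their directions collapses $\MV(\,\cdot\,,\,\cdot\,,\mathfrak{K})$ to a planar mixed area, so the base case already yields the inequality, and expanding a general zonoid as a Minkowski sum of segments together with multilinearity (\cref{prop:multandVit}) proves \cref{prop:AFi} whenever $\mathfrak{K}$ consists of zonoids. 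Since zonoids are \emph{not} dense among convex bodies, this does not recover the general statement, which is exactly why the full proof must pass through the signature argument above.
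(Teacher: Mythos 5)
The paper offers no proof of \cref{prop:AFi}: it is quoted as a black box from \cite[Theorem~7.3.1]{bible}, so there is no internal argument to match yours against. Your outline of the classical route --- continuity plus density to reduce to polytopes with a common normal fan, reformulation as the statement that the symmetric Minkowski-bilinear form $(K,L)\mapsto\MV(K,L,\mathfrak{K})$ has exactly one positive eigenvalue, induction on $m$ with the planar Minkowski inequality as base case --- is an accurate survey of Alexandrov's strategy, but, as you say yourself, it omits the inductive step that carries all of the difficulty. As a proof it is therefore incomplete; in the context of this paper that is defensible, since the authors themselves simply cite the result.

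The one substantive original claim in your proposal is, however, wrong: the assertion that \cref{prop:AFi} follows from the segment case ``by multilinearity'' whenever $\mathfrak{K}$ consists of zonoids. The segment case itself is fine: iterating \cite[Theorem~5.3.1]{bible}, $\MV(K,L,\seg{u_3},\ldots,\seg{u_m})$ is a nonnegative multiple of the planar mixed area of the projections of $K,L$ onto the $2$-plane orthogonal to $u_3,\ldots,u_m$, so Minkowski's planar inequality applies. But expanding each zonoid as a limit of sums of segments gives
\begin{equation}
\MV(K,L,Z_3,\ldots,Z_m)=\int \MV\big(K,L,\vec{s}\,\big)\,d\mu(\vec{s}\,)\;\geq\;\int\sqrt{f(\vec{s}\,)\,g(\vec{s}\,)}\,d\mu(\vec{s}\,),
\end{equation}
with $f=\MV(K,K,\cdot)$ and $g=\MV(L,L,\cdot)$, whereas the target is $\sqrt{\int f\,d\mu\cdot\int g\,d\mu}$. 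By Cauchy--Schwarz, $\int\sqrt{fg}\,d\mu\leq\sqrt{\int f\,d\mu}\sqrt{\int g\,d\mu}$, so the bound you obtain sits on the wrong side of the one you need. Equivalently, the reverse Cauchy--Schwarz property (at most one positive eigenvalue) is not preserved under positive combinations of bilinear forms --- already $\mathrm{diag}(1,0)+\mathrm{diag}(0,1)$ fails --- so the zonoid case of \cref{prop:AFi} is not a corollary of the planar case by this mechanism and should not be advertised as a shortcut.
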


Another inequality bounds from below the volume of the Minkowski sum of two convex bodies and is known as the \emph{Brunn--Minkowski inequality} (BM). It has many equivalent form and we chose to present here the multiplicative one, see~\cite[p.372~(e)]{bible}.

\begin{proposition}[BM]\label{prop:BMi}
  Let $K_0,K_1\subset \R^m$ be convex bodies. For all $t\in[0,1]$, we have
  \begin{equation}
      \vol_m((1-t)K_0+tK_1)\geq \vol_m(K_0)^{1-t}\vol_m(K_1)^t.
  \end{equation}
\end{proposition}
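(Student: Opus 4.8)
The plan is to deduce this multiplicative Brunn--Minkowski inequality from the functional Pr\'ekopa--Leindler inequality, which produces the multiplicative form directly and admits a clean proof by induction on the dimension. First I would record the reduction: the Pr\'ekopa--Leindler inequality states that for $t\in(0,1)$ and measurable $f,g,h\colon\R^m\to[0,\infty)$ satisfying $h((1-t)x+ty)\ge f(x)^{1-t}g(y)^t$ for all $x,y\in\R^m$, one has $\int h\ge\bigl(\int f\bigr)^{1-t}\bigl(\int g\bigr)^t$. Applying it with $f=\one_{K_0}$, $g=\one_{K_1}$, and $h=\one_{(1-t)K_0+tK_1}$ settles the statement: the hypothesis holds because $x\in K_0$ and $y\in K_1$ force $(1-t)x+ty\in(1-t)K_0+tK_1$, so $h((1-t)x+ty)=1$, while the inequality is vacuous when $f(x)g(y)=0$; and the conclusion reads exactly $\vol_m((1-t)K_0+tK_1)\ge\vol_m(K_0)^{1-t}\vol_m(K_1)^t$. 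The endpoint cases $t\in\{0,1\}$ are trivial.

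Next I would prove Pr\'ekopa--Leindler by induction on $m$. After normalizing $\sup f=\sup g=1$ (rescaling $h$ by $\|f\|_\infty^{1-t}\|g\|_\infty^t$, and treating $\int f=0$ or $\int g=0$ as trivial), the base case $m=1$ is the crux. For each level $s\in(0,1)$ the superlevel sets are nonempty and satisfy $\{h>s\}\supseteq(1-t)\{f>s\}+t\{g>s\}$, since $h((1-t)x+ty)\ge f(x)^{1-t}g(y)^t>s^{1-t}s^t=s$ on the relevant set. The one--dimensional sumset bound $|A+B|\ge|A|+|B|$ then gives $|\{h>s\}|\ge(1-t)|\{f>s\}|+t|\{g>s\}|$, and integrating in $s$ via the layer--cake formula yields $\int h\ge(1-t)\int f+t\int g$, which dominates $\bigl(\int f\bigr)^{1-t}\bigl(\int g\bigr)^t$ by the weighted AM--GM inequality $(1-t)a+tb\ge a^{1-t}b^t$.

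For the inductive step I would slice in the last coordinate. Fixing $x_m,y_m\in\R$ and setting $z_m=(1-t)x_m+ty_m$, the $(m-1)$--dimensional hypothesis holds for the slices $w\mapsto f(w,x_m)$, $w\mapsto g(w,y_m)$, $w\mapsto h(w,z_m)$, so the inductive hypothesis applied to these produces partial integrals $F(x_m)=\int_{\R^{m-1}}f(\cdot,x_m)$, $G$, $H$ on $\R$ satisfying $H(z_m)\ge F(x_m)^{1-t}G(y_m)^t$. Applying the $m=1$ case to $F,G,H$ and invoking Fubini (so that $\int H=\int h$, etc.) closes the induction.

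An alternative, purely geometric route avoids the functional inequality: prove the additive form $\vol_m(A+B)^{1/m}\ge\vol_m(A)^{1/m}+\vol_m(B)^{1/m}$ by the Hadwiger--Ohmann argument (verify it for boxes directly from AM--GM, extend to finite unions of boxes by induction on their number using a separating hyperplane, then approximate general bodies), apply it to $A=(1-t)K_0$ and $B=tK_1$ using $\vol_m(\lambda K)=\lambda^m\vol_m(K)$, and convert to the multiplicative form via AM--GM. In either approach the main obstacle is the analytic/combinatorial core: in the Pr\'ekopa--Leindler route it is the one--dimensional base case and the careful Fubini slicing, while in the Hadwiger--Ohmann route it is the induction on the number of boxes, where the separating hyperplane must be positioned so that the volume fractions of $A$ and $B$ on each side match and the inductive hypothesis applies to both halves.
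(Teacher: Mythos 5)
Your proof is correct, but it is worth noting that the paper does not prove this statement at all: Proposition \ref{prop:BMi} is imported as a classical fact with a citation to Schneider's monograph \cite[p.\ 372 (e)]{bible}, so any self-contained argument is ``different from the paper'' by default. Your primary route, deducing the multiplicative Brunn--Minkowski inequality from Pr\'ekopa--Leindler applied to the indicators $\one_{K_0}$, $\one_{K_1}$, $\one_{(1-t)K_0+tK_1}$, is the standard modern proof and has the advantage of landing directly on the multiplicative (dimension-free) form stated here, with the degenerate cases $\vol_m(K_i)=0$ and $t\in\{0,1\}$ handled trivially; your induction on dimension (one-dimensional base case via superlevel sets, the sumset bound $|A+B|\ge|A|+|B|$, layer-cake, and weighted AM--GM, then Fubini slicing) is sound. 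The only step that deserves a word of care in the general Pr\'ekopa--Leindler induction is that the sumset of two measurable sets need not be measurable, so the one-dimensional estimate should be phrased via inner measure or by approximating the superlevel sets from inside by compacta; in the actual application to the proposition this issue evaporates, since the relevant sets are the compact convex bodies themselves. Your alternative Hadwiger--Ohmann route to the additive form $\vol_m(A+B)^{1/m}\ge\vol_m(A)^{1/m}+\vol_m(B)^{1/m}$, followed by homogeneity and AM--GM, is equally valid and is closer in spirit to the treatment in the cited reference; it buys a purely geometric argument at the cost of a more delicate induction on the number of boxes.
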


\subsection{Grassmannian zonoids}\label{sec:grasszon}
The zonoids that will appear in the construction of the zonoid section below (see \cref{def:zonoidsec}) belong to a particular subset of $\ZZ(\Lambda^k V)$. Recall that if $V$ is Euclidean then $\Lambda^k V$ inherits an Euclidean structure given for all $v_1\wedge\cdots\wedge v_k, w_1\wedge\cdots\wedge w_k\in\Lambda^k V$ by
\begin{equation}
    \langle v_1\wedge\cdots\wedge v_k, w_1\wedge\cdots\wedge w_k\rangle:=\det\left(\langle v_i, w_j\rangle\right)_{1\leq i,j\leq k}.
\end{equation}
Vectors of the form $v_1\wedge\cdots\wedge v_k\in\Lambda^k V$ are said to be \emph{simple}.

We write $G(k,V)$ for the Grassmannian of $k$--dimensional subspaces of $V$. Recall that the Grassmannian embeds in the projective space of $\Lambda^kV$ via the Plücker embedding that sends
$E\in G(k,V)$ to $[e_1\wedge\cdots\wedge e_k]\in\PP(\Lambda^kV)$ where $e_1,\ldots,e_k$ is a basis of $E.$ In particular the set of simple vectors in $\Lambda^kV$ can be viewed as the cone over the Grassmannian and a measure on $G(k,V)$ can be identified with an even measure on $S(V)$ supported on the simple vectors.

For every $E\in G(k,V)$ we define the segment
\begin{equation}
    \seg{E}:=\seg{e_1\wedge\cdots\wedge e_k}\subset\Lambda^k V
\end{equation}
where $e_1,\ldots,e_k$ is an orthonormal basis of $E.$

\begin{definition}
  A zonoid $K\in\ZZ(\Lambda^k V)$ is a \emph{Grassmannian zonotope} if there exists subspaces $E_1,\ldots,E_n\in G(k,V)$ scalars $\lambda_1,\ldots,\lambda_n\geq 0$ and a simple vector $c=c_1\wedge\cdots\wedge c_k\in\Lambda^k V$ such that $K=\lambda_1\seg{E_1}+\cdots+\lambda_n\seg{E_n}+\tfrac{1}{2}\{c\}$. A \emph{Grassmannian zonoid} is a limit of Grassmannian zonotopes. We denote the set of Grassmannian zonoids in $\Lambda^k V$ by $\GZ(k,V)\subset \ZZ(\Lambda^k V)$ and centered Grassmannian zonoids by $\GZo(k,V):=\GZ(k,V)\cap \ZZo(\Lambda^k V)$.
\end{definition}

\begin{remark} For $k\in\{0,1,m-1,m\}$ where $m:=\dim V$, all zonoids are Grassmannian.
\end{remark}


The following lemma clarifies how to recognize Grassmannian zonoids when represented by random vectors or by measures. In particular, centered Grassmannian zonoids in $\Lambda^k V$ correspond to positive measures on $G(k,V)$.

\begin{lemma}\label{lem:GZchar}
  Let $K\in \ZZo(\Lambda^k V)$. The following are equivalent.
  \begin{itemize}
    \item[(i)] $K\in\GZo(k,V)$;
    \item[(ii)] There is an integrable random vector $X\in\Lambda^k V$ that is almost surely simple, i.e. such that almost surely $X=X_1\wedge\cdots\wedge X_k$ (the vectors $X_1,\ldots, X_k$ can be \emph{dependent}), such that $K=\EE\seg{X}$
    \item[(iii)] The support of the measure $\mu_K\in\Mev^+(S(\Lambda^kV))$ is contained in the intersection of $S(\Lambda^kV)$ with the set of simple vectors, i.e. $\mu_K\in\Me^+(G(k,V))$.
  \end{itemize}
\end{lemma}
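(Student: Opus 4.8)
The plan is to prove the three characterizations of centered Grassmannian zonoids by establishing the implications $(ii)\Rightarrow(i)$, $(i)\Rightarrow(iii)$ and $(iii)\Rightarrow(ii)$, using the dictionary between centered zonoids, integrable random vectors (via Vitale's construction and \cref{prop:zoneq}) and even measures on the sphere (via the cosine transform~\cref{eq:costrans} and \cref{prop:fromrvtomeas}) developed earlier in this section. The key observation tying everything together is that a simple vector in $\Lambda^k V$, after normalization, lies on the image of the Grassmannian under the Plücker embedding, and that $\seg{E}$ is precisely $\seg{x}$ for a simple unit vector $x$ representing $E$. Thus all three conditions are really saying the same thing: the relevant data is concentrated on the cone of simple vectors.

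First I would prove $(ii)\Rightarrow(i)$. If $K=\EE\seg{X}$ with $X$ almost surely simple and integrable, then I discretize: by the strong law of large numbers for zonoids quoted after the definition of the expected zonoid (from \cite{ArsVit}), the averages $\tfrac1n(\seg{X_1}+\cdots+\seg{X_n})$ of i.i.d.\ copies converge almost surely in the Hausdorff distance to $\EE\seg{X}$. Each summand $\seg{X_j}$ is, with $X_j=X_j^1\wedge\cdots\wedge X_j^k$ simple, a scalar multiple of some $\seg{E_j}$ with $E_j=\Span(X_j^1,\ldots,X_j^k)\in G(k,V)$ (after normalizing the basis to be orthonormal, which changes only the scaling factor $\lambda_j=\|X_j\|$). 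Hence each average is a Grassmannian zonotope, and their Hausdorff limit is by definition a Grassmannian zonoid, giving $K\in\GZo(k,V)$.

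Next, $(i)\Rightarrow(iii)$. Here I use the correspondence~\cref{eq:costrans} between a centered zonoid and its measure $\mu_K$, together with continuity of the map $K\mapsto\mu_K$: for a Grassmannian zonotope $K=\sum_j\lambda_j\seg{E_j}$ one computes directly from~\cref{eq:costrans} that $\mu_K=\sum_j\lambda_j(\delta_{x_j}+\delta_{-x_j})/?$, a measure supported on simple unit vectors, i.e.\ on $G(k,V)$. Passing to the Hausdorff limit and invoking the fact that the cosine transform is a homeomorphism onto its image (so that $K_n\to K$ forces $\mu_{K_n}\to\mu_K$ weakly$^*$), the support condition is preserved because the set of simple unit vectors is closed in $S(\Lambda^k V)$; weak$^*$ limits of measures supported on a fixed closed set remain supported there. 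This yields $\mu_K\in\Me^+(G(k,V))$.

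Finally, $(iii)\Rightarrow(ii)$. Given $\mu_K$ supported on simple unit vectors, I build the random vector by the recipe of \cref{prop:fromrvtomeas} run in reverse: normalize $\mu_K$ to a probability measure (its total mass is $\ell(K)$, finite), sample a direction $x$ from it, and set $X:=\ell(K)\,x$. Then $X$ is almost surely simple since $\mu_K$ charges only simple vectors, it is integrable, and by~\cref{eq:forMUla} applied to test functions $f$ one checks $\EE\seg{X}=K$. The main obstacle, and the step deserving the most care, is the interchange of limits in $(i)\Rightarrow(iii)$: I must argue that Hausdorff convergence of the zonoids transfers to weak$^*$ convergence of the associated measures and that the closed support condition survives this limit. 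This rests on the injectivity and continuity of the cosine transform (\cref{eq:costrans} and the classical result \cite[Theorem~3.5.3]{bible}), so I would state that correspondence as a homeomorphism carefully before taking limits; the remaining verifications are routine applications of the already-established formulas.
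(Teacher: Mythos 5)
Your proof is correct and relies on the same machinery as the paper's: the random-vector/measure dictionary of \cref{prop:fromrvtomeas} for $(iii)\Rightarrow(ii)$, and the fact that Hausdorff convergence of centered zonoids corresponds to weak-$*$ convergence of the generating measures (plus closedness of the set of simple unit vectors) for $(i)\Rightarrow(iii)$. The only cosmetic difference is that you close the cycle with a direct $(ii)\Rightarrow(i)$ via the strong law of large numbers for zonoids, where the paper routes everything through $(iii)$ as the hub; both are fine, and your stray ``$/?$'' normalization constant is $2$.
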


\begin{proof}
  The equivalence $(ii)\iff(iii)$ follows from \cref{prop:fromrvtomeas}. The equivalence $(i)\iff(iii)$ follows from the fact that Hausdorff convergence of zonoids corresponds to weak--$*$ convergence of measures \cite[Theorem~2.26(5)]{ZA}.
\end{proof}

\begin{remark}
As it will be clear from \cref{def:zonoidsec}, \cref{lem:GZchar}.(ii) implies that the value at $p\in M$ of the zonoid section $\zeta_X$ of a \zkrok field $X\randin \mC^1(M,\R^k)$ is a Grassmannian zonoids: $\zeta_X(p)\in \GZ(k,T_pM)$ for all $p\in M$.
\end{remark}

\begin{remark}
From (iii) we see that $\GZo(k,V) \cong \Me^+(G(k,V))$.
\end{remark}

It is not difficult, using (iii), to see that the Grassmannian zonoids are closed under the Minkowski sum. Similarly, one can see using (ii) that they are also closed under the wedge product.

\begin{lemma}
  The wedge product, respectively the Minkowski sum, of two Grassmannian zonoids is a Grassmannian zonoid.
\end{lemma}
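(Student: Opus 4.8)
The statement to prove is that the wedge product and the Minkowski sum of two Grassmannian zonoids are again Grassmannian zonoids. The plan is to use the characterizations of Grassmannian zonoids provided by \cref{lem:GZchar}, namely the descriptions in terms of almost surely simple random vectors (ii) and in terms of measures supported on simple vectors (iii), which are precisely the tools the preceding lemma equips us with.

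For the Minkowski sum, I would use characterization (iii). Let $K,L\in\GZo(k,V)$. By \cref{lem:GZchar}.(iii), the associated measures $\mu_K,\mu_L\in\Mev^+(S(\Lambda^kV))$ are both supported on the set of simple vectors. Since the cosine transform in \cref{eq:costrans} is linear and $h_{K+L}=h_K+h_L$ by \cref{prop:suppfctprop}, the measure representing $K+L$ is simply $\mu_K+\mu_L$. The support of a sum of nonnegative measures is contained in the union of their supports, so $\mu_{K+L}$ remains supported on the simple vectors, and applying \cref{lem:GZchar}.(iii) in the reverse direction shows $K+L\in\GZo(k,V)$. For the non-centered case one reduces to the centered one via the decomposition $K=\seg{K}+\tfrac12\{e(K)\}$ of \cref{def:nigiro}, noting that the nigiro of a Grassmannian zonoid is a simple vector and that a sum of simple vectors need not be simple but the relevant check is only on the centered parts $\seg{K}$ and $\seg{L}$, whose sum is handled above.

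For the wedge product, I would instead use characterization (ii). Let $K=\EE\seg{X}$ and $L=\EE\seg{Y}$ with $X\in\Lambda^k V$ and $Y\in\Lambda^l V$ integrable and almost surely simple, and with $X,Y$ chosen independent (we are free to realize them on a common product probability space preserving their individual laws, which does not change $K$ or $L$). By \cref{eq:wedgeandVit} we have $K\wedge L=\EE\seg{X}\wedge\EE\seg{Y}=\EE\seg{X\wedge Y}$. It remains to observe that the random vector $X\wedge Y\in\Lambda^{k+l}V$ is almost surely simple: writing $X=X_1\wedge\cdots\wedge X_k$ and $Y=Y_1\wedge\cdots\wedge Y_l$ almost surely, we get $X\wedge Y=X_1\wedge\cdots\wedge X_k\wedge Y_1\wedge\cdots\wedge Y_l$, which is a simple $(k+l)$-vector almost surely. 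Hence $K\wedge L$ satisfies \cref{lem:GZchar}.(ii) and is a Grassmannian zonoid in $\Lambda^{k+l}V$. The extension to non-centered zonoids follows from \cref{eq:Mnotcent} together with the fact, recorded at the end of \cref{sub:zonoidcalc}, that $e(K\wedge L)=e(K)\wedge e(L)$, which is simple whenever $e(K)$ and $e(L)$ are.

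The main subtlety, and the only place requiring care, is the independence requirement in characterization (ii): \cref{eq:wedgeandVit} gives $\EE\seg{X}\wedge\EE\seg{Y}=\EE\seg{X\wedge Y}$ only when $X$ and $Y$ are independent, whereas the individual simple representatives supplied by \cref{lem:GZchar}.(ii) live on unrelated probability spaces. I would resolve this by replacing $X$ and $Y$ with independent copies on a product space, which is legitimate precisely because $\EE\seg{X}$ depends only on the law of $X$ (this is exactly the content of the zonoid equivalence of \cref{prop:zoneq}), so the zonoids $K$ and $L$ are unaffected. Everything else is a direct and routine application of the two representations, so I do not expect any further obstacle.
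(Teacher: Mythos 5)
Your proof is correct and follows exactly the route the paper indicates: the paper gives no written argument for this lemma, remarking only that closure under the Minkowski sum follows from characterization (iii) of \cref{lem:GZchar} and closure under the wedge product from characterization (ii), which is precisely what you carry out (including the necessary step of realizing the two simple representatives as independent random vectors on a product space so that \cref{eq:wedgeandVit} applies). The one point worth flagging is your treatment of the non-centered Minkowski sum: the sum of two simple nigiros need not be simple, so under a literal reading of the paper's definition of Grassmannian zonotope (which requires the translation vector $c$ to be simple) closure really only holds for the centered parts — but this is an imprecision in the paper's definition and statement rather than a gap in your argument.
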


The next lemma makes computations easier for Grassmannian zonoids and, for instance, it can be used to compute directly the constant in the proof of \cref{thm:Alphaca}. We will use it in the proof of \cref{lem:hate}.
\begin{lemma}\label{lem:lengthwithballs}
  Let $C\in\GZ(k,\R^m)$ and let $B_m:=B_{\R^m}$ be the unit ball of $\R^m$. Then we have
  \begin{equation}
    \ell(C)=\frac{1}{(m-k)!\uball_{m-k}}\ell\left(C\wedge B_m^{\wedge (m-k) }\right)
  \end{equation}
  where $\uball_d:=\vol_d (B_d)$.
\end{lemma}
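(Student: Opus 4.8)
The plan is to reduce the statement to the mixed-volume identity of \cref{prop:MVandwedge} by unraveling the intrinsic-volume and wedge-product definitions, so that the claimed formula becomes a statement about the coefficient $\MV(C[k],B_m[m-k])$ in Minkowski's polynomial. I would first invoke \cref{def:zlength} together with the interpretation of length as the first intrinsic volume, and then recall from \cref{prop:MVandwedge} that for any $m$ zonoids $K_1,\ldots,K_m\in\ZZ(\R^m)$ one has $\frac{1}{m!}\ell(K_1\wedge\cdots\wedge K_m)=\MV(K_1,\ldots,K_m)$. Applying this to the tuple consisting of $C$ (which I must treat carefully, since $C\in\GZ(k,\R^m)$ and the wedge $C\wedge B_m^{\wedge(m-k)}$ lands in $\ZZ(\Lambda^m\R^m)\cong\ZZ(\R)$, whose length is just the total mass) I would rewrite the right-hand side as
\begin{equation}
  \frac{1}{(m-k)!\,\uball_{m-k}}\ell\bigl(C\wedge B_m^{\wedge(m-k)}\bigr)
  =\frac{m!}{(m-k)!\,\uball_{m-k}}\,\MV\bigl(\,\widetilde C,\,B_m[m-k]\,\bigr),
\end{equation}
where $\widetilde C$ denotes the tuple of $k$ ``slots'' coming from $C$ and $B_m$ is repeated $m-k$ times.

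The crux is to match this against the definition \cref{eq:defvint} of the $k$-th intrinsic volume, namely $\Vint_k(C)=\frac{\binom{m}{k}}{\uball_{m-k}}\MV(C[k],B_m[m-k])$, and to relate $\Vint_k(C)$ back to $\ell(C)$. Here I would use \cref{prop:kintvolpow}, which gives $\frac{1}{k!}\ell(C^{\wedge k})=\Vint_k(C)$; but since I want $\ell(C)$ rather than $\ell(C^{\wedge k})$, the cleaner route is to observe that the wedge $C\wedge B_m^{\wedge(m-k)}$ is \emph{multilinear} in its factors, so its length computes the mixed volume with $C$ appearing only once, i.e. $\MV$ with the tuple $(C, B_m[m-k])$ taken in the symmetric/multilinear sense. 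The combinatorial bookkeeping of how many times each body appears, and the corresponding binomial factor, is what converts $\frac{m!}{(m-k)!}\MV(C,B_m[m-k])$ into $\ell(C)$ once one normalizes by $\uball_{m-k}$ and uses that the full-dimensional mixed volume $\MV(C,B_m[m-k])$ agrees with the appropriately weighted intrinsic volume of the $k$-dimensional body $C$.

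The main obstacle I anticipate is the careful tracking of the multilinear versus polynomial normalizations: $\MV(K_1,\ldots,K_m)$ is defined via the coefficient of $t_1\cdots t_m$ (all bodies distinct), whereas both \cref{prop:kintvolpow} and \cref{eq:defvint} use repeated arguments, which introduce the factorials $k!$, $(m-k)!$ and the binomial $\binom{m}{k}$. I would resolve this by writing everything in terms of the fully-expanded multilinear mixed volume and checking that the factor $\frac{1}{(m-k)!\,\uball_{m-k}}$ in the statement is exactly what cancels the $(m-k)!$ arising from the $m-k$ repeated copies of $B_m$ and the $\uball_{m-k}$ from Steiner normalization, leaving $\ell(C)=\Vint_1$-type length of the Grassmannian zonoid $C$. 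Since $C$ is a \emph{Grassmannian} zonoid, its length equals $\mu_C(S)$ by \cref{prop:fromrvtomeas}, and this representation can be used to verify the constant directly if the abstract identification proves delicate.
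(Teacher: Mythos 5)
There is a genuine gap at the central step of your argument. You want to apply \cref{prop:MVandwedge}, i.e. $\tfrac{1}{m!}\ell(K_1\wedge\cdots\wedge K_m)=\MV(K_1,\ldots,K_m)$, to a tuple in which $C$ occupies ``$k$ slots'', writing the right-hand side as $\tfrac{m!}{(m-k)!\uball_{m-k}}\MV(\widetilde C, B_m[m-k])$. But that proposition requires every factor to be a zonoid in $\R^m$ itself, and a general $C\in\GZ(k,\R^m)$ does \emph{not} factor as $K_1\wedge\cdots\wedge K_k$ with $K_i\in\ZZ(\R^m)$: the Grassmannian hypothesis only says $C$ is a limit of sums $\sum_i\lambda_i\seg{E_i}$, equivalently $C=\EE\seg{X_1\wedge\cdots\wedge X_k}$ with possibly \emph{dependent} $X_i$ (\cref{lem:GZchar}). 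So the expression $\MV(\widetilde C,B_m[m-k])$ is simply not defined, and neither is the appeal to $\Vint_k(C)$ via \cref{eq:defvint} or \cref{prop:kintvolpow}, both of which take a body in $\R^m$ (or a power $K^{\wedge k}$) as input, not an arbitrary element of $\GZ(k,\R^m)$. The phrase ``the $k$-dimensional body $C$'' conflates $C\subset\Lambda^k\R^m$ with a convex body in $\R^m$, which it is not.

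The route can be repaired, and your closing remark about the measure $\mu_C$ on $G(k,\R^m)$ points in the right direction, but it must be the main step rather than a fallback: both sides of the claimed identity are Minkowski linear and continuous in $C$ (the length is the first intrinsic volume, and $C\mapsto C\wedge B_m^{\wedge(m-k)}$ is Minkowski linear by \cref{thm:FTZC}), so it suffices to check the identity for $C=\seg{E}=\seg{e_1}\wedge\cdots\wedge\seg{e_k}$ with $e_1,\ldots,e_k$ an orthonormal basis of $E\in G(k,\R^m)$. \emph{For such} $C$ your mixed-volume computation is legitimate: $\ell(\seg{e_1}\wedge\cdots\wedge\seg{e_k}\wedge B_m^{\wedge(m-k)})=m!\MV(\seg{e_1},\ldots,\seg{e_k},B_m[m-k])=(m-k)!\uball_{m-k}=(m-k)!\uball_{m-k}\,\ell(\seg E)$, using the classical formula for mixed volumes with segment arguments. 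This is where the Grassmannian hypothesis is actually used. The paper takes a different but parallel path: it writes $C=\EE\seg{X_1\wedge\cdots\wedge X_k}$ with a random simple vector, realizes $B_m=\EE\seg{Y}$ for a Gaussian $Y$ (\cref{eg:gausszonisball}), and factors $\EE\|X_1\wedge\cdots\wedge X_k\wedge Y_1\wedge\cdots\wedge Y_{m-k}\|=\ell(C)\cdot\EE\|\pi(Y_1)\wedge\cdots\wedge\pi(Y_{m-k})\|$ by independence and rotation invariance, identifying the last factor as $(m-k)!\uball_{m-k}$ via \cref{prop:kintvolpow}. Probabilistic conditioning on the span of $X_1,\ldots,X_k$ plays there exactly the role that the reduction to a single $\seg E$ plays in the fixed version of your argument.
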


\begin{proof}
  Since the length is translation invariant, we can assume $C$ is centered. Let $C=\EE\seg{X_1\wedge\cdots\wedge X_k}$, let $Y\randin\R^m$ be a Gaussian vector of mean $0$ and variance $\sqrt{2\pi}$ in such a way that $B_m=\EE\seg{Y}$ and let $Y_1,\ldots,Y_{m-k}$ be iid copies of $Y$ independents of $X_1\wedge\cdots\wedge X_k$. Then using the independence of the random variables and the fact that $Y_1\wedge\cdots\wedge Y_{m-k}$ is orthogonal invariant we have
  \begin{align}
    \ell\left(C\wedge B_d^{\wedge (d-k) }\right) &=\EE\|X_1\wedge\cdots\wedge X_k\wedge Y_1\wedge\cdots\wedge Y_{m-k}\|  \\
                                      &=\EE\|X_1\wedge\cdots\wedge X_k\|\cdot\EE\|e_1\wedge\cdots\wedge e_k\wedge Y_1\wedge\cdots\wedge Y_{m-k}\|
  \end{align}
  where $e_1,\ldots,e_m$ denotes the standard basis of $\R^m$. We obtain
  \begin{equation}
    \ell\left(C\wedge B_m^{\wedge (m-k) }\right)=\ell(C)\cdot\EE\| \pi(Y_1)\wedge\cdots\wedge \pi(Y_{m-k})\|
  \end{equation}
  where $\pi:\R^m\to\R^{m-k}$ is the orthogonal projection onto $\Span(e_{k+1},\ldots,e_m)$. Then it remains only to see, using \cref{prop:kintvolpow}, that $\EE\| \pi(Y_1)\wedge\cdots\wedge \pi(Y_{m-k})\|=\ell\left(\pi(B_m)^{\wedge{(m-k)}}\right)=\ell\left((B_{m-k})^{\wedge{(m-k)}}\right)= (m-k)!\uball_{m-k}$.
\end{proof}

Finally, we observe the following. Let $f:G(k,V)\to \R$ be a measurable function and denote also by $f$ its (even and) homogeneous extension on the cone of simple vectors. Then if $K=\EE\seg{X_1\wedge\cdots\wedge X_k}$ is a Grassmannian zonoid with generating measure $\mu_K\in\Me^+(G(k,V))$, we get that \cref{eq:intwrtgenmeas} becomes:
\begin{equation}\label{eq:intwrtgenmeasgrass}
    \EE f(X_1\wedge\cdots\wedge X_k)=\int_{G(k,V)}f\, d\mu_K.
\end{equation}

\subsection{Topology of zonoids}\label{sub:zonoidbundles}
We conclude this introduction to zonoids with a short comment on zonoid bundles. It will be useful to keep in mind this section in what follows, to understand the continuity of the zonoid section (\cref{def:zonoidsec}). Let $M$ be a manifold of dimension $m$ and let $\pi:E\to M$ be a topological vector bundle of rank $k$. The structure of vector bundle is given by the \emph{trivialization maps} $\chi_U: E|_U\simto U\times \R^k$ which are homeomorphisms that are linear isomorphism on the fibers. 

We can define the zonoid bundle $\ZZ(E)$ whose fiber at a point $p\in M$ is defined to be $\ZZ(E)_p:=\ZZ(E_p)$ where $E_p$ is the fiber of $E$ at $p$, and whose bundle structure is given by the collection of maps $\widehat{\chi_U}:\ZZ(E)|_U\simto U\times \ZZ(\R^k)$ in particular the topology on $\ZZ(E)$ is the smallest topology that makes all $\widehat{\chi_U}$ homeomorphisms. Recall that the space of zonoids $\ZZ(\R^k)$ is topologized by the Hausdorff distance, see \cref{eq:Hausdist}. Similarly one can define $\ZZo(E)$, $\GZ(k,E)$, $\GZo(k,E)$.

Given a fiber bundle $\pi:F\to M$ we denote by $\Gamma(F)$ the space of continuous sections of $F$, that is $\gamma\in\Gamma(F)$ if and only if $\gamma:M\to F$ is a continuous map such that for every $p\in M$, $\pi(\gamma(p))=p$. In particular a section $\zeta\in\Gamma(\ZZ(E))$ is the choice of a zonoid at each point $p$ of the manifold $M$ in the vector space $E_p$ such that this zonoid depends \emph{continuously} on the point $p$. We will call $\zeta$ a \emph{zonoid section}.

We observe then that a section $\zeta$ of the bundle $\ZZ(E)\to M$ defines at each point $p\in M$ a continuous positively homogeneous sublinear function $h_{\zeta(p)}:E_p^*\to \R$.
\begin{lemma}\label{lem:continuih}
$\zeta$ is continuous if and only if the map $h_{\zeta}:E^*\to\R, (p,u)\mapsto h_{\zeta(p)}(u)$ is a continuous function on $E^*$.
\end{lemma}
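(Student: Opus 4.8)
The plan is to prove both implications by reducing the global continuity statement to the already-understood local picture over trivializing charts, and then to the Euclidean case where continuity of support functions is the definition of Hausdorff continuity. First I would fix a point $p_0\in M$ and a trivializing open set $U\ni p_0$ with trivialization $\chi_U\colon E|_U\simto U\times\R^k$, inducing the dual trivialization $\chi_U^*$ on $E^*|_U$ and the zonoid trivialization $\widehat{\chi_U}\colon \ZZ(E)|_U\simto U\times\ZZ(\R^k)$. Since continuity is a local property and the trivializations are homeomorphisms, it suffices to prove the equivalence after transporting everything to $U\times\R^k$; that is, $\zeta$ is continuous near $p_0$ iff the composite section $p\mapsto \widehat{\chi_U}(\zeta(p))\in\ZZ(\R^k)$ is continuous, and $h_\zeta$ is continuous near $p_0$ iff the corresponding map $U\times(\R^k)^*\to\R$, $(p,u)\mapsto h_{\widehat{\chi_U}(\zeta(p))}(u)$, is continuous; the latter reduction uses that the support function transforms compatibly under the linear fiber isomorphisms via \cref{prop:suppfctprop}-\ref{itm:linh}.

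Having localized, the core is a statement in a fixed Euclidean space $\R^k$: a map $\gamma\colon U\to\ZZ(\R^k)$ is continuous (for the Hausdorff distance) if and only if the two-variable map $(p,u)\mapsto h_{\gamma(p)}(u)$ is continuous on $U\times(\R^k)^*$. For the forward direction, I would use the identity \eqref{eq:Hausdist}, namely $\dd(\gamma(p),\gamma(p'))=\sup_{\|u\|=1}|h_{\gamma(p)}(u)-h_{\gamma(p')}(u)|$, to control $h$ uniformly in $u$: given continuity of $\gamma$, for $\|u\|=1$ we get $|h_{\gamma(p)}(u)-h_{\gamma(p')}(u)|\le \dd(\gamma(p),\gamma(p'))\to 0$ as $p'\to p$, and positive homogeneity extends this bound to all $u$ after normalizing; combining this uniform-in-$u$ continuity in $p$ with the (Lipschitz, hence automatic) continuity in $u$ of each sublinear $h_{\gamma(p)}$ yields joint continuity of $(p,u)\mapsto h_{\gamma(p)}(u)$ by a standard two-variable argument. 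For the converse, joint continuity on the compact set $U'\times S((\R^k)^*)$ for a small closed neighborhood $U'$ of $p_0$ is uniform by Heine-Cantor, so $\sup_{\|u\|=1}|h_{\gamma(p)}(u)-h_{\gamma(p_0)}(u)|\to 0$ as $p\to p_0$, which is exactly $\dd(\gamma(p),\gamma(p_0))\to 0$, i.e.\ continuity of $\gamma$ at $p_0$.

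The main obstacle, and the only place requiring genuine care, is the passage between pointwise/separate continuity and joint continuity of $h$ in the forward direction: one must argue that uniform-in-$u$ continuity in the base variable $p$ (which is what the Hausdorff distance directly provides) upgrades to genuine joint continuity in $(p,u)$, rather than mere separate continuity. This is handled by the elementary estimate
\[
|h_{\gamma(p)}(u)-h_{\gamma(p_0)}(u_0)|\le |h_{\gamma(p)}(u)-h_{\gamma(p_0)}(u)|+|h_{\gamma(p_0)}(u)-h_{\gamma(p_0)}(u_0)|,
\]
where the first term is bounded by $\|u\|\,\dd(\gamma(p),\gamma(p_0))$ using homogeneity and \eqref{eq:Hausdist}, and the second tends to $0$ by continuity (indeed Lipschitz-ness) of the single sublinear function $h_{\gamma(p_0)}$. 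A further minor technical point is the homogeneity reduction, so that it suffices to test $u$ on the unit sphere, and the compatibility of the dual trivialization, so that the homeomorphism property of $\chi_U$ correctly matches continuity of $h_\zeta$ on $E^*$ with continuity on $U\times(\R^k)^*$; both are routine once the transformation rule $h_{T(K)}=h_K\circ T^t$ is invoked. With these in hand the equivalence follows, completing the proof.
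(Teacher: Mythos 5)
Your proposal is correct and follows essentially the same route as the paper: localize via a trivialization, then use the identity \eqref{eq:Hausdist} together with positive homogeneity to identify Hausdorff convergence of the zonoids with locally uniform convergence of the support functions on the unit sphere. The only cosmetic difference is that the paper handles the two-variable continuity by the exponential law for the compact-open topology (currying $h$ into a map $M\to\mC(\R^k)$), whereas you argue joint continuity directly via the triangle inequality, the Lipschitz property of a fixed sublinear function, and Heine--Cantor for the converse; both are sound.
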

\begin{proof}
It is sufficient to prove the statement locally, thus we assume $E=\R^m\times \R^k$. 
Consider the space $\mC(\R^k)$ endowed with the compact-open topology. This has the property that: $h\in \mC(\R^m\times \R^k)$ if and only if $h_1\in \mC(\R^m,\mC(\R^k))$, where $h_1:p\mapsto h(p,\cdot)$.
Therefore, the statement translates into proving that a sequence of zonoids $\zeta_n\subset \R^k$ converges to a limit $\zeta$ if and only if the corresponding sequence of support functions $h_n\colon \R^k\to \R$ converges to $h:=h_{\zeta}$ in $\mC(\R^k)$ with respect to the compact-open topology. Now, we recall that $h_n$ and $h$ are positively homogeneous functions, which implies that $h_n\to h$ if and only if the same convergence holds for the restrictions to the sphere $S^{k-1}$. The compact-open topology of $\mC(S^{k-1})$ coincides with the one induced by the supremum norm, hence we conclude by \cref{eq:Hausdist}.
\end{proof}

\cref{lem:continuih} will be used in \cref{sec:zonoidsection} to show the continuity of the zonoid section.

We conclude this section with some observations regarding the space of zonoid sections, with the only scope of giving a more complete picture.
In fact, it is easy to turn the latter proof into a proof of the following statement. Linearity is meant with respect to the Minkowsky sum on the left and follows from \cref{prop:suppfctprop}.
\begin{proposition}
The assignment $\zeta\mapsto h_\zeta$ defines a linear topological embedding 
\begin{equation}
    h_{\cdot}:\Gamma(\ZZ(E))\hookrightarrow \mC(E^*),
\end{equation}
\end{proposition}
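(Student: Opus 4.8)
The plan is to verify the four ingredients of a linear topological embedding: that $h_\cdot$ is well defined (i.e. lands in $\mC(E^*)$), that it is Minkowski-linear, that it is injective, and that it is a homeomorphism onto its image, where both $\Gamma(\ZZ(E))$ and $\mC(E^*)$ carry their natural (compact-open) topologies. Well-definedness and one half of the homeomorphism statement are already contained in \cref{lem:continuih}, so the real work is to promote that fiberwise continuity statement to one about the section spaces equipped with their topologies.

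The algebraic properties are immediate. For linearity, note that by definition $(\zeta_1+t\zeta_2)(p)=\zeta_1(p)+t\zeta_2(p)$ as a Minkowski combination in $E_p$, so \cref{prop:suppfctprop}(1) applied fiberwise gives $h_{(\zeta_1+t\zeta_2)(p)}=h_{\zeta_1(p)}+t\,h_{\zeta_2(p)}$ for every $p$, i.e. $h_{\zeta_1+t\zeta_2}=h_{\zeta_1}+t\,h_{\zeta_2}$ in $\mC(E^*)$; here linearity is read with respect to the Minkowski monoid structure and scalars $t\ge 0$. For injectivity, recall that a convex body is uniquely determined by its support function \cite[Theorem~1.7.1]{bible}; hence $h_\zeta=h_{\zeta'}$ forces $h_{\zeta(p)}=h_{\zeta'(p)}$, and therefore $\zeta(p)=\zeta'(p)$, for all $p$.

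The heart of the matter is the claim that $h_\cdot$ is a homeomorphism onto its image. The plan is to argue locally over a trivializing open set $U$ with $E|_U\cong U\times\R^k$, and then globalize, since both topologies are determined by restriction to the members of an open cover. Over $U$ the total space of $E^*$ reads $U\times(\R^k)^*$, and the exponential law for compact-open topologies (valid because $(\R^k)^*$ is locally compact Hausdorff) gives a natural homeomorphism $\mC\tyu U\times(\R^k)^*\uyt\cong\mC\tyu U,\mC((\R^k)^*)\uyt$, under which $h_\zeta$ corresponds to the map $p\mapsto h_{\zeta(p)}$. On the other hand, the assignment $K\mapsto h_K$ is, by \cref{eq:Hausdist} together with the homogeneity of support functions, a homeomorphism from $\ZZ(\R^k)$ onto the cone of sublinear functions inside $\mC((\R^k)^*)$ with the compact-open topology—this is exactly the equivalence extracted in the proof of \cref{lem:continuih}. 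Post-composing a section $\zeta\in\Gamma(\ZZ(E)|_U)=\mC(U,\ZZ(\R^k))$ with this fiberwise homeomorphism identifies the restriction of $h_\cdot$ with the homeomorphism $\mC(U,\ZZ(\R^k))\to\mC(U,\{\text{sublinear functions}\})$ it induces on the function spaces, which is the content we need.

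The main obstacle I expect is bookkeeping rather than mathematics: ensuring that the compact-open topology on $\Gamma(\ZZ(E))$ matches, via the exponential law, the compact-open topology on $\mC(E^*)$, and that the local homeomorphisms glue—equivalently, that a net $\zeta_n$ converges in $\Gamma(\ZZ(E))$ if and only if $h_{\zeta_n}\to h_\zeta$ uniformly on compact subsets of $E^*$, in \emph{both} directions. One must also keep in mind that $\Gamma(\ZZ(E))$ is only a monoid under Minkowski sum, so ``linear topological embedding'' is to be read in the monoid sense; the image is the convex cone of those $h\in\mC(E^*)$ that are fiberwise support functions of zonoids, and it is onto this image that $h_\cdot$ is a homeomorphism.
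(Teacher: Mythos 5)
Your proposal is correct and follows essentially the same route as the paper, which simply remarks that the proof of \cref{lem:continuih} (fiberwise continuity of $\zeta$ being equivalent to continuity of $h_\zeta$ on $E^*$, via the exponential law for the compact-open topology and the identification of Hausdorff convergence with uniform convergence of support functions) upgrades directly to the statement about section spaces, with Minkowski-linearity coming from \cref{prop:suppfctprop}. Your additional bookkeeping on injectivity and the monoid reading of ``linear'' matches the paper's intent.
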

\begin{remark}
The exact image of $h_\cdot$ is not easy to determine, but it is certainly contained in the subset of functions that are sublinear on fibers, see \cref{sec:zonoids}.
\end{remark}


A further observation is that, as fiber bundles, we have $\ZZ(E)\cong \ZZo(E)\oplus E$ and thus
\begin{equation}\label{eq:splitzonbundle}
    \Gamma(\ZZ(E))\cong \Gamma(\ZZo(E))\oplus \Gamma(E).
\end{equation}
Therefore we can, as before, treat the \emph{nigiro} (see \cref{def:nigiro}) of a zonoid and the centered zonoid as separate continuous sections.
\section{{\zkrok} hypotheses}\label{sec:zkrokhypotheses}
Let $(M,g)$ be a smooth Riemannian manifold of dimension $m\in\N$, possibly non-compact. In this section we are going to describe a class of random functions $X:M\randto \R^k$ for which Kac-Rice formula works well and it can be written in terms of a \emph{field of zonoids} as explained in \cref{sec:intro}.
\begin{definition}[{\zkrok} hypotheses]\label{def:zkrok}

Let $X\colon M \randto \R^k$ be a random map. We say that $X$ is {\zkrok} if the following properties hold.
\begin{enumerate}
    \item\label{itm:krok1} $X\randin \mC^1(M,\R^k)$.
    \item\label{itm:krok2} Almost surely, $0$ is a regular value of $X$.
    \item\label{itm:krok:3} For any $p\in M$ the probability $[X(p)]$ on $\R^k$ is absolutely continuous with density denoted as $\rho_{X(p)}\colon \R^k\to [0,+\infty)$.
    \item\label{itm:krok4} The function $\rho_X\colon M\times \R\to \R$ given by $\rho_X(p,x)=\rho_{X(p)}(x)$ is continuous at $(p,0)$ for all $p\in M$.
    \item\label{itm:krok5} There exists a regular conditional probability $\mu(p,x)\in \mathscr{P}(\mC^1(M,\R^k))$ of $X$ given $X(p)$ (see \cref{sub:remarkrok} below) such that the following holds. Let $J_p\cdot \mu(p,x)\in \Me^+(\mC^1(M,\R^k))$ be the measure defined by
    \be 
    J_p\cdot \mu(p,x)(B)=\int_{B}J_pf \cdot d\tyu\mu(p,x)\uyt(f).
    \footnote{In the distributional sense, it is the multiplication of the measure $\mu(p,x)$ with the function $J_p\colon f\mapsto J_pf$.}
    \ee
    Then we ask that $J_p\cdot \mu(p,x)$  is a finite measure and that the function 
    \be 
    J_M \cdot \mu:M\times \R^k\to \Me^+\tyu\mC^1(M,\R^k)\uyt
    \ee
    \be 
    (p,x)\mapsto J_p\cdot \mu(p,x)
    \ee
    is continuous at $(p,0)$ for all $p\in M$.
\end{enumerate}
\end{definition}
These hypotheses are exactly what we need to apply the Kac-Rice formula to express the expectation of quantities of the form: 
\be 
I_\a(X):=\int_{X^{-1}(0)}\a(p,X)dM(p),
\ee
where $\a\colon M\times \mC^1(M,\R^k)\to \R$ is a measurable function, see Theorem \ref{thm:Alphaca}.
They are a variation of the KROK hypotheses introduced in \cite{KRStec}: a series of hypotheses on pairs $(X,W)$, where $X\colon M\to N$ is a random map and $W\subset N$ is a submanifold of codimension $m=\dim M$. If $(X,W)$ is KROK, then the measure $\mu(A):=\EE\#(X^{-1}(W)\cap A)$ is computed by a generalized Kac-Rice formula, see \cite[Theorem 2.2]{KRStec}.
In this paper, we only consider the case when $W=\{0\}\subset N=\R^k$ but we do not impose conditions on its codimension $k$.

The precise relation between the KROK hypotheses of \cite{KRStec} and the {\zkrok} hypotheses of Definition \ref{def:zkrok} is that $X$ is {\zkrok} if and only if the pair $(X,\{0\})$ satisfies all conditions KROK.$(\ell)$ for all $\ell\in \{\mathrm{i},\dots,\mathrm{vii}\}\smallsetminus\{\mathrm{v}\}$ in \cite[Definition 2.1]{KRStec}. Indeed KROK.$\mathrm{(v)}$ is a codimension assumption and it translates to our setting as the condition: $k=m$, which is not required for $X$ to be \zkrok. The hypothesis KROK.(vii) is equivalent to \zkrok.\ref{itm:krok5} by point (3) of Proposition \ref{prop:tecnicalzkrok} below, that is a more precise version of \cite[Prop. 2.4]{KRStec}. See also \cref{apx:comp} to compare with the hypotheses that appear in the more standard statements of Kac-Rice formulas, \cite{AdlerTaylor,AzaisWscheborbook}.

\begin{remark}
Although having a Riemannian metric $g$ on $M$ is useful to state \zkrok.\ref{itm:krok5}, the notion does not depend on $g$: If $X$ is {\zkrok} on $(M,g)$ then it is {\zkrok} on $(M,\Tilde{g})$ for any Riemannian metric $\tilde{g}$. 
This is easily seen by the fact that the functions $J_p$ and $\tilde{J}_p$ corresponding to the two metrics are related by an identity: $J_p=\f(p)\tilde{J}_p$ for some smooth function $\f\in \mC^\infty(M,(0,+\infty))$. 
\end{remark}
\begin{remark}\label{rem:bulinskaya}
The hypothesis \zkrok.\ref{itm:krok2} can be verified in some cases using the generalization of Bulinskaya Lemma proved in \cite[Prop. 6.12]{AzaisWscheborbook}. This says that if $X\randin \mC^2(M,\R^k)$ and the triple $(p,X(p),d_pX)$ has a joint density $\rho\colon J^1(M,\R^k) \to \R$, where $J^1(M,\R^k)$ is the first jet bundle, that is bounded on a compact neighborhood of each point $(p,0,A)\in J^1(M,\R^k)$, then \zkrok.\ref{itm:krok2} holds.
\end{remark}

\subsubsection{A comment about the notation}
The notation \emph{KROK}, introduced in \cite{KRStec}, stands for \emph{Kac-Rice OK}. Here, we add the letter \emph{z} for two reasons: to remind that we only care about the \emph{zeroes} and to indicate that some \emph{zonoid} will appear. {\zkrok} is pronounced ``skrok'', ``zkrok'' or ``zee krok''.

\subsection{Remarks on \zkrok-\cref{itm:krok5}}\label{sub:remarkrok}
Given a random element $X\randin \mC^1(M,\R^k)$ and a point $p \in M$,
a \emph{regular conditional probability}\footnote{
See \cite{dudley} or \cite{Erhan}. In the latter the same object is called a \emph{regular version of the conditional probability}.
} of $X$ given $X(p)$ is a function 
\be
\mu(p,\cdot)(\cdot)\colon \mathcal{B}(\mC^1(M,\R^k))\times \R^k\to [0,1],
\ee
\be 
(x,B)\mapsto \mu(p,x)(B)
\ee
that satisfies the following two properties, see \cite{dudley} (The definition for any fixed $p$ as it depends only on the pair of random variables $X$ and $X(p)$).
\begin{enumerate}[label=\alph*)]
\item\label{itm:propA} For every $B\in\mathcal{B}(\mC^1(M,\R^k))$, the function $\mu(p,\cdot)(B)\colon N\to [0,1]$ is Borel and for every $V\in \mathcal{B}(\R^k)$, we have
\be \label{eq:condproba}
\PP\{X\in B ; X(p)\in V\}=\int_V\mu(p,x)(B)d[X(p)](x)
\ee
where recall that $[X(p)]$ denotes the probability measure that is the law of the random vector $X(p)\randin\R^k.$
\item\label{itm:propB} For all $x\in N$, $\mu(p,x)$ is a Borel probability measure on $\mC^1(M,\R^k)$.
\end{enumerate} 
The fact that the space $\mC^1(M,\R^k)$ is Polish ensures that, for every $p\in M$, a regular conditional probability measure $\mu(p,\cdot)(\cdot)$ of $X$ given $X(p)$ exists (see \cite[Theorem 10.2.2]{dudley}) and it is unique up to $[X(p)]$-a.e. equivalence on $\R^k$. However, strictly speaking, it is not a well defined function of $p$, although the notation can mislead to think that. 

According to the above definition, there are many different choices of measures $\mu(p,x)\in \mathscr{P}(\mC^1(M,\R^k))$ with the property that 
$\mu(p,\cdot)(\cdot)$ is a regular conditional probability of $X$ given $X(p)$, for all fixed $p\in M$.
In our case such ambiguity may be traumatic, since we will be interested in the value of $\mu(p,x)$ at $x=0$ which, by \zkrok-\cref{itm:krok:3}, is negligible for the measure $[X(p)]$, i.e. $\PP\{X(p)=0\}=0$. Therefore, it is essential to choose a family of regular conditional probabilities $\{\mu_p\}_{p\in M}$ that has at least some continuity property at  $(p,x)\to (p_0,0)$. This is the motivation for the hypothesis \zkrok-\cref{itm:krok5}. 

\subsection{Notation for conditioned random maps}\label{sub:notakrok} We will use the notation of random elements, in the following sense. If $X\randin \mC^1(M,\R^k)$ is \zkrok, then for any $(p,x)\in M\times \R^k$, we write 
\be 
(X|X(p)=x)\randin\mC^1(M,\R^k)
\ee
for any random element representing the measure $\mu(p,x)$, i.e. such that ${[X|X(p)=x]}=\mu(p,x)$. Hence $(X|X(p)=x)$ is not a well defined random element but since in the sequel everything will only depend on the \emph{law} this will not be a problem.
Moreover, we will write 
\be 
\PP\kop X\in B |X(p)=x \pok:= \PP\kop (X|X(p)=x)\in B\pok =\mu(p,x)(B),
\ee
for every $B\subset \mC^1(M,\R^k)$ and 
\be 
\EE\kop \alpha(X) |X(p)=x \pok:= \EE\kop \alpha((X|X(p)=x))\pok=\int_{\mC^1(M,\R^k)}\alpha(f)d\mu(p,x)(f).
\ee
for every $\alpha\colon \mC^1(M,\R^k)\to \R$ measurable, whenever the integral, called \emph{expectation} in this context, makes sense. If $X$ is {\zkrok} then the probability $\mu(p,0)$ is unique, so the notation $[X|X(p)=x]$ is not ambiguous at $x=0$. More precisely, if $\mu(p,x)$ and $\mu'(p,x)$ are two regular conditional probabilities of $X$ given $X(p)$ satisfying \zkrok-\cref{itm:krok5} then $\mu(p,0)=\mu'(p,0).$  For all the other $x\in\R^k$, we will abuse the notation.

The following observation is often useful in computations.
\begin{remark}\label{rem:indcond}
Let $X\randin \mC^1(M,\R^k)$ and let $p\in M$. If $d_pX$ and $X(p)$ are stochastically independent, then the law of the random vector $d_pX$ is a regular conditional probability of $d_pX$ given $X(p)$, therefore  we have that the two laws are equivalent:
\be 
[d_pX]=[d_pX|X(p)=x], \quad \text{ for $[X(p)]$-almost every $x\in \R^k$. }
\ee
In particular, if $X$ is \zkrok,  the continuity of $\mu(p,x)$ at $x=0$ yields
\be 
[d_pX]=[d_pX|X(p)=0].
\ee
Therefore, in this case the zonoid section at $p$ is computed by:
\be 
\zeta_X(p)=\EE\kop [0,d_pX^1\wedge \dots \wedge d_pX^k] \pok \rho_{X(p)}(0).
\ee
\end{remark}
\subsubsection{The notation makes sense}
The Lemma below has the scope to clarify some doubts that often arise when using the notation explained above.
\begin{lemma}\label{lem:stupidconditioning}
Let $X\randin \mC^1(M,\R^k)$ and fix $p\in M$. Let $\mu(p,\cdot)(\cdot)$ be a regular conditional probability for $X$ given $X(p)$. Then $\mu(p,x)$ is supported on $\{f\in \mC^1(M,\R^k)\colon f(p)=x\}$ for $[X(p)]$-a.e.  $x\in \R^k$, that is, in the above notation, 
\be 
\PP\kop X(p)=x \ \Big|\ X(p)=x\pok=1,\quad \text{for $[X(p)]$-a.e. $x\in \R^k$.}
\ee
\end{lemma}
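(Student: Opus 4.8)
The plan is to reduce the claim about the support of $\mu(p,x)$ to a statement about its pushforward under evaluation, and then to upgrade an ``almost everywhere for each fixed Borel set'' identity to an ``almost everywhere, for all sets simultaneously'' one by means of a countable generating family. First I would introduce the evaluation map $\mathrm{ev}_p\colon \mC^1(M,\R^k)\to\R^k$, $\mathrm{ev}_p(f):=f(p)$, which is continuous for the weak Whitney topology and hence Borel. The set in the statement is $\{f\colon f(p)=x\}=\mathrm{ev}_p^{-1}(\{x\})$, so writing $\nu_x:=(\mathrm{ev}_p)_*\mu(p,x)$ for the pushforward, the claim is equivalent to $\nu_x=\delta_x$ for $[X(p)]$-almost every $x$. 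Note that one cannot simply substitute $B=\mathrm{ev}_p^{-1}(\{x\})$ into the defining identity \eqref{eq:condproba}, since $\{x\}$ is $[X(p)]$-negligible and that identity degenerates to $0=0$; resolving this is exactly the point of the lemma.

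Next, for a \emph{fixed} Borel set $A\subset\R^k$ I would apply property \ref{itm:propA} to $B:=\mathrm{ev}_p^{-1}(A)$. Since $\{X\in B\}=\{X(p)\in A\}$ as events, the left-hand side of \eqref{eq:condproba} equals $[X(p)](A\cap V)=\int_V\one_A\,d[X(p)]$, so that
\be
\int_V \mu(p,x)\tyu\mathrm{ev}_p^{-1}(A)\uyt\,d[X(p)](x)=\int_V \one_A(x)\,d[X(p)](x)\qquad\text{for all Borel }V\subset\R^k.
\ee
Both integrands are Borel functions of $x$ (the first by property \ref{itm:propA}), and they have equal integrals over every Borel $V$; hence they agree $[X(p)]$-almost everywhere. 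This gives $\nu_x(A)=\delta_x(A)$ for all $x$ outside a $[X(p)]$-null set $N_A$ that a priori depends on $A$.

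The only genuinely technical step is to remove this dependence on $A$. I would fix a countable $\pi$-system $\{A_n\}_{n\in\N}$ generating the Borel $\sigma$-algebra of $\R^k$ (for instance, the boxes with rational vertices, together with $\R^k$ itself), and set $N:=\bigcup_n N_{A_n}$, which is again $[X(p)]$-negligible. For every $x\notin N$ the two probability measures $\nu_x$ and $\delta_x$ coincide on the generating $\pi$-system, hence on all Borel sets by Dynkin's $\pi$-$\lambda$ theorem. In particular $\nu_x(\{x\})=\delta_x(\{x\})=1$, that is $\mu(p,x)(\{f\colon f(p)=x\})=1$ for every $x\notin N$, which is the assertion.

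I expect the main obstacle to be precisely this passage from ``for each fixed $A$, almost every $x$'' to ``for almost every $x$, every $A$'': the identity read directly off \eqref{eq:condproba} carries an $A$-dependent exceptional set, and the combination of a countable generating $\pi$-system with the $\pi$-$\lambda$ theorem is what collapses the uncountably many null sets into a single one. Everything else is a routine unravelling of the definition of regular conditional probability.
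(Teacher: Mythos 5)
Your proof is correct and takes essentially the same route as the paper's: both reduce to the a.e. identity $\mu(p,x)\left(\mathrm{ev}_p^{-1}(A)\right)=\one_A(x)$ for each fixed Borel $A$ via property (a), and then collapse the $A$-dependent null sets into one using a countable family. The only cosmetic difference is the final step, where the paper writes $\{x\}$ as a countable intersection of basis elements and uses continuity from above of $\mu(p,x)$, while you invoke Dynkin's $\pi$-$\lambda$ theorem to conclude $\nu_x=\delta_x$ outright and then evaluate at the singleton.
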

\begin{proof}
Let us fix $p\in M$. Let $V\subset \R^k$ be a Borel subset and define $B_V:=\{f\in\mC^1(M,\R^k)|f(p)\in V\}$. Then, by \cref{eq:condproba}, we have that 
\be 
\int_{V}d[X(p)](x)=
\PP\kop X(p)\in V\pok=\PP\kop X\in B_V\pok =\int_{\R^k}\mu(p,x)(B_V)d[X(p)](x).
\ee
It follows that there is a Borel subset $N_V\subset \R^k$, with $\PP\{X(p)\in N_V\}=1$ such that for every $x\in N_V$, we have \be
1_V(x)=\mu(p,x)(B_V)=\PP\kop X(p)\in V|X(p)=x\pok.
\ee
Let $\{V_n\}_{n\in\N}$ be a  countable basis of the topology of $N$. Let $B_n=B_{V_n}\subset \mC^1(M,\R^k)$ be defined as above. Then $\cap_n N_{V_n}:=N'\subset \R^k$ is still a full measure set for $[X(p)]$.  Clearly, we have that every singleton $x\in N$, can be written as a countable intersection
\be 
\{x\}=\bigcap_{\kop n\in \N:\ x\in V_n\pok}V_n.
\ee
Moreover, for every $x\in N'$ and every $n\in\N$, we have that $\mu(p,x)(B_n)=1_{V_n}(x)$. Therefore, if $x\in N'$, then we conclude by the continuity from above of the measure $\mu(p,x)$:
\be 
\PP\kop X(p)=x|X(p)= x \pok=\mu(p,x)\tyu B_{\kop x\pok}\uyt=\inf_{\kop n\in \N:\ x\in V_n\pok} 1_{V_n}(x)=1.
\ee
\end{proof}


\subsection{Equivalent formulations of \zkrok-\cref{itm:krok5}}
We derive a more technical version of the hypothesis \zkrok-\cref{itm:krok5}. 
See also \cref{apx:comp}. 
\begin{proposition}\label{prop:tecnicalzkrok}
Let $X:M\randto\R^k$ be a random map satisfying \zkrok-\cref{itm:krok1}-\cref{itm:krok4} and let $\mu(p,\cdot)(\cdot)$ $=:[X|X(p)=\cdot](\cdot)$ be a regular conditional probability of $X$ given $X(p)$ (See \cref{sub:remarkrok}). The following statements are equivalent:
\begin{enumerate}
    \item ({\zkrok-\cref{itm:krok5}}) The function 
    $ J_M\cdot \mu:M\times \R^k\to \Me^+(\mC^1(M,\R^k))$ is continuous at $(p,0)$ for all $p\in M$.
    \item\label{itm:tecnokrok2} For any bounded continuous function $\a\in  \mC_b(\mC^1(M,\R^k); \R)$ and any convergent sequence $(p_n,x_n)\to (p,0)$ in $M\times \R^k$ we have
\be 
    \EE\kop\a(X)J_{p_n}X\Big| X(p_n)=x_n\pok \to \EE\kop\a(X)J_pX\Big| X(p)=0\pok.
    \ee
    \item\label{itm:KROK8} For any bounded continuous function $\a\in  \mC_b(\mC^1(M,\R^k)\times M; \R)$, the function
    \be 
    M\times \R^k\ni (p,x)\mapsto \EE\kop\alpha(X,p)J_pX\Big| X(p)=0\pok
    \ee
    is finite and continuous at $(p,0)$ for every $p\in M$.
    \item\label{itm:tecnokrok4} For any sequence of continuous functions $\beta_n\to \beta_0\in \mC(\mC^1(M,\R^k); \R)$ that converges in the compact-open topology and any sequence $(p_n,x_n)\to (p_0,0)$ converging in $M\times \R^k$ such that $\beta_n (f)\le C J_{p_n}f$ for some $C>0$, we have that
    \be\label{eq:limbeta}
    \EE\kop\beta_n(X)\Big| X(p_n)=x_n\pok \to \EE\kop\beta_0(X)\Big| X(p_0)=0\pok.
    \ee
\end{enumerate}
\end{proposition}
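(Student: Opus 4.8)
The plan is to recognise all four conditions as different ways of testing the \emph{weak-$*$ continuity at $(p,0)$} of a single family of finite positive measures. Set $\nu(p,x):=J_p\cdot\mu(p,x)\in\Me^+(\mC^1(M,\R^k))$ and note that for bounded continuous $\alpha$ one has $\int\alpha\,d\nu(p,x)=\EE\kop\alpha(X)J_pX\mid X(p)=x\pok$. Since $\mC^1(M,\R^k)$ is Polish, the space $\Me^+(\mC^1(M,\R^k))$ carrying the weak-$*$ topology of \cref{sec:notations} is metrizable, and $M\times\R^k$ is metrizable, so continuity of $(p,x)\mapsto\nu(p,x)$ at $(p,0)$ is equivalent to sequential continuity there. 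Unwinding weak-$*$ convergence -- testing against every $\alpha\in\mC_b$, with $\alpha\equiv1$ giving convergence of total masses and hence finiteness of the limit $\EE\kop J_pX\mid X(p)=0\pok=\nu(p,0)(\mC^1(M,\R^k))$ -- this is verbatim statement \ref{itm:tecnokrok2}. Thus (\zkrok-\cref{itm:krok5})$\iff$\ref{itm:tecnokrok2}.

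Next I would dispatch the two easy implications into \ref{itm:tecnokrok2}. That \ref{itm:KROK8}$\Rightarrow$\ref{itm:tecnokrok2} is immediate, taking $\alpha(f,p)$ independent of $p$. For \ref{itm:tecnokrok4}$\Rightarrow$\ref{itm:tecnokrok2}, I would specialize $\beta_n:=\alpha\cdot J_{p_n}$ for a fixed $\alpha\in\mC_b$: since $(p,f)\mapsto J_pf$ is jointly continuous, $J_{p_n}\to J_{p_0}$ uniformly on compacta, so $\beta_n\to\alpha\,J_{p_0}$ in the compact-open topology, and $\beta_n\le\|\alpha\|_\infty J_{p_n}$; then $\int\beta_n\,d\mu_n=\int\alpha\,d\nu_n$, and \ref{itm:tecnokrok4} delivers exactly the convergence \ref{itm:tecnokrok2} requires.

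The substantial content is \ref{itm:tecnokrok2}$\Rightarrow$\ref{itm:KROK8} and \ref{itm:tecnokrok2}$\Rightarrow$\ref{itm:tecnokrok4}, and both rest on a continuous-convergence principle. Fix $(p_n,x_n)\to(p_0,0)$ and abbreviate $\mu_n:=\mu(p_n,x_n)$, $\nu_n:=\nu(p_n,x_n)$, $\nu_0:=\nu(p_0,0)$; by the equivalence just proved, $\nu_n\to\nu_0$ weakly with convergent total masses. The principle I would invoke is: if in addition $g_n$ is uniformly bounded and converges \emph{continuously} to $g$ (meaning $f_n\to f$ implies $g_n(f_n)\to g(f)$), then $\int g_n\,d\nu_n\to\int g\,d\nu_0$; this follows from the Skorokhod representation theorem \cite{Billingsley} after normalizing total masses. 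For \ref{itm:tecnokrok2}$\Rightarrow$\ref{itm:KROK8} I apply it with $g_n:=\alpha(\cdot,p_n)$, which is bounded by $\sup|\alpha|$ and converges continuously to $\alpha(\cdot,p_0)$ by joint continuity of $\alpha$; this yields continuity at $(p_0,0)$ of the conditional expectation appearing in \ref{itm:KROK8}, its finiteness being clear from $|\alpha|\le\sup|\alpha|$ and finiteness of $\nu(p,0)$.

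For \ref{itm:tecnokrok2}$\Rightarrow$\ref{itm:tecnokrok4} I would rewrite, using $d\nu_n=J_{p_n}\,d\mu_n$,
\begin{equation*}
\int\beta_n\,d\mu_n=\int\frac{\beta_n}{J_{p_n}}\,d\nu_n+\int_{\{J_{p_n}=0\}}\beta_n\,d\mu_n,
\end{equation*}
where the new integrand $a_n:=\beta_n/J_{p_n}$ (set to $0$ where $J_{p_n}=0$) satisfies $a_n\le C$ by the domination hypothesis and converges continuously to $\beta_0/J_{p_0}$, so the first term is treated by the continuous-convergence principle above. The main obstacle lives precisely here: the bound $\beta_n\le C J_{p_n}$ is only \emph{one-sided}, so $a_n$ is bounded above but not below, and one must in addition control the second term, carried by the mass that $\mu_n$ may place on $\{J_{p_n}=0\}$ (where $\beta_n\le0$). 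I expect this to be the delicate step; the way I would push it through is to note that a weakly convergent sequence of finite measures on a Polish space is tight by Prokhorov's theorem \cite{Billingsley}, truncate $J_{p_n}$ from below (replacing it by $J_{p_n}\vee\varepsilon$ and letting $\varepsilon\to0$), and use the uniform integrability furnished by weak convergence with convergent total mass to bound both the negative part and the tail outside a large compact set. This is where \ref{itm:tecnokrok2} and \ref{itm:tecnokrok4} are genuinely shown to encode the same information.
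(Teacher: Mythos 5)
Your overall route is the paper's: identify (1) with (2) as weak-$*$ continuity of $(p,x)\mapsto J_p\cdot\mu(p,x)$, dispatch the downward implications by specialization, and prove the hard upward implication by normalizing the measures $J_{p_n}\cdot\mu(p_n,x_n)$ to probabilities, applying Skorokhod's representation theorem, and concluding by dominated convergence. The paper proves $(1)\Rightarrow(4)$ in exactly this way; the only refinement it adds that you omit is the degenerate case $E(p_0,0):=\EE\kop J_{p_0}X\mid X(p_0)=0\pok=0$, where normalization fails and one instead bounds the whole expression by $C\,E(p_n,x_n)\to 0$. You should include that case explicitly.

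The place where you stall --- the term $\int_{\{J_{p_n}=0\}}\beta_n\,d\mu_n$ and the absence of a lower bound on $\beta_n/J_{p_n}$ --- is not a gap that the tightness/truncation machinery you propose can close: under the literal one-sided reading $\beta_n\le C J_{p_n}$, statement (4) is simply false (take $\beta_n=\beta_0=-g$ with $g\ge 0$ continuous and non-integrable against $\mu(p_0,0)$; the hypothesis holds but the left-hand side of \eqref{eq:limbeta} is not even defined). The domination must be read as two-sided, $|\beta_n(f)|\le C J_{p_n}f$, and this is how the paper's own proof uses it: it asserts that $J_{p_n}f=0$ forces $\beta_n(f)=0$ (which annihilates your second term identically) and applies dominated convergence to $\beta_n(Y_n)/J_{p_n}Y_n$ with the two-sided bound $C$. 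With that reading your decomposition closes immediately and no Prokhorov or truncation argument is needed; note that all the $\beta_n$ used downstream (e.g.\ in \cref{prop:hcontinuity} and \cref{thm:pullback}) are nonnegative, so the two-sided reading is the intended one.
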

\begin{proof}
$(1)\iff (2)$ by definition. Moreover, it is clear that $(4)\implies (3)\implies (2)$, so that it will be sufficient to show that $(1)\implies (4)$. In \cite[Proposition 2.4]{KRStec} it was proven that $(1)\implies (3)$, but a slight modification of the same argument allows to obtain the (apparently) stronger statement $(4)$. We are going to repeat it here, with some extra care, to prove the Proposition.

Assume $(1)$ and let $\beta_n,p_n,x_n\to \beta_0,p_0,0$ as in the statement of $(4)$. Observe that for all $\beta=\beta_n$ and $p=p_n$, if $J_{p}f=0$, then $\beta(f)=0$, so that
\bega
    \EE\kop\beta(X)\Big| X(p)=x\pok 
    =
    \int_{\mC^1(M,\R^k)}\beta(f)d\mu(p,x) (f)
    =
    \\
    =
    \int_{\mC^1(M,\R^k)\smallsetminus \{J_p=0\}}\beta(f)\frac{J_pf}{J_pf}d\mu(p,x) (f)
    +
    \int_{\mC^1(M,\R^k)\cap  \{J_p=0\}}\beta(f)d\mu(p,x) (f)
    \\=
    \int_{\mC^1(M,\R^k)}\frac{\beta(f)}{J_pf}d\tyu J_p\cdot \mu(p,x)\uyt (f).
\eega
Notice that the last term makes sense because $J_p\cdot \mu(p,x)\tyu\{J_p=0\}\uyt=0$.

Let $E(p,x):=\EE\kop J_pX|X(p)=x\pok$ be the total mass of the measure $J_p\cdot \mu(p,x)$. By \zkrok-\cref{itm:krok5}, the number $E(p,0)\ge 0$ is finite, though notice that it could be zero (See \cref{ex:parabolazza}). The hypothesis $(1)$ implies that $E(p_n,x_n)\to E(p,0)$. If $E(p_0,0)=0$, then the limit \eqref{eq:limbeta} holds since
\be 
    \left|\EE\kop\beta_n(X)\Big| X(p_n)=x_n\pok\right| \le CE(p_n,x_n)
    \to 0=
    \EE\kop\beta_0(X)\Big| X(p_0)=0\pok.
\ee
Assume that $E(p_0,0)>0$, then we can assume that $E(p_n,x_n)>0$ for all $n\in\N$. In this case, the next sequence of probabilities converges:
\be 
P_n:=E(p_n,x_n)^{-1}J_{p_n}\cdot\mu(p_n,x_n)\to P_0:=E(p_0,0)^{-1}J_{p_0}\cdot\mu(p_0,0).
\ee
Thus by Skorohod's Theorem (See \cite{Billingsley, Parth}) there exists a sequence of random functions $Y_n, Y_0\randin \mC^1(M,\R^k)$ defined on a common probability space such that $Y_n\to Y_0$ in $\mC^1(M,\R^k)$ almost surely. Then 
\bega 
\EE\kop\beta_n(X)\Big| X(p_n)=x_n\pok
&= E(p_n,x_n)\int_{\mC^1(M,\R^k)}\frac{\beta_n(f)}{J_{p_n}f}dP_n(f)
\\
&=E(p_n,x_n)\EE\kop \frac{\beta_n(Y_n)}{J_{p_n}f}\pok
\to 
E(p_0,0)\EE\kop \frac{\beta_0(Y)}{J_{p_0}f}\pok
\\
&=E(p_0,0)\int_{\mC^1(M,\R^k)}\frac{\beta_0(f)}{J_{p_0}f}dP_0(f)
\\
&=\EE\kop\beta(X)\Big| X(p)=0\pok.
\eega
Here the limit holds by dominated convergence, since $\frac{\beta_n(Y_n)}{J_{p_n}f}\le C$ and $\frac{\beta_n(Y_n)}{J_{p_n}f}\to \frac{\beta_0(Y_0)}{J_{p_0}f}$ almost surely.
\end{proof}
To show that a given random field verifies \zkrok-\ref{itm:krok5}, it is often convenient to check directly that it satisfies  point \ref{itm:tecnokrok2} of \cref{prop:tecnicalzkrok} above, which is equivalent to \zkrok-\ref{itm:krok5} by definition.
On the other hand, the apparently stronger formulation given in point \cref{itm:tecnokrok4} is the one that we will refer to in the subsequent proofs, in order to deduce other properties of \zkrok fields. We also note that \zkrok-\ref{itm:krok5} is an equivalent formulation of property KROK.(vii) of \cite[Definition 2.1]{KRStec}, that is point \ref{itm:KROK8}.
\begin{remark}
\cref{prop:tecnicalzkrok} is based on the same principle as the theorem of Banach-Steinhaus \cite[Chapter 2]{brezis2010functional}.
\end{remark}
\begin{example}\label{ex:parabolazza}
There are examples of random maps $X\randin C^1(M,\R)$ that are \zkrok, thus in particular 
\be 
\PP\kop X(p)=0\implies  J_pX>0, \forall p\in M\pok=1,
\ee 
but for which there are points $p\in M$ with  $\EE\kop J_pX|X(p)=0\pok=0$. It is possible to build such examples on any manifold $M$ by generalizing the following construction.

Let $\gamma_1,\gamma_2\sim N(0,1)$ be independent normal Gaussians. Define $X\randin \mC^\infty(\R,\R)$ as
\be 
X(u):=u^2\gamma_1 +\gamma_2.
\ee
By \cref{prop:Gausmthkrok}, in the next subsection, the field $X$ is {\zkrok} and the probability $\mu(u_0,0)$ is represented by the random field such that $(X(u)|X(u_0)=0)=(u^2-u_0^2)\gamma_1$. Thus, $\EE\kop J_0X|X(0)=0\pok=0$. See also \cref{ex:randlev}.
\end{example}

\subsection{The Gaussian case}
Assume that the random map $X\colon M \randto \R^k$ is Gaussian, see \cite{dtgrf,AdlerTaylor}. As it should be expected, in this case the {\zkrok}  hypotheses are much simpler, in particular \zkrok.\ref{itm:krok5} is automatically satisfied.
\begin{proposition}\label{prop:gauskrok1}
Let $X$ be a Gaussian random field on $M$ with values in $\R^k$ such that
\begin{enumerate}
    \item $X\randin \mC^1(M,\R^k)$;
    \item Almost surely, $0$ is a regular value of $X$;
    \item For any $p\in M$ the Gaussian vector $X(p)\randin \R^k$ is non-degenerate: ${\det\EE\{X(p)X(p)^T\}\neq 0}$;
\end{enumerate}
Then $X$ is \zkrok.
\end{proposition}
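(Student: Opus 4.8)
The plan is to verify the five conditions of \cref{def:zkrok} one by one. Conditions \zkrok.\ref{itm:krok1} and \zkrok.\ref{itm:krok2} are literally hypotheses (1) and (2), so there is nothing to prove. For \zkrok.\ref{itm:krok:3}, hypothesis (3) says that $X(p)$ is a nondegenerate Gaussian vector, hence its law is absolutely continuous with the usual Gaussian density $\rho_{X(p)}$, determined by the mean $m(p):=\EE\{X(p)\}$ and the covariance $\Sigma(p):=\EE\{(X(p)-m(p))(X(p)-m(p))^T\}$. For \zkrok.\ref{itm:krok4} I would note that, since $X\randin\mC^1(M,\R^k)$, the evaluation $p\mapsto X(p)$ is continuous, so $m$ and $\Sigma$ are continuous in $p$; the explicit Gaussian density is then jointly continuous in $(p,x)$ on the open set $\{\det\Sigma(p)>0\}$, which by hypothesis (3) is all of $M$, so $\rho_X$ is continuous at $(p,0)$ (in fact everywhere).

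The only substantial point is \zkrok.\ref{itm:krok5}, where I would exploit that conditioning a Gaussian field is an affine operation. Regressing $X$ on $X(p)$, the conditional law of $X$ given $X(p)=x$ is again Gaussian and can be realized as
\be
(X\mid X(p)=x)\;\overset{d}{=}\;\tilde X^{(p)}+a_p(x),
\ee
where $a_p(x)\in\mC^1(M,\R^k)$ is the conditional mean field
\be
a_p(x)(q)=m(q)+C(q,p)\,\Sigma(p)^{-1}\,(x-m(p)),\qquad C(q,p):=\mathrm{Cov}(X(q),X(p)),
\ee
which is affine in $x$, and $\tilde X^{(p)}:=X-a_p(X(p))$ is a centered Gaussian field that is stochastically independent of $X(p)$ and whose law does not depend on $x$. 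Taking $\mu(p,x):=[\tilde X^{(p)}+a_p(x)]$ furnishes an explicit regular conditional probability of $X$ given $X(p)$. I would first check finiteness of $E(p,x):=\EE\kop J_pX\mid X(p)=x\pok=\EE\{J_p(\tilde X^{(p)}+a_p(x))\}$: since $J_pf=\|d_pf^1\wedge\cdots\wedge d_pf^k\|$ is dominated by a polynomial of degree $k$ in the entries of the Gaussian vector $d_p\tilde X^{(p)}$, and Gaussian vectors have finite moments of all orders, we get $E(p,x)<\infty$.

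To establish continuity I would use the equivalent formulation \cref{prop:tecnicalzkrok}.\ref{itm:tecnokrok2}: fix $\alpha\in\mC_b(\mC^1(M,\R^k);\R)$ and a sequence $(p_n,x_n)\to(p,0)$, and show
\be
\EE\kop\alpha(\tilde X^{(p_n)}+a_{p_n}(x_n))\,J_{p_n}(\tilde X^{(p_n)}+a_{p_n}(x_n))\pok
\longrightarrow
\EE\kop\alpha(\tilde X^{(p)}+a_{p}(0))\,J_{p}(\tilde X^{(p)}+a_{p}(0))\pok.
\ee
Continuity in $p$ of $m$, $\Sigma^{-1}$ and of the cross-covariance $C(\cdot,p)$ (which depends continuously on $p$ in the $\mC^1$ topology, by the regularity of the $\mC^1$ Gaussian field $X$) gives $a_{p_n}(x_n)\to a_p(0)$ in $\mC^1(M,\R^k)$ and convergence in law of the centered Gaussian fields $\tilde X^{(p_n)}\to\tilde X^{(p)}$; in particular the joint laws of $(\tilde X^{(p_n)},d_{p_n}\tilde X^{(p_n)})$ converge. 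The bounded factor $\alpha$ is handled by this weak convergence, while the unbounded Jacobian $J_{p_n}$ is handled by uniform integrability, which holds because the covariances of $d_{p_n}\tilde X^{(p_n)}$ converge and hence the higher moments $\EE\{(J_{p_n}(\tilde X^{(p_n)}+a_{p_n}(x_n)))^{1+\epsilon}\}$ stay bounded uniformly in $n$. Combining weak convergence with uniform integrability yields the limit, which by \cref{prop:tecnicalzkrok} is exactly \zkrok.\ref{itm:krok5}. I expect the last step to be the main obstacle: passing to the limit through the unbounded factor $J_{p_n}$ requires the uniform-integrability control, i.e. a uniform bound on higher moments of $J_{p_n}(\tilde X^{(p_n)})$ along the sequence, which rests on the continuity of the covariance data and on Gaussian moment bounds; the affine regression structure and the independence of $\tilde X^{(p)}$ from $X(p)$ are what make everything else routine.
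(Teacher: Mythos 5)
Your proof is correct, but it is genuinely more than what the paper does: the paper's own proof of this proposition is a one-line deferral to \cite[Section 9.1, Lemma 9.1]{KRStec}, where the verification of \zkrok-\ref{itm:krok5} (in the form of \cref{prop:tecnicalzkrok}.(\ref{itm:KROK8})) is carried out, whereas you supply a self-contained argument. The Gaussian-regression decomposition $(X\mid X(p)=x)\overset{d}{=}\tilde X^{(p)}+a_p(x)$, with $a_p$ affine in $x$ and $\tilde X^{(p)}$ independent of $X(p)$, is indeed the standard mechanism behind the cited lemma, and your reduction to \cref{prop:tecnicalzkrok}.(\ref{itm:tecnokrok2}) is exactly the equivalence the paper invokes. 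Two points deserve to be made explicit rather than left implicit. First, the continuity (indeed $\mC^1$-regularity) of $q\mapsto m(q)$ and $q\mapsto C(q,p)$, and the continuity of $p\mapsto C(\cdot,p)$ into $\mC^1(M,\R^k)$, are where Gaussianity is used beyond the pointwise level: they require $\EE\|X\|_{\mC^1(K)}<\infty$ on compacts, which for Gaussian random elements of the Fréchet space $\mC^1(M,\R^k)$ follows from Fernique's theorem; without this, neither \zkrok-\ref{itm:krok4} nor the convergence $a_{p_n}(x_n)\to a_p(0)$ in $\mC^1$ is justified. Second, your last step can be streamlined: since $\tilde X^{(p_n)}=X-a_{p_n}(X(p_n))$ are all realized on the same probability space and $a_{p_n}(X(p_n))\to a_p(X(p))$ almost surely in $\mC^1(M,\R^k)$, you get almost sure convergence $\tilde X^{(p_n)}+a_{p_n}(x_n)\to\tilde X^{(p)}+a_p(0)$ in $\mC^1$, so there is no need to invoke weak convergence of laws at all — almost sure convergence of the integrands together with the uniform $L^{1+\epsilon}$ bound on the Jacobians (which follows from convergence of the covariances of $d_{p_n}\tilde X^{(p_n)}$ and Gaussian moment bounds) closes the argument by Vitali/uniform integrability. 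What your route buys is a proof readable without the external reference; what the paper's route buys is brevity and reuse of a lemma already proved in the required generality.
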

\begin{proof}
In \cite[Section 9.1]{KRStec} the author uses \cite[Lemma 9.1]{KRStec} to prove the validity of \zkrok.\ref{itm:krok5}, in the equivalent form reported in Proposition \ref{prop:tecnicalzkrok}, point (\ref{itm:KROK8}).
\end{proof}
Actually, the requirement that $0$ is almost surely a regular value is, in many cases, redundant. We already seen that when $X\in\mC^2$, one can use the generalized Bulinskaya Lemma, see Remark \ref{rem:bulinskaya}. However, in the Gaussian case, if the field is smooth\footnote{The requirement that $X\randin\mC^r$ for $r$ large enough would be sufficient, however, the authors do not know precisely \emph{how large} $r$ should be.} then by \cite[Theorem 7]{dtgrf} we have that \emph{(3)} implies \emph{(2)}. This can be thought as a manifestation of Sard's theorem (see \cite{Hirsch}), so that it should not be surprising that a regularity higher than $\mC^1$ is required\footnote{Sard's theorem \cite{sard} states that the set of critical values of a map $f\colon \R^m\to\R^k$ of class $\mC^r$ has measure zero, provided that $r\ge 1+\max\{0,m-k\}$.}.
\begin{proposition}\label{prop:Gausmthkrok}
Let Let $X$ be a Gaussian random field on $M$ with values in $\R^k$ such that
\begin{enumerate}
    \item $X\randin \mC^\infty(M,\R^k)$;
    \item For any $p\in M$ the Gaussian vector $X(p)\randin \R^k$ is non-degenerate: ${\det\EE\{X(p)X(p)^T\}\neq 0}$;
    \end{enumerate}
Then $X$ is \zkrok.
\end{proposition}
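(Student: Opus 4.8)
The plan is to reduce the whole statement to \cref{prop:gauskrok1}, verifying its three hypotheses one at a time. Two of them are immediate. Since $X\randin\mC^\infty(M,\R^k)$ is in particular of class $\mC^1$, hypothesis $(1)$ of \cref{prop:gauskrok1} holds; and hypothesis $(2)$ of the present statement is verbatim hypothesis $(3)$ there, namely the non-degeneracy of the Gaussian vector $X(p)$ at each point. So the only missing ingredient is hypothesis $(2)$ of \cref{prop:gauskrok1}: that $0$ is almost surely a regular value of $X$. Once this is in hand, \cref{prop:gauskrok1} applies directly and delivers that $X$ is \zkrok, in particular taking care automatically of \zkrok.\ref{itm:krok5} and the continuity requirements.

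To establish the regular-value property I would invoke parametric transversality in its Gaussian form. Consider the full evaluation map $(\omega,p)\mapsto X_\omega(p)$ into $\R^k$. The non-degeneracy assumption $(2)$ says precisely that at each fixed $p$ the law of $X(p)$ is a Gaussian with full-rank covariance, hence supported on all of $\R^k$; consequently the variation of $X_\omega(p)$ in the $\omega$-direction alone already spans $\R^k$, so the full evaluation map is transverse to $\{0\}$ at every point of its zero locus. The parametric transversality principle then yields that, for almost every $\omega$, the fiber map $X_\omega\colon M\to\R^k$ is itself transverse to $\{0\}$, which is exactly the assertion that $0$ is a.s. a regular value. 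This is the implication ``$(3)\Rightarrow(2)$'' mentioned just before the statement, and I would simply cite \cite[Theorem~7]{dtgrf} for the rigorous infinite-dimensional version rather than reconstruct it.

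The one point that genuinely requires the stronger smoothness hypothesis — and thus the only real obstacle — is this passage from transversality of the full map to transversality of almost every fiber: it rests on Sard's theorem, which for a target $\R^k$ and a source of dimension $m=\dim M$ needs regularity $r\ge 1+\max\{0,m-k\}$. Because $m$ is unconstrained here, no fixed finite regularity suffices uniformly, whereas assuming $X\randin\mC^\infty$ clears the Sard threshold at every point. This is precisely why the hypothesis is strengthened from $\mC^1$ (as in \cref{prop:gauskrok1}) to $\mC^\infty$. With all three hypotheses of \cref{prop:gauskrok1} verified, the conclusion follows.
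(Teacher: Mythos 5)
Your proposal is correct and follows exactly the paper's route: both reduce to \cref{prop:gauskrok1}, note that the $\mC^1$ and non-degeneracy hypotheses are immediate, and obtain the almost-sure regular-value condition by citing \cite[Theorem 7]{dtgrf}, with the same observation that the $\mC^\infty$ assumption is what clears the Sard-type regularity threshold. The extra heuristic you give for why the cited theorem holds is not needed (and not given) in the paper, but it does no harm since you ultimately rely on the citation rather than on that sketch.
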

\begin{proof}
Combine Proposition \ref{prop:gauskrok1} with \cite[Theorem 7]{dtgrf} as discussed above.
\end{proof}

\section{The zonoid section}\label{sec:zonoidsection}
We are now ready to define the main object of this paper. We recall, from \cref{sub:zonoidbundles} that a zonoid section $\zeta\in\Gamma(\ZZ(\Lambda^kT^*M))$ is the choice of a zonoid at each point $p$ of the manifold $M$ in the vector space $\Lambda^kT_p^*M$ such that this zonoid depends \emph{continuously} on the point $p$.
\begin{definition}\label{def:zonoidsec}
Let $X=(X^1,\ldots,X^k)\randin \mC^1(M,\R^k)$ be \emph{z-KRoK}. The associated \emph{zonoid section} $\zeta_X\in \Gamma\left(\ZZ(\Lambda^k T^*M)\right)$ is defined for every $p\in M$ by 
\begin{equation}
    \zeta_X(p):=\EE\kop\qwe0,\dd_pX^1\wedge\cdots\wedge\dd_p X^k\ewq\Big|X(p)=0\pok\rho_{X(p)}(0).
\end{equation}
\end{definition}
The fact that this definition is well posed, i.e. that the section $\zeta_X$ is indeed continuous, is a consequence of \cref{prop:hcontinuity} below.
This definition has to be intended in the following sense: let $[X|X(p)=0]=\mu(p,0)$ be the probability measure implied by the {\zkrok} condition and represented by a random map $(X|X(p)=0)\randin \mC^1(M,\R^k)$, as explained in \cref{sub:notakrok}. Then we consider the random covector $(\dd_pX^1\wedge\cdots\wedge\dd_p X^k|X(p)=0)=:Y\randin \Lambda^kT_p^*M$ and form the random segment $[0,Y]\subset \Lambda^kT^*_pM$. This is, in particular, a random zonoid and we can take its expectation as explained in \cref{sec:zonandrv} (we will see that $\EE\|Y\|<+\infty$ in a moment), and build the zonoid $\zeta_X(p)\subset \Lambda^kT^*_pM$ having support function $
h_{\zeta_{X}(p)}\colon \Lambda^kT_pM\to \R$ given, for every $u\in\Lambda^kT_pM$, by
\be 
h_{\zeta_X(p)}(u)=\rho_{X(p)}(0) \EE\max\kop0,\langle Y,u\rangle \pok.
\ee 
We denote by $h_{\zeta_X}\colon \Lambda^kTM\to\R$ the function given by $(p,u)\mapsto h_{\zeta_X(p)}(u)$. The following property is a useful consequence of the {\zkrok} hypotheses.
(see equation \eqref{eq:suppofexpseg} and the precedent discussion.)
\begin{proposition}\label{prop:hcontinuity}
$h_{\zeta_X}\colon \Lambda^kTM\to \R$ is continuous.
\end{proposition}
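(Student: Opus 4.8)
The plan is to establish sequential continuity of $h_{\zeta_X}$ on the total space $\Lambda^kTM$, which is a manifold and hence metrizable, so this is enough. Throughout I abbreviate $\eta_p(f):=\dd_pf^1\wedge\cdots\wedge\dd_p f^k\in\Lambda^kT^*_pM$ for $f=(f^1,\dots,f^k)\in\mC^1(M,\R^k)$, so that $J_pf=\|\eta_p(f)\|$ and, by the definition of $\zeta_X$ together with the conventions of \cref{sub:notakrok},
\be\label{eq:hzetaform}
h_{\zeta_X}(p,u)=\rho_{X(p)}(0)\,\EE\kop\max\{0,\langle\eta_p(X),u\rangle\}\big|X(p)=0\pok.
\ee
The first factor is continuous at every $(p,0)$ by \zkrok-\cref{itm:krok4}, so the whole task reduces to showing that the conditional expectation in \eqref{eq:hzetaform} depends continuously on $(p,u)$. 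The natural tool is the dominated continuity of conditional expectations recorded in point \ref{itm:tecnokrok4} of \cref{prop:tecnicalzkrok}, and this continuity of $h_{\zeta_X}$ is exactly what makes $\zeta_X$ a genuine zonoid section via \cref{lem:continuih}.

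Fix $(p_0,u_0)\in\Lambda^kTM$ and a sequence $(p_n,u_n)\to(p_0,u_0)$ in the total space. Choosing a local trivialization of $\Lambda^kTM$ and the dual trivialization of $\Lambda^kT^*M$ over a neighborhood $U\ni p_0$, the canonical duality pairing $\langle\eta_p(f),u\rangle$ becomes the standard pairing of the model fibre $W$, and the map $(f,p,u)\mapsto\langle\eta_p(f),u\rangle$ is jointly continuous on $\mC^1(M,\R^k)\times U\times W$, since the $1$-jet evaluation $(f,p)\mapsto\dd_pf$ is jointly continuous for the weak Whitney topology \cite{Hirsch} and the wedge is multilinear. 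I then set $\beta_n(f):=\max\{0,\langle\eta_{p_n}(f),u_n\rangle\}$ and $\beta_0(f):=\max\{0,\langle\eta_{p_0}(f),u_0\rangle\}$, which are continuous functions on $\mC^1(M,\R^k)$, and check the three hypotheses of point \ref{itm:tecnokrok4} of \cref{prop:tecnicalzkrok} with $x_n=0$. First, $\beta_n\to\beta_0$ in the compact-open topology: because $\max\{0,\cdot\}$ is $1$-Lipschitz, on any compact $\mathcal{K}\subset\mC^1(M,\R^k)$ one has $\sup_{f\in\mathcal{K}}|\beta_n(f)-\beta_0(f)|\le\sup_{f\in\mathcal{K}}|\langle\eta_{p_n}(f),u_n\rangle-\langle\eta_{p_0}(f),u_0\rangle|$, and uniform continuity of the joint pairing on the compact set $\mathcal{K}\times L$, with $L:=\{(p_n,u_n)\}_n\cup\{(p_0,u_0)\}$, drives this supremum to $0$. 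Second, the domination $\beta_n(f)\le\|u_n\|\,J_{p_n}f\le C\,J_{p_n}f$ holds with $C:=\sup_n\|u_n\|<\infty$ (finite since $\|u_n\|\to\|u_0\|$), by Cauchy--Schwarz $\langle\eta_{p_n}(f),u_n\rangle\le\|\eta_{p_n}(f)\|\,\|u_n\|$. Third, $(p_n,0)\to(p_0,0)$ trivially.

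With these in place, point \ref{itm:tecnokrok4} of \cref{prop:tecnicalzkrok} gives $\EE\kop\beta_n(X)\big|X(p_n)=0\pok\to\EE\kop\beta_0(X)\big|X(p_0)=0\pok$, i.e. the conditional-expectation factor of \eqref{eq:hzetaform} is sequentially continuous at $(p_0,u_0)$; multiplying by $\rho_{X(p_n)}(0)\to\rho_{X(p_0)}(0)$ yields $h_{\zeta_X}(p_n,u_n)\to h_{\zeta_X}(p_0,u_0)$, which proves the claim. The essential obstacle is that the integrand $\max\{0,\langle\eta_p(X),u\rangle\}$ is \emph{unbounded} on $\mC^1(M,\R^k)$, so the bounded version in point \ref{itm:tecnokrok2} of \cref{prop:tecnicalzkrok} cannot be applied directly; the dominated formulation \ref{itm:tecnokrok4} is designed precisely to absorb this growth against $J_pf$, and the only genuinely technical point is verifying that the family of wedge-pairings $\beta_n$ converges in the compact-open topology.
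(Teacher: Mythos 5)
Your proof is correct and follows essentially the same route as the paper's: fix a convergent sequence $(p_n,u_n)\to(p_0,u_0)$, package the integrand into continuous functionals $\beta_n$ converging compact-openly, dominate them by $C\,J_{p_n}f$ using $\|u_n\|\le C$, and invoke point \ref{itm:tecnokrok4} of \cref{prop:tecnicalzkrok} with $x_n=0$. The only cosmetic difference is that the paper absorbs the factor $\rho_{X(p_n)}(0)$ into $\beta_n$ whereas you treat it as a separate continuous prefactor via \zkrok-\cref{itm:krok4}; both are fine, and your verification of the compact-open convergence is if anything more detailed than the paper's.
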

\begin{proof}
Let $(p_n,u_n)\to (p_0,u_0)$ be a converging sequence in $\Lambda^kTM$. Define ${\beta_n \colon\mC^1(M,\R^k)\to \R}$ as
\be 
\beta_n(f):=\max\kop 0,\langle d_{p_n}f^1\wedge\dots\wedge d_{p_n}f^k,u_n\rangle\pok
\rho_{X(p_n)}(0). \ee
Clearly $\beta_n$ is continuous and, by \zkrok-\cref{itm:krok:3}, it converges: $\beta_n\to \beta_0$ in the compact-open topology of ${\mC\tyu\mC^1(M,\R^k);\R\uyt}$. Moreover, since $u_n$ converges, there exists a constant $C>0$ such that
\be 
\beta_n(f)\le \|d_{p_n}f^1\wedge\dots\wedge d_{p_n}f^k\|\|u_n\|\le C J_{p_n}f.
\ee
Applying \cref{prop:tecnicalzkrok}, with $x_n=0$, we obtain 
\bega
    \lim_{n\to+\infty}h_{\zeta_X(p_n)}(u_n)
    &=
    \lim_{n\to+\infty}\EE\kop\beta_n(X)\Big| X(p_n)=x_n\pok
    \\
    &=
    \EE\kop\beta_0(X)\Big| X(p_0)=0\pok
    \\
    &=
    h_{\zeta_X(p_0)}(u_0).
    \eega
\end{proof}
By \cref{lem:continuih} this ensures that the function $\zeta_X\colon M\to \ZZ(\Lambda^k T^*M)$ is indeed continuous and that \cref{def:zonoidsec} was well posed:  $\zeta_X\in \Gamma\left(\ZZ(\Lambda^k T^*M)\right)$.
\subsection{The Pull-back property}
We now establish a simple and very useful criteria for building {\zkrok} maps out of others in a seemingly functorial way. This is also reminiscent of a property of the characteristic classes of vector bundles.
\begin{mainthm}\label{thm:pullback}
Let $X\randin C^1(M,\R^k)$ be \zkrok. Let $S$ be a smooth manifold and let $\f\colon S\to M$ be a smooth map such that $\f\transv X^{-1}(0)$ almost surely. Then $X\circ \f\randin C^1(N,\R^k)$ is {\zkrok} and 
\be\label{eq:pullback} 
\zeta_{X\circ \f}(q)=d_q\f^*\zeta_X(\f(q)), \quad \forall q\in S.
\ee
\end{mainthm}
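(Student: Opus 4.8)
The proof is pointwise in $q\in S$ and splits into two tasks: showing that $Y:=X\circ\f$ is \zkrok, and verifying the formula \eqref{eq:pullback}. The algebraic heart of the formula is the chain rule. Since $Y^i=X^i\circ\f$, we have $Y(q)=X(\f(q))$ and $d_qY^i=d_q\f^*(d_{\f(q)}X^i)$, so that
\be
d_qY^1\wedge\cdots\wedge d_qY^k=d_q\f^*\left(d_{\f(q)}X^1\wedge\cdots\wedge d_{\f(q)}X^k\right),
\ee
where $d_q\f^*$ denotes the map $\Lambda^kT^*_{\f(q)}M\to\Lambda^kT^*_qS$ induced on exterior $k$--powers by the cotangent pull-back $(d_q\f)^*$. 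The plan is to first establish the \zkrok conditions for $Y$, identifying along the way the conditional law of $Y$ given $Y(q)$, and then to read off the formula from \cref{lem:lineartransfofVitale} applied to the linear map $T=d_q\f^*$, using that $\rho_{Y(q)}(0)=\rho_{X(\f(q))}(0)$ since $Y(q)$ and $X(\f(q))$ have the same law.

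Conditions \zkrok.\ref{itm:krok1}--\ref{itm:krok4} for $Y$ are routine. Precomposition by the fixed smooth map $\f$ is continuous $\mC^1(M,\R^k)\to\mC^1(S,\R^k)$, so $Y\randin\mC^1(S,\R^k)$ and \ref{itm:krok1} holds. For \ref{itm:krok2}, at a point $q\in Y^{-1}(0)=\f^{-1}(X^{-1}(0))$ the differential $d_{\f(q)}X$ is a.s.\ surjective with $\ker d_{\f(q)}X=T_{\f(q)}X^{-1}(0)$; the hypothesis $\f\transv X^{-1}(0)$ says exactly that $d_q\f(T_qS)+\ker d_{\f(q)}X=T_{\f(q)}M$, which is equivalent to surjectivity of $d_qY=d_{\f(q)}X\circ d_q\f$, i.e.\ to $0$ being a regular value of $Y$. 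Conditions \ref{itm:krok:3}--\ref{itm:krok4} are immediate from $Y(q)=X(\f(q))$, which gives $\rho_Y(q,x)=\rho_X(\f(q),x)$, continuous at $(q,0)$ by continuity of $\f$ and \zkrok.\ref{itm:krok4} for $X$.

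The substantive step is \zkrok.\ref{itm:krok5}, verified through \cref{prop:tecnicalzkrok}, point \ref{itm:tecnokrok2}. Writing $\f^*\colon\mC^1(M,\R^k)\to\mC^1(S,\R^k)$, $f\mapsto f\circ\f$, and letting $\mu^X(p,\cdot)$ be a regular conditional probability of $X$ given $X(p)$, the pushforward $\mu^Y(q,x):=(\f^*)_*\mu^X(\f(q),x)$ satisfies \eqref{eq:condproba} for $Y$ (a direct substitution using $(\f^*)^{-1}(B)$ and $Y(q)=X(\f(q))$), hence is a regular conditional probability for $Y$. The key estimate is the domination $J_qY=\|d_q\f^*(W)\|\le\|d_q\f^*\|\,J_{\f(q)}X=:C(q)\,J_{\f(q)}X$, with $W=d_{\f(q)}X^1\wedge\cdots\wedge d_{\f(q)}X^k$ and $C(q)$ depending continuously on $q$. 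Given $\alpha\in\mC_b(\mC^1(S,\R^k))$ and $(q_n,x_n)\to(q,0)$, I would set
\be
\beta_n(f):=\alpha(f\circ\f)\,\left\|d_{q_n}\f^*\left(d_{\f(q_n)}f^1\wedge\cdots\wedge d_{\f(q_n)}f^k\right)\right\|,
\ee
so that $\EE\kop\alpha(Y)J_{q_n}Y\,\big|\,Y(q_n)=x_n\pok=\EE\kop\beta_n(X)\,\big|\,X(\f(q_n))=x_n\pok$. Working in local charts near $q$ and $\f(q)$, one checks that $\beta_n\to\beta_0$ in the compact-open topology (using $d_{q_n}\f^*\to d_q\f^*$ and joint continuity of the evaluation of first derivatives as $\f(q_n)\to\f(q)$) and that $\beta_n(f)\le C\,J_{\f(q_n)}f$ for a uniform $C$. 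Applying \cref{prop:tecnicalzkrok}, point \ref{itm:tecnokrok4}, to the \zkrok field $X$ along the points $(\f(q_n),x_n)\to(\f(q),0)$ then yields $\EE\kop\beta_n(X)\,|\,X(\f(q_n))=x_n\pok\to\EE\kop\beta_0(X)\,|\,X(\f(q))=0\pok=\EE\kop\alpha(Y)J_qY\,|\,Y(q)=0\pok$, which is precisely point \ref{itm:tecnokrok2} for $Y$; finiteness of $\EE\{J_qY\mid Y(q)=0\}$ follows from the same domination and \zkrok.\ref{itm:krok5} for $X$. This last step is the main obstacle: the transfer from $X$ to $Y$ must be carried out at the level of the measure-valued continuity \zkrok.\ref{itm:krok5}, and the conditioning base point $\f(q_n)$ genuinely moves with $n$, so the comparison functions $\beta_n$ vary; the Banach--Steinhaus-type formulation of \cref{prop:tecnicalzkrok}, point \ref{itm:tecnokrok4}, which permits a moving dominated sequence together with moving conditioning points, is exactly what absorbs this.

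Finally, for the formula, under $\mu^Y(q,0)=(\f^*)_*\mu^X(\f(q),0)$ the random vector $g\mapsto d_qg^1\wedge\cdots\wedge d_qg^k$ equals $d_q\f^*(W)$, so its conditional law is the pushforward under $d_q\f^*$ of the law of $W$ given $X(\f(q))=0$; this vector is integrable by the domination above and \zkrok.\ref{itm:krok5}. Applying \cref{lem:lineartransfofVitale} with $T=d_q\f^*$ and pulling out the scalar $\rho_{Y(q)}(0)=\rho_{X(\f(q))}(0)$ gives
\be
\zeta_Y(q)=d_q\f^*\left(\EE\kop\qwe0,W\ewq\big|X(\f(q))=0\pok\,\rho_{X(\f(q))}(0)\right)=d_q\f^*\left(\zeta_X(\f(q))\right),
\ee
which is \eqref{eq:pullback}.
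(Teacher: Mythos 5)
Your proposal is correct and follows essentially the same route as the paper's proof: conditions \zkrok.\ref{itm:krok1}--\ref{itm:krok4} are checked directly, the regular conditional probability for $X\circ\f$ is constructed as the pushforward $(\f^*)_{\#}\mu(\f(q),x)$, the domination $J_q(X\circ\f)\le C\,J_{\f(q)}X$ is used to feed the moving sequence $(\f(q_n),x_n)$ and functions $\beta_n$ into \cref{prop:tecnicalzkrok}.\ref{itm:tecnokrok4}, and the formula itself follows from the chain rule together with \cref{lem:lineartransfofVitale}. The only cosmetic difference is that you make explicit, via \cref{lem:lineartransfofVitale}, the final identification that the paper dismisses as obvious from \cref{def:zonoidsec}.
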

\begin{proof}
Assuming the first part of the statement, the formula \eqref{eq:pullback} is obvious from the definition of $\zeta_X$. To prove the theorem we have to show that the random map $X\circ \f$ satisfies all the five properties of \cref{def:zkrok}, with respect to any Riemannian metric on $S$.
\begin{enumerate}[wide]
    \item $X\circ \f\randin \mC^1(M,\R^k)$, by definition.
    \item The fact that $0$ is a regular value of $X\circ\f$ is completely equivalent (under the condition that $0$ is a regular value of $X$) to the hypothesis $\f\transv X^{-1}(0)$.
    \item For $q\in S$, the probability $[(X\circ \f)(q)]=[X(\f(q))]$ on $\R^k$ has density $\rho_{(X\circ\f)(q)}(\cdot)\colon \R^k\to [0,+\infty]$, where $\rho_{(X\circ\f)(q)}(x):=\rho_{X(\f(q))}(x)$. 
    \item Since $\f$ is continuous and $\rho_X$ is continuous at $(p,0)$, it follows that $\rho_{X\circ\f}$ is continuous at $(q,0)$ for any $q\in S$.
    \item Let $\mu(p,x):=[X |X(p)=x]\in\mathscr{P}(\mC^1(M,\R^k))$ be the regular conditional probability on $\mC^1(M,\R^k)$ associated to the {\zkrok} random map $X$. By assumption, the function 
    \be 
    J_M\cdot \mu\colon M\times \R^k\to \Me^+\tyu \mC^1(M,\R^k)\uyt
    \ee
    is continuous at $(p,0)$. Let $\f^*\colon \mC^1(M,\R^k)\to \mC^1(S,\R^k)$ be the function given by $\f^*(f)\leonew{:}=f\circ \f$. This is continuous with respect to the $\mC^1$ topologies and we define $\nu(q,x):=\f^*_{\#}\mu(\f(q),x)$ to be the push-forward of $\mu(\f(q),x)$ via  $\f^*$. So $\nu(q,x)$ is the probability measure such that for every measurable function $F\colon \mC^1(M,\R^k)\to [0,+\infty]$, we have
    \be 
    \int_{\mC^1(S,\R^k)}F(g)d\nu(q,x)(g)=\EE\kop F(\f^*(X)) |X(\f(p))=x\pok.
    \ee
    From this, one can see that $\nu(q,\cdot)(\cdot)$ is a regular conditional probability of $X\circ\f$ given $(X\circ \f)(q)$ (see \cref{sub:remarkrok}). Indeed, for every $B\in \mathcal{B}(\mC^1(M,\R^k))$, by taking $F\leonew{:}=1_{B}$, we see that 
    \be 
    \nu(q,x)(B)=\PP\kop X\circ \f\in B |X(\f(p))=x\pok
    \ee
    is Borel measurable with respect to $x\in\R^k$ and for any $V\in \mathcal{B}(\R^k)$, by taking $F(g)\leonew{:}=1_B(g)1_V(g(q))$ we obtain 
    \begin{multline}
    \PP\{X\circ\f\in B; (X\circ\f)(q)\in V\}
    =
    \EE\kop1_B(X\circ \f) 1_V(X(\f(q)))\pok
    \\
    =
    \int_{\R^k}
    \EE\kop 1_B(X\circ \f) 1_V(X(\f(q)))|X(\f(p))=x\pok d[X(\f(p))](x)=
    \\
    =
    \int_{\R^k}
    \nu(q,x)(B)d[(X\circ\f)(p)](x),
    \end{multline}
    so that Property \cref{itm:propA} is proven. Moreover, it is obvious by the construction that $\nu(q,x)$ is a Borel probability, indeed it follows by the measurability of the function $f^*$, thus Property \ref{itm:propB}. 
    
    At this point, we proved that that for any $q\in S$, we have the regular conditional probability $\nu(q,\cdot)(\cdot)$. To conclude the proof we have to show the continuity of ${J_q\cdot\nu(q,x)}$ at $(q,0)$. Let $\a\colon \mC^1(S,\R^k)\to [0,1]$ be continuous. Let $(q_n,x_n)\to (q,0)$ be a converging sequence in $S\times \R^k$. Then
    \be \label{eq:thiseqinthisproof}
    \int_{\mC^1(S,\R^k)}\a (g)(J_{q_n} g) d\nu(q_n,x_n)(g)=\EE\kop \a(X\circ \f)\tyu J_{q_n}(X\circ\f)\uyt|X(\f(q_n))=x_n\pok=\dots
    \ee
    Observe that the normal Jacobians satisfy the inequality
    \be 
    J_{q_n}(X\circ\f)\le J_{\f(q_n)}X\cdot J_{q_n}\f\le C\cdot J_{\f(q_n)}X,
    \ee
    where the last inequality is due to the facts that the sequence ${q_n}$ is contained in a compact subset of $S$  and that $J_q\f$ is continuous in $q$, because $\f\in\mC^1$. 

    It follows that we can apply \cref{prop:tecnicalzkrok} to the sequence of points $(p_n,x_n)\leonew{:}=(\f(q_n),x_n)$ and the continuous functions $\beta_n$ defined as
    \be 
    \beta_n(f):=\a\tyu f\circ\f\uyt J_{q_n}(f\circ\f)\to \a(f\circ\f)J_q(f\circ \f).
    \ee
    The above sequence converges in the compact-open topology of $\mC_b(\mC^1(M,\R^k);\R)$. Indeed, since $\mC^1(M,\R^k)$ is metrizable, this is equivalent to say that whenever $f_n\to f$ in $\mC^1(M,\R^k)$, then $\beta_n(f_n)\to\beta(f)$.
    Now, $f_n\to f$ converges in $\mC^1(M,\R^k)$ if and only if $j^1_{q_n}f_n\to j^1_qf$ in $J^1(M,\R^k)$ for every converging sequence $q_n\to q$, thus, in particular, $J_{q_n}\to J_qf$, since $J_qf$ depends continuously on $j^1_qf$. By \cref{prop:tecnicalzkrok} we get that \cref{eq:thiseqinthisproof} becomes
    \be 
    \dots =\EE\kop\beta_n(X)|X(p_n)=x_n\pok\to \EE\kop\a(X\circ \f)J_q(X\circ\f)|X\circ\f(q)=0\pok,
    \ee
    which proves the thesis. 
\end{enumerate}
\end{proof}

\subsection{Independent intersection and wedge product}
If $X_1\randin C^1(M,\R^k)$ and $X_2\randin C^1(M,\R^l)$ are two {\zkrok} fields, one can build another random field $Y=(X_1,X_2)\randin C^1(M,\R^{k+l})$ whose zero set is the intersection of the previous two zero sets: $Y^{-1}(0)=X_1^{-1}(0)\cap X_2^{-1}(0).$ In the case where $X_1$ and $X_2$ are independent, we prove that the zonoid section of the new field is the wedge product of the previous zonoid sections.

\begin{mainthm}\label{thm:wedge}
Let $X_1\randin C^1(M,\R^k)$ and $X_2\randin C^1(M,\R^l)$ be independent  {\zkrok} fields. Then $Y:=(X_1,X_2)\randin C^1(M,\R^{k+l})$ is {\zkrok} and we have for all $p\in M$
\begin{equation}
    \zeta_{Y}(p)=\zeta_{X_1}(p)\wedge\zeta_{X_2}(p).
\end{equation}
\end{mainthm}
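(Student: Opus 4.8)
The plan is to verify the five conditions of \cref{def:zkrok} for $Y=(X_1,X_2)$ and then read off the formula from the Vitale description of the wedge product \eqref{eq:wedgeandVit}. Throughout I identify $\mC^1(M,\R^{k+l})\cong \mC^1(M,\R^k)\times\mC^1(M,\R^l)$, a realization of $Y$ being a pair $(f_1,f_2)$, and I write $\omega_1:=d_pf_1^1\wedge\cdots\wedge d_pf_1^k$ and $\omega_2:=d_pf_2^1\wedge\cdots\wedge d_pf_2^l$, so that $d_pY^1\wedge\cdots\wedge d_pY^{k+l}=\omega_1\wedge\omega_2$ and $J_pY=\|\omega_1\wedge\omega_2\|$. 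Condition \cref{itm:krok1} is immediate. By independence $Y(p)=(X_1(p),X_2(p))$ has density $\rho_{Y(p)}(x_1,x_2)=\rho_{X_1(p)}(x_1)\rho_{X_2(p)}(x_2)$, which settles \cref{itm:krok:3}, and being a product of factors continuous at the respective $(p,0)$ it also settles \cref{itm:krok4}; in particular $\rho_{Y(p)}(0)=\rho_{X_1(p)}(0)\rho_{X_2(p)}(0)$. The key structural input is that, again by independence, the product $\mu(p,(x_1,x_2)):=\mu^{X_1}(p,x_1)\otimes\mu^{X_2}(p,x_2)$ is a regular conditional probability of $Y$ given $Y(p)$: one checks properties \ref{itm:propA}--\ref{itm:propB} by factoring $\PP\kop X_1\in B_1, X_2\in B_2; X_1(p)\in V_1, X_2(p)\in V_2\pok$ and extending from rectangles $B_1\times B_2$ to all Borel sets by a monotone-class argument.

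For condition \cref{itm:krok2} I must show $0$ is a.s. a regular value of $Y$, equivalently that $Z_1=X_1^{-1}(0)$ and $Z_2=X_2^{-1}(0)$ meet transversally. By Fubini and independence it suffices to show that for $[X_1]$-a.e.\ realization $f_1$ (for which $W:=f_1^{-1}(0)$ is a submanifold of codimension $k$) the independent field $X_2$ satisfies $X_2^{-1}(0)\transv W$ almost surely; this is exactly the assertion that $0$ is a.s.\ a regular value of the restriction $X_2|_W$, a general-position fact for the independent field $X_2$ relative to the fixed $W$ that I expect to follow from a Bulinskaya-type argument (cf.\ \cref{rem:bulinskaya}). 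I regard this transversality as the most delicate non-analytic point, since it is not literally a hypothesis of the pull-back theorem and must be extracted from independence.

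The analytic heart is condition \cref{itm:krok5}, which I verify in the equivalent form of \cref{prop:tecnicalzkrok}, point \ref{itm:tecnokrok2}: for $\alpha\in\mC_b(\mC^1(M,\R^{k+l});\R)$ and $(p_n,x_n)\to(p,0)$ with $x_n=(x_n^1,x_n^2)$ I must show $\EE\kop\alpha(Y)J_{p_n}Y\mid Y(p_n)=x_n\pok\to\EE\kop\alpha(Y)J_pY\mid Y(p)=0\pok$. Finiteness follows from $\|\omega_1\wedge\omega_2\|\le\|\omega_1\|\,\|\omega_2\|$ together with the product structure, which gives $\EE\kop J_pY\mid Y(p)=x\pok\le\EE\kop J_pX_1\mid X_1(p)=x_1\pok\,\EE\kop J_pX_2\mid X_2(p)=x_2\pok<\infty$. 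For the limit, set $\lambda_n:=\big(J_{p_n}X_1\cdot\mu^{X_1}(p_n,x_n^1)\big)\otimes\big(J_{p_n}X_2\cdot\mu^{X_2}(p_n,x_n^2)\big)$, a finite positive measure on the product space. By condition \cref{itm:krok5} for each $X_i$ the two factors converge weak-$*$ with converging total masses, hence the products converge, $\lambda_n\to\lambda_0$ weakly, where $\lambda_0$ has the analogous product form with each factor evaluated at $(p,0)$. Writing $\Phi(f_1,f_2):=\alpha(f_1,f_2)\,\|\omega_1\wedge\omega_2\|/(\|\omega_1\|\,\|\omega_2\|)$ on $\kop\omega_1\neq0,\ \omega_2\neq0\pok$, one has $\EE\kop\alpha(Y)J_{p_n}Y\mid Y(p_n)=x_n\pok=\int\Phi\,d\lambda_n$, because multiplying the density $\|\omega_1\|\,\|\omega_2\|$ of $\lambda_n$ by $\Phi$ recovers $\alpha\,\|\omega_1\wedge\omega_2\|$. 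Now $\Phi$ is bounded (the ratio is $\le 1$) and continuous off $\kop\omega_1=0\pok\cup\kop\omega_2=0\pok$, a set that is $\lambda_0$-null since each factor of $\lambda_0$ has a density vanishing there; so $\int\Phi\,d\lambda_n\to\int\Phi\,d\lambda_0$ by the mapping theorem for bounded integrands whose discontinuity set is $\lambda_0$-negligible, yielding exactly $\EE\kop\alpha(Y)J_pY\mid Y(p)=0\pok$. The crux, and the main obstacle, is that the coupling term $\|\omega_1\wedge\omega_2\|$ is not a product; the device of absorbing it into the bounded almost-everywhere continuous factor $\Phi$ against the genuine product measure $\lambda_n$ is what lets the limit pass.

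Finally, with $Y$ shown to be \zkrok, the formula is a clean consequence of zonoid calculus. Under $[Y\mid Y(p)=0]=[X_1\mid X_1(p)=0]\otimes[X_2\mid X_2(p)=0]$ the covectors $\omega_1$ and $\omega_2$ are independent and integrable, each $\EE\|\omega_i\|=\EE\kop J_pX_i\mid X_i(p)=0\pok<\infty$, so \eqref{eq:wedgeandVit} gives $\EE\kop[0,\omega_1\wedge\omega_2]\mid Y(p)=0\pok=\EE\kop[0,\omega_1]\mid X_1(p)=0\pok\wedge\EE\kop[0,\omega_2]\mid X_2(p)=0\pok$. Multiplying by $\rho_{Y(p)}(0)=\rho_{X_1(p)}(0)\rho_{X_2(p)}(0)$ and distributing the two nonnegative scalars through the Minkowski-bilinear wedge product of zonoids yields $\zeta_Y(p)=\zeta_{X_1}(p)\wedge\zeta_{X_2}(p)$, as claimed.
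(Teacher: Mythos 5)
Your overall route is the paper's: verify the {\zkrok} conditions for $Y$ using the product structure coming from independence (in particular taking $\mu_1\otimes\mu_2$ as the regular conditional probability of $Y$ given $Y(p)$), and then read the identity $\zeta_Y=\zeta_{X_1}\wedge\zeta_{X_2}$ off \eqref{eq:wedgeandVit}. Where you differ is in the amount of detail on \zkrok-\cref{itm:krok5}: the paper simply says one can repeat the reasoning of the proof of \cref{thm:pullback}, whereas you give an explicit argument via weak convergence of the product measures $\lambda_n$ and a mapping theorem for the bounded function $\Phi$, continuous off the $\lambda_0$-null set $\{\omega_1=0\}\cup\{\omega_2=0\}$. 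This is sound and arguably more informative; the only slip is that $\Phi$ must be built from the derivatives at $p_n$, so it is really a sequence $\Phi_n$ and you need convergence of $\int\Phi_n\,d\lambda_n$ rather than of $\int\Phi\,d\lambda_n$ --- the same issue handled in point (\ref{itm:tecnokrok4}) of \cref{prop:tecnicalzkrok} via Skorohod plus dominated convergence, and it is repaired the same way here.

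The genuine gap is \zkrok-\cref{itm:krok2}, and you are right to single it out. That $0$ is an a.s.\ regular value of $(X_1,X_2)$, i.e.\ $Z_1\transv Z_2$ a.s., is not a formal consequence of independence plus the {\zkrok} hypotheses, and your proposed fix does not close it: the Bulinskaya lemma of \cref{rem:bulinskaya} requires $\mC^2$ regularity and a locally bounded joint density of the $1$-jet $(p,X(p),d_pX)$, neither of which is part of \cref{def:zkrok}. Moreover, the Fubini reduction leaves you needing ``$0$ is a.s.\ a regular value of $X_2|_W$'' for a \emph{fixed} submanifold $W$, which is strictly stronger than \zkrok-\cref{itm:krok2} for $X_2$ on $M$: a $\mC^1$ field can be a submersion on all of $M$ while its restriction to a fixed hypersurface has a positive-measure set of critical values (Whitney-style examples, realizable with random level sets of a $\mC^1$ submersion, which are {\zkrok}). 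So this step can actually fail at the stated level of generality. The paper's own proof is no better here --- it declares conditions \cref{itm:krok1} to \cref{itm:krok4} ``immediately satisfied'' --- so you have located a point that deserves more care (e.g.\ extra regularity or jet-density assumptions) than either argument gives it.
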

\begin{proof}
Conditions {\zkrok} \cref{itm:krok1} to \cref{itm:krok4} are immediately satisfied, note that since $X_1$ and $X_2$ are independent we have for all $x_1\in\R^k$, $x_2\in\R^l$ and all $p\in M$: $\rho_{Y(p)}(x_1,x_2)=\rho_{X_1(p)}(x_1)\rho_{X_2(p)}(x_2).$ To see that {\zkrok}\cref{itm:krok5} is satisfied it is enough to see that if $\mu_i(\cdot,\cdot)$ is a regular conditional probability for $X_i$ then $\mu(p,(x_1,x_2)):=\mu_1(p,x_1)\otimes\mu_2(p,x_2)$ is a regular conditional probability of $Y$ given $Y(p)$. 
With such choice of $\mu$, one can prove that $Y$ satisfies
{\zkrok}-\cref{itm:krok5}, by repeating the reasoning used in the proof of \cref{thm:pullback}.
In particular, in the notation introduced in \cref{sub:notakrok}, we have that for all $p\in M$, the random vectors $(X_1|X_1(p)=0)$ and $(X_2|X_2(p)=0)$ are independent.

Now it remains to observe that by definition of the field $Y$, we have for all $p\in M$:
\begin{equation}
    \dd_pY^1\wedge\cdots \dd_pY^{k+l}=(\dd_pX_1^1\wedge\cdots\wedge\dd_pX_1^k)\wedge( \dd_pX_2^1\wedge\cdots\wedge\dd_pX_2^l).
\end{equation}
Hence, using \cref{eq:wedgeandVit}, we have 
\begin{multline}
    \rho_{Y(p)}(0)[0,\dd_pY^1\wedge\cdots \dd_pY^{k+l}]=
    \\
    \left(\rho_{X_1(p)}(0)[0,\dd_pX_1^1\wedge\cdots\wedge\dd_pX_1^k]\right)\wedge\left( \rho_{X_2(p)}(0)[0,\dd_pX_2^1\wedge\cdots\wedge\dd_pX_2^l]\right).
\end{multline}
The result then follows by taking expectations on both sides and from the independence observed earlier.
\end{proof}

\subsection{Bernoulli combination and Minkowski sum}
Another simple operation on random fields allows to build the convex combination of the zonoid sections.

\begin{proposition}\label{prop:bern}
Let $X_0,X_1\randin C^1(M,\R^k)$ be {\zkrok} and let $\epsilon\randin\{0,1\}$ be a Bernoulli random variable of parameter $t\in[0,1]$ independent of $X_0$ and $X_1$, that is $\epsilon=0$ with probability $t$ and $\epsilon=1$ with probability $1-t$. Assume, in addition, that
\be\label{hyp:bern}
    (*)\ \text{ there is no point $p\in M$  such that $\rho_{X_i}(p,0)=0$ for both $i=0,1$.}
\ee
Let $X_t:=\epsilon X_0 + (1-\epsilon)X_1$. Then $X_t\randin C^1(M,\R^k)$ is {\zkrok} and we have for all $p\in M$
\begin{equation}
    \zeta_{X_t}(p)=(1-t)\zeta_{X_0}(p)+t\zeta_{X_1}(p).
\end{equation}
\end{proposition}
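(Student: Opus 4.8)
The plan is to dispose of the boundary cases $t\in\{0,1\}$ first (there $X_t$ equals $X_0$ or $X_1$ almost surely and the identity is tautological, and assumption $(*)$ is not even needed), and to concentrate on $t\in(0,1)$. Everything hinges on identifying a regular conditional probability of $X_t$ given $X_t(p)$. Because $X_t$ is a Bernoulli mixture, conditioning on a value $X_t(p)=x$ reweights the two branches $\epsilon=0,1$ by a Bayes factor. Concretely, writing $\mu_i(p,\cdot)(\cdot)$ for a regular conditional probability of $X_i$ given $X_i(p)$ as in the {\zkrok} hypotheses, I would set, for every $x$ with $\rho_{X_t(p)}(x)>0$,
\be
\mu_t(p,x):=\frac{t\,\rho_{X_1(p)}(x)}{\rho_{X_t(p)}(x)}\,\mu_1(p,x)+\frac{(1-t)\,\rho_{X_0(p)}(x)}{\rho_{X_t(p)}(x)}\,\mu_0(p,x),
\ee
where $\rho_{X_t(p)}(x)=t\,\rho_{X_1(p)}(x)+(1-t)\,\rho_{X_0(p)}(x)$ is the density of $X_t(p)$, and $\mu_t(p,x)$ is defined arbitrarily on the $[X_t(p)]$-null set where the denominator vanishes. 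Splitting $\PP\kop X_t\in B;\ X_t(p)\in V\pok$ over the two values of $\epsilon$ and using that each $[X_i(p)]$ has density $\rho_{X_i(p)}$, a short computation shows that $\mu_t(p,\cdot)(\cdot)$ satisfies properties \ref{itm:propA}--\ref{itm:propB} of \cref{sub:remarkrok}, hence is a genuine regular conditional probability of $X_t$ given $X_t(p)$.

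Next I would check the five {\zkrok} conditions for $X_t$. Conditions \zkrok-\ref{itm:krok1} and \zkrok-\ref{itm:krok2} are immediate since $X_t$ almost surely equals one of $X_0,X_1$, and \zkrok-\ref{itm:krok:3}--\ref{itm:krok4} follow from the mixture formula for $\rho_{X_t(p)}$, which is absolutely continuous and continuous at $(p,0)$. For \zkrok-\ref{itm:krok5} I would use the equivalent formulation in point \ref{itm:tecnokrok2} of \cref{prop:tecnicalzkrok}: for bounded continuous $\a$ and a sequence $(p_n,x_n)\to(p,0)$, the mixture gives
\bega
\EE\kop\a(X_t)J_{p_n}X_t\big|X_t(p_n)=x_n\pok
&=\frac{t\,\rho_{X_1(p_n)}(x_n)}{\rho_{X_t(p_n)}(x_n)}\EE\kop\a(X_1)J_{p_n}X_1\big|X_1(p_n)=x_n\pok\\
&\quad+\frac{(1-t)\,\rho_{X_0(p_n)}(x_n)}{\rho_{X_t(p_n)}(x_n)}\EE\kop\a(X_0)J_{p_n}X_0\big|X_0(p_n)=x_n\pok.
\eega
Here the two conditional expectations converge to their values at $(p,0)$ by point \ref{itm:tecnokrok2} of \cref{prop:tecnicalzkrok} applied to the {\zkrok} fields $X_0$ and $X_1$, while the Bayes weights converge by continuity of the densities at $(p,0)$. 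It is exactly here that assumption $(*)$ enters: it guarantees $\rho_{X_t(p)}(0)=t\,\rho_{X_1(p)}(0)+(1-t)\,\rho_{X_0(p)}(0)>0$, so the denominators stay bounded away from $0$ along the sequence. Passing to the limit reproduces $\EE\kop\a(X_t)J_pX_t\big|X_t(p)=0\pok$ as computed from $\mu_t(p,0)$, which is point \ref{itm:tecnokrok2} of \cref{prop:tecnicalzkrok} for $X_t$; in particular the total mass is finite, so $X_t$ is {\zkrok}.

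Finally, the zonoid identity follows from the same decomposition evaluated at $x=0$. Since the expected zonoid is Minkowski-linear in the averaging measure (its support function is linear in the measure, so \cref{prop:suppfctprop} applies to the convex combination), averaging the random segment $[0,\dd_pf^1\wedge\cdots\wedge\dd_pf^k]$ against $\mu_t(p,0)$ yields
\be
\EE\kop[0,\dd_pX_t^1\wedge\cdots\wedge\dd_pX_t^k]\big|X_t(p)=0\pok
=\frac{t\,\rho_{X_1(p)}(0)}{\rho_{X_t(p)}(0)}\,\zeta'_1+\frac{(1-t)\,\rho_{X_0(p)}(0)}{\rho_{X_t(p)}(0)}\,\zeta'_0,
\ee
where $\zeta'_i:=\EE\kop[0,\dd_pX_i^1\wedge\cdots\wedge\dd_pX_i^k]\big|X_i(p)=0\pok$. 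Multiplying through by $\rho_{X_t(p)}(0)$ cancels the denominators and turns the Bayes weights back into the prior weights: $\rho_{X_t(p)}(0)\cdot\frac{t\rho_{X_1(p)}(0)}{\rho_{X_t(p)}(0)}=t\,\rho_{X_1(p)}(0)$, and $t\,\rho_{X_1(p)}(0)\,\zeta'_1=t\,\zeta_{X_1}(p)$ by \cref{def:zonoidsec}, likewise for the other term. This gives $\zeta_{X_t}(p)=(1-t)\zeta_{X_0}(p)+t\zeta_{X_1}(p)$.

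I expect the main obstacle to be the verification of \zkrok-\ref{itm:krok5}, specifically the control of the Bayes weights: the conditional law of $X_t$ given $X_t(p)=x$ is \emph{not} the naive Bernoulli mixture $t\mu_1(p,x)+(1-t)\mu_0(p,x)$ but the reweighted one above, and the ratios $\rho_{X_i(p)}(x)/\rho_{X_t(p)}(x)$ are harmless only because assumption $(*)$ keeps the denominator positive at $(p,0)$ while \zkrok-\ref{itm:krok4} provides continuity there. The cancellation in the last step -- density normalization undoing the Bayes reweighting -- is what makes the clean convex combination appear, and it is worth isolating as the conceptual heart of the argument.
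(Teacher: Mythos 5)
Your proposal is correct and follows essentially the same route as the paper: the same Bayes-reweighted regular conditional probability $\mu_t(p,x)$, the same verification that it satisfies properties \ref{itm:propA}--\ref{itm:propB}, and the same cancellation $\rho_{X_t(p)}(0)\,\mu_t(p,0)=(1-t)\rho_{X_0(p)}(0)\,\mu_0(p,0)+t\,\rho_{X_1(p)}(0)\,\mu_1(p,0)$ to obtain the Minkowski convex combination. The only difference is that you spell out the \zkrok-\ref{itm:krok5} verification via point \ref{itm:tecnokrok2} of \cref{prop:tecnicalzkrok}, where the paper merely asserts that $(*)$ suffices; your added detail is consistent with the paper's intent.
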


\begin{proof}
The properties {\zkrok} \cref{itm:krok1} to \cref{itm:krok4} are satisfied by $X_t$ and observe that for all $p\in M$, we have $\rho_{X_t(p)}=(1-t)\rho_{X_0(p)}+t\rho_{X_1(p)}$. Let $\mu_i(p,x)$ be a regular conditional probability for $X_i$ given $X_i(p)$, $i=0,1$. We prove that \begin{equation}\label{eq:mutbern}
    \mu_t(p,x):=\frac{(1-t)\rho_{X_0(p)}(x)\mu_0(p,x)+t\rho_{X_1(p)}(x)\mu_1(p,x)}{\rho_{X_t(p)}(x)}
\end{equation}
is a regular conditional probability for $X_t$ given $X_t(p)$. Indeed, let $B\subset C^1(M,\R^k)$ and $V\subset \R^k$ be Borel subsets, then, by definition of $X_t,$ we have for all $p\in M$, 
\begin{align}
    \PP\left(X_t\in B;\ X_t(p)\in V\right)    &=(1-t)\PP\left(X_0\in B;\ X_0(p)\in V\right) + t \PP\left(X_1\in B;\ X_1(p)\in V\right)  \\
    &=\int_V\left((1-t)\mu_0(p,x)(B)\rho_{X_0(p)}(x)+t\mu_1(p,x)(B)\rho_{X_1(p)}(x)\right) \dd x
\end{align}
where the first equality follows from the definition of $X_t$ and the second from the property of conditional probabilities given in~\eqref{eq:condproba}. And thus we obtain 
\begin{equation}
    \PP\left(X_t\in B;\ X_t(p)\in V\right)=\int_V\mu_t(p,x)(B)\rho_{X_t(p)}(x) \dd x.
\end{equation}
Moreover $\mu_t(p,x)$ is a probability measure for all $p\in M$, $x\in \R^k$ thus it is a regular conditional probability for $X_t$. The hypothesis $(*)$ guarantees that $\mu_t$ satisfies {\zkrok} \cref{itm:krok5}, since $\mu_0$ and $\mu_1$ do. 
Finally, the result follows from the fact that $\rho_{X_t(p)}(0)\mu_t (p,0)=(1-t)\rho_{X_0(p)}(0)\mu_0 (p,0)+t \rho_{X_1(p)}(0)\mu_1 (p,0)$ for all $p\in M$.
\end{proof}
\begin{remark}
The hypothesis $(*)$ in \cref{prop:bern} is what allows to avoid the difficulties coming from the denominator in \cref{eq:mutbern} when proving that $X_t$ satisfies \zkrok-\ref{itm:krok5}. It is not a necessary condition, although in general the field $X_t$ may fail to be \zkrok. 
\end{remark}
\begin{remark}\label{rem:weakzkrok}
We believe that the {\zkrok} Hypotheses, as stated in \cref{def:zkrok}, are a bit more restricting than necessary. Indeed, the continuity condition in  \ref{itm:krok5} could probably be replaced by the weaker conditions that the product $(p,x)\mapsto \rho_{X(p)}J_p\cdot\mu(p,x)$ is continuous at $(p,0)$ for all $p\in M$ and that $E(p,x)=\EE\{J_pX|X(p)=x\}$ is locally bounded, without affecting the results of the paper except for \cref{prop:bern}, in which the hypothesis $(*)$ could be dropped, and \cref{thm:abundance} which we will discuss in \cref{sec:ex} below.
\end{remark}
\section{The Alpha formula}
We will use the following version of Kac-Rice formula to deduce all our results. This is obtained as a particular case of \cite{KRStec}. See Appendix \ref{apx:comp} for a detailed comparison with the standard statements of Kac-Rice formula in \cite{AzaisWscheborbook} and \cite{AdlerTaylor}. The only differences are in the hypotheses, in particular the statement below is almost identical to \cite[Theorem 6.7]{AzaisWscheborbook}.
\begin{proposition}[$\alpha$-Kac-Rice formula]\label{thm:alphaKR}
Let $(M,g)$ be a Riemannian manifold of dimension $m\in\N$. Let $F\colon M\randto \R^m$ be a {\zkrok} random field. Let $\a\colon \mC^1(M,\R^m)\times M\to \R$ be a Borel measurable function. Then
\be\label{eq:aKR} 
\EE\kop\sum_{p\in F^{-1}(0)}\alpha(F,p)\pok=\int_M\delta^\a_F(p)dM(p).
\ee
Where
\be 
\delta^\a_F(p)=\EE\kop \a(F,p)J_pF \Big| F(p)=0\pok \rho_{F(p)}(0).
\ee
\end{proposition}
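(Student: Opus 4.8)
The plan is to obtain this statement as the specialization to $W=\{0\}\subset\R^m$ of the general Kac-Rice formula for KROK pairs proved in \cite[Theorem~2.2]{KRStec}. The first step is to check that the hypotheses transfer. Since $F$ is {\zkrok} and here $k=m$, the pair $(F,\{0\})$ satisfies \emph{all} the conditions KROK.$(\ell)$, $\ell\in\{\mathrm{i},\dots,\mathrm{vii}\}$, of \cite[Definition~2.1]{KRStec}: by the discussion in \cref{sec:zkrokhypotheses}, being {\zkrok} is equivalent to $(F,\{0\})$ satisfying KROK.$(\ell)$ for all $\ell\neq\mathrm{v}$, while the remaining KROK.$\mathrm{(v)}$ is exactly the codimension condition $\codim\{0\}=\dim M$, i.e.\ $k=m$, which holds by assumption. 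In particular KROK.$\mathrm{(vii)}$ holds by \cref{prop:tecnicalzkrok}.(\ref{itm:KROK8}).

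Having verified the hypotheses, I would apply \cite[Theorem~2.2]{KRStec} to $(F,\{0\})$. The only point requiring care is to see that the abstract KROK density reduces to the asserted $\delta^\a_F$. This is where $W=\{0\}$ being a single point of full codimension $m$ in $\R^m$ simplifies everything: the factor encoding the geometry of $W$ together with the conditioning $F(p)\in W$ collapses to the ordinary density $\rho_{F(p)}(0)$ of the random vector $F(p)\randin\R^m$ evaluated at the origin, the normal Jacobian of $F$ relative to $W$ becomes the ordinary Jacobian determinant $J_pF=\|d_pF^1\wedge\cdots\wedge d_pF^m\|$, and no extra normalizing constant survives. Comparing the resulting expression with the standard equidimensional Kac-Rice density (see also \cref{apx:comp}) then yields exactly
\be
\EE\kop\sum_{p\in F^{-1}(0)}\a(F,p)\pok=\int_M\EE\kop\a(F,p)J_pF\Big|F(p)=0\pok\rho_{F(p)}(0)\,dM(p).
\ee

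The delicate part of the argument is therefore not a new computation but this constant-matching, together with the correct interpretation of the conditional expectation: one must make sure that the conditioning $\EE\kop\,\cdot\,\Big|F(p)=0\pok$ appearing in \cite{KRStec} is the one defined through the regular conditional probability singled out by {\zkrok}-\cref{itm:krok5} (see \cref{sub:notakrok}), which is precisely the object for which \cref{prop:tecnicalzkrok} guarantees the continuity and finiteness needed for the right-hand side to make sense. If \cite[Theorem~2.2]{KRStec} were only stated for the counting measure $A\mapsto\EE\#(F^{-1}(0)\cap A)$, i.e.\ for weights $\a(F,p)=\one_A(p)$, I would recover the general Borel weight by the usual measure-theoretic extension: prove the identity first for $\a=\one_B(F)\one_A(p)$ on product Borel sets using the disintegration of \cref{sub:remarkrok}, pass to simple functions by linearity and to nonnegative measurable $\a$ by monotone convergence, and finally split $\a=\a^+-\a^-$, the integrability being ensured by the finiteness of $\EE\kop J_pF\mid F(p)=0\pok$ from {\zkrok}-\cref{itm:krok5}.
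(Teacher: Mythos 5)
Your proposal is correct and follows essentially the same route as the paper: the paper's entire proof is the observation that a {\zkrok} field with values in $\R^{\dim M}$ makes the pair $(F,\{0\})$ KROK, after which the result is \cite[Theorem~4.1]{KRStec} (the already $\a$-weighted version, so your final paragraph extending from counting measures to general Borel weights is careful but not needed). The only cosmetic difference is that you cite the unweighted \cite[Theorem~2.2]{KRStec} and rebuild the weight by a monotone-class argument, whereas the paper invokes the weighted statement directly.
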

\begin{proof}
In the language of \cite[Theorem 4.1]{KRStec}, if $F$ is {\zkrok} with values in $\R^{\dim M}$, then the pair $(X,\{0\})$ is \emph{KROK}.
\end{proof}
The name Kac-Rice formula is often used to denote also a more general version of \cref{thm:alphaKR} which allows to deal with the case in which $X^{-1}(0)$ is not zero dimensional, see \cite[Theorem 6.8]{AzaisWscheborbook}. The additional flexibility provided by Theorem \ref{thm:Alphaca} below is crucial for us, since we want to be able to build a framework of calculus for intersections of random submanifolds $X^{-1}(0)$ of arbitrary codimension.
\begin{theorem}[Alpha Formula]\label{thm:Alphaca}
Let $k\le m\in \N$. Let $(M,g)$ be a Riemannian manifold of dimension $m$. Let $X\colon M\randto \R^k$ be a {\zkrok} random field and define the random submanifold $Z:=X^{-1}(0)$. Let $\a\colon \mC^1(M,\R^k)\times M\to \R$ be a Borel measurable function. Then
\be\label{aformula} 
\EE\kop\int_{ Z}\alpha(X,p)dZ(p)\pok=\int_M\delta^\a_X(p)dM(p).
\ee
Where
\be\label{eq:alphadensity}
\delta^\a_X(p):=\EE\kop \a(X,p)J_pX \Big| X(p)=0\pok \rho_{X(p)}(0).
\ee
\end{theorem}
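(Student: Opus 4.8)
The plan is to reduce the statement to the already-established zero-dimensional case \cref{thm:alphaKR} (where $k=m$) by intersecting $Z$ with the zero set of an auxiliary independent Gaussian field, chosen so that it samples any submanifold with a density that depends neither on the point nor on the submanifold. Concretely, set $d:=m-k$ and let $G\colon M\randto\R^{d}$ be a \emph{normal} Gaussian field with respect to $g$ (as constructed in \cref{sec:ATfield}), independent of $X$. Two consequences of normality will be used: at each $p$ the value $G(p)$ is a standard Gaussian vector, so $\rho_{G(p)}(0)=(2\pi)^{-d/2}$ and $G(p)$ is stochastically independent of $d_pG$; and in any orthonormal frame $d_pG^1,\dots,d_pG^d$ are independent standard Gaussian covectors on $T_pM$. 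First I would form $Y:=(X,G)\colon M\randto\R^{m}$, which is {\zkrok} by \cref{prop:Gausmthkrok} together with the wedge property \cref{thm:wedge}, and whose zero set is $Y^{-1}(0)=Z\cap G^{-1}(0)$, almost surely discrete since $0$ is a regular value of $Y$.

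Next I would apply \cref{thm:alphaKR} to $Y$ with the integrand $\tilde\alpha(h,p):=\alpha(\mathrm{pr}\circ h,\,p)$, where $\mathrm{pr}\colon\R^m\to\R^k$ is the projection onto the first $k$ coordinates, so that $\tilde\alpha(Y,p)=\alpha(X,p)$. This gives
\[
\EE\kop\sum_{p\in Z\cap G^{-1}(0)}\alpha(X,p)\pok=\int_M\EE\kop\alpha(X,p)\,J_pY\,\Big|\,Y(p)=0\pok\,\rho_{Y(p)}(0)\,dM(p).
\]
The right-hand density factors by independence of $X$ and $G$: the event $Y(p)=0$ means $X(p)=0$ and $G(p)=0$, and since $G(p)$ is independent of $d_pG$, conditioning on $G(p)=0$ leaves the law of $d_pG$ untouched. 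Writing $w:=d_pX^1\wedge\cdots\wedge d_pX^k$, the inner expectation thus becomes $\EE\{\alpha(X,p)\cdot\EE_G\|w\wedge d_pG^1\wedge\cdots\wedge d_pG^d\|\mid X(p)=0\}$. The Gaussian expectation is computed with \cref{eq:wedgeandVit} and \cref{eg:gausszonisball}, giving $\ell\big(\seg{w}\wedge(\tfrac{1}{\sqrt{2\pi}}B_m)^{\wedge d}\big)=(2\pi)^{-d/2}\ell(\seg{w}\wedge B_m^{\wedge d})$, which by \cref{lem:lengthwithballs} equals $(2\pi)^{-d/2}\,d!\,\uball_d\,J_pX$. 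Multiplying by $\rho_{Y(p)}(0)=\rho_{X(p)}(0)(2\pi)^{-d/2}$ shows the right-hand side equals $(2\pi)^{-d}\,d!\,\uball_d\int_M\delta^\alpha_X(p)\,dM(p)$.

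For the left-hand side I would take the expectation over $G$ conditionally on a fixed realization of $X$ (justified by independence and Tonelli after reducing to $\alpha\ge0$). For fixed $X$, the restriction $G|_Z$ is a {\zkrok} Gaussian field on the $d$-dimensional Riemannian manifold $Z$ by the pull-back property \cref{thm:pullback} applied to the inclusion $\iota_Z\colon Z\hookrightarrow M$ (using $\iota_Z\transv G^{-1}(0)$ almost surely over $G$), and its zero set is exactly $Z\cap G^{-1}(0)$. Applying the zero-dimensional formula \cref{thm:alphaKR} on $Z$ yields
\[
\EE_G\kop\sum_{p\in Z\cap G^{-1}(0)}\alpha(X,p)\pok=\int_Z\alpha(X,p)\,\EE\{J_p(G|_Z)\mid G(p)=0\}\,\rho_{G(p)}(0)\,dZ(p).
\]
The crucial point is that by normality $d_p(G|_Z)$ is, in orthonormal frames, a $d\times d$ matrix of i.i.d. standard Gaussians independent of $G(p)$, so the density $\EE\{J_p(G|_Z)\mid G(p)=0\}\rho_{G(p)}(0)=(2\pi)^{-d/2}\,\EE|\det\mathcal N_d|$ is a universal constant, independent of $p$ and of the geometry of $Z$. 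Taking $\EE_X$ then produces $(2\pi)^{-d/2}\,\EE|\det\mathcal N_d|$ times the left-hand side of the desired identity.

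Finally I would match the two constants. Evaluating the Jacobian computation of the second paragraph in the degenerate case $k=0$, $w=1$, identifies $\EE|\det\mathcal N_d|=(2\pi)^{-d/2}\,d!\,\uball_d$, so that $(2\pi)^{-d/2}\EE|\det\mathcal N_d|=(2\pi)^{-d}\,d!\,\uball_d$ coincides with the right-hand constant; the two proportionality factors cancel, leaving exactly $\EE\{\int_Z\alpha\,dZ\}=\int_M\delta^\alpha_X\,dM$. The main obstacle is the third step: establishing the \emph{constancy} of the Kac-Rice density of the zeros of $G|_Z$ — the normality ``miracle'' that the sampling density depends on neither the point nor the submanifold — and rigorously justifying the conditional Fubini exchange (reduction to nonnegative $\alpha$, measurability of $X\mapsto\int_Z\alpha\,dZ$, and the fact that $G|_Z$ is {\zkrok} for almost every realization of $X$).
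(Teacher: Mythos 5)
Your proposal is correct and follows essentially the same route as the paper: intersect $Z$ with the zero set of $d=m-k$ i.i.d.\ normal Gaussian fields independent of $X$, apply the zero-dimensional formula (\cref{thm:alphaKR}) to the combined field, factor the Kac-Rice density using the independence of $X(p)$, $d_pX$ from the normal field's value and derivative, and use the point- and submanifold-independence of the sampling density of the restricted normal field (the paper's \cref{lem:Montecarlo}) on the left-hand side. The only cosmetic difference is that you compute the matching constants $(2\pi)^{-d}d!\uball_d=2/s_d$ explicitly on both sides, whereas the paper pins them down by evaluating the formula on a round sphere with an invariant field; both are valid and rely on the same zonoid-calculus computations (\cref{lem:hate}, \cref{lem:hate2}, \cref{lem:lengthwithballs}).
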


The proof will be given later, in \cref{sec:proofAlpha}, after some preliminaries. In \cite[Theorem 6.10]{AzaisWscheborbook} the analogous statement for Gaussian fields is reported mentioning that the proof follows the same lines as in the case $m=k$. 
Here, to prove its validity under our {\zkrok} hypotheses, we are going to use a different strategy.
We are going to prove that, with little work and using the Pull-back property (\cref{thm:pullback}), Theorem \ref{thm:Alphaca} is a natural consequence of Theorem \ref{thm:alphaKR}. This method of proof is new and interesting in that it shows how it's always possible to reduce everything to the zero dimensional case using the construction, by Adler and Taylor \cite{AdlerTaylor}, of Gaussian fields that represent the Riemannian structure, see Section \ref{sec:ATfield}. Moreover, it is fully in the spirit of this work to investigate the relations between the various Kac-Rice formulas.
\subsection{The Adler-Taylor metric and normal fields}\label{sec:ATfield}
In \cite[Section 12]{AdlerTaylor} Adler and Taylor introduced and developed the concept of the Riemannian metric induced by a sufficiently regular random field $\y\colon M\randto \R$ on a smooth manifold:
\be\label{eq:ATmetric}
g^\y_{AT}(p)(v,w)=\EE\{d_p\y(v)\cdot d_p\y(w)\}.
\ee 
We will refer to $g_{AT}^\y$ as \emph{the Adler-Taylor metric} induced by $\y$. Given a Riemannian manifold $(M,g)$, it will be very useful for us to express $g$ as the Adler-Taylor metric induced by some smooth Gaussian field with unit variance.

\begin{definition}\label{def:ATfield}
Let $(M,g)$ be a Riemannian manifold and $\y\randin \mC^\infty(M)$ be a smooth Gaussian random field. 
We will say that $\y$ is a \emph{normal field} on $(M,g)$ if $\y(p)\sim \mathcal N(0,1)$ for every $p\in M$ and $g=g^\y_{AT}$.
In this case we will write $\y\sim \mathcal N(M,g)$.
\end{definition}
\begin{remark}
The law of the normal field $\y\sim \mathcal N(M,g)$ is not uniquely determined. It depends exactly on the choice of an isometric immersion of $(M,g)$ into the sphere of an Hilbert space. By  Nash's isometric embedding theorem, every smooth Riemannian manifold $(M,g)$ admits a normal field $\y$ with finite dimensional support $\text{supp}(\y)\subset\mC^\infty(M,\R)$. See also \cite{AdlerTaylor} and \cite{Nicolaescu2016}.
\end{remark}
By \cref{def:ATfield} it is clear that if $\y\sim \mathcal{N}(M,g)$ then for every smooth submanifold $Z\subset M$ with induced metric $g|_Z$ we have $\y|_Z\sim \mathcal{N}(Z,g|_Z)$. This property, together with the following lemma makes the normal field a very good tool to express integrals over the manifold. 
\begin{lemma}\label{lem:Montecarlo}
Let $(M,g)$ be a Riemannian manifold of dimension $m$, let $\y\sim\mathcal N(M,g)$ and let $Y^1,\ldots,Y^m,$ be i.i.d. copies of $\y$. Define the random discrete set $\Sigma:=\{Y^1=\dots =Y^m=0\}$. Let $\a\colon M\to \R$ be Borel with compact support. Then we have
\begin{equation} 
\int_M\a(p)=\frac{s_m}{2}\EE\kop\sum_{p\in\Sigma}\a(p)\pok
\end{equation}
where recall that $s_m:=\vol_m(S^m).$
\end{lemma}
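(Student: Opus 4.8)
The plan is to apply the $\alpha$-Kac-Rice formula (\cref{thm:alphaKR}) to the Gaussian field $F:=(Y^1,\ldots,Y^m)\randin \mC^\infty(M,\R^m)$, whose zero set is precisely $\Sigma=F^{-1}(0)$, using as test function the map $(f,p)\mapsto\alpha(p)$, which does not depend on the field. First I would verify that $F$ is {\zkrok}: since the $Y^i$ are smooth and independent with $Y^i(p)\sim\mathcal N(0,1)$, the vector $F(p)\sim\mathcal N(0,I_m)$ is non-degenerate, so \cref{prop:Gausmthkrok} applies. This guarantees in particular that $\Sigma$ is almost surely discrete and that \cref{eq:aKR} holds, yielding
\[
\EE\left\{\sum_{p\in\Sigma}\alpha(p)\right\}=\int_M \alpha(p)\,\EE\{J_pF\mid F(p)=0\}\,\rho_{F(p)}(0)\,dM(p).
\]

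The heart of the argument is to show that the density $\delta^\alpha_F(p)$ equals $c\,\alpha(p)$ for a universal constant $c$ depending only on $m$. Since $F(p)$ is a standard Gaussian in $\R^m$, its density at the origin is $\rho_{F(p)}(0)=(2\pi)^{-m/2}$. Next, differentiating the identity $p\mapsto\EE\{\y(p)^2\}\equiv 1$ gives $\EE\{\y(p)\,d_p\y(v)\}=0$ for every $v$, so the jointly Gaussian pair $(\y(p),d_p\y)$ is uncorrelated, hence independent; consequently $F(p)$ and $d_pF$ are independent and, by \cref{rem:indcond}, the conditioning disappears, i.e.\ $\EE\{J_pF\mid F(p)=0\}=\EE\{J_pF\}$. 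Finally, the defining property $g=g^{\y}_{AT}$ of a normal field means precisely that, in any orthonormal coframe of $T_p^*M$, each covector $d_pY^i$ has i.i.d.\ standard Gaussian coordinates; thus $J_pF=\|d_pY^1\wedge\cdots\wedge d_pY^m\|=|\det A|$, where $A$ is an $m\times m$ matrix with i.i.d.\ $\mathcal N(0,1)$ entries, whose law depends neither on $p$ nor on $(M,g)$. Therefore $\delta^\alpha_F(p)=c\,\alpha(p)$ with $c:=(2\pi)^{-m/2}\EE|\det A|$ independent of the manifold, and
\[
\EE\left\{\sum_{p\in\Sigma}\alpha(p)\right\}=c\int_M \alpha(p)\,dM(p).
\]

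It remains to identify $c=2/s_m$. Rather than evaluating $\EE|\det A|$ directly, I would exploit that $c$ is universal by testing both sides on a convenient example. Take $M=S^m$ with the round metric and the normal field $\y(p)=\langle\xi,p\rangle$ for $\xi$ a standard Gaussian vector in $\R^{m+1}$; one checks that $g^{\y}_{AT}$ is the round metric and $\y(p)\sim\mathcal N(0,1)$, so $\y\sim\mathcal N(S^m,g)$. With i.i.d.\ copies $Y^i(p)=\langle\xi^i,p\rangle$ one has $\Sigma=S^m\cap \Span(\xi^1,\ldots,\xi^m)^{\perp}$, which almost surely is the pair of antipodal points cut out of $S^m$ by a line, so $\#\Sigma=2$ a.s. Taking $\alpha\equiv 1$ gives $2=c\,\vol_m(S^m)=c\,s_m$, whence $c=2/s_m$; rearranging the previous display then produces the stated formula.

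The main obstacle is the bookkeeping in the local computation of $\delta^\alpha_F(p)$ --- in particular reading off correctly from the normal-field condition that in an orthonormal coframe the differentials $d_pY^i$ assemble into a matrix with i.i.d.\ standard Gaussian entries, and justifying the independence of value and gradient --- whereas the constant itself is pinned down cleanly by the sphere example instead of by an explicit Gaussian determinant integral.
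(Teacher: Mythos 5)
Your proposal is correct and follows essentially the same route as the paper: apply the $\alpha$-Kac-Rice formula (\cref{thm:alphaKR}) to $F=(Y^1,\ldots,Y^m)$, use the independence of $F(p)$ and $d_pF$ (obtained by differentiating $\EE\{\y(p)^2\}\equiv 1$) together with the fact that the normal-field condition makes $d_pF$ a matrix of i.i.d.\ standard Gaussians in an orthonormal coframe, so that the density is a universal constant times $\alpha(p)$, and then pin down the constant by the antipodal-points computation on the round sphere. The only differences are cosmetic (you make the appeal to \cref{prop:Gausmthkrok} and the value $\rho_{F(p)}(0)=(2\pi)^{-m/2}$ explicit, and you correctly take the Gaussian vector in $\R^{m+1}$ rather than $\R^m$ in the sphere example).
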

\begin{proof}
Let $Y=(Y^1,\dots,Y^m)\colon M\randto \R^m$. First note that, since $Y(p)\sim N(0,\mathbbm{1}_m)$ for all $p\in M$, by differentiating $\EE\{|Y^i(p)|^2\}=1$ with respect to $p$ we see that the random vectors $Y(p)$ and $d_pY$ are independent.
By \cref{thm:alphaKR}, we have that
\be 
\EE\kop\sum_{p\in\Sigma}\a(p)\pok=\int_M\a(p)\EE\kop J_pY|Y(p)=0\pok\rho_{N(0,1)}(0)dM(p)=c(m)\int_M\a(p)dM(p)
\ee
where the last equality is due to two facts: the first is that for any fixed $p\in M$, the random vectors $Y(p)$ and $d_pY$ are independent; the second is that in an orthonormal frame, all the rows of $d_pY$ are identically distributed standard Gaussian vectors in $\R^m$. 

Now, the constant $c(m)$ can be computed by writing more carefully the formula, but there is a quicker way. The above identity should be true in the case when $M=S^m$, $\a=1$ and $\y\colon S^m\randto \R$ is a normal field on $S^m$ defined as $\y(p)=\langle\gamma, p\rangle$ for $\gamma\sim N(0,\mathbbm{1}_m)$. In such case $\Sigma$ is 
almost surely a pair of antipodal points, thus
\be 
c(m)=\frac{1}{s_m}\EE\#\Sigma=\frac{2}{s_m}.
\ee
\end{proof}

\subsection{Proof of the Alpha Formula (\cref{thm:Alphaca})}\label{sec:proofAlpha}

Let $k\le m\in \N$ and let 
\begin{equation}
    X\colon M\randto \R^k
\end{equation}
be a \zkrok field. Let $d:=m-k$, let $Y^1,\ldots, Y^d\sim \mathcal{N}(M,g)$ be i.i.d. normal fields independent of $X$ and let $Y:=(Y^1,\dots,Y^d)\colon M\randto \R^d$. We write $Z:=X^{-1}(0)$ and $\Sigma:=Y^{-1}(0)$ and we let $F:=(X,Y)\colon M\randto \R^m$.

\subsubsection{Intersection with a normal field}
By \cref{thm:wedge}, $F$ is {\zkrok}. 
By integrating first with respect to $Y$, using the independence of $X$ and $Y$, we deduce the following identity from \cref{thm:alphaKR}:
\be\label{eq:aluno}
\EE\int_Z\a(X,p)=\frac{s_d}{2}\EE\kop\sum_{p\in \Sigma\cap Z}\a(X,p)\pok=\dots
\ee
Now, we apply \cref{thm:alphaKR} with $\alpha(F,p):=\alpha(X,p)$ depending only on the first factor and \cref{eq:aluno} becomes
\be\label{eq:aldue}
\dots=\frac{s_d}{2}\int_M\delta_F^\a(p)dM(p).
\ee
It remains only to show that $\frac{s_d}{2}\delta_F^\a=\delta_X^\a$. 
\subsubsection{The constant doesn't matter}
Once again, we don't need to keep track of the constants as long as they depend only on $k$ and $m$. Indeed, we can argue as in the proof of \cref{lem:Montecarlo} and observe that if the identity
\be\label{eq:alphacmk}
\EE \int_Z \a(X,p)=c(m,k)\int_M\delta^\a_X(p)dM(p)
\ee
holds under the hypotheses of \cref{thm:Alphaca}, then we can check the constant in the case when $M=S^m$, $\a=1$ and $X=(X^1,\dots,X^k)$ is such that $X^i(p)=\langle \gamma_i,p\rangle$ for a family of $k$ i.i.d. standard Gaussian vectors $\gamma_i\sim N(0,\mathbbm{1}_{m+1})$. Such random field is invariant under orthogonal transformations, therefore $\delta_X^1$ is a constant, hence we can compute it at $p=e_0$ the first vector of the canonical basis of $\R^{m+1}$. Since, in this case, $Z$ is almost surely a unit sphere of dimension $d$, we obtain the identity
\be 
s_d=c(m,k)s_m\delta_X^1(e_0)=c(m,k)s_m\EE\kop|J_0\begin{pmatrix}
\gamma_1 & \dots & \gamma_k
\end{pmatrix}|\Big| \gamma_i^0=0\pok\frac{1}{(2\pi)^{\frac k2}},
\ee
from which we deduce, using \cref{lem:hate} below, that
\be 
c(m,k)^{-1}=\frac{s_m}{s_d}\EE\kop\|\xi_1\wedge\cdots\wedge\xi_k\|\pok\frac{1}{(2\pi)^{\frac k2}}=1
\ee
where $\xi_1,\ldots,\xi_k\sim N(0,\mathbbm{1}_m)$ are i.i.d.
\begin{lemma}\label{lem:hate}
Let $\xi_1,\ldots,\xi_k\randin\R^m$ be i.i.d. standard Gaussian vectors. We have:
\be 
\EE\|\xi_1\wedge\cdots\wedge\xi_k\|=\frac{m!\uball_m}{(2\pi)^{\frac{k}{2}}(m-k)!\uball_{m-k}}=(2\pi)^{\frac k2}\frac{s_{m-k}}{s_m}
\ee
\end{lemma}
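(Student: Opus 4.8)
The plan is to read off the result from the zonoid calculus assembled in \cref{sec:zonoids}, turning the expectation $\EE\|\xi_1\wedge\cdots\wedge\xi_k\|$ into the length of a wedge power of a Euclidean ball. By \cref{def:zlength}, if we set $C:=\EE\seg{\xi_1\wedge\cdots\wedge\xi_k}\in\GZ(k,\R^m)$, then $\ell(C)=\EE\|\xi_1\wedge\cdots\wedge\xi_k\|$, so it suffices to identify $C$ and compute its length. Since the $\xi_i$ are independent, iterating the wedge--Vitale identity \eqref{eq:wedgeandVit} (peeling off one factor at a time, using that $\xi_k$ is independent of $\xi_1\wedge\cdots\wedge\xi_{k-1}$) gives $C=\EE\seg{\xi_1}\wedge\cdots\wedge\EE\seg{\xi_k}$, and by \cref{eg:gausszonisball} each factor is the ball $\EE\seg{\xi_i}=\tfrac{1}{\sqrt{2\pi}}B^m$. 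Hence $C=(2\pi)^{-k/2}(B^m)^{\wedge k}$ and $\EE\|\xi_1\wedge\cdots\wedge\xi_k\|=(2\pi)^{-k/2}\,\ell\big((B^m)^{\wedge k}\big)$.

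It remains to compute $\ell\big((B^m)^{\wedge k}\big)$, and here I would invoke \cref{prop:kintvolpow}, which gives $\ell\big((B^m)^{\wedge k}\big)=k!\,\Vint_k(B^m)$. The $k$-th intrinsic volume of the unit ball follows directly from its definition \eqref{eq:defvint} together with $\MV(B^m,\ldots,B^m)=\vol_m(B^m)=\uball_m$, yielding $\Vint_k(B^m)=\binom{m}{k}\uball_m/\uball_{m-k}$. Combining these, $\ell\big((B^m)^{\wedge k}\big)=k!\binom{m}{k}\uball_m/\uball_{m-k}=\tfrac{m!}{(m-k)!}\,\uball_m/\uball_{m-k}$, which produces the first claimed identity $\EE\|\xi_1\wedge\cdots\wedge\xi_k\|=\dfrac{m!\,\uball_m}{(2\pi)^{k/2}(m-k)!\,\uball_{m-k}}$.

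Finally, the second equality is a purely formal rewriting using the closed form $s_k=\tfrac{2^{k+1}\pi^k}{k!\,\uball_k}$ recorded in \cref{sec:notations}: substituting the expressions for $s_m$ and $s_{m-k}$ shows that $(2\pi)^{k/2}\,s_{m-k}/s_m$ equals the right-hand side above, so no further work is needed. I expect no serious obstacle along this route; the only genuine input beyond formal manipulation is the value $\Vint_k(B^m)$, the remainder being bilinearity of the wedge product of zonoids and bookkeeping of the factor $(2\pi)^{-k/2}$. As a cross-check one can bypass the zonoid machinery: by rotational invariance of the standard Gaussian, $\|\xi_1\wedge\cdots\wedge\xi_k\|$ factors as a product of independent distances of $\xi_i$ to the span of its predecessors, distributed as $\chi_{m},\chi_{m-1},\ldots,\chi_{m-k+1}$, whence $\EE\|\xi_1\wedge\cdots\wedge\xi_k\|=\prod_{j=m-k+1}^{m}\sqrt2\,\Gamma(\tfrac{j+1}{2})/\Gamma(\tfrac{j}{2})$, which telescopes to $2^{k/2}\,\Gamma(\tfrac{m+1}{2})/\Gamma(\tfrac{m-k+1}{2})$; matching this to the stated constants is exactly the Legendre duplication formula, which would be the only delicate point of that alternative derivation.
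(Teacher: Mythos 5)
Your proof is correct and follows essentially the same route as the paper's: both reduce $\EE\|\xi_1\wedge\cdots\wedge\xi_k\|$ to $(2\pi)^{-k/2}\,\ell\bigl((B^m)^{\wedge k}\bigr)$ via \cref{eg:gausszonisball} and the wedge--Vitale identity \eqref{eq:wedgeandVit}. The only (harmless) divergence is in the last step: the paper first invokes \cref{lem:lengthwithballs} to pass to the top power $\ell\bigl((B^m)^{\wedge m}\bigr)=m!\,\uball_m$, whereas you apply \cref{prop:kintvolpow} directly at level $k$ together with the explicit value $\Vint_k(B^m)=\binom{m}{k}\uball_m/\uball_{m-k}$; both yield the same constant.
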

\begin{proof}
We will prove the lemma using zonoid calculus, as discussed in \cref{sec:zonoids}. First, by \cref{eg:gausszonisball}, we have that $\EE\seg{\xi_i}=(2\pi)^{-\tfrac12} B^m$ for all $i=1,\ldots,m$. It follows then from \cref{def:zlength} that 
\bega\label{eqthiseq2}
\EE\|\xi_1\wedge\cdots\wedge\xi_k\|=(2\pi)^{-\frac k2}\ell\tyu(B^m)^{\wedge k}\uyt=\dots.
\eega
Observe that $B^m$ is a Grassmannian zonoid, hence, by using first  \cref{lem:lengthwithballs} and then \cref{prop:kintvolpow}, \cref{eqthiseq2} becomes
\bega 
\dots=(2\pi)^{-\frac k2}\frac{1}{(m-k)!\uball_{m-k}}\ell\tyu(B^m)^{\wedge m}\uyt=(2\pi)^{-\frac k2}\frac{1}{(m-k)!\uball_{m-k}}m!\uball_m
\eega
which gives the first equality we wanted. The second follows from the identity $d!\uball_d=(2\pi)^ds_d$.
\end{proof}

\begin{remark} \cref{prop:kintvolpow} implies that in the setting of \cref{lem:hate} above we have
\be\label{eq:plugin} 
\EE\|\xi_1\wedge\cdots\wedge\xi_k\|=(2\pi)^{-\frac k2}k!\Vint_k(B^m).
\ee
\end{remark}
\subsubsection{Computing the density}
In virtue of the identities \eqref{eq:aluno} and \eqref{eq:aldue}, to prove the identity \eqref{eq:alphacmk}, it is sufficient to show that 
\be 
\delta^\a_F(p)=c(m,k)\delta^\a_X(p),
\ee
for some constant $c(m,k)$ depending only on $m$ and $k$. (Since we already showed that the constant doesn't matter, we will keep calling it with the same letter $c(m,k)$ even though its value changes from line to line.) 
Since $X$ and $Y$ are independent, we have that $\rho_{F(p)}(0)=\rho_{X(p)}(0)\rho_{Y(p)}(0)=c(m,k)\rho_{X(p)}(0)$. Moreover, observe that $d_pY$ and $Y(p)$ are independent. Therefore
\bega \label{eq:thiseq3}
\delta^\a_F(p)&
=\EE\kop\a(X,p)J_pF|F(p)=0\pok\rho_{F(p)}(0)
\\
&=
c(m,k)\EE\kop\a(X,p)\|d_pX^1\wedge \dots \wedge d_pX^k\wedge d_pY^1\wedge \dots \wedge d_pY^d \|\Big| X(p)=0\pok\rho_{X(p)}(0)
=\dots
\eega 
Recall that taking coordinates with respect to an orthonormal basis of $T^*_pM$, we have that $d_pY^1,\dots,d_pY^d$ become i.i.d. standard Gaussian vectors in $\R^m$, so that, by integrating first with respect to $Y$ and using \cref{lem:hate2} below, we obtain that \cref{eq:thiseq3} becomes
\bega
\dots&=c(m,k)\EE_{X}\kop\a(X,p)\EE_Y\kop\|d_pX^1\wedge \dots \wedge d_pX^k\wedge d_pY^1\wedge \dots \wedge d_pY^d\|\pok\Big| X(p)=0\pok\rho_{X(p)}(0)
\\
&=
c(m,k)\EE\kop\a(X,p)\|d_pX^1\wedge \dots \wedge d_pX^k\|\Big| X(p)=0\pok\rho_{X(p)}(0)
=
\delta^\a_X(p)
\eega 
which is what we wanted.
\begin{lemma}\label{lem:hate2}
Let $\xi_1,\dots,\xi_d\randin \R^m$ be i.i.d. standard Gaussian vectors and let $v_1,\dots,v_k\in\R^m$. Then there exists a constant $c(m,k)>0$ s.t.
\be 
\EE\|v_1\wedge \dots \wedge v_k\wedge \xi_1\wedge \dots \wedge \xi_d\|=c(m,k)\|v_1\wedge \dots \wedge v_k\|
\ee
\end{lemma}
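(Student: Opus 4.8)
The plan is to show that the left-hand side depends on $v_1,\dots,v_k$ only through the single scalar $\|v_1\wedge\cdots\wedge v_k\|$, the constant being exactly the expectation that survives after this factor is pulled out. First, if $v_1,\dots,v_k$ are linearly dependent then $\|v_1\wedge\cdots\wedge v_k\|=0$ and the multivector $v_1\wedge\cdots\wedge v_k\wedge\xi_1\wedge\cdots\wedge\xi_d$ vanishes identically, so both sides are $0$ and there is nothing to prove. Hence I may assume the $v_i$ are linearly independent and set $U:=\Span(v_1,\dots,v_k)$, a $k$-dimensional subspace with orthogonal complement $U^\perp$ of dimension $m-k$.

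Next I would invoke the orthogonal invariance of the standard Gaussian law. Pick $O\in O(m)$ sending $U$ onto the coordinate subspace $\Span(e_1,\dots,e_k)$. Since $O$ is an isometry it preserves the induced norm on every exterior power, and since $\xi_1,\dots,\xi_d$ are i.i.d. standard Gaussians the vectors $O\xi_1,\dots,O\xi_d$ have the same joint law; replacing $v_i$ by $Ov_i$ and $\xi_i$ by $O\xi_i$ therefore changes neither side. Thus I may assume $U=\Span(e_1,\dots,e_k)$, in which case $v_1\wedge\cdots\wedge v_k=\pm\|v_1\wedge\cdots\wedge v_k\|\,e_1\wedge\cdots\wedge e_k$. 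Wedging with $e_1\wedge\cdots\wedge e_k$ annihilates the $U$-components of each $\xi_i$, so, writing $\pi\colon\R^m\to\Span(e_{k+1},\dots,e_m)$ for the orthogonal projection onto $U^\perp$, the orthogonality of $U$ and $U^\perp$ yields the norm factorization
\be
\|v_1\wedge\cdots\wedge v_k\wedge\xi_1\wedge\cdots\wedge\xi_d\|=\|v_1\wedge\cdots\wedge v_k\|\cdot\|\pi(\xi_1)\wedge\cdots\wedge\pi(\xi_d)\|.
\ee

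It then only remains to take expectations. The projection $\pi(\xi_i)$ of a standard Gaussian vector onto the $(m-k)$-dimensional space $U^\perp$ is again a standard Gaussian vector there, and the $\pi(\xi_i)$ are independent; hence
\be
\EE\|\pi(\xi_1)\wedge\cdots\wedge\pi(\xi_d)\|=:c(m,k)
\ee
is a finite constant depending only on $m,k,d$ and not on the $v_i$ (in the application $d=m-k$, so I write $c(m,k)$). Combining the last two displays proves the identity. Positivity holds in the relevant range $k+d\le m$: there the $d\le m-k$ independent standard Gaussians in $U^\perp\cong\R^{m-k}$ are almost surely linearly independent, so the wedge is a.s. nonzero and its expectation is strictly positive; for $d=m-k$ this $c(m,k)$ is precisely the expected absolute determinant of an $(m-k)\times(m-k)$ Gaussian matrix.

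The only point needing a little care is the factorization $\|a\wedge b\|=\|a\|\,\|b\|$ for $a\in\Lambda^kU$ and $b\in\Lambda^{m-k}U^\perp$ with $U\perp U^\perp$; this is the standard fact that, in an orthonormal basis adapted to the splitting $\R^m=U\oplus U^\perp$, the simple multivectors built from the two blocks are orthonormal and their wedge carries the product norm. Everything else reduces to the rotational invariance of the Gaussian measure, in the same spirit as the projection computation used in \cref{lem:supofsimplek}.
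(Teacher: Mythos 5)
Your proof is correct and follows essentially the same route as the paper's: reduce by orthogonal invariance to the case $\Span(v_1,\dots,v_k)=\Span(e_1,\dots,e_k)$, replace each $\xi_i$ by its projection onto the orthogonal complement, factor the norm, and recognize the remaining expectation as a constant involving independent standard Gaussians in $\R^{m-k}$. Your treatment is somewhat more explicit about the degenerate case and the positivity of the constant, but the argument is the same.
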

\begin{proof}
Let $e_1,\dots, e_m$ be an orthonormal basis. We can assume that $v_1,\dots,v_k$ belong to the space generated by $e_1,\dots,e_k$. Let us denote by $\pi\colon \R^m\to \R^m$, the orthogonal projection onto the space spanned by $e_{k+1},\dots,e_{m}$. Then 
\bega 
\EE\|v_1\wedge \dots \wedge v_k\wedge \xi_1\wedge \dots \wedge \xi_d\|
&=
\EE\|v_1\wedge \dots \wedge v_k\wedge \pi(\xi_1)\wedge \dots \wedge \pi(\xi_d)\|
\\
&=
\|v_1\wedge \dots \wedge v_k\|\cdot \EE\| \pi(\xi_1)\wedge \dots \wedge \pi(\xi_d)\|.
\eega
This concludes the proof of the Lemma, because $\pi(\xi_i)$ are now independent standard Gaussian vectors in a space of dimension $m-k$.
\end{proof}

\section{Main results}\label{sec:mainres}

\subsection{The density of expected volume}
Taking $\a=1$ in \cref{thm:Alphaca}, we obtain the formula for the expected volume of a random submanifold $Z=X^{-1}(0)$.
In this case, abusing notation, we write
\begin{equation}
    \delta_Z(p):=\delta_X(p):=\delta^1_X(p),
\end{equation}
where $\delta^1_X(p)$ is defined by \eqref{eq:alphadensity}, with $\a\equiv 1$. 

\begin{theorem}[Expected volume]\label{thm:Evol}
Let $k\le m\in \N$. Let $(M,g)$ be a Riemannian manifold of dimension $m$. Let $X\colon M\randto \R^k$ be a {\zkrok} random field and define the random submanifold $Z:=X^{-1}(0)$. Let $A\subset M$ be a Borel subset. Then 
\begin{equation}\label{eq:maindensvol}
    \delta_Z(p)=\ell\tyu\zeta_X(p)\uyt
\end{equation}
 and thus
\be 
\EE\kop\vol_d(Z\cap A)\pok=\int_A\ell\tyu\zeta_X(p)\uyt dM(p).
\ee
\end{theorem}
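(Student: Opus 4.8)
The statement has two parts: the pointwise identity \cref{eq:maindensvol}, namely $\delta_Z(p)=\ell(\zeta_X(p))$, and the integrated formula for $\EE\{\vol_d(Z\cap A)\}$. The plan is to prove the pointwise identity first -- this is where the zonoid section meets the Kac-Rice density -- and then to read off the integrated formula as an immediate specialization of the Alpha Formula (\cref{thm:Alphaca}), which I may assume.

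For the pointwise identity, fix $p\in M$ and consider the random vector
\be
W:=\rho_{X(p)}(0)\,\tyu d_pX^1\wedge\cdots\wedge d_pX^k\ \big|\ X(p)=0\uyt\ \randin\ \Lambda^kT^*_pM,
\ee
obtained by scaling the conditioned random covector of \cref{def:zonoidsec} by the nonnegative scalar $\rho_{X(p)}(0)$. Since $\rho\,[0,V]=[0,\rho V]$ for $\rho\ge0$ and the expected segment is positively homogeneous, the definition of the zonoid section rewrites as $\zeta_X(p)=\EE[0,W]$. The vector $W$ is integrable, because $\EE\|W\|=\EE\kop J_pX\ \big|\ X(p)=0\pok\rho_{X(p)}(0)$ is exactly the (finite) total mass of $J_p\cdot\mu(p,0)$ guaranteed by \zkrok-\cref{itm:krok5}. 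I then invoke the identity $\ell(\EE[0,W])=\EE\|W\|$ recorded just after \cref{def:zlength}, which gives
\be
\ell(\zeta_X(p))=\EE\|W\|=\EE\kop J_pX\ \big|\ X(p)=0\pok\rho_{X(p)}(0)=\delta_Z(p),
\ee
establishing \cref{eq:maindensvol}.

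To obtain the integrated formula, I apply \cref{thm:Alphaca} with the Borel function $\a(X,p):=\one_A(p)$, which depends only on $p$. The left-hand side becomes $\EE\kop\int_Z\one_A(p)\,dZ(p)\pok=\EE\kop\vol_d(Z\cap A)\pok$, while the density in \cref{eq:alphadensity} factors as $\delta^\a_X(p)=\one_A(p)\,\delta_Z(p)$. Combining this with the pointwise identity just proved yields
\be
\EE\kop\vol_d(Z\cap A)\pok=\int_M\one_A(p)\,\delta_Z(p)\,dM(p)=\int_A\ell\tyu\zeta_X(p)\uyt\,dM(p),
\ee
as claimed.

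I do not anticipate a substantive obstacle, since the analytic core is carried entirely by \cref{thm:Alphaca}. The only points deserving care are the integrability of $W$ (controlled by the finiteness part of \zkrok-\cref{itm:krok5}) and the measurability of $p\mapsto\ell(\zeta_X(p))$, which is in fact continuous by the continuity of the zonoid section (\cref{prop:hcontinuity}) together with the continuity of the length functional; these ensure that all the integrals above are well defined.
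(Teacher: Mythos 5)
Your proposal is correct and follows essentially the same route as the paper: apply the Alpha Formula (\cref{thm:Alphaca}) with a trivial weight to get $\delta_Z(p)=\EE\{J_pX\,|\,X(p)=0\}\rho_{X(p)}(0)$ as the density of $A\mapsto\EE\{\vol_d(Z\cap A)\}$, and then recognize this as $\ell(\zeta_X(p))$ directly from \cref{def:zlength} and \cref{def:zonoidsec}. Your extra remarks on the integrability of $W$ and the continuity of $p\mapsto\ell(\zeta_X(p))$ are sound bookkeeping that the paper leaves implicit.
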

\begin{proof}
By \cref{thm:Alphaca}, we have that 
\begin{equation}
    \delta_Z(p)=\EE\kop \|d_pX^1\wedge \dots \wedge d_pX^k\|\big|X(p)=0\pok\rho_{X(p)}(0)
\end{equation}
is the density of the measure $A\mapsto \EE\{\vol_d(Z\cap A)\}$.
By definition of the length (\cref{def:zlength}) and of the zonoid section (\cref{def:zonoidsec}), this is precisely equal to $\ell\tyu\zeta_X(p)\uyt$, which is what we wanted.
\end{proof}
Notice that, since $J_pX=\|d_pX^1\wedge \dots \wedge d_pX^k\|$, \eqref{eq:maindensvol} is the first of the two identities in \eqref{eq:krmain}.

Let us use the convention that $\vol_n(\emptyset):=0$ for all $n\in \mathbb{Z}$ and $\vol_n(Z)=+\infty$ if $Z\neq\emptyset$ and $n<0$.
Using the expression for independent intersection described in \cref{thm:wedge} we find the following.
\begin{corollary}\label{cor:Evolvol}
Let $X_1\randin C^1(M,\R^{k_1}),\ldots,X_n\randin C^1(M,\R^{k_n})$ be independent \zkrok fields, write $k:=k_1+\cdots+k_n$ and let $Z_i:=(X_i)^{-1}(0)$, $i=1,\ldots,n$. Then we have, for all $p\in M$,
\be\label{eq:1megacor} 
\delta_{Z_1\cap\cdots\cap Z_n}(p)=\ell(\zeta_{X_1}(p)\wedge\ldots\wedge \zeta_{X_n}(p)).
\ee
In other words, for all $U\subset M$ measurable we have
\be\label{eq:2megacor}
 \EE\vol_{m-k}\left(Z_1\cap\cdots\cap Z_n\cap U\right)=\int_U \ell(\zeta_{X_1}(p)\wedge\ldots \wedge\zeta_{X_n}(p)) dM (p)
\ee
In the case where $k_i=1$ for all $i=1,\dots, n$ and were $n=m=\dim M$, we have
\begin{equation}\label{eq:3megacor}
    \EE\#\left(Z_1\cap\cdots\cap Z_m\cap U\right)=m!\int_U \MV(\zeta_{X_1}(p),\ldots,\zeta_{X_m}(p)) dM (p),
\end{equation}
where $\MV$ denotes the mixed volume, see \cref{sec:MV}.
 In the case in which $k_i=1$ for all $i=1,\dots,n$ and all the fields are identically distributed, we have
\begin{equation}\label{eq:4megacor}
    \EE\vol_{m-n}\left(Z_1\cap\cdots\cap Z_n\cap U\right)=n!\int_U  \mathcal{V}_n(\zeta_{X_1}(p)) dM (p),
\end{equation}
where we recall that $\mathcal{V}_n$ denotes the $n^{th}$ intrinsic volume defined in \cref{eq:defvint};
if, in addition, $n=m=\dim M$, then
\begin{equation}\label{eq:5megacor}
    \EE\#\left(Z_1\cap\cdots\cap Z_m\cap U\right)=m!\int_U  \vol_m(\zeta_{X_1}(p)) dM (p).
\end{equation}
\end{corollary}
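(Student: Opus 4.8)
The plan is to reduce every identity to \cref{thm:Evol} applied to the single combined field
\[
X_0:=(X_1,\dots,X_n)\colon M\randto\R^k,\qquad k:=k_1+\cdots+k_n,
\]
whose zero set is exactly the intersection $Z_0:=X_0^{-1}(0)=Z_1\cap\cdots\cap Z_n$. First I would prove, by induction on $n$, that $X_0$ is \zkrok and that its zonoid section factors as
\[
\zeta_{X_0}(p)=\zeta_{X_1}(p)\wedge\cdots\wedge\zeta_{X_n}(p),\qquad\forall p\in M.
\]
The base case $n=2$ is exactly the wedge property \cref{thm:wedge}. For the inductive step I would write $X_0=(X_1,W)$ with $W:=(X_2,\dots,X_n)$; mutual independence of the $X_i$ makes $X_1$ and $W$ independent, the inductive hypothesis makes $W$ a \zkrok field with $\zeta_W=\zeta_{X_2}\wedge\cdots\wedge\zeta_{X_n}$, and \cref{thm:wedge} applied to the independent pair $(X_1,W)$ closes the induction. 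Feeding this into \cref{thm:Evol} gives $\delta_{Z_0}(p)=\ell(\zeta_{X_0}(p))=\ell(\zeta_{X_1}(p)\wedge\cdots\wedge\zeta_{X_n}(p))$, which is \eqref{eq:1megacor}, and integrating this density over $U$ against $dM$ yields \eqref{eq:2megacor}.

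The three remaining formulas are specializations of \eqref{eq:2megacor}, obtained by rewriting the length with the zonoid-calculus identities of \cref{sec:MV}. For \eqref{eq:3megacor}, when every $k_i=1$ and $n=m$ we have $k=m$, each $\zeta_{X_i}(p)$ is a zonoid in $\Lambda^1T_p^*M\cong\R^m$, and $Z_0$ is almost surely discrete, so $\vol_{m-k}(Z_0\cap U)=\#(Z_0\cap U)$; \cref{prop:MVandwedge} then rewrites the length as $\ell(\zeta_{X_1}(p)\wedge\cdots\wedge\zeta_{X_m}(p))=m!\,\MV(\zeta_{X_1}(p),\dots,\zeta_{X_m}(p))$. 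For \eqref{eq:4megacor}, when the fields are identically distributed (and $k_i=1$) their zonoid sections coincide pointwise, since $\zeta_X(p)$ depends only on the law of $X$ (see \cref{def:zonoidsec}); hence $\ell(\zeta_{X_1}(p)\wedge\cdots\wedge\zeta_{X_n}(p))=\ell(\zeta_{X_1}(p)^{\wedge n})=n!\,\Vint_n(\zeta_{X_1}(p))$ by \cref{prop:kintvolpow}. Finally \eqref{eq:5megacor} is the case $n=m$ of \eqref{eq:4megacor}, using that the top intrinsic volume equals the ordinary volume, $\Vint_m(K)=\vol_m(K)$, which is immediate from the definition \eqref{eq:defvint} with $k=m$ (there $\binom{m}{m}=\uball_0=1$ and $\MV(K[m])=\vol_m(K)$).

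The only step that requires genuine care is the inductive claim that $X_0$ is \zkrok with the stated wedge decomposition: one must check that at each stage the grouping into $(X_1,W)$ preserves independence and the \zkrok conditions, so that \cref{thm:wedge} is legitimately applicable, and that the $n$-fold wedge $\zeta_{X_1}\wedge\cdots\wedge\zeta_{X_n}$ written without parentheses is unambiguous. The latter rests on associativity of the wedge product of zonoids, which follows from the identity \eqref{eq:wedgeandVit} on independent simple random vectors together with associativity of the exterior product. Everything after this reduction is a direct substitution into \cref{thm:Evol} followed by the appropriate identity of \cref{sec:MV}.
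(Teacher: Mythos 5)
Your proposal is correct and follows essentially the same route as the paper: reduce to \cref{thm:Evol} via iterated application of the wedge property (\cref{thm:wedge}), then specialize using \cref{prop:MVandwedge} and \cref{prop:kintvolpow}. The only difference is that you spell out the induction establishing that the combined field is \zkrok{} with the $n$-fold wedge decomposition, a step the paper leaves implicit.
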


\begin{proof}
As we mentioned above, \eqref{eq:2megacor} follows by combining \cref{thm:Evol} with \cref{thm:2}.
    In the case where $k_i=1$ for all $i=1,\ldots,n$,  and where $n=m=\dim M$, we have $k=n$, so that by \cref{prop:MVandwedge}  \eqref{eq:2megacor} specializes to \eqref{eq:3megacor}.
If all the fields are identically distributed and scalar: $k_1=\dots=k_n=1$,  then their zonoid sections coincide and thus \eqref{eq:2megacor} becomes \eqref{eq:4megacor} by \cref{prop:kintvolpow}.
Finally, if $n=m$ we obtain \eqref{eq:5megacor} as a special case of either \eqref{eq:3megacor} or \eqref{eq:4megacor}.
\end{proof}

\subsection{Alexandrov-Fenchel and Brunn-Minkowsky inequalities for random submanifolds}\label{sub:AFBM}
Applying the inequalities (AF) and (BM) (\cref{prop:BMi,prop:AFi}) we obtain lower bounds for the densities.

\begin{mainthm}[KRAF]\label{cor:KRAF}
Let $Y_1,\ldots,Y_{m-2},X_1,X_1',X_2,X_2'\randin C^1(M,\R)$ be independent {\zkrok} fields, such that $X_1'\sim X_1$ and $X_2'\sim X_2$. Let  $\mathfrak{Z}:=(Y_1)^{-1}(0)\cap\ldots\cap(Y_{m-2})^{-1}(0)$, $Z_i:=(X_i)^{-1}(0)$ and $Z_i':=(X_i')^{-1}(0)$. Then we have for all $p\in M$
\begin{equation}
    \delta_{ Z_1\cap Z_2\cap \mathfrak{Z}}(p)\geq \sqrt{\delta_{ Z_1\cap Z'_1\cap \mathfrak{Z}}(p)\cdot \delta_{ Z_2\cap Z_2'\cap \mathfrak{Z}}(p)}.
\end{equation}
\end{mainthm}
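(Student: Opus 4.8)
The plan is to translate each of the three densities into a mixed volume via the wedge product of zonoids and then invoke the Alexandrov--Fenchel inequality verbatim. First I would apply \cref{cor:Evolvol} (equation \eqref{eq:1megacor}) to rewrite each density as the length of a wedge product of the relevant zonoid sections. Since every field here is scalar ($k_i=1$), each factor $\zeta_{X_i}(p)$ and $\zeta_{Y_j}(p)$ is a zonoid in $\Lambda^1 T_p^*M\cong T_p^*M$, a Euclidean space of dimension $m$, and each wedge product has exactly $(m-2)+2=m$ factors. Setting $K:=\zeta_{X_1}(p)$, $L:=\zeta_{X_2}(p)$ and $\mathfrak K:=(\zeta_{Y_1}(p),\ldots,\zeta_{Y_{m-2}}(p))$, I would use that the zonoid section depends only on the law of the field (see \cref{def:zonoidsec}), so that $X_1'\sim X_1$ forces $\zeta_{X_1'}(p)=K$ and likewise $\zeta_{X_2'}(p)=L$. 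Thus the three densities become $\ell(K\wedge L\wedge\zeta_{Y_1}(p)\wedge\cdots\wedge\zeta_{Y_{m-2}}(p))$, $\ell(K\wedge K\wedge\zeta_{Y_1}(p)\wedge\cdots)$ and $\ell(L\wedge L\wedge\zeta_{Y_1}(p)\wedge\cdots)$.

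Next I would fix an orthonormal frame to identify $\Lambda^1 T_p^*M$ with $\R^m$ and apply \cref{prop:MVandwedge}, which gives $\tfrac{1}{m!}\ell(K_1\wedge\cdots\wedge K_m)=\MV(K_1,\ldots,K_m)$, yielding
\begin{equation*}
\delta_{Z_1\cap Z_2\cap\mathfrak Z}(p)=m!\,\MV(K,L,\mathfrak K),\qquad \delta_{Z_1\cap Z_1'\cap\mathfrak Z}(p)=m!\,\MV(K,K,\mathfrak K),\qquad \delta_{Z_2\cap Z_2'\cap\mathfrak Z}(p)=m!\,\MV(L,L,\mathfrak K).
\end{equation*}
The Alexandrov--Fenchel inequality (\cref{prop:AFi}) then gives $\MV(K,L,\mathfrak K)\ge\sqrt{\MV(K,K,\mathfrak K)\,\MV(L,L,\mathfrak K)}$, and multiplying through by $m!$ and using $m!\sqrt{ab}=\sqrt{(m!\,a)(m!\,b)}$ produces exactly the claimed bound.

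The argument is essentially a substitution through the established dictionary, so I do not expect a deep obstacle; the points requiring care are bookkeeping ones. I would first confirm that the tuples $(X_1,X_2,Y_1,\ldots,Y_{m-2})$, $(X_1,X_1',Y_1,\ldots,Y_{m-2})$ and $(X_2,X_2',Y_1,\ldots,Y_{m-2})$ are all \zkrok, which follows by iterating \cref{thm:wedge}, so that each $\delta$ is well defined and \cref{cor:Evolvol} applies. I would also note that the identification of $\Lambda^1 T_p^*M$ with $\R^m$ needed for \cref{prop:MVandwedge} is harmless: changing the volume normalization scales all three mixed volumes by the same positive factor, leaving the inequality invariant. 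Finally, no full-dimensionality of $K$ or $L$ is needed, since \cref{prop:AFi} holds for arbitrary convex bodies, and in the degenerate cases the inequality holds trivially because one or both sides vanish.
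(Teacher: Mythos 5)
Your proposal is correct and follows exactly the route the paper intends (and leaves implicit): rewrite each density via \cref{cor:Evolvol} as the length of an $m$-fold wedge of the scalar zonoid sections, note that $X_i'\sim X_i$ forces $\zeta_{X_i'}(p)=\zeta_{X_i}(p)$, convert lengths to mixed volumes with \cref{prop:MVandwedge}, and apply \cref{prop:AFi}. The bookkeeping points you flag (iterated \cref{thm:wedge} for the \zkrok property of the tuples, and the harmlessness of the identification of $T_p^*M$ with $\R^m$) are exactly the right ones to check.
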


\begin{remark}
Note that \cref{cor:KRAF} is an inequality on the densities and not directly on the number of points of intersection. In fact, by Hölder's inequality, we have that 
\begin{equation}
    \sqrt{\EE\#\left(Z_1\cap Z_1'\right)\cdot\EE\#\left(Z_2\cap Z_2'\right)}\geq \int_M \sqrt{\delta_{ Z_1\cap Z'_1\cap \mathfrak{Z}}(p)\cdot \delta_{ Z_2\cap Z_2'\cap \mathfrak{Z}}(p)}.
\end{equation}
\end{remark}

\begin{mainthm}[KRBM]\label{cor:KRBM}
Let $X_0, X_1\randin C^1(M,\R)$ be {\zkrok} fields, let $\epsilon\randin\{0,1\}$ be a Bernoulli of parameter $0\leq t\leq 1$ independents of $X_0,X_1$, i.e. $\epsilon=0$ with probability $(1-t)$and $1$ with probability $t$. Let $X_t:=(1-\epsilon_i) X_0+\epsilon X_1$ be {\zkrok}\footnote{For instance, this is true if the condition $(*)$ of \cref{prop:bern} holds.}. Finally, let $Z_1^{(i)},\ldots,Z_m^{(i)}$ be i.i.d. copies of $(X_i)^{-1}(0)$, $i=0,1,t$. We have for all $p\in M:$
\begin{equation}
    \delta_{Z_1^{(t)}\cap\cdots\cap Z_m^{(t)}}(p)\geq \left(\delta_{Z_1^{(0)}\cap\cdots\cap Z_m^{(0)}}(p)\right)^{(1-t)}\left( \delta_{Z_1^{(1)}\cap\cdots\cap Z_m^{(1)}}(p)\right)^t.
\end{equation}
\end{mainthm}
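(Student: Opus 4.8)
The plan is to reduce everything to a pointwise application of the Brunn--Minkowski inequality (\cref{prop:BMi}) to the zonoid sections. First I would invoke \cref{cor:Evolvol} in the identically distributed scalar case with $n=m=\dim M$: each $X_i$ is scalar and we intersect $m$ i.i.d.\ copies, so equation \eqref{eq:4megacor} together with \cref{prop:kintvolpow} and the identity $\Vint_m=\vol_m$ for the top intrinsic volume gives, for each $i\in\{0,1,t\}$ and every $p\in M$,
\[
\delta_{Z_1^{(i)}\cap\cdots\cap Z_m^{(i)}}(p)=m!\,\Vint_m\!\big(\zeta_{X_i}(p)\big)=m!\,\vol_m\!\big(\zeta_{X_i}(p)\big),
\]
where $\zeta_{X_i}(p)\subset\Lambda^1T^*_pM=T^*_pM$ is a convex body in the $m$-dimensional space $T^*_pM$, so that $\vol_m$ is the honest $m$-dimensional volume there. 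This reduces the claim to an inequality among $\vol_m(\zeta_{X_0}(p))$, $\vol_m(\zeta_{X_1}(p))$ and $\vol_m(\zeta_{X_t}(p))$.

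Next I would identify the zonoid section of $X_t$. The field $X_t=(1-\epsilon)X_0+\epsilon X_1$ equals $X_0$ with probability $1-t$ and $X_1$ with probability $t$, so it has the same law as the Bernoulli combination of \cref{prop:bern}; by hypothesis it is {\zkrok}. Hence \cref{prop:bern} applies and yields the Minkowski identity
\[
\zeta_{X_t}(p)=(1-t)\,\zeta_{X_0}(p)+t\,\zeta_{X_1}(p)\qquad\text{in }T^*_pM.
\]

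Finally I would apply \cref{prop:BMi} in $T^*_pM\cong\R^m$ with $K_0=\zeta_{X_0}(p)$ and $K_1=\zeta_{X_1}(p)$, obtaining
\[
\vol_m\!\big((1-t)\zeta_{X_0}(p)+t\,\zeta_{X_1}(p)\big)\ \ge\ \vol_m\!\big(\zeta_{X_0}(p)\big)^{1-t}\,\vol_m\!\big(\zeta_{X_1}(p)\big)^{t}.
\]
Multiplying both sides by $m!$, the left-hand side becomes $\delta_{Z_1^{(t)}\cap\cdots\cap Z_m^{(t)}}(p)$ by the first step applied to $X_t$, while the right-hand side equals $\delta_{Z_1^{(0)}\cap\cdots\cap Z_m^{(0)}}(p)^{1-t}\,\delta_{Z_1^{(1)}\cap\cdots\cap Z_m^{(1)}}(p)^{t}$, using $\delta_i=m!\,\vol_m(\zeta_{X_i}(p))$ and the identity $(m!)^{1-t}(m!)^{t}=m!$; this is exactly the claimed inequality. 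There is no substantial obstacle: the content is entirely concentrated in \cref{prop:BMi}, and the only points requiring care are that the Bernoulli conventions match so that \cref{prop:bern} delivers the combination with weights $(1-t,t)$ in the correct order, and that each $\zeta_{X_i}(p)$ is treated as a full-dimensional body in $T^*_pM$ so that $\vol_m$ is the relevant intrinsic volume. Our role is merely to transport Brunn--Minkowski through the dictionary provided by \cref{cor:Evolvol} and \cref{prop:bern}.
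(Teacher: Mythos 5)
Your proof is correct and follows exactly the route the paper intends (the paper leaves this proof implicit, saying only that the result follows by applying \cref{prop:BMi} together with \cref{cor:Evolvol} and \cref{prop:bern}): express each density as $m!\,\vol_m(\zeta_{X_i}(p))$ via \eqref{eq:5megacor}, identify $\zeta_{X_t}(p)=(1-t)\zeta_{X_0}(p)+t\zeta_{X_1}(p)$, and apply Brunn--Minkowski pointwise. The only cosmetic remark is that full-dimensionality of $\zeta_{X_i}(p)$ is not needed, since \cref{prop:BMi} holds for arbitrary convex bodies and the inequality is trivial when a volume vanishes.
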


\subsection{The expected current}
Assume that $M$ is oriented. Then a {\zkrok} field $X\colon M\randto \R^k$ defines a random $(m-k)$-current, by integration over the random (co-oriented and thus oriented, see \cref{def:orientZ}) submanifold $Z=X^{-1}(0)$:
\be
\int_Z\colon \Omega_c^{(m-k)}(M)\to \R
\ee
where recall that $\Omega_c^{(m-k)}(M)$ is the space of smooth differential forms of degree $m-k$ with compact support.
\begin{definition}\label{def:orientZ}
The orientation of $Z=X^{-1}(0)$ is defined by declaring that if  $\lambda\in \Lambda^{m-k}T^*_pM$ is such that $\lambda\wedge d_pX^1\wedge\dots\wedge d_pX^k>0$, then $\lambda|_Z>0$.
\end{definition}

In this subsection, we will prove that the expectation of this random current is the current represented by the continuous $k$-form $e_X\in\Gamma(\Lambda^{k}T^*M)\subset \Omega_c^{m-k}(M)^*$, which is the \emph{nigiro} (see \cref{def:nigiro}) of the zonoid section:
\be 
e_X(p)=\EE\kop d_pX^1\wedge \dots \wedge d_pX^k\big|X(p)=0\pok\rho_{X(p)}(0)=e(\zeta_X).
\ee
\begin{proposition}\label{prop:Econt}
$e_X$ is a continuous $k$-form: $e_X\in \Gamma(\Lambda^kT^*M)$.
\end{proposition}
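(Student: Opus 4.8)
The plan is to express the nigiro directly in terms of the support function and then invoke the continuity already established in \cref{prop:hcontinuity}. The key algebraic observation is that for any zonoid $K\subset \Lambda^kT_p^*M$, the nigiro is recovered as the \emph{odd part} of the support function:
\[
\langle u, e(K)\rangle = h_K(u)-h_K(-u),\qquad u\in\Lambda^kT_pM .
\]
This follows from the decomposition $K=\seg{K}+\tfrac12\{e(K)\}$ of \cref{def:nigiro}: by \cref{prop:suppfctprop} we have $h_K(u)=h_{\seg{K}}(u)+\tfrac12\langle u, e(K)\rangle$, and since $\seg{K}$ is centrally symmetric one has $h_{\seg{K}}(-u)=h_{\seg{K}}(u)$, so the even summand cancels when we subtract and exactly twice the linear term survives.

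Applying this pointwise to $K=\zeta_X(p)$, I obtain
\[
\langle u, e_X(p)\rangle = h_{\zeta_X(p)}(u)-h_{\zeta_X(p)}(-u),\qquad p\in M,\ u\in\Lambda^kT_pM .
\]
By \cref{prop:hcontinuity} the map $h_{\zeta_X}\colon \Lambda^kTM\to\R$ is continuous, and the fibrewise antipodal map $u\mapsto -u$ is continuous, so the right-hand side is a continuous function on the total space $\Lambda^kTM$.

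The final step is to upgrade this joint continuity to continuity of $e_X$ as a section of $\Lambda^kT^*M$. Working in a local trivialization $\Lambda^kT^*M|_U\cong U\times(\R^N)^*$ with $N=\binom{m}{k}$, the components of $e_X(p)$ in the dual basis are exactly the pairings of $e_X(p)$ with the constant basis sections $u=e_i$; by the displayed formula each function $p\mapsto\langle e_i, e_X(p)\rangle$ is continuous, whence $e_X\in\Gamma(\Lambda^kT^*M)$. Equivalently, one may simply invoke the topological splitting \eqref{eq:splitzonbundle}: since $\zeta_X$ is already a continuous section of $\ZZ(\Lambda^kT^*M)$ (as established right after \cref{prop:hcontinuity}) and the nigiro is the projection onto the $\Gamma(\Lambda^kT^*M)$-summand, $e_X=e(\zeta_X)$ is automatically continuous.

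I do not expect a genuine obstacle here, since the analytic work has already been carried out in \cref{prop:hcontinuity}; the only point deserving care is the linear-algebraic identity for the nigiro, whose role is precisely to reduce the continuity of $e_X$ to that of the support function $h_{\zeta_X}$ that is already in hand.
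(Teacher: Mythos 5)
Your proof is correct and takes essentially the same route as the paper: both reduce the continuity of the nigiro to the continuity of $h_{\zeta_X}$ from \cref{prop:hcontinuity}, via a linear-algebraic identity recovering $e(K)$ from the odd part of the support function. Your derivation of that identity from the decomposition $K=\seg{K}+\tfrac{1}{2}\{e(K)\}$ and the evenness of $h_{\seg{K}}$ is, if anything, cleaner (and correctly normalized) than the paper's verification on segments.
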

\begin{proof}
Given a zonoid $\zeta$ in a fixed vector space $V$, its nigiro  $e(\zeta)$ can be expressed as
\be 
e(\zeta)=\sum_{i=1}^m \frac{h_{\zeta}(v_i)-h_{\zeta}(-v_i)}{2} v^i,
\ee
where $v_1,\dots,v_m$ is a basis of $V$ and $v^1,\dots,v^m$ is the dual basis.
Indeed, one can check that this formula is true for segments and is linear and continuous in $h_{\zeta}$.
Hence, $e(\zeta)$ depends continuously on the support function $h_{\zeta}$. 
Thus, the thesis follows from \cref{prop:hcontinuity}.
\end{proof}
\begin{theorem}[Expected current]\label{thm:Ecurrent}
Let $k\le m\in \N$. Let $(M,g)$ be an oriented Riemannian manifold of dimension $m$. Let $X\colon M\randto \R^k$ be a {\zkrok} random field and consider the random submanifold $Z:=X^{-1}(0)$, oriented according to \cref{def:orientZ}. Let $\omega\in\Omega_c^{(m-k)}(M)$. Then
\be 
\EE\kop\int_Z \omega|_Z\pok=\int_M  \omega \wedge e_X.
\ee
\end{theorem}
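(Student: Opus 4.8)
The plan is to deduce the statement from the Alpha Formula (\cref{thm:Alphaca}) with a suitable choice of integrand $\a$, working pointwise in the exterior algebra $\Lambda T_p^*M$. Let $\mathrm{vol}_M\in\Gamma(\Lambda^m T^*M)$ be the Riemannian volume form fixed by the orientation. First I would rewrite the integral of the form $\omega|_Z$ over the oriented submanifold $Z$ as an integral against the Riemannian density: $\int_Z\omega|_Z=\int_Z\a(X,p)\,dZ(p)$, where $\a(X,p):=\omega_p(e_1,\dots,e_{m-k})$ for any positively oriented orthonormal basis $e_1,\dots,e_{m-k}$ of $T_pZ=\ker d_pX$, oriented as in \cref{def:orientZ}. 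This $\a$ is a Borel function of $(X,p)$, well defined wherever $d_pX$ is surjective, which holds almost surely along $Z$.

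The crux is the pointwise identity
\be\label{eq:myptwise}
\a(X,p)\,J_pX=\big(\omega_p\wedge d_pX^1\wedge\cdots\wedge d_pX^k\big)(f_1,\dots,f_m),
\ee
valid for any fixed positively oriented orthonormal frame $f_1,\dots,f_m$ of $T_pM$; equivalently, the right-hand side is the coefficient of the $m$-form $\omega_p\wedge d_pX^1\wedge\cdots\wedge d_pX^k$ relative to $\mathrm{vol}_M$. To prove it I would evaluate this $m$-form on a frame adapted to $X$: take $f_1,\dots,f_{m-k}$ a positively oriented orthonormal basis of $T_pZ$ and $f_{m-k+1},\dots,f_m$ an orthonormal basis of its normal space, with the full frame positively oriented in $M$. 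Since each $d_pX^i$ vanishes on $T_pZ$, the $k$-covector $d_pX^1\wedge\cdots\wedge d_pX^k$ kills every tuple meeting $T_pZ$, so only one shuffle contributes and the value equals $\omega_p(f_1,\dots,f_{m-k})\cdot(d_pX^1\wedge\cdots\wedge d_pX^k)(f_{m-k+1},\dots,f_m)=\a(X,p)\,(\pm J_pX)$. The orientation convention \cref{def:orientZ} forces the sign to be $+$: writing $\lambda=f^1\wedge\cdots\wedge f^{m-k}$, one has $\lambda\wedge d_pX^1\wedge\cdots\wedge d_pX^k=\big[(d_pX^1\wedge\cdots\wedge d_pX^k)(f_{m-k+1},\dots,f_m)\big]\mathrm{vol}_M$, and the positivity of the orientation of $Z$ means exactly that this bracket equals $+J_pX$. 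As the value of an $m$-form on a positively oriented orthonormal frame does not depend on the frame, \eqref{eq:myptwise} then holds for an $X$-independent frame $f_1,\dots,f_m$.

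Applying \cref{thm:Alphaca} gives $\EE\{\int_Z\omega|_Z\}=\int_M\delta^\a_X(p)\,dM(p)$ with $\delta^\a_X(p)=\EE\{\a(X,p)J_pX\mid X(p)=0\}\rho_{X(p)}(0)$. The right-hand side of \eqref{eq:myptwise} is \emph{linear} in the $k$-covector $d_pX^1\wedge\cdots\wedge d_pX^k$, so the linear functional $\eta\mapsto(\omega_p\wedge\eta)(f_1,\dots,f_m)$ commutes with the conditional expectation (integrability is ensured by \zkrok, since $\EE\{J_pX\mid X(p)=0\}<\infty$), yielding
\be
\delta^\a_X(p)=\Big(\omega_p\wedge\big[\EE\{d_pX^1\wedge\cdots\wedge d_pX^k\mid X(p)=0\}\rho_{X(p)}(0)\big]\Big)(f_1,\dots,f_m)=(\omega_p\wedge e_X(p))(f_1,\dots,f_m),
\ee
by the definition $e_X=e(\zeta_X)$. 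Thus $\delta^\a_X(p)$ is the coefficient of $\omega\wedge e_X$ relative to $\mathrm{vol}_M$, and integrating against $dM$ recovers the integral of the $m$-form over the oriented $M$: $\int_M\delta^\a_X\,dM=\int_M\omega\wedge e_X$, which is the claim.

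The main obstacle is the sign bookkeeping in \eqref{eq:myptwise}: the Jacobian $J_pX=\|d_pX^1\wedge\cdots\wedge d_pX^k\|$ is an unsigned norm while the wedge product is signed, and reconciling the two is precisely where \cref{def:orientZ} must be used carefully. Once the sign is settled, the remainder is linearity of the (conditional) expectation together with the continuity of $e_X$ from \cref{prop:Econt} and the Alpha Formula.
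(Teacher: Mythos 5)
Your proposal is correct and follows essentially the same route as the paper: the same choice of weight $\a(f,p)=\langle\omega(p),e_1\wedge\dots\wedge e_{m-k}\rangle$, the same key pointwise identity $\a(f,p)J_pf\,\Omega_M(p)=\omega(p)\wedge d_pf^1\wedge\dots\wedge d_pf^k$, and the same application of \cref{thm:Alphaca} followed by linearity of the conditional expectation. The only difference is that you spell out the orientation/sign verification that the paper leaves implicit, which is a worthwhile addition but not a different argument.
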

\begin{proof}
Let $d=m-k$. Let us define $\a\colon \mC^1(M,\R^k)\times M\to\R$ as follows: if $f(p)\neq 0$ or if $p$ is a critical point of $f$, then $\a(f,p)=0$; otherwise we set
\be\label{eq:alphacurrent} 
\a(f,p):=\langle\omega(p), e_1\wedge\dots\wedge e_d\rangle,
\ee
where $e_1,\dots,e_d$ is a positive orthonormal basis of $T_p(f^{-1}(0))=\ker d_pf$. 
Let $\Omega_M$ be the positive volume m-form of $M$, so that $\int_Mh\Omega_M=\int_M hdM$, for any integrable function $h\colon M\to \R$. An equivalent expression defining $\a$ is:
\be\label{eq:alphacurrent2}
\a(f,p)J_pf\Omega_M(p)=\omega(p)\wedge  d_pf^1\wedge\dots \wedge d_pf^k.
\ee
We conclude by applying \cref{thm:Alphaca} as follows.
\bega
\EE\kop\int_Z\omega|_{Z}\pok
&= 
\EE\kop\int_Z\a(X,p)dZ(p)\pok
\\
&=
\int_M\EE\kop \a(X,p)J_pX \Big| X(p)=0\pok \rho_{X(p)}(0)\Omega_M(p)
\\
&=
\int_M\EE\kop \omega\wedge dX^1\wedge \dots \wedge dX^k\big|X(p)=0\pok\rho_{X(p)}(0)
\\
&=
\int_M \omega\wedge e_X.
\eega
\end{proof}
Together, \cref{thm:Evol} and \cref{thm:Ecurrent} form the statement of \cref{thm:1}, whose proof is thus now complete.
\subsection{What does the Zonoid section know?}\label{sub:whatdoesitknow}
We have seen two cases of the Alpha formula (\cref{thm:Alphaca}) where the density $\delta^\alpha_X$ was a function of the zonoid section $\zeta_X$. 

We can ask what are the conditions on the function $\alpha$ for this to be the case.

\begin{proposition}\label{prop:whatknows}
    Let $\alpha:C^1(M,\R^k)\times M\to \R$ be a measurable function that is given for every $(f,p)\in C^1(M,\R^k)\times M$ by $0$ if $J_p\varphi=0$ and else by: 
    \begin{equation}
        \alpha(f,p)= (J_pf)^{-1}T(\dd_pf^1\wedge \cdots\wedge \dd_pf^k)+(J_pf)^{-1}F(\dd_pf^1\wedge \cdots\wedge \dd_pf^k)
    \end{equation}
    where $T:\Lambda^kT^*M\to \R$ is linear on the fibers and $F:\Lambda^kT^*M\to \R$ is positively homogeneous on the fibers. Then for every \zkrok field $X\randin C^1(M,\R^k)$ and every $p\in M$, the density $\delta_X^\alpha(p)$ is a function of the zonoid $\zeta_X(p)$.
\end{proposition}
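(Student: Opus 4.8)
The plan is to read off $\delta_X^\alpha(p)$ from its definition \cref{eq:alphadensity} and recognise the two summands of $\alpha$ as, respectively, the \emph{nigiro} and the \emph{centered part} of $\zeta_X(p)$. Write $V:=\Lambda^kT_p^*M$, set $Y:=\dd_pX^1\wedge\cdots\wedge\dd_pX^k\in V$, put $\rho:=\rho_{X(p)}(0)$ and $\tilde Y:=(Y\mid X(p)=0)$. Since $J_pf=\|\dd_pf^1\wedge\cdots\wedge\dd_pf^k\|$, the simple vector $\dd_pf^1\wedge\cdots\wedge\dd_pf^k$ vanishes exactly on $\{J_pf=0\}$; there $T$ vanishes by linearity and $F$ vanishes by homogeneity, matching the convention $\alpha(f,p)=0$. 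Off that set, multiplying the defining formula of $\alpha$ by $J_pf$ gives $\alpha(f,p)J_pf=T(\dd_pf^1\wedge\cdots\wedge\dd_pf^k)+F(\dd_pf^1\wedge\cdots\wedge\dd_pf^k)$, so this identity holds for all $f$. Substituting into \cref{eq:alphadensity} and conditioning on $X(p)=0$ I would split
\be
\delta_X^\alpha(p)=\rho\,\EE\kop T(Y)\big|X(p)=0\pok+\rho\,\EE\kop F(Y)\big|X(p)=0\pok .
\ee

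For the first (linear) summand, linearity of $T$ lets it pass through the expectation: $\rho\,\EE\{T(Y)\mid X(p)=0\}=T\big(\rho\,\EE\{\tilde Y\}\big)$. Using $\rho\,\EE[0,\tilde Y]=\EE[0,\rho\tilde Y]=\zeta_X(p)$ (the last equality is \cref{def:zonoidsec}) together with \cref{lem:centnotcent}, which gives $e(\EE[0,\rho\tilde Y])=\rho\,\EE\tilde Y$, I obtain $\rho\,\EE\{\tilde Y\}=e(\zeta_X(p))$. Hence the first summand equals $T\big(e(\zeta_X(p))\big)$, manifestly a function of $\zeta_X(p)$ (indeed of its nigiro alone).

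For the second (homogeneous) summand I would absorb the density into the random vector: set $W:=\rho\tilde Y$, which is integrable by the \zkrok hypotheses (this is what makes $\zeta_X(p)$ well defined) and satisfies $\EE[0,W]=\zeta_X(p)$, hence $\EE\seg{W}=\seg{\zeta_X(p)}$. Because $F$ is positively homogeneous and $\rho\ge 0$, $\rho\,\EE\{F(Y)\mid X(p)=0\}=\EE\{F(W)\}$. Then \cref{eq:intwrtgenmeas}, justified by \cref{prop:zoneq}, identifies $\EE\{F(W)\}$ with $\int_{S(V)}F\,\dd\mu_{\seg{\zeta_X(p)}}$ as soon as $F$ is even on the fibres (and $W$ being almost surely simple, one may equivalently integrate over the Grassmannian via \cref{eq:intwrtgenmeasgrass}). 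Combining, I reach
\be
\delta_X^\alpha(p)=T\big(e(\zeta_X(p))\big)+\int_{S(V)}F\,\dd\mu_{\seg{\zeta_X(p)}},
\ee
which exhibits $\delta_X^\alpha(p)$ as a function of $\zeta_X(p)$, depending on its nigiro through $T$ and on its centered part through the measure $\mu_{\seg{\zeta_X(p)}}$.

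The delicate step is the homogeneous summand, and precisely the invocation of \cref{prop:zoneq}: that result controls $\EE\{g(W)\}$ only for \emph{even} one-homogeneous $g$, since $\EE\seg{W}=\seg{\zeta_X(p)}$ records only the even part of the magnitude-weighted directional distribution of $W$. Writing $F=F_{\mathrm{even}}+F_{\mathrm{odd}}$, one has $\int_{S(V)}F\,\dd\mu_{\seg{\zeta_X(p)}}=\EE\{F_{\mathrm{even}}(W)\}$, and this equals $\EE\{F(W)\}$ only up to the term $\EE\{F_{\mathrm{odd}}(W)\}$; the full zonoid $\zeta_X(p)$ pins down the odd part of $F$ only through its first moment, i.e. through the nigiro. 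Thus the argument is clean exactly when the odd part of $F$ is linear — in which case it is absorbed into $T$ — and this is automatic in the intended applications, e.g. \cref{eq:sexy}, where $F$ is the homogeneous extension of a function on the \emph{unoriented} Grassmann bundle $G(m-k,TM)$ and is therefore even. I would record this evenness (equivalently, linearity of the odd part) as the hypothesis under which the homogeneous contribution is genuinely a function of $\zeta_X(p)$.
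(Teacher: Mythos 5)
Your proof is correct and follows essentially the same route as the paper's: the same splitting of $\delta_X^\alpha(p)$ into a linear summand, handled by pulling $T$ through the conditional expectation to obtain $T(e(\zeta_X(p)))$, and a positively homogeneous summand, handled by absorbing $\rho_{X(p)}(0)$ into the random vector and invoking \cref{prop:zoneq}. The caveat you raise about the homogeneous summand is a genuine one that the paper's own proof passes over in silence: \cref{prop:zoneq} characterises zonoid equivalence only through \emph{even} one-homogeneous functions, so $\EE\{F(W)\}$ is a function of $\seg{\zeta_X(p)}$ only when $F$ is even; the full zonoid $\zeta_X(p)$ adds to this only the first moment $\EE W=e(\zeta_X(p))$, which pins down the contribution of the odd part of $F$ exactly when that part is linear. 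In all of the paper's uses of this proposition (\cref{prop:introwhatknows}, \cref{thm:title}, and \cref{thm:Evol} with $F=\|\cdot\|$) the function $F$ is the homogeneous extension of a function on the unoriented Grassmannian and is therefore even, so nothing downstream is affected; but your suggestion to record evenness of $F$ (equivalently, linearity of its odd part) as a hypothesis is the right correction to the statement as written.
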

\begin{proof}
    Let $X\randin C^1(M,\R^k)$ be \zkrok and let $p\in M$. By definition, see \cref{eq:alphadensity}, the density is given by 
    \begin{align}
        \delta_X^\alpha(p)=\rho_{X(p)}(0)\, &\EE\left[T\left(\dd_pX^1\wedge \cdots\wedge \dd_pX^k\right)|X(p)=0\right]   \\
        &+\rho_{X(p)}(0)\, \EE\left[F\left(\dd_pX^1\wedge \cdots\wedge \dd_pX^k\right)|X(p)=0\right].
    \end{align}
    The first summand gives
    \begin{align}
        \rho_{X(p)}(0)\EE\left[T(\dd_pX^1\wedge \cdots\wedge \dd_pX^k)|X(p)=0\right]&=T\left(\rho_{X(p)}(0)\EE\left[\dd_pX^1\wedge \cdots\wedge \dd_pX^k|X(p)=0\right]\right)  \\
        &=T(e_X(p)).
    \end{align}
    For the second term, if we call $Y:=\rho_{X(p)}(0)(\dd_pX^1\wedge \cdots\wedge \dd_pX^k|X(p)=0)$ then we have tautologically 
    \begin{equation}
        \rho_{X(p)}(0)\EE\left[F(\dd_pX^1\wedge \cdots\wedge \dd_pX^k)|X(p)=0\right]=\EE\left[F(Y)\right]
    \end{equation}
    But since $F$ is positively homogeneous, by \cref{prop:zoneq}, this does not depend on the random vector $Y$ but this is a function of the zonoid $\EE\seg{Y}=\seg{\zeta_X(p)}$ which is the centered version of $\zeta_X(p)$ (see \cref{def:nigiro}) and this concludes the proof.
\end{proof}
\begin{remark}
In particular, the above proof shows that if $F\equiv 0$, then $\delta^\a=T(e(\zeta_X))$, while if $T\equiv 0$, then $\delta^\a$ depends on $\zeta_X(p)$ only up to translations, i.e., on $\underline{\zeta_X(p)}$ (see \cref{def:nigiro}).
\end{remark}
In the case of the density of expected volume ( \cref{thm:Evol}) we have that $T\equiv 0$ and $F=\|\cdot\|$ is the norm (given by the Riemannian structure). 

In the case of the expected current (\cref{thm:Ecurrent}) we see from Equation \eqref{eq:alphacurrent2} that $\a$ is given pointwise by a linear function evaluated on $(J_pf)^{-1}(\dd_pf^1\wedge \cdots\wedge \dd_pf^k)$. 
Since $J_pf=\|\dd_pf^1\wedge \cdots\wedge \dd_pf^k\|$, the latter is a unit simple vector. 
Let us consider the bundle $G_+(k,T^*M)\to M$ whose fiber over $p\in M$ is the Grassmannian of oriented $k$-dimensional vector subspaces of $T^*_pM$. The set of unit simple vector in $\Lambda^k T^*M$ is identified with  $G_+(k,T^*M)$ via the Pl\"ucker embedding:
\begin{equation}
\Pi\colon G_+(k,T^*_pM)\xrightarrow{\sim} \kop v_1\wedge\dots\wedge v_k\in \Lambda^k T^*_pM|\, \|v_1\wedge\dots\wedge v_k\|=1\pok,
\end{equation}
\be 
(V,[v_1\wedge \dots \wedge v_k])\mapsto \frac{v_1\wedge\dots\wedge v_k}{\|v_1\wedge\dots\wedge v_k\|}
\ee
where $[v_1\wedge \dots \wedge v_k]$ denotes the orientation of $V$ induced by the basis $v_1\wedge \dots \wedge v_k$.
We recall that, by \cref{lem:GZchar}.(iii), we have that a centered Grassmannian zonoid $K$ in $\Lambda^kT^*_pM$ is associated, via a one to one correspondence, with a positive measure $\mu_K$ on $G(k,T_pM)$, given by \eqref{eq:costrans}.

Let us call \emph{linear} those functions $\theta_T:G_+(k,T^*M)\to \R$ such that if $v_1,\dots,v_k$ is an orthonormal basis of $V\subset T^*_pM$ then
\be 
\theta_T(V,[v_1\wedge \dots \wedge v_k])=T\tyu v_1\wedge \dots \wedge v_k\uyt
\ee
for some linear function $T:\Lambda^k T^*M\to \R$. Then, we can rewrite \cref{prop:whatknows} in the following way. 
\begin{proposition}\label{prop:introwhatknows}
    Let $\theta_T:G_+(k,T^*M)\to \R$ be a linear function and let $F:G(k,TM)\to \R$ be measurable. Then for every \zkrok random field $X:M\to\R^k$, we have 
    \be\label{eq:introwhatknows} 
    \EE\kop\int_Z \theta_T\tyu N_pZ, [\dd_pX^1,\dots , \dd_pX^k]\uyt
    +F\tyu N_pZ\uyt dZ\pok=\int_M \tyu T(e(\zeta_X))+\delta^F \uyt dM,
    \ee
    where $\delta^F:M\to \R$ is a function whose value at any $p\in M$ depends only on $F$ and on  $K:=\underline{\zeta_X(p)}$, and is given by
    \be\label{eq:deltasexy} 
    \delta^F(p)=\int_{G(k,T_pM)}Fd\mu_{K}.
    \ee
\end{proposition}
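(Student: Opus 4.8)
The plan is to recognize Proposition \ref{prop:introwhatknows} as a direct reformulation of Proposition \ref{prop:whatknows} together with the Alpha formula (\cref{thm:Alphaca}), so that the work consists in choosing the right test function $\alpha$ and unwinding the identifications of \cref{sec:grasszon}. Concretely, I would define $\alpha\colon \mC^1(M,\R^k)\times M\to\R$ by $\alpha(f,p):=0$ whenever $J_pf=0$, and otherwise
\[
\alpha(f,p):=\theta_T\bigl(N_p,[d_pf^1,\dots,d_pf^k]\bigr)+F(N_p),
\]
where $N_p$ is the $k$-plane spanned by $d_pf^1,\dots,d_pf^k$ (identified with a normal space in $T_pM$ through the metric). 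On the zero set $Z=X^{-1}(0)$ the covectors $d_pX^1,\dots,d_pX^k$ span the conormal $N_pZ$ with the orientation of \cref{def:orientZ}, so $\alpha(X,p)$ for $p\in Z$ is exactly the integrand appearing on the left-hand side of \eqref{eq:introwhatknows}.

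The key step is to check that $\alpha$ has the shape required by \cref{prop:whatknows}. Writing the positively oriented unit simple covector as $(J_pf)^{-1}d_pf^1\wedge\cdots\wedge d_pf^k$ and using linearity of $T$, the first summand becomes $(J_pf)^{-1}T(d_pf^1\wedge\cdots\wedge d_pf^k)$, which is the linear part. For the second summand, I would extend $F$ to the cone of simple vectors by one-homogeneity, setting $\tilde F(w):=\|w\|\,F(\mathrm{span}\,w)$; then $F(N_p)=(J_pf)^{-1}\tilde F(d_pf^1\wedge\cdots\wedge d_pf^k)$ is precisely the positively homogeneous part. Thus \cref{prop:whatknows} applies and guarantees that $\delta^\alpha_X(p)$ is a function of $\zeta_X(p)$.

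To extract the explicit value I would reuse the two computations inside the proof of \cref{prop:whatknows}. The linear summand contributes $T(e_X(p))=T(e(\zeta_X(p)))$ by pulling $T$ out of the conditional expectation. For the homogeneous summand, set $Y:=\rho_{X(p)}(0)\,(d_pX^1\wedge\cdots\wedge d_pX^k\mid X(p)=0)$, so that by \cref{lem:centnotcent} we have $\seg{\zeta_X(p)}=\EE\seg{Y}=:K$; since $Y$ is almost surely simple, $K$ is a centered Grassmannian zonoid and by \cref{lem:GZchar} its measure $\mu_K$ is supported on $G(k,T_pM)$. Applying \eqref{eq:intwrtgenmeasgrass} to the one-homogeneous function $\tilde F$ gives $\EE[\tilde F(Y)]=\int_{G(k,T_pM)}F\,d\mu_K=\delta^F(p)$, because $\tilde F$ restricts to $F$ on unit simple vectors. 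Hence $\delta^\alpha_X(p)=T(e(\zeta_X(p)))+\delta^F(p)$, and \cref{thm:Alphaca} yields \eqref{eq:introwhatknows} after integrating over $M$.

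I expect the only genuinely delicate point to be the bookkeeping of the metric identifications $G(k,T^*_pM)\cong G(k,T_pM)$ together with orientations: one must verify that the normalized wedge $d_pX^1\wedge\cdots\wedge d_pX^k/J_pX$ is indeed the positively oriented unit simple covector used to evaluate $\theta_T$, and that the measure $\mu_K$ produced by Vitale's construction from the simple random vector $Y$ is exactly the one transported onto $G(k,T_pM)$ against which $F$ is integrated in \eqref{eq:deltasexy}. Everything else is a mechanical translation of \cref{prop:whatknows} into the Grassmannian language.
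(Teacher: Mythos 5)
Your proposal is correct and follows essentially the same route as the paper: the paper likewise treats the statement as an immediate consequence of \cref{prop:whatknows} applied to the weight $\alpha(f,p)=(J_pf)^{-1}T(\dd_pf^1\wedge\cdots\wedge \dd_pf^k)+(J_pf)^{-1}\tilde F(\dd_pf^1\wedge\cdots\wedge \dd_pf^k)$, reads off $\delta^F$ from the Alpha formula (\cref{thm:Alphaca}), and concludes via the homogeneous extension of $F$ and \eqref{eq:intwrtgenmeasgrass}. Your version merely spells out the identifications (the extension $\tilde F$, the support of $\mu_K$ on $G(k,T_pM)$, the orientation of the unit simple covector) that the paper leaves implicit.
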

\begin{proof}
 The only thing that does not directly derive from \cref{prop:whatknows} is the formula \eqref{eq:deltasexy} for $\delta^F$. By \cref{thm:Alphaca}, $\delta^F$ is given by
 \be 
 \delta^F(p)=\rho_{X(p)}(0)\, \EE\left[F\left(\dd_pX^1\wedge \dots \wedge \dd_pX^k\right)|X(p)=0\right],
 \ee
 where we still denote by $F$ the even and homogeneous extension to the cone of simple vectors in $\Lambda^kTM$.
 \eqref{eq:deltasexy} now follows from \eqref{eq:intwrtgenmeasgrass}.
 
\end{proof}
\subsubsection{The zonoid section as a varifold}\label{subsub:varifold}
Let $\Gamma(\ZZ_0(\Lambda^kT^*M))$ denote the subspace of the space of zonoid sections $\Gamma(\ZZ(\Lambda^kT^*M))$, as defined in \cref{sub:zonoidbundles}, consisting of centered ones.
For instance, the centered zonoid section $ \seg{\zeta_X}$ of a \zkrok field $X\randin \mC^1(M,\R^k)$ is an element of $\Gamma(\ZZ_0(\Lambda^kT^*M))$.

 Let  $\zeta\in \Gamma(\ZZ_0(\Lambda^kT^*M))$. Following the discussion preceding \cref{prop:introwhatknows}, for every $p\in M$, the centered zonoid $\zeta(p)$  has an associated measure $\mu_{\zeta(p)}$ on the Grassmannian $G(k,T_p^*M)$,  which we identify with $G(m-k,T_pM)$. Recall that $h_K\colon \Lambda^kT_pM\to \R$ denotes the support function of the zonoid $K\subset \Lambda^kT_p^*M$, in particular, $h_{[0,v_1\wedge\dots \wedge v_k]}(x)=|\langle x,v_1\wedge\dots \wedge v_k \rangle|,$ for $v_1,\dots, v_n\in T_pM$ and $x\in \Lambda^kT_pM$. Because of \cref{lem:GZchar}.(iii), the measure $\mu_{\zeta(p)}$ is defined as the unique measure on  $G(m-k,T_pM)$ such that:
\bega  
h_{\zeta(p)}(x)&=\int_{G(m-k,T_pM)}h_{[0,v_1\wedge\dots \wedge v_k]}(x)d\mu_{\zeta(p)}(V)
\\
&=
\int_{G(m-k,T_pM)}\| v_{k+1}\wedge\dots \wedge v_{m}\wedge x\|d\mu_{\zeta(p)}(V)
, \quad \text{ for all $x\in \Lambda^kT_pM,$}
\eega
where for any $V$, we have chosen an orthonormal basis $v_1,\dots, v_m$ of $T_pM$ such that $v_1,\dots,v_k$ is a basis of $V^\perp$. 
Let $d=m-k$. We can put together such family of measures, to define a \emph{$d$-varifold} on $M$, that is, a positive measure on the total space of the Grassmann bundle $G(d,TM),$ see \cite{Allard1972Varifold}.
\begin{definition}
Let $\zeta\in \Gamma(\ZZ_0(\Lambda^kT^*M)).$ and let $d=m-k$. We define the $d$-varifold $V_{\zeta}$ as the positive measure on $G(d,TM)$ such that
\be\label{eq:defimuX}
V_{\zeta}(A):=\int_M\mu_{\zeta(p)}\tyu A\cap G(d,T_pM)\uyt dM(p).
\ee
\end{definition}
Notice that the underlying measure on $M$, usually denoted $\|V_{\zeta}\|$ in varifold theory (see \cite{Allard1972Varifold}),  is absolutely continuous with density $\ell(\zeta)$, see \cref{prop:fromrvtomeas}. and that, since $\zeta$ is continuous, $V_\zeta$ is always a Radon measure. 
\begin{lemma}\label{lem:zonoidvarifold}
The function $\zeta\mapsto V_\zeta,$ defined for all $\zeta\in\Gamma(\ZZ_0(\Lambda^kT^*M)),$  is injective.
\end{lemma}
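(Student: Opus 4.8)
The plan is to factor the assignment $\zeta\mapsto V_\zeta$ through the family of fiberwise measures $p\mapsto \mu_{\zeta(p)}$ and to reconstruct this family from the varifold $V_\zeta$. Recall that, by \eqref{eq:costrans} and \cref{lem:GZchar}.(iii), the centered (Grassmannian) zonoid $\zeta(p)\subset\Lambda^kT_p^*M$ is uniquely determined by the positive measure $\mu_{\zeta(p)}\in\Me^+(G(d,T_pM))$, $d=m-k$, that appears in \eqref{eq:defimuX}; thus the pointwise map $\zeta(p)\mapsto\mu_{\zeta(p)}$ is injective. Consequently it suffices to show that if $\zeta,\zeta'\in\Gamma(\ZZo(\Lambda^kT^*M))$ satisfy $V_\zeta=V_{\zeta'}$, then $\mu_{\zeta(p)}=\mu_{\zeta'(p)}$ for every $p\in M$.

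First I would record that, since $\zeta$ is a continuous section, the map $p\mapsto\mu_{\zeta(p)}$ is weak-$*$ continuous: by the argument in the proof of \cref{lem:GZchar}, Hausdorff convergence of centered zonoids corresponds to weak-$*$ convergence of the associated measures. Next I would test the two varifolds against functions of product type $\Psi(p,V)=\phi(p)\,\tilde F(p,V)$, where $\phi\in\mC_c(M)$ and $\tilde F$ is any bounded continuous function on the total space $G(d,TM)$. Unwinding \eqref{eq:defimuX}, the equality $V_\zeta(\Psi)=V_{\zeta'}(\Psi)$ reads
\be
\int_M \phi(p)\,H(p)\,dM(p)=0,
\ee
where
\be
H(p):=\int_{G(d,T_pM)}\tilde F(p,\cdot)\,d\mu_{\zeta(p)}-\int_{G(d,T_pM)}\tilde F(p,\cdot)\,d\mu_{\zeta'(p)}.
\ee
Working in a local trivialization of the Grassmann bundle, in which the fibers are identified with a fixed compact Grassmannian, the weak-$*$ continuity of $p\mapsto\mu_{\zeta(p)}$ together with the continuity of $\tilde F$ shows that $H$ is continuous on $M$. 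Since the displayed identity holds for every $\phi\in\mC_c(M)$, we conclude $H\equiv 0$; that is,
\be
\int_{G(d,T_pM)}\tilde F(p,\cdot)\,d\mu_{\zeta(p)}=\int_{G(d,T_pM)}\tilde F(p,\cdot)\,d\mu_{\zeta'(p)}
\ee
for every $p\in M$ and every continuous $\tilde F$ on $G(d,TM)$.

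Finally I would localize to a single fiber. Fixing $p_0\in M$ and an arbitrary $f\in\mC(G(d,T_{p_0}M))$, I would extend $f$ to a bounded continuous function $\tilde F$ on the metrizable total space $G(d,TM)$ by the Tietze extension theorem, the fiber $G(d,T_{p_0}M)$ being a compact, hence closed, subset. Evaluating the previous identity at $p_0$ gives $\int f\,d\mu_{\zeta(p_0)}=\int f\,d\mu_{\zeta'(p_0)}$; as $f$ was arbitrary, $\mu_{\zeta(p_0)}=\mu_{\zeta'(p_0)}$, and by the pointwise injectivity recalled at the outset, $\zeta(p_0)=\zeta'(p_0)$. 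Since $p_0$ is arbitrary, $\zeta=\zeta'$, proving injectivity.

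I expect the delicate points to be, first, the continuity of $p\mapsto\mu_{\zeta(p)}$ and hence of $H$, which must be extracted from the Hausdorff-continuity of the zonoid section via the equivalence with weak-$*$ convergence and is cleanest to verify in a local trivialization; and second, the passage from global bundle test functions to arbitrary functions on a single fiber, which relies on Tietze extension. A virtue of this approach is that it handles uniformly the points where $\ell(\zeta(p))=0$, where $\mu_{\zeta(p)}$ is the zero measure and is therefore automatically reconstructed.
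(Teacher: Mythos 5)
Your proof is correct and follows essentially the same route as the paper's (much terser) argument: the paper simply notes that \eqref{eq:defimuX} determines $\mu_{\zeta(p)}$ for almost every $p$ and that continuity of $\zeta$ then determines it everywhere, which is exactly what your testing against product functions, the continuity of $H$, and the Tietze localization make precise. The details you supply (weak-$*$ continuity of $p\mapsto\mu_{\zeta(p)}$ from Hausdorff continuity, and the fiberwise injectivity of $\zeta(p)\mapsto\mu_{\zeta(p)}$) are the correct ingredients and are consistent with \cref{lem:GZchar} and \cref{prop:fromrvtomeas}.
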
 
\begin{proof}\eqref{eq:defimuX} determines $\mu_{\zeta(p)}$, and thus $\zeta(p)$, for almost every $p\in M$; by the continuity of the latter, this determines  $\zeta$. 
\end{proof}
There exists another natural way of constructing a $d$-varifold. Let $Z\subset M$ be $\mC^1$ submanifold of dimension $d$, then $TZ\subset G(d,TM)$ is a subset of the Grassmann bundle\footnote{If $Z$ is of class $\mC^2$, then $TZ\subset G(d,TM)$ is a $\mC^1$ submanifold of dimension $d.$}
\begin{definition}
Let Let $Z\subset M$ be $\mC^1$ submanifold of dimension $d$. We define the $d$-varifold $V_{Z}$ as the positive measure on $G(d,TM),$ supported on $TZ,$ such that
\be
V_{Z}(A):=\int_M 1_A(T_pZ)dM(p).
\ee
\end{definition}
The reason why we talk about varifolds is that they are the proper language to understand \cref{thm:E} and also the title of the paper. Indeed, we have the following theorem.
\begin{mainthm}\label{thm:title}
Let $X\randin \mC^1(M,\R^k)$ be a \zkrok random field, and let $d=m-k$ be the dimension of the random submanifold $Z:=X^{-1}(0).$ Then
\be 
\EE V_Z=V_{\zeta_X}
\ee
\end{mainthm}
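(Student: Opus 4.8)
The plan is to test both sides against functions. Both $\EE V_Z$ and $V_{\zeta_X}$ are Radon measures on the total space of $G(d,TM)$ (for $\EE V_Z$ this uses that $V_\zeta$ is always Radon and that $\EE$ of a random Radon measure is Radon), and two Radon measures agreeing on $\mC_c$ coincide; so it suffices to show, for every compactly supported continuous $F\colon G(d,TM)\to\R$,
\[
\EE\kop\int_Z F(T_pZ)\,dZ(p)\pok=\int_M\int_{G(d,T_pM)}F\,d\mu_{\zeta_X(p)}\,dM(p)=V_{\zeta_X}(F).
\]
Well-posedness of the left-hand side is immediate: if $F$ is supported in a compact $K$ projecting to $\pi(K)\subset M$, then $|V_Z(F)|\le\|F\|_\infty\vol_d(Z\cap\pi(K))$, whose expectation is finite by \cref{thm:Evol}. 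So $\EE V_Z(F)$ is meaningful and $\EE V_Z$ is genuinely a (locally finite) measure.

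The core is an application of the Alpha formula. I would set $\alpha\colon\mC^1(M,\R^k)\times M\to\R$ to be $\alpha(f,p):=F(\ker d_pf)$ when $J_pf\neq0$ and $\alpha(f,p):=0$ otherwise; since $F$ is continuous and $p\mapsto\ker d_pf$ is continuous on $\{J_pf\neq0\}$, this $\alpha$ is Borel. On $Z$ we have $\ker d_pX=T_pZ$, so $\int_Z\alpha(X,p)\,dZ(p)=\int_Z F(T_pZ)\,dZ(p)$, and \cref{thm:Alphaca} gives
\[
\EE V_Z(F)=\int_M\EE\kop\alpha(X,p)\,J_pX\,\Big|\,X(p)=0\pok\,\rho_{X(p)}(0)\,dM(p).
\]
Next I would rewrite the integrand in terms of the simple covector $w:=d_pX^1\wedge\cdots\wedge d_pX^k$. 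Let $\hat F$ be the even, positively $1$-homogeneous function on the cone of simple $k$-covectors in $\Lambda^kT_p^*M$ defined by $\hat F(w):=\|w\|\,F(V)$, where $V\in G(d,T_pM)$ is the orthogonal complement of the metric-dual of the $k$-plane $\Span(w_1,\dots,w_k)\subset T_p^*M$ represented by $w=w_1\wedge\cdots\wedge w_k$. By construction $\alpha(X,p)\,J_pX=\hat F(d_pX^1\wedge\cdots\wedge d_pX^k)$, because $J_pX=\|w\|$ and the normalized covector $w/\|w\|$ encodes exactly $T_pZ$; moreover $\hat F$ is even since $F$ depends only on the unoriented plane.

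To finish I would invoke the zonoid--measure dictionary: since $\hat F$ is even and $1$-homogeneous, \eqref{eq:intwrtgenmeasgrass} (equivalently \cref{prop:zoneq} with \cref{prop:fromrvtomeas}) yields
\[
\EE\kop\hat F(d_pX^1\wedge\cdots\wedge d_pX^k)\,\Big|\,X(p)=0\pok\,\rho_{X(p)}(0)=\int\hat F\,d\mu_{\zeta_X(p)},
\]
where the factor $\rho_{X(p)}(0)$ is absorbed by the scaling $\underline{\zeta_X(p)}=\rho_{X(p)}(0)\,\EE\seg{(w\mid X(p)=0)}$, and only the centered part $\underline{\zeta_X(p)}$ matters because $\hat F$ is homogeneous. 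Under the identifications $G(k,T_p^*M)\cong G(k,T_pM)\cong G(d,T_pM)$ fixed in \cref{subsub:varifold}, the function $\hat F$ corresponds to $F$ and $\mu_{\zeta_X(p)}$ is precisely the measure used to define $V_{\zeta_X}$, so this equals $\int_{G(d,T_pM)}F\,d\mu_{\zeta_X(p)}$; integrating over $M$ gives $\EE V_Z(F)=V_{\zeta_X}(F)$. This is exactly the special case $\theta_T\equiv0$ of \cref{prop:introwhatknows} (the identity \eqref{eq:sexy}), which one may simply cite after performing the orthogonal-complement identification. The only real obstacle is the bookkeeping of those three Grassmannian identifications and verifying that $\hat F$ is genuinely even and $1$-homogeneous so that the passage to the generating measure is legitimate; the probabilistic content is entirely carried by \cref{thm:Alphaca} and is routine once $\alpha$ is chosen as above.
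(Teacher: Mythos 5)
Your proposal is correct and follows essentially the same route as the paper: the paper's proof simply tests $\EE V_Z$ against continuous functions $F$ and cites \cref{prop:introwhatknows} with $\theta_T\equiv 0$, which is exactly the combination of \cref{thm:Alphaca} (with the weight $\alpha(f,p)=F(\ker d_pf)$) and the generating-measure identity \eqref{eq:intwrtgenmeasgrass} that you spell out. The only cosmetic difference is that you test against compactly supported rather than bounded continuous functions and make the homogeneous extension $\hat F$ explicit, which the paper delegates to the proofs of \cref{prop:whatknows,prop:introwhatknows}.
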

\begin{proof}
Let $F\colon G(d,TM)\cong G(k,TM)\to \R$ be a bounded continuous function. Then  \cref{prop:introwhatknows} yields the thesis as follows:
\bega 
\EE\kop V_Z(F)\pok &=\EE\kop \int_M F(T_pZ)dM(p)\pok
\\
&= \int_M \delta^F(p) dM(p)
\\
&= \int_M \int_{G(d,T_pM)}Fd\mu_{\zeta(p)} dM(p)=V_{\zeta}(F)
\eega
\end{proof}
\subsubsection{The zonoid section does not know the random field}
The previous observations, combined with \cref{prop:introwhatknows}, yields that the zonoid section depends only on the law of the zero set $X^{-1}(0)$. In more technical terms, we have the following.

\begin{proposition}\label{prop:onlydeponrandsubm}
    Let $X_1,X_2\randin\mC^1(M,\R^k)$ be \zkrok random fields and let $Z_i=X_i^{-1}(0)$, for $i=1,2.$ 
    Assume that
    \be\label{eq:P1ZZA} 
    \PP\kop Z_1\in W\pok=\PP\kop Z_2\in W\pok
    \ee 
    for any family $W$ of submanifolds of $M$ such that the set $\kop f\in\Omega:f^{-1}(0)\in W\pok$ is Borel  in $\mC^1(M,\R^k)$, where $\Omega\subset \mC^1(M,\R^k)$ is the subset of functions for which $0$ is a regular value\footnote{If $M$ is compact, $\Omega$ is open. In general, it can be expressed as a countable intersection of open and dense sets, thus it is always a Borel set, see \cite{Hirsch}.}. Then $\seg{\zeta_{X_1}}=\seg{\zeta_{X_2}}$.
\end{proposition}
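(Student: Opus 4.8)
The plan is to reduce the statement to the equality of expected varifolds $\EE V_{Z_1}=\EE V_{Z_2}$ and to obtain the latter from the hypothesis that $Z_1$ and $Z_2$ have the same law. Indeed, by \cref{thm:title} we have $\EE V_{Z_i}=V_{\zeta_{X_i}}$ for $i=1,2$, and the varifold $V_{\zeta_{X_i}}$ depends only on the centered section $\underline{\zeta_{X_i}}$, since it is assembled from the measures $\mu_{\zeta_{X_i}(p)}$ attached via the cosine transform to the centered zonoids (the nigiro, being odd, does not affect an even measure). Hence $\EE V_{Z_i}=V_{\underline{\zeta_{X_i}}}$, and once we know $\EE V_{Z_1}=\EE V_{Z_2}$, the injectivity of $\zeta\mapsto V_\zeta$ on $\Gamma(\ZZo(\Lambda^kT^*M))$ from \cref{lem:zonoidvarifold} yields $\underline{\zeta_{X_1}}=\underline{\zeta_{X_2}}$, which is the claim.

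So it remains to show $\EE V_{Z_1}(F)=\EE V_{Z_2}(F)$ for every continuous compactly supported $F\colon G(d,TM)\to\R$; since both sides define Radon measures, this suffices. The functional
\begin{equation}
    g_F(Z):=\int_Z F(T_pZ)\,dZ(p)
\end{equation}
depends only on the submanifold $Z$, because $T_pZ=\ker d_pX$ is intrinsic to $Z$; writing $\Phi\colon\Omega\to\mathcal S$, $f\mapsto f^{-1}(0)$, for the assignment onto the space of submanifolds, we have $V_{Z_i}(F)=g_F(Z_i)=g_F\circ\Phi(X_i)$. Moreover $\EE\{g_F(Z_i)\}$ is finite: if $K\subset M$ is the compact projection of $\supp F$, then $|g_F(Z_i)|\le\sup|F|\,\vol_d(Z_i\cap K)$, whose expectation is finite by \cref{thm:Evol}.

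The crucial point is that $g_F\circ\Phi\colon\Omega\to\R$ is Borel. By the very definition of the $\sigma$-algebra in the hypothesis --- the pushforward under $\Phi$ of the Borel $\sigma$-algebra of $\Omega$ --- this is equivalent to $g_F$ being measurable with respect to that $\sigma$-algebra, whence \eqref{eq:P1ZZA} and the fact that expectations depend only on the law give $\EE\{g_F(Z_1)\}=\EE\{g_F(Z_2)\}$, completing the proof. To establish the Borel measurability I would exhibit $g_F\circ\Phi$ as a pointwise limit of continuous functionals through a smoothed coarea (Kac-Rice) approximation: setting $G(j^1_pf):=F(\ker d_pf)\,J_pf$, extended by $0$ on critical jets --- a bounded continuous function on the $1$-jet bundle because $|G|\le\sup|F|\,J_pf\to0$ as $d_pf$ degenerates --- and choosing a mollifier $\chi\ge0$ with $\int_{\R^k}\chi=1$, one sets
\begin{equation}
    g_F^\eps(f):=\int_M G(j^1_pf)\,\tfrac{1}{\eps^k}\chi\!\left(\tfrac{f(p)}{\eps}\right)dM(p).
\end{equation}
Each $g_F^\eps$ is continuous in $f$ (the integrand is jointly continuous in $(f,p)$ and supported in $p$ over the compact $K$), and the coarea formula rewrites it as $\int_{\R^k}\tfrac{1}{\eps^k}\chi(x/\eps)\,I_f(x)\,dx$ with $I_f(x)=\int_{f^{-1}(x)}F(\ker d_pf)\,dZ_x(p)$, so that $g_F^\eps(f)\to I_f(0)=g_F(\Phi(f))$ as $\eps\to0$.

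The step I expect to be the main obstacle is this last convergence: one must check that for every $f\in\Omega$ the inner integral $I_f(x)$ is continuous at $x=0$, so that the approximate identity converges to $I_f(0)$ pointwise on all of $\Omega$ (not merely almost everywhere). This is where the hypothesis that $0$ is a regular value of $f$ and the compact support of $F$ enter, guaranteeing that the nearby level sets $f^{-1}(x)$ and their tangent data vary continuously over $K$. Everything else --- the reduction, the finiteness, and the transfer of expectations --- is formal given \cref{thm:title}, \cref{lem:zonoidvarifold} and \cref{thm:Evol}.
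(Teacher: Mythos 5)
Your proof is correct and follows essentially the same route as the paper's: both reduce the claim to the equality $\EE\int_{Z_1}F(T_pZ_1)\,dZ_1=\EE\int_{Z_2}F(T_pZ_2)\,dZ_2$ for all test functions $F$ on the Grassmann bundle --- which only requires that $f\mapsto\int_{f^{-1}(0)}F$ be measurable for the zero-set $\sigma$-algebra $\mathcal{A}$ --- and then invoke the injectivity of the cosine-transform/varifold correspondence (the paper via \cref{prop:introwhatknows}, you via \cref{thm:title} and \cref{lem:zonoidvarifold}, which is the same content) to recover the centered zonoid section. The only difference is that you supply a mollification/coarea argument for the Borel measurability of $f\mapsto\int_{f^{-1}(0)}F$, a point the paper asserts without proof; the verification you flag (continuity of $x\mapsto I_f(x)$ at $x=0$ for $f\in\Omega$) is a routine consequence of $0$ being a regular value of $f$ together with the compactness of the projection of $\supp F$, so it is not a genuine gap.
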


\begin{proof}
    Since $X_1$ and $X_2$ satisfy \zkrok-\ref{itm:krok2}, we consider them as random elements of $\Omega$.
    For a family $W$ of submanifolds of $M$, we write $A_W:=\kop f\in\Omega:f^{-1}(0)\in W\pok\subset \Omega$. Let $\mathcal{A}$ be the $\sigma$-algebra on $\Omega$ consisting of all Borel subsets of the form $A_W$. By definition, $\mathcal{A}$ is contained in the Borel $\sigma$-algebra of $\Omega$ and a Borel function is measurable for $\mathcal{A}$ if and only if it depends only on the zero set. In particular, for any $F\colon G(k,TM)\to \R$ measurable, the function $I_F:f\mapsto \int_{f^{-1}(0)}F$ is measurable with respect to $\mathcal{A}$.
    
    Let us now consider the probability measure $\PP_1$, respectively $\PP_2$, on the measurable space $(\Omega,\mathcal{A})$ obtained by restricting the laws of $X_1,$ respectively $X_2$, to the $\sigma$-algebra $\mathcal{A}$, respectively. By hypothesis we have that 
    $\PP_1=\PP_2.$ Therefore
    \be\label{eq:EIF}
    \EE\kop I_F(X_1)\pok=\EE_1\kop I_F\pok=\EE_2\kop I_F\pok=\EE\kop I_F(X_2)\pok.
    \ee
    where $\EE_i$ denotes the integral with respect to the measure $\PP_i,$ for $i=1,2.$  \cref{prop:introwhatknows} implies that if \eqref{eq:EIF} holds for every $F,$ then $\mu_{X_1}=\mu_{X_2}$ and hence $\seg{\zeta_{X_1}(p)}=\seg{\zeta_{X_2}(p)},$ which is what we wanted.
    
\end{proof}
The nigiro $e(\zeta_X)$ of the zonoid section does not depend only on the law of the random submanifold $Z=X^{-1}(0)$, but also on the orientation of its normal bundle $NZ$ induced by the isomorphism given by $d_pX:T_pN\to \R^k$, for all $p\in M$.

A pair $(Z,o)$, where $Z$ is a submanifold (of $M$) and $o$ is an orientation of $NZ$ is called a \emph{cooriented} submanifold (of $M$). By considering also the case $F=0$ in \cref{prop:introwhatknows} and reasoning as in the proof of \cref{prop:onlydeponrandsubm} above, we get the following.
\begin{proposition}\label{prop:onlydeponrandsubm2}
    Let $X_1,X_2\randin\mC^1(M,\R^k)$ be \zkrok random fields and let $Z_i=X_i^{-1}(0)$, for $i=1,2.$ Let us denote by $o_{X_i}$ the orientation of $NZ_i$ induced by $dX_i$, for $i=1,2.$ 
    Assume that
    \be\label{eq:P1ZZA2} 
    \PP\kop (Z_1,o_{X_1})\in W\pok=\PP\kop (Z_2,o_{X_2})\in W\pok
    \ee 
    for any family $W$ of cooriented submanifolds of $M$ such that the set $\kop f\in\Omega:(f^{-1}(0),o_f)\in W\pok$ is Borel in $\mC^1(M,\R^k)$. Then $\zeta_{X_1}=\zeta_{X_2}$.
\end{proposition}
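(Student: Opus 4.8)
The plan is to mimic the proof of \cref{prop:onlydeponrandsubm}, but to replace the $\sigma$-algebra generated by unoriented zero sets with the finer one generated by \emph{cooriented} zero sets, and to extract the missing information about the nigiro from the $F\equiv 0$ instance of \cref{prop:introwhatknows}. Since the splitting \eqref{eq:separation} reads $\zeta_X=\tfrac12 e(\zeta_X)+\seg{\zeta_X}$, it suffices to establish separately that $\seg{\zeta_{X_1}}=\seg{\zeta_{X_2}}$ and that $e(\zeta_{X_1})=e(\zeta_{X_2})$.

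First I would note that the hypothesis \eqref{eq:P1ZZA2} is strictly stronger than the hypothesis of \cref{prop:onlydeponrandsubm}. Indeed, given a family $W'$ of unoriented submanifolds with $\{f\in\Omega:f^{-1}(0)\in W'\}$ Borel, let $W$ be the family of all cooriented submanifolds whose underlying manifold lies in $W'$; then $\{f\in\Omega:(f^{-1}(0),o_f)\in W\}=\{f\in\Omega:f^{-1}(0)\in W'\}$ and $\PP\{(Z_i,o_{X_i})\in W\}=\PP\{Z_i\in W'\}$, so \eqref{eq:P1ZZA2} forces the laws of the unoriented zero sets to agree. Hence \cref{prop:onlydeponrandsubm} applies directly and yields $\seg{\zeta_{X_1}}=\seg{\zeta_{X_2}}$, and it only remains to compare the nigiros.

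For the nigiro I would repeat the measurability set-up of \cref{prop:onlydeponrandsubm} on the cooriented side: regarding $X_1,X_2$ as random elements of $\Omega$ (legitimate by \zkrok-\ref{itm:krok2}), form the $\sigma$-algebra $\mathcal A$ on $\Omega$ consisting of the Borel sets $A_W=\{f\in\Omega:(f^{-1}(0),o_f)\in W\}$, with the property that a Borel function is $\mathcal A$-measurable precisely when it factors through $f\mapsto(f^{-1}(0),o_f)$. The crucial observation is that, for every linear function $\theta_T\colon G_+(k,T^*M)\to\R$, the functional $J_T(f):=\int_{f^{-1}(0)}\theta_T\!\left(N_pf^{-1}(0),\,o_f\right)$ depends only on the cooriented zero set, since $\theta_T$ is evaluated on the cooriented normal space, which is exactly the datum $o_f$; thus $J_T$ is $\mathcal A$-measurable. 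As \eqref{eq:P1ZZA2} says that the restrictions $\PP_1,\PP_2$ of the laws of $X_1,X_2$ to $(\Omega,\mathcal A)$ coincide, I obtain $\EE\{J_T(X_1)\}=\EE\{J_T(X_2)\}$ for all $\theta_T$, and the $F\equiv0$ case of \cref{prop:introwhatknows} converts this into
\begin{equation}
\int_M T\!\left(e(\zeta_{X_1})\right)dM=\int_M T\!\left(e(\zeta_{X_2})\right)dM
\end{equation}
for every fibrewise-linear $T\colon\Lambda^kT^*M\to\R$.

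The final step, which I expect to be the only mildly delicate point, is to localize this integrated identity to the pointwise statement. Here I would choose $T$ of product form $\psi(p)\,\Theta$, with $\psi\ge0$ a bump function concentrating at an arbitrary $p_0\in M$ and $\Theta$ an arbitrary linear functional on the fiber $\Lambda^kT^*_{p_0}M$; using that $e(\zeta_{X_i})$ is a continuous section by \cref{prop:Econt}, I conclude $\Theta\!\left(e(\zeta_{X_1})(p_0)\right)=\Theta\!\left(e(\zeta_{X_2})(p_0)\right)$ for all $\Theta$, and since linear functionals separate points of $\Lambda^kT^*_{p_0}M$ this gives $e(\zeta_{X_1})(p_0)=e(\zeta_{X_2})(p_0)$. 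As $p_0$ is arbitrary, the nigiros agree, and together with the centered part this yields $\zeta_{X_1}=\zeta_{X_2}$. The main obstacle is purely a matter of bookkeeping the measurability — checking that $J_T$ is Borel and genuinely factors through cooriented zero sets — but this is the exact coorientation analogue of the corresponding claim for $I_F$ in \cref{prop:onlydeponrandsubm}.
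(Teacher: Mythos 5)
Your proposal is correct and follows essentially the route the paper intends: the paper's proof is precisely ``reason as in \cref{prop:onlydeponrandsubm}, now including the linear term $T(e(\zeta_X))$ from the $F\equiv 0$ case of \cref{prop:introwhatknows}'', and your decomposition into centered part plus nigiro, the observation that \eqref{eq:P1ZZA2} implies \eqref{eq:P1ZZA}, and the localization via fibrewise-linear test functions and the continuity of $e(\zeta_{X_i})$ (\cref{prop:Econt}) fill in exactly the steps the paper leaves implicit.
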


\section{Vector bundles}\label{sec:vectbun}
The results of the previous section can be extended to the setting of random sections of vector bundles. 
\begin{definition}\label{de:loczkrok}
Let $\pi:E\to M$ be a smooth vector bundles of rank $k$ and let $X\randin \mC^1(M|E)$ be a random section. We say that $X$ is \emph{locally \zkrok} if for every open set $U\subset M$ on which there is a trivialization $E|_U\cong U\times \R^k$, the local random field $X|_U\randin \mC^1(U,\R^k)$ is \zkrok.
\end{definition}
We denote the zero section of the vector bundle $E\to M$ by $0_M\subset E$. By applying \cref{thm:Alphaca} locally we get the following.

\begin{theorem}[Alpha Formula for vector bundles]\label{thm:AlphacaVB}
Let $k\le m\in \N$. Let $(M,g)$ be a Riemannian manifold of dimension $m$. Let $E\to M$ be a $\mC^1$ real vector bundle of rank $k$, endowed with a metric. Let $\nabla$ be any connection on $E$. Let $X\colon M\randto E$ be a locally {\zkrok} random section and define the random submanifold $Z:=X^{-1}(0_M)$. Let $\a\colon \mC^1(M|E)\times M\to \R$ be a Borel measurable function. Then
\be 
\EE\kop\int_{ Z}\alpha(X,p)dZ(p)\pok=\int_M\delta^\a_X(p)dM(p).
\ee
Where
\be\label{eq:alphadensitybundle}
\delta^\a_X(p)=\EE\kop \a(X,p)\left\|\frac{(\nabla X)_p^{\wedge k}}{k!}\right\| \Big| X(p)=0\pok \rho_{X(p)}(0),
\ee
\end{theorem}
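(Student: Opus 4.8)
The plan is to deduce this from the flat Alpha Formula \cref{thm:Alphaca} by localizing over a trivializing cover, so that the only genuinely new content is the identification, in each chart, of the bundle density $\delta^\a_X(p)$ of \cref{eq:alphadensitybundle} with the field density produced by \cref{thm:Alphaca}. Both sides of the claimed identity are additive in $\a$, so fixing a locally finite open cover $\{U_i\}$ of $M$ over which $E$ is trivial, together with a subordinate partition of unity $\{\chi_i\}$, and decomposing $\a=\sum_i \chi_i\a$ (each summand vanishing for $p\notin U_i$), it suffices to treat a single $\a$ supported over one trivializing open set $U$; local finiteness makes the interchange of the sum with $\EE$ and with $\int_M$ harmless. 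Over such a $U$ I would then use the metric on $E$ to pick a local orthonormal frame $e_1,\dots,e_k$ of $E|_U$, which trivializes $E|_U\cong U\times\R^k$ and turns $X$ into a random field $X_U=(X^1,\dots,X^k)\randin\mC^1(U,\R^k)$ that is {\zkrok} by hypothesis.

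The key geometric point is that at any point $p$ with $X(p)=0$ the covariant derivative $(\nabla X)_p\colon T_pM\to E_p$ is independent of the connection: for two connections one has $(\nabla-\nabla')X=A\cdot X$ with $A$ tensorial, which vanishes at $p$ since $X(p)=0$. Hence at zeros we may compute $\nabla X=\sum_j d X^j\otimes e_j$ using the flat connection of the orthonormal frame, and a direct expansion of this $E$-valued one-form gives $\frac{(\nabla X)_p^{\wedge k}}{k!}=\left(d_pX^1\wedge\cdots\wedge d_pX^k\right)\otimes\left(e_1\wedge\cdots\wedge e_k\right)$, so that
\be
\left\|\frac{(\nabla X)_p^{\wedge k}}{k!}\right\|=\|d_pX^1\wedge\cdots\wedge d_pX^k\|=J_pX_U,
\ee
because the frame is orthonormal. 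Since the conditional law $[X\mid X(p)=0]$ is carried by sections vanishing at $p$ (the section analogue of \cref{lem:stupidconditioning}), this shows both that the integrand in \cref{eq:alphadensitybundle} is connection- and frame-independent on the support of the conditioning, so $\delta^\a_X$ is well defined, and that $\delta^\a_X(p)$ coincides on $U$ with the density attached to $X_U$ by \cref{thm:Alphaca}.

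It remains to run \cref{thm:Alphaca} on the Riemannian manifold $U$ with the field $X_U$. The one subtlety is that $\a$ depends on the global section whereas \cref{thm:Alphaca} wants a weight depending only on the field over $U$; this is handled by conditioning. As $Z\cap U$, its $(m-k)$-volume measure, and the events $\{X(p)=0\}$ all factor through $X_U$, the tower property lets me replace $\a$ by its regular conditional expectation $\tilde\a(f,p):=\EE\{\a(X,p)\mid X_U=f\}$, a Borel function on $\mC^1(U,\R^k)\times U$, without changing either side. Applying \cref{thm:Alphaca} to $(X_U,\tilde\a)$ and undoing the conditioning (now for the conditional law $\EE\{\,\cdot\mid X_U(p)=0\}$) yields
\be
\EE\kop\int_{Z}\a(X,p)\,dZ(p)\pok=\int_U \EE\kop\a(X,p)\,J_pX_U\,\Big|\,X_U(p)=0\pok\rho_{X_U(p)}(0)\,dM(p),
\ee
whose right-hand side equals $\int_U\delta^\a_X\,dM$ by the identification above; summing over $\{\chi_i\}$ then finishes the proof. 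I expect the main obstacle to be exactly this conditioning bookkeeping — verifying joint measurability of $\tilde\a$ and that the two successive conditionings (on $X_U$ and then on $X_U(p)=0$) compose correctly — rather than the geometric identification, which is a short invariance computation.
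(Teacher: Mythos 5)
Your proposal is correct and follows exactly the route the paper takes: the paper proves \cref{thm:AlphacaVB} simply by ``applying \cref{thm:Alphaca} locally,'' with the two remarks following the statement supplying precisely your key identifications, namely that $(\nabla X)_p$ is connection-independent at zeros of $X$ and that $\bigl\|\tfrac{(\nabla X)^{\wedge k}_p}{k!}\bigr\|=J_pX$ in an orthonormal frame (see \cref{eq:jacob}). Your partition-of-unity decomposition and the conditioning argument handling the dependence of $\a$ on the global section are exactly the bookkeeping the paper leaves implicit, and they go through as you describe.
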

\begin{remark}
The value of $(\nabla X)_p$ at a point $p$ such that $X(p)=0$ doesn't depend on the choice of the connection (see also \cref{lem:stupidconditioning}). It is a linear map $(\nabla X)_p\colon T_pM\to E_p$ between two Euclidean spaces, thus it has a well defined Jacobian determinant $J(\nabla X)_p=:J_pX$, which we wrote in a more fancy way, using the language of double forms, for which we refer to \cite{AdlerTaylor}.  
This language defines the linear map $(\nabla X)^{\wedge k}_p\colon \Lambda^kT_pM\to \Lambda^kE_p=:\det E_p$, such that
\be\label{eq:wedgpow}
v_1\wedge\dots \wedge v_k\mapsto k! \nabla X_p(v_1)\wedge \dots \wedge \nabla X_p(v_k),
\ee  
where the codomain can be identified as $\Lambda^kE_p=e_1\wedge\dots\wedge e_k \R$, for an orthonormal basis $e_1,\dots,e_k$ of $E_p$. We interpret $(\nabla X)^{\wedge k}_p$ as an element of $\Lambda^kT^*_pM\otimes \det E$.
Thus, choosing $v_1,\dots,v_k$ to be a orthonormal basis of $\left(\ker(\nabla X)_p\right)^\perp$ we have the equality:
\be\label{eq:jacob}
    \left\|\frac{(\nabla X)^{\wedge k}_p}{k!}
    \right\|=
    \left|\det\left(\Big\langle(\nabla X)_p(v_i),e_j\Big\rangle\right)_{1\le i,j\le k}\right|
    =
    J_pX.
\ee
\end{remark}
\begin{remark}
The function $\rho_{X(p)}\colon E_p\to [0,+\infty)$ is the density of $[X(p)]$ with respect to the Euclidean metric on the fiber $E_p$. This term depends on the choice of the metric as well as the Jacobian of $X$ (see \eqref{eq:jacob}), but the product of the two does not, so that $\delta^\a_X$ is independent on the choice of a metric on $E$.
\end{remark}
\cref{def:zonoidsec} can be extended to define the \emph{zonoid section} in this setting.
\begin{definition}\label{def:zonoidsecvb}
Let $k\le m\in \N$. Let $(M,g)$ be a Riemannian manifold of dimension $m$. Let $E\to M$ be a $\mC^1$ real vector bundle of rank $k$, endowed with a metric. Let $X\randin \mC^1(M|E)$ be locally z-KRoK. The associated \emph{zonoid section} $\zeta_X\in \Gamma(\ZZ(\Lambda^k T^*M\otimes \det E))$ is defined for every $p\in M$ by 
\begin{equation}
    \zeta_X(p):=\EE\kop\qwe0,\frac{(\nabla X)^{\wedge k}_p}{k!}\ewq\Big|X(p)=0\pok\rho_{X(p)}(0).
\end{equation}
\end{definition}
We recall that an orientation of $E$ corresponds to a trivialization of $\det E$. In general, the support function of $\zeta_X$ is a continuous function $h_{\zeta_X}\colon \Lambda^k TM\otimes \det E^*\to \R$ and the nigiro $e_X=e(\zeta_X)$ is a continuous section of $\Lambda^k T^*M\otimes \det E$. Moreover, we compute the length $\ell(\zeta_X)$ and the other intrinsic volumes of $\zeta_X$ in terms of the metric on $\Lambda^kT^*M\otimes \det E$ induced by the Riemannian metric and the metric on $E$, hence they define continuous functions on $M$.

By applying locally the results of Section \ref{sec:mainres} we extend them to the setting of vector bundles. In particular, \cref{prop:hcontinuity}, \cref{thm:pullback}, \cref{thm:wedge}, \cref{thm:Evol}, \cref{prop:Econt}, \cref{thm:Ecurrent} hold with the obvious modifications of the statements. 

\begin{theorem}\label{thm:megathmvb}
Let $k\le m\in \N$. Let $(M,g)$ be a Riemannian manifold of dimension $m$. Let $E\to M$ be a $\mC^1$ real vector bundle of rank $k$, endowed with a metric $h$. Let $X\colon M\randto E$ be a locally {\zkrok} random section and define the random submanifold $Z:=X^{-1}(0_M)$.
\begin{enumerate}
\item \emph{(Pull-back property)} Let $\f\colon S\to M$ be a $\mC^1$ map such that $\f\transv Z$ almost surely. Then $X\circ \f$ is a locally {\zkrok}  random section of the pull-back bundle $\f^*E\to S$ and
\be 
\zeta_{X\circ \f}(p)=(d_p\f^*\otimes \mathrm{id}_{\det E})\zeta_X(p).
\ee
    \item 
    If $X_1,X_2$ are independent locally {\zkrok} random sections of two vector bundles $E_1,E_2$ over $M$, then $Z_1\cap Z_2$ is the zero set of $X_1\oplus X_2\colon M\randto E_1\oplus E_2$, there is a canonical identification $\det(E_1\oplus E_2)=\det E_1\otimes \det E_2$ and
    \be 
    \zeta_{X_1\oplus X_2}=\zeta_{X_1}\wedge \zeta_{X_2},
    \ee
    where this wedge operation is meant as a bilinear map $\Lambda^{k_1}T^*M\otimes \det E_1\times \Lambda^{k_2}T^*M\otimes \det E_2\to \Lambda^{k_1+k_2}T^*M\otimes \det E_1\otimes \det E_2$.
        \item For any Borel $A\subset M$ Borel set, we have
        \be  \EE\{\vol_{m-k}(Z\cap A)\}=\int_A\ell(\zeta_X)dM
        \ee.
 \item       
If $E$ and $M$ are oriented, then we identify $\det E=\R$ and $Z$ is oriented according to \cref{def:orientZ}, then we have the equality of currents:
\be 
    \EE\int_Z=\int_M\wedge e_X \in \Omega_c^{(m-k)}(M)^*.
    \ee
    \end{enumerate}
\end{theorem}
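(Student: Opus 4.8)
The plan is to reduce every assertion to its already-established counterpart for $\R^k$-valued fields by working in local trivializations of $E$; the only genuinely new point is to check that the locally defined objects glue into well-defined global ones, after which each item follows by localization.

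First I would verify that \cref{def:zonoidsecvb} produces a well-defined continuous section $\zeta_X\in\Gamma(\ZZ(\Lambda^kT^*M\otimes\det E))$. The key observation is that, conditionally on $X(p)=0$, the value $(\nabla X)_p\colon T_pM\to E_p$ does not depend on the connection: the difference of two connections is a tensor $A\in\Omega^1(M;\mathrm{End}\,E)$ and $(\nabla X-\nabla'X)_p=A_p\,X(p)$ vanishes at zeros of $X$. Consequently the $k$-th exterior power $(\nabla X)^{\wedge k}_p/k!$ is an intrinsic random element of the line-twisted space $\Lambda^kT_p^*M\otimes\det E_p$, being the wedge power of a frame-free linear map, and the expectation in \cref{def:zonoidsecvb} is meaningful. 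Over a trivializing chart $U$ with frame $e_1,\dots,e_k$ of $E|_U$, the local components form a \zkrok field $X|_U\colon U\randto\R^k$ and, with respect to the trivial connection,
\be
\frac{(\nabla X)^{\wedge k}_p}{k!}=\big(d_pX^1\wedge\cdots\wedge d_pX^k\big)\otimes\big(e_1\wedge\cdots\wedge e_k\big)\qquad\text{at zeros of }X,
\ee
so that $\zeta_X|_U$ is identified with the zonoid section $\zeta_{X|_U}$ of \cref{def:zonoidsec} tensored with the local trivialization $e_1\wedge\cdots\wedge e_k$ of $\det E$. Continuity of $\zeta_X|_U$ is then \cref{prop:hcontinuity}, and under a change of frame with transition $g\colon U\cap U'\to GL_k(\R)$ the factor $(\det g)^{-1}$ picked up by $d_pX^1\wedge\cdots\wedge d_pX^k$ cancels the factor $\det g$ picked up by $e_1\wedge\cdots\wedge e_k$, confirming that the natural codomain is $\Lambda^kT^*M\otimes\det E$ and that the local pieces define a global continuous section.

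Granting this, the four items are obtained by localization. For item (1) (Pull-back), over matching trivializing charts of $E$ and $\f^*E$ the section $X\circ\f$ becomes the composition $X|_U\circ\f$, which is \zkrok by \cref{thm:pullback}; the transformation rule \eqref{eq:pullback} tensored with $\mathrm{id}_{\det E}$ gives the stated formula, the determinant factor being carried along because $\f^*\det E=\det\f^*E$. For item (2) (independent intersection), the canonical identification $\det(E_1\oplus E_2)=\det E_1\otimes\det E_2$ turns the local expression for $(\nabla(X_1\oplus X_2))^{\wedge(k_1+k_2)}$ into the wedge of the two local Jacobian covectors, and \cref{thm:wedge} applied in joint trivializations, together with the independence of $X_1$ and $X_2$, yields $\zeta_{X_1\oplus X_2}=\zeta_{X_1}\wedge\zeta_{X_2}$. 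For item (3) (expected volume), taking $\a\equiv1$ in the Alpha Formula for bundles (\cref{thm:AlphacaVB}) gives $\delta_Z(p)=\EE\{\|(\nabla X)^{\wedge k}_p/k!\|\mid X(p)=0\}\rho_{X(p)}(0)$, which equals $\ell(\zeta_X(p))$ by \cref{def:zlength} and \cref{def:zonoidsecvb}; integrating over $A$ gives the claim. For item (4) (expected current), the orientations of $M$ and $E$ give a global trivialization $\det E\cong\R$ compatible with positively oriented local frames up to positive functions, so the local forms patch into the global continuous $k$-form $e_X=e(\zeta_X)$; testing against $\omega\in\Omega_c^{m-k}(M)$ and using a partition of unity subordinate to trivializing charts reduces $\EE\int_Z\omega=\int_M\omega\wedge e_X$ to \cref{thm:Ecurrent} on each chart.

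The main obstacle is the gluing step in the second paragraph: verifying the connection-independence of $(\nabla X)^{\wedge k}_p/k!$ at the zeros of $X$ and the matching $\det g$ versus $(\det g)^{-1}$ cancellation across overlaps, which is exactly what forces the zonoid section to take values in the twisted bundle $\Lambda^kT^*M\otimes\det E$ rather than in $\Lambda^kT^*M$. Everything after that is a routine transfer of the results of \cref{sec:mainres} through charts, the integral statements (3) and (4) being handled by a partition of unity and the naturality statements (1) and (2) following directly from the local formulas.
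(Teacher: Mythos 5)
Your proposal is correct and follows the same route as the paper, which likewise treats \cref{thm:megathmvb} by localizing to trivializing charts and invoking \cref{thm:pullback}, \cref{thm:wedge}, \cref{thm:Evol} and \cref{thm:Ecurrent}, with the connection-independence of $(\nabla X)_p$ at zeros of $X$ noted exactly as you state it. You in fact supply more detail than the paper on the gluing step (the $\det g$ versus $(\det g)^{-1}$ cancellation that forces the twist by $\det E$), which the paper leaves implicit in the remarks following \cref{thm:AlphacaVB}.
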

\begin{theorem}\label{thm:GBCbody}
Under the hypotheses of \cref{thm:megathmvb} and assuming that $E$ and $M$ are oriented, if moreover $e_X$ is smooth, then it is closed and the class $[e_X]\in H_{DR}^k(M)$ is the (De Rham) Euler class of the vector bundle $E$.
\end{theorem}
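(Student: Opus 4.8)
The plan is to prove first that the smooth $k$-form $e_X$ is closed, and then to identify its de Rham class with the Euler class $e(E)$ by testing against compactly supported closed forms and invoking Poincaré duality. Since $E$ is oriented, I use the trivialization $\det E\cong\R$ to regard $e_X$ as a genuine smooth $k$-form on $M$, exactly as in \cref{thm:megathmvb}.(4).

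For closedness, fix $\omega\in\Omega^{m-k-1}_c(M)$. As $0_M$ is almost surely a regular value, $Z$ is almost surely a $C^1$ submanifold without boundary, and $\omega|_Z$ has compact support; so Stokes' theorem on $Z$ gives $\int_Z d(\omega|_Z)=0$. Since restriction commutes with the exterior derivative, $(d\omega)|_Z=d(\omega|_Z)$, and hence $\int_Z(d\omega)|_Z=0$ almost surely. Taking expectations and applying \cref{thm:megathmvb}.(4) to the compactly supported $(m-k)$-form $d\omega$ yields
\be
\int_M d\omega\wedge e_X=\EE\left\{\int_Z(d\omega)|_Z\right\}=0.
\ee
On the other hand, Stokes' theorem on $M$ applied to the compactly supported $(m-1)$-form $\omega\wedge e_X$ gives $0=\int_M d(\omega\wedge e_X)=\int_M d\omega\wedge e_X+(-1)^{m-k-1}\int_M\omega\wedge de_X$. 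Combining the two identities gives $\int_M\omega\wedge de_X=0$ for all $\omega\in\Omega^{m-k-1}_c(M)$. Since $de_X$ is a smooth $(k+1)$-form and the pointwise wedge pairing is nondegenerate, this forces $de_X=0$.

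Now that $e_X$ is closed it defines $[e_X]\in H^k_{DR}(M)$, and for every closed $\omega\in\Omega^{m-k}_c(M)$ the quantity $\int_M\omega\wedge e_X$ depends only on $[\omega]\in H^{m-k}_c(M)$ and realizes the Poincaré pairing $\langle[e_X],[\omega]\rangle$. By the same Stokes argument on $Z$, $\int_Z\omega|_Z$ likewise depends only on $[\omega]$ when $\omega$ is closed. I then invoke the classical fact (see e.g. \cite{spivak,nicolaescu2020lectures}) that the zero locus of a section transverse to the zero section, cooriented by the orientations of $E$ and $M$ — precisely the coorientation of \cref{def:orientZ} — is Poincaré dual to $e(E)$, so that $\int_Z\omega|_Z=\langle e(E),[\omega]\rangle$. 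This value is \emph{deterministic}: the space of $C^1$ sections of $E$ is affine, hence connected, and the homology class of the cooriented transverse zero locus is a homotopy invariant, so it equals the Poincaré dual of $e(E)$ for almost every realization of $X$. Taking expectations and using \cref{thm:megathmvb}.(4),
\be
\langle[e_X],[\omega]\rangle=\int_M\omega\wedge e_X=\EE\left\{\int_Z\omega|_Z\right\}=\langle e(E),[\omega]\rangle
\ee
for every closed $\omega\in\Omega^{m-k}_c(M)$. Since the Poincaré pairing $H^k_{DR}(M)\times H^{m-k}_c(M)\to\R$ is nondegenerate for the oriented manifold $M$, we conclude $[e_X]=e(E)$.

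The step demanding the most care — and the main obstacle — is the deterministic identity $\int_Z\omega|_Z=\langle e(E),[\omega]\rangle$: one must verify that the coorientation of \cref{def:orientZ} matches the standard convention under which the transverse zero locus represents $+e(E)$ and not its negative, and that this classical statement, usually phrased for smooth sections, applies to the almost surely $C^1$ section $X$ — which it does, since the zero locus of any transverse section is homologous, through the affine family of sections, to that of a smooth one. Everything else is a routine combination of Stokes' theorem and Poincaré duality.
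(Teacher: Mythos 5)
Your proposal is correct and follows essentially the same route as the paper: closedness of $e_X$ is deduced from the fact that the current $\EE\int_Z$ is closed together with \cref{thm:megathmvb}.(4), and the identification $[e_X]=e(E)$ comes from the Bott--Tu statement that a transverse zero locus is Poincaré dual to the Euler class, plus nondegeneracy of the Poincaré pairing. Your version merely spells out the Stokes arguments in more detail and adds welcome care about compact supports, the $C^1$ regularity of $Z$, and the coorientation convention, none of which changes the structure of the argument.
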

\begin{proof}
Observe that $d \int_Z=0$ in the sense of currents, that is, $\int_Z \omega=0$ for every $\omega$ closed. By linearity, the same holds for the current $\EE\int_Z$. If $e_X$ is smooth, point (4) above implies that then $de_X=0$. Let $\eta\in \Omega^k(M)$ be a De Rham representative of the Euler class of $E$. It is proved in \cite[Chapter 12]{botttu} that if $\omega$ is a closed form, then $\int_Z\omega|_Z=\int_M\omega\wedge \eta$ holds for all $X\transv 0_M$. By taking the expectation on both sides and using point (4) we obtain the identity:
\be 
Q([\omega], [\eta])=\int_M \omega\wedge \eta=\int_M\omega\wedge e_X=Q([\omega], [e_X]), \quad \forall [\omega]\in H_{DR}^{(m-k)}(M)
\ee
where $Q$ denotes the (De Rham) intersection form of $M$. Since the latter is nondegenerate by Poincaré duality (see \cite[Chapter 3]{botttu}) it follows that $[\eta]=[e_X]$.

\end{proof}

The latter statement can be expressed in a more general form using the language of twisted forms.
Given a real line bundle $L\to M$, a \emph{$i$-form with values in $L$} is a section of $\Lambda^iT^*M\otimes L\to M$ and the space of such objects is denoted as $\Omega^i(M,L)$. When $L$ is the orientation bundle of the manifold, that we will denote as $L_M$, the elements of $\Omega^m(M,L_M)$ are called \emph{densities} and there is a canonical integration operator $\int_M\colon \Omega^m_c(M,L_M)\to \R$, see \cite[Chapter 7]{botttu} or \cite[Appendix A]{KRStec}. Given $e\in \Omega^k(M,\det E)$ and $\omega\in \Omega^{(m-k)}(M,L_M\otimes \det E^*)$, their product $ \omega\wedge e$ can be canonically identified as a density, since $ L_M\otimes \det E^*\otimes \det E\cong L_M$ and therefore the number $\int_M \omega \wedge e$ is well defined, regardless of orientability.

On the other hand, once the vector bundle $E$ is endowed with a metric, if $X\transv 0_M$ then the orientation line bundle $L_Z$ of the submanifold $Z=X^{-1}(0_M)$ is isomorphic to $L_Z\cong L_M\otimes \det E|_Z$ (the isomorphism depends on the euclidean structure of $\det E$). Therefore, given $\omega \in \Omega^{(m-k)}(M,L_M\otimes \det E^*)$, its restriction $\omega|_Z$ can be seen as a density on $Z$ and thus the integral $\int_Z\omega|_Z$ is well defined.

\begin{corollary}\label{cor:nonorientEcurrent}
Let $k\le m\in \N$. Let $M$ be a smooth manifold of dimension $m$. Let $E\to M$ be a smooth real vector bundle of rank $k$, endowed with a metric. 
Let $X\colon M\randto E$ be a locally {\zkrok} random section and define the random submanifold $Z:=X^{-1}(0_M)$. Let $\omega\in\Omega_c^{(m-k)}(M,L_M\otimes \det E^*)$. Then
\be 
\EE\kop\int_Z \omega|_Z\pok=\int_M  \omega \wedge e_X.
\ee
\end{corollary}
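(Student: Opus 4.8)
The plan is to deduce the non-orientable statement from the oriented one, \cref{thm:Ecurrent} (equivalently \cref{thm:megathmvb}.(4)), by localizing with a partition of unity. The crucial preliminary observation, already contained in the twisted-form discussion preceding the statement, is that \emph{both} sides of the claimed identity are globally well-defined real numbers with no orientability hypothesis: the right-hand side because $L_M\otimes\det E^*\otimes\det E\cong L_M$ turns $\omega\wedge e_X$ into a compactly supported density, and the left-hand side because the metric-induced isomorphisms $L_Z\cong L_M\otimes\det E|_Z$ and $\det E|_Z\cong\det E^*|_Z$ let one read $\omega|_Z$ as a density on $Z$. Both sides are moreover $\R$-linear in $\omega$ and additive under a decomposition of $\omega$.

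First I would fix a locally finite open cover $\kop U_i\pok$ of $M$ by sets that simultaneously trivialize $TM$ and $E$; each such $U_i$ is then orientable with $E|_{U_i}$ orientable. Choosing a subordinate partition of unity $\kop\chi_i\pok$ and using that $\omega$ has compact support, the sum $\omega=\sum_i\chi_i\omega$ is finite, so by linearity of expectation and of integration it suffices to prove the identity for each $\chi_i\omega$. Hence one may assume $\supp(\omega)\subset U$ for a single orientable trivializing open set $U$.

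On such a $U$ I would fix an orientation of $U$ together with an orientation of the fibres of $E|_U$. These choices trivialize $L_M|_U\cong\underline{\R}$ and $\det E|_U\cong\underline{\R}$; consequently $X|_U$ becomes a genuine {\zkrok} field $U\to\R^k$, the twisted form $\omega$ becomes an ordinary $(m-k)$-form $\tilde\omega$ through $L_M\otimes\det E^*|_U\cong\underline{\R}$, and $e_X|_U$ becomes the ordinary nigiro $k$-form $\tilde e=e(\zeta_{X|_U})$. By \cref{def:orientZ} the induced orientation on $Z\cap U$ is precisely the one matching the identification $L_Z\cong L_M\otimes\det E|_Z$, so that $\int_{Z\cap U}\omega|_Z=\int_{Z\cap U}\tilde\omega|_Z$ and $\int_U\omega\wedge e_X=\int_U\tilde\omega\wedge\tilde e$ as ordinary oriented integrals. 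Applying \cref{thm:Ecurrent} to $X|_U$ yields $\EE\kop\int_{Z\cap U}\tilde\omega|_Z\pok=\int_U\tilde\omega\wedge\tilde e$, which is the desired equality for $\chi_i\omega$; summing over the finitely many relevant $i$ finishes the argument.

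The hard part is the sign bookkeeping hidden in the third paragraph: one must verify that the several trivializations are mutually compatible, so that the chosen local orientation does not leak into the final orientation-free identity. Concretely, reversing the orientation of $U$ simultaneously flips the orientation that \cref{def:orientZ} assigns to $Z\cap U$ (via $L_Z\cong L_M\otimes\det E|_Z$) and the sign of the identification $\omega\leftrightarrow\tilde\omega$, and these two flips cancel in $\int_{Z\cap U}\tilde\omega|_Z$; the right-hand side $\int_U\tilde\omega\wedge\tilde e$ is insensitive to the same joint flip. The clean way to record this is that the twisted integrals are, by construction, independent of the auxiliary local orientation and equal to the corresponding oriented integrals once one is fixed. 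With this in hand the reduction to \cref{thm:Ecurrent} is immediate and no analytic input beyond \cref{thm:Alphaca} is needed, the required continuity of $e_X$ and compact support of $\omega$ being inherited from the oriented case.
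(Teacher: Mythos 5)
Your proof is correct and follows the route the paper intends: the corollary is stated without an explicit proof, resting on the declared principle that the oriented results of Section 6 extend to bundles by applying them in local trivializations, which is exactly the partition-of-unity reduction to \cref{thm:Ecurrent} that you carry out. Your sign bookkeeping --- that reversing either auxiliary orientation simultaneously flips the identification of $\omega$ with an ordinary form and the induced orientation of $Z$ (respectively the sign of $e_X$), so the twisted integrals are orientation-independent and agree with the oriented ones --- is precisely the verification implicit in the paper's discussion of twisted forms preceding the statement.
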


\begin{remark}\label{rem:nonorient}
The language of twisted forms allows to define a twisted version of De Rham cohomology, see \cite{botttu}. In this sense, it is easy to see that again we have that $d e_X=0$ and $[e_X]\in H_{dR}^k(M,\det E)$ is the Euler class of the vector bundle $E$.
\end{remark}

\section{Crofton formula in Finsler manifolds}\label{sec:Finsler}

A \emph{Finsler structure} on a manifold $M$ is the choice of a norm $F_p$ on each tangent space $T_pM$ that depends continuously on the point $p\in M$. This gives a well defined notion of length of curves. Indeed, given $\gamma:[0,1]\to M$ a smooth curve, one defines
\begin{equation}\label{eq:lengthFinsler}
    \ell^F(\gamma):=\int_0^1F_{\gamma(t)}(\dot{\gamma}(t))\dd t. 
\end{equation}

The choice of a full dimensional convex body in each cotangent space induces a norm in the tangent space. Indeed, if $\zeta(p)\subset T^*_pM$ is a symmetric convex body containing the origin in its interior, then the support function $h_{\zeta(p)}:T_pM\to \R$ defines a norm. In our case, the (centered) zonoid section of a \zkrok scalar field is not always full dimensional and defines only a semi norm.
\begin{definition}
We call a \emph{semi Finsler structure} on $M$, the choice of a semi norm $F_p:T_pM\to \R$ for each $p\in M$ depending continuously on $p$. Equivalently, this is the choice of a continuous section $p\mapsto \zeta(p)\subset T^*_pM$ of centrally symmetric convex bodies containing the origin.
\end{definition}
\begin{remark}
A centrally symmetric convex body $\zeta(p)\subset T^*_pM$ is contained in a hyperplane $v^\perp$ with $v\in T_pM$ if and only if $h_{\zeta(p)}(v)=0$. For the semi Finsler structure, it means that traveling from $p$ along the direction $v$ is \emph{free} and curves that pass at $p$ tangent to $v$ have locally length zero.
\end{remark}

The zonoid section associated to a \zkrok scalar field (see \cref{def:zonoidsec}) provides then a semi Finsler structure.

\begin{definition}\label{def:finsler}
Let $X\randin C^1(M,\R)$ be a {\zkrok} field. We denote by $F^X$ the semi Finsler structure induced by $\seg{\zeta_X}(\cdot)$, where recall that $\seg{\zeta_X}(\cdot)$ is the centered zonoid of $\zeta_X(\cdot)$, i.e., for all $p\in M$ and all $v\in T_pM$
\begin{equation}\label{eqdefFX}
    F^X_p(v):=\frac{\rho_{X(p)}(0)}{2}\EE\kop \left|\dd_p X(v)\right| \Big| X(p)=0\pok.
\end{equation}
\end{definition}

Our previous results interpret in this context as follows.

\begin{proposition}[Crofton formula for curves]\label{propcroftoniftrs} Let $X\randin C^1(M,\R)$ be {\zkrok} and let $Z:=X^{-1}(0)$. Let $\gamma:[0,1]\to M$ be a smooth curve such that $\gamma\transv Z$ almost surely. Then 
\begin{equation}\label{eq:croftonformula}
   \EE\#(\gamma\cap Z)=2\,\ell^{F^X}(\gamma).
\end{equation}
\end{proposition}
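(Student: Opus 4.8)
The plan is to realize the left-hand side as a special case of the pull-back property (\cref{thm:pullback}) composed with the expected-volume formula (\cref{thm:Evol}), so that the proof amounts to identifying the two sides of \eqref{eq:croftonformula} with the length of the same zonoid section pulled back to the interval. First I would observe that $\gamma\colon[0,1]\to M$ is a smooth map with $\gamma\transv Z$ almost surely, which is precisely the hypothesis needed to apply \cref{thm:pullback} to the scalar {\zkrok} field $X$ and the map $\f=\gamma$. This yields that $X\circ\gamma\randin\mC^1([0,1],\R)$ is {\zkrok}, with zero set $(X\circ\gamma)^{-1}(0)=\gamma^{-1}(Z)$, and its zonoid section is the pull-back
\begin{equation}
    \zeta_{X\circ\gamma}(t)=d_t\gamma^*\tyu\zeta_X(\gamma(t))\uyt\subset \Lambda^1T^*_t[0,1]=T^*_t[0,1].
\end{equation}

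Next I would apply \cref{thm:Evol} to the field $X\circ\gamma$ on the one-dimensional manifold $[0,1]$, where $k=m=1$ so that $Z_\gamma:=(X\circ\gamma)^{-1}(0)$ is zero-dimensional and $\vol_0$ counts points. Since $\#(\gamma\cap Z)=\#\gamma^{-1}(Z)=\#Z_\gamma$ (using transversality to avoid double counting), \cref{thm:Evol} gives
\begin{equation}
    \EE\#(\gamma\cap Z)=\int_0^1\ell\tyu\zeta_{X\circ\gamma}(t)\uyt \dd t=\int_0^1\ell\tyu d_t\gamma^*\zeta_X(\gamma(t))\uyt \dd t.
\end{equation}
It then remains to compute the integrand. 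The length of a one-dimensional zonoid (a centered segment plus a translation) equals the half-length of the generating segment, and because length is translation invariant it depends only on the centered part $\seg{\zeta_X}$. Using the pull-back together with \cref{lem:lineartransfofVitale}, the centered zonoid $\seg{d_t\gamma^*\zeta_X(\gamma(t))}=d_t\gamma^*\seg{\zeta_X(\gamma(t))}$ is the segment $\EE\seg{\dd_{\gamma(t)}X(\dot\gamma(t))}\rho_{X(\gamma(t))}(0)$, whose length, by \cref{def:zlength} and \eqref{eq:suppofexpseg}, is exactly $\rho_{X(\gamma(t))}(0)\,\EE\{|\dd_{\gamma(t)}X(\dot\gamma(t))|\mid X(\gamma(t))=0\}=2F^X_{\gamma(t)}(\dot\gamma(t))$ by \cref{def:finsler}.

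Putting these together, the integral becomes $\int_0^1 2F^X_{\gamma(t)}(\dot\gamma(t))\,\dd t=2\,\ell^{F^X}(\gamma)$ by the definition \eqref{eq:lengthFinsler} of Finsler length, which is the claim. The only genuinely delicate point is the bookkeeping in identifying $\ell(\zeta_{X\circ\gamma}(t))$ with $2F^X_{\gamma(t)}(\dot\gamma(t))$: one must be careful that the pull-back of a zonoid in $T^*_{\gamma(t)}M$ under the cotangent map $d_t\gamma^*$ lands in $T^*_t[0,1]$ and reduces to the scalar segment generated by the pairing $\dd_{\gamma(t)}X(\dot\gamma(t))$, and that the factor of $2$ between length and the seminorm $F^X$ (coming from the $\tfrac12$ in the definition of $\seg{\cdot}$ and of the nigiro) is tracked consistently. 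The transversality hypothesis is what guarantees both that $X\circ\gamma$ is {\zkrok} and that $\gamma^{-1}(Z)$ is a genuine finite point count, so no further regularity work is needed.
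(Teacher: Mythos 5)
Your proposal is correct and follows essentially the same route as the paper: pull back the field along $\gamma$ via \cref{thm:pullback}, apply \cref{thm:Evol} on $[0,1]$, and identify $\ell(\zeta_{X\circ\gamma}(t))$ with $2F^X_{\gamma(t)}(\dot\gamma(t))$. The only cosmetic difference is that the paper computes the length of the one-dimensional zonoid directly as $h(\partial_t)+h(-\partial_t)$ via the pulled-back support function, whereas you argue through translation invariance and the centered segment; both yield the same factor of $2$.
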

\begin{proof}
Consider the random field $X\circ\gamma:[0,1]\to \R$ and apply the pull-back property \cref{thm:pullback}. By \cref{eq:pullback}, we have
\begin{equation}\label{eq:zetaXgamma}
    h_{\zeta_{X\circ\gamma}(t)}(\partial_t)=h_{\zeta_X(\gamma(t))}(\dot{\gamma}(t))
\end{equation}
Since $\zeta_{X\circ \gamma}(t)$ lives in a space of dimension $1$ (formally the tangent to $[0,1]$), its length is given by 
\begin{align}
    \ell(\zeta_{X\circ\gamma}(t)) &=h_{\zeta_{X\circ\gamma}(t)}(\partial_t)+h_{\zeta_{X\circ\gamma}(t)}(-\partial_t) \\
    &=h_{\zeta_X(\gamma(t))}(\dot{\gamma}(t))+h_{\zeta_X(\gamma(t))}(-\dot{\gamma}(t))\\
    &=2h_{\seg{\zeta_X(\gamma(t))}}(\dot{\gamma}(t))=2 F^X(\dot\gamma(t)).
\end{align}
Applying \cref{thm:Evol}, we obtain
\begin{equation}
    \EE\#(X\circ\gamma)^{-1}(0)=\int_0^1 \ell\left(\zeta_{X\circ\gamma}(t)\right)\, \dd t=2\int_0^1 F^X(\dot{\gamma}(t))\, \dd t
\end{equation}
We recognize on the right $2\ell^{F^X}(\gamma).$ To conclude, note that $(X\circ\gamma)^{-1}(0)=\gamma^{-1}(\gamma\cap Z)$ and thus $\#(X\circ\gamma)^{-1}(0)=\#(\gamma\cap Z)$.
\end{proof}

Formulas of the type of \cref{eq:croftonformula} are called \emph{Crofton formula} from the original Crofton formula with curves on the sphere and random hyperplanes.

Constructions of Finsler structures that admit a Crofton formula are known for random hyperplanes in projective space, see \cite{PaivaGelfCrof,BernigCrofton,SchneiderCrofton}. Moreover, a more general result very similar to \cref{propcroftoniftrs} can be found in \cite[Theorem~A]{paivageod}, although the {\zkrok} hypothesis is significantly more general and the construction of the metric $F^X$ explicit with \cref{eqdefFX}.

\begin{remark}
Note that the (semi) Finsler structure satisfying \cref{eq:croftonformula} is unique. Indeed\miknew{,} if $v\in T_pM$ is such that there exists a curve $\gamma:[0,1]\to M$ almost surely transversal to $Z$\miknew{,} such that $\gamma(0)=p$ and $\dot\gamma(0)=v$\miknew{,} then\miknew{,} by \cref{eq:lengthFinsler}\miknew{,} we have $\tfrac{1}{\eps}\ell^{F^X}(\gamma|_{[0,\eps]})\to F^X(v)$ as $\eps\to 0.$ Moreover, by
\cref{lem:almostallcurves} below, almost all $(p,v)\in TM$ admit such a curve.
\end{remark}

\begin{lemma}\label{lem:almostallcurves}
Let $X\randin \mC^1(M,\R)$ be \zkrok and $Z=X^{-1}(0)$. Then for almost every $(p,v)\in TM$ we have that $\PP\{p\in Z, d_pX(v)=0\}=0$.
\end{lemma}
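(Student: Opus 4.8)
The claim is in fact true for \emph{every} $(p,v)\in TM$, so I would first dispose of it with a one-line argument and then explain why the measure-theoretic ``almost every'' formulation is the natural one for the application. By \zkrok-\cref{itm:krok:3} the law of the scalar random variable $X(p)$ is absolutely continuous on $\R$, so $\PP\{X(p)=0\}=0$. Since the event $\{p\in Z,\ d_pX(v)=0\}$ is contained in $\{X(p)=0\}$, monotonicity of $\PP$ gives $\PP\{p\in Z,\ d_pX(v)=0\}=0$ for all $(p,v)$, a fortiori for almost every one.

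The more informative route—and the one that explains the ``almost every'' phrasing that feeds the uniqueness argument in the preceding remark—is a Tonelli computation on $TM$. I would equip $TM$ with the measure $\mu$ that is locally $dM(p)\otimes d\lambda_p(v)$, where $\lambda_p$ denotes Lebesgue measure on the fibre $T_pM$. Since the integrand is nonnegative, it suffices to show
\begin{equation}
\int_{TM}\PP\{X(p)=0,\ d_pX(v)=0\}\,d\mu(p,v)=\EE\int_{TM}\one\{X(p)=0,\ d_pX(v)=0\}\,d\mu(p,v)=0,
\end{equation}
the equality coming from exchanging $\EE$ with the $\mu$-integral. For the inner integral I would fix a realization of $X$ for which $0$ is a regular value — this holds almost surely by \zkrok-\cref{itm:krok2} — and integrate first along the fibres: for each $p$ the fibre integral $\int_{T_pM}\one\{X(p)=0,\ d_pX(v)=0\}\,d\lambda_p(v)$ vanishes, being $0$ when $X(p)\neq0$ and equal to $\lambda_p(\ker d_pX)=0$ when $X(p)=0$, since regularity forces $d_pX\neq0$ there and $\ker d_pX$ is then a genuine hyperplane. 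Hence the whole $\mu$-integral is $0$ for almost every $X$, its expectation vanishes, and the nonnegative integrand must therefore be zero $\mu$-a.e.

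The only step requiring genuine care—and the main (if mild) obstacle—is the joint measurability needed to apply Tonelli, namely that $(\omega,p,v)\mapsto\one\{X(\omega)(p)=0,\ d_pX(\omega)(v)=0\}$ is measurable on the product of the probability space with $(TM,\mu)$. I would deduce this from the continuity of the evaluation maps $(f,p)\mapsto f(p)$ and $(f,p,v)\mapsto d_pf(v)$ on $\mC^1(M,\R)\times TM$: composing with the random element $X\randin\mC^1(M,\R)$ gives a jointly measurable map into $\R\times\R$, whose preimage of the closed set $\{(0,0)\}$ is measurable; $\sigma$-finiteness of $\mu$ (clear since $M$ is second countable) then legitimizes the exchange. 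Everything else is routine.
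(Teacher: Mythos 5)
Both halves of your argument check out, and the second one is essentially the paper's own proof. A remark on the first: the one-line disposal is formally valid for the statement as written, since by \zkrok-\cref{itm:krok:3} the law of $X(p)$ has a density, so $\PP\{X(p)=0\}=0$ for every $p$ and the event $\{p\in Z,\ d_pX(v)=0\}$ is contained in $\{X(p)=0\}$; but the paper deliberately avoids this route — its proof opens by announcing that only the almost sure regularity of the value $0$ (\zkrok-\cref{itm:krok2}) will be used, so that the conclusion, and the genuinely non-vacuous ``almost every'', survive for fields whose pointwise laws may have atoms, and so that the statement that actually gets used is the measure-zero property of the full incidence set rather than a pointwise triviality. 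Your Tonelli argument coincides with the paper's: the paper introduces $A:=\kop((p,v),f)\in TM\times\mC^1(M,\R)\colon f(p)=0,\ d_pf(v)=0\pok$, slices along $f$, identifies $A_f$ with $T(f^{-1}(0))$ for $f\transv\{0\}$, and concludes that $A$ is null by exchanging the order of integration. You compute the same slice fibrewise over $M$ — $\lambda_p(\ker d_pf)=0$ because $d_pf\neq 0$ on $Z$ — which is the same observation in local form, and you supply the joint-measurability justification that the paper leaves implicit (``since $A$ is measurable''). No gap.
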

\begin{proof}
In fact, we are only going to use the assumption that $0$ is a regular value of $X$ almost surely.
Let us consider the set:
\be 
A:=\kop \tyu(p,v),f\uyt\in TM\times \mC^1(M,\R)\colon f(p)=0, d_pf(v)=0\pok.
\ee
We need to show that $\PP\{\tyu(p,v),X\uyt\in A\}=0$ for almost every $(p,v)\in TM$. By Tonelli's theorem, since $A$ is measurable, this is equivalent to show that $A$ has measure zero and this can be proven by sectioning in the opposite way (i.e., exchanging the order of integration). Indeed, for each $f\in \mC^1(M,\R)$ such that $f\transv \{0\}$, we have that $A_f:=\kop(p,v)\in TM|\tyu(p,v),X\uyt\in A\pok$ corresponds exactly to $T(f^{-1}(0))$, hence $A_f$ has measure zero for $[X]$-almost every $f$, which by Tonelli implies that $A$ has measure zero.
\end{proof}

Unlike for the length, there are several definitions of volume in Finsler manifolds. One way to define $k$-dimensional volumes of submanifolds is to define a $k$-density, that is\miknew{,} a nonnegative homogeneous function $\varphi_k$ on the simple vectors of $\Lambda^k TM$. The $k$-densities satisfy a pull-back property and thus, given an embedded submanifold $\iota:S\hookrightarrow M $, $\iota^*\varphi_k$ defines a density (in the classical sense) and can be integrated. The $k$-volume of $S$ is then defined to be
\begin{equation}
    \vol_{\varphi_k}(S):=\int_S \iota^*\varphi_k.
\end{equation}
See \cite{volumesFinsler} for the possible choices of $k$-densities and more details. 
One of the most common choices is the \emph{Holmes-Thompson density}. To define it, it is convenient for us to fix a Riemannian metric on our manifold $M$.
\begin{definition}
Let $F$ be a semi Finsler structure on $M$ and let $\zeta(p)\subset T^*_pM$ be the convex body such that $F_p=h_{\zeta(p)}.$ The $k^{th}$ \emph{Holmes-Thompson} density $\varphi_k^{HT}$ is given for all $p\in M$, and all simple vectors $v=v_1\wedge \cdots\wedge v_k\in\Lambda^k T_pM$
\begin{equation}
    \varphi_k^{HT}(v_1\wedge\cdots\wedge v_k):=\frac{\|v_1\wedge\cdots\wedge v_k\|}{\omega_k} \vol_k(\pi_v(\zeta(p)))
\end{equation}
where $\|\cdot\|$ is the norm on $\Lambda^kT_pM$ induced by the Riemannian structure, $\pi_v$ is the orthogonal projection onto $Span(v_1,\ldots,v_k)$ (identifying the space and its dual) and $\vol_k$ is the $k$-dimensional volume in the Riemannian structure in $T_pM$.
\end{definition}

The reader can refer to \cite[p.19]{volumesFinsler}. One can also show that this definition doesn't depend on the choice of the Riemannian metric, however, in our case, this becomes clear with the next lemma.

\begin{lemma}\label{lem:HTandwedge}
Let $F$ be a semi Finsler structure on $M$ such that for each $p\in M$, there is a zonoid $\zeta(p)\in \ZZo(T^*_pM)$ such that $F_p=h_{\zeta(p)}$. Then, the Holmes--Thompson density is given by 
\begin{equation}
    \varphi_k^{HT}=\frac{2}{k!\uball_k}h_{\zeta(p)^{\wedge k}}.
\end{equation}
\end{lemma}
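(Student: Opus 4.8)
The plan is to observe that the statement is essentially a repackaging of \cref{lem:supofsimplek}, which already computes the support function of $\zeta(p)^{\wedge k}$ on a simple vector as a projected volume. First I would fix $p\in M$ and a simple vector $v=v_1\wedge\cdots\wedge v_k\in\Lambda^k T_pM$, and recall that by hypothesis $\zeta(p)\in\ZZo(T^*_pM)$ is a centered zonoid with $F_p=h_{\zeta(p)}$. Applying \cref{lem:supofsimplek} with $K=\zeta(p)$ (using the Riemannian metric to identify $T_pM$ with the dual $(T^*_pM)^*$), I obtain
\be
h_{\zeta(p)^{\wedge k}}(v_1\wedge\cdots\wedge v_k)=\frac{\|v_1\wedge\cdots\wedge v_k\|}{2}\,k!\,\vol_k(\pi_v(\zeta(p))).
\ee

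Next I would simply solve this for the projected volume,
\be
\vol_k(\pi_v(\zeta(p)))=\frac{2}{k!\,\|v_1\wedge\cdots\wedge v_k\|}\,h_{\zeta(p)^{\wedge k}}(v_1\wedge\cdots\wedge v_k),
\ee
and substitute into the definition $\varphi_k^{HT}(v)=\frac{\|v_1\wedge\cdots\wedge v_k\|}{\omega_k}\vol_k(\pi_v(\zeta(p)))$. The factor $\|v_1\wedge\cdots\wedge v_k\|$ cancels, leaving $\varphi_k^{HT}(v)=\frac{2}{k!\,\omega_k}\,h_{\zeta(p)^{\wedge k}}(v)$, and it remains only to note that $\omega_k=\uball_k$ is the volume of the unit ball in $\R^k$, which yields the claimed identity.

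There is essentially no obstacle here: the whole content is carried by \cref{lem:supofsimplek}, and what survives is a one-line rearrangement together with bookkeeping of the normalizing constant. The single point deserving a moment's care is the matching of Euclidean structures, since $\zeta(p)$ lives in $T^*_pM$, its wedge power in $\Lambda^k T^*_pM$, and the support function is evaluated on $\Lambda^k T_pM$; all of these are identified through the Riemannian metric, which is exactly the identification already built into the definition of $\varphi_k^{HT}$, so the two occurrences of ``the orthogonal projection $\pi_v$'' coincide. As a byproduct this also makes transparent the claimed independence of the auxiliary metric, since the right-hand side $\frac{2}{k!\,\uball_k}h_{\zeta(p)^{\wedge k}}$ depends only on the zonoid $\zeta(p)$.
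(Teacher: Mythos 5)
Your proposal is correct and follows exactly the paper's route: the paper's proof is the one-line remark that the lemma ``is a consequence of the definition and \cref{lem:supofsimplek},'' and you have simply written out that substitution, including the cancellation of $\|v_1\wedge\cdots\wedge v_k\|$ and the identification $\omega_k=\uball_k$. The remark about the metric identifications and the resulting metric-independence is a sensible (and accurate) elaboration, but it adds nothing beyond what the paper's argument already implicitly uses.
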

\begin{proof}
This is a consequence of the definition and \cref{lem:supofsimplek}.
\end{proof}

Now with a proof very similar to the proof of \cref{propcroftoniftrs} we obtain a Crofton formula for higher dimensional volumes.

\begin{theorem}[Crofton formula]\label{prop:FinsCroftonVol}
Let $1\leq k\leq m$, let $X_1,\ldots,X_k\randin C^1(M,\R)$ be i.i.d. {\zkrok} fields and let $Z^{(k)}:=(X_1)^{-1}(0)\cap\cdots\cap(X_k)^{-1}(0)$. Let $\iota:S\hookrightarrow M$ be an embedded submanifold of dimension $k$ such that $S\transv Z^{(k)}$ almost surely, then we have 
\begin{equation}
    \EE\#(S\cap Z^{(k)})=k! \uball_k \vol_k^{F^{X_1}}(S)
\end{equation}
where $\vol_k^{F^{X_1}}$ denotes the Holmes--Thompson volume for the semi Finsler structure defined by \cref{eqdefFX}.
\end{theorem}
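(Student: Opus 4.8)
The plan is to mirror the proof of \cref{propcroftoniftrs}, replacing the single pulled-back scalar field by the $k$-tuple and bringing in the $k$-th wedge power of the zonoid section. First I would assemble the fields into $W:=(X_1,\dots,X_k)\colon M\randto\R^k$, whose zero set is exactly $Z^{(k)}$. By \cref{thm:wedge} (applied $k-1$ times) the field $W$ is \zkrok, and since the $X_i$ are identically distributed their zonoid sections coincide, so
\be
\zeta_W(p)=\zeta_{X_1}(p)\wedge\dots\wedge\zeta_{X_k}(p)=\zeta_{X_1}(p)^{\wedge k}.
\ee
The hypothesis $S\transv Z^{(k)}$ is precisely $\iota\transv W^{-1}(0)$, so \cref{thm:pullback} applies with $\varphi=\iota$: the field $W\circ\iota\colon S\randto\R^k$ is \zkrok and $\zeta_{W\circ\iota}(q)=d_q\iota^*\bigl(\zeta_{X_1}(\iota(q))^{\wedge k}\bigr)$ for all $q\in S$. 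I would stress here that one cannot pull back the $X_i$ individually, since transversality of $\iota$ to each $X_i^{-1}(0)$ is not assumed; this is why $W$ must be formed on $M$ first.

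Since $\dim S=k$ and $W\circ\iota$ takes values in $\R^k$, its zero set is zero-dimensional and equals $\iota^{-1}(Z^{(k)})$, which has the same cardinality as $S\cap Z^{(k)}$ because $\iota$ is an embedding. Applying \cref{thm:Evol} with $d=\dim S-k=0$ (so that $\vol_0$ counts points) then gives
\be
\EE\#(S\cap Z^{(k)})=\int_S\ell\bigl(\zeta_{W\circ\iota}(q)\bigr)\,dS(q).
\ee
It remains only to identify the integrand with the Holmes--Thompson density, and this is where the proof diverges from the one-dimensional curve case.

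The key computation is the length of $\zeta_{W\circ\iota}(q)$, which lives in the one-dimensional space $\Lambda^kT_q^*S$. As in \cref{propcroftoniftrs}, the length of a centered zonoid $K$ in a one-dimensional space equals $2h_K(\omega_q)$, where $\omega_q=w_1\wedge\dots\wedge w_k$ is the unit volume vector of $T_qS$ for an orthonormal basis $w_1,\dots,w_k$. Using that length is translation invariant (so $\ell(\zeta_{W\circ\iota})=\ell(\seg{\zeta_{W\circ\iota}})$), that linear maps commute with the centering operation $\seg{\cdot}$ and act on support functions by the transpose (\cref{lem:lineartransfofVitale} and \cref{prop:suppfctprop}), and that $\seg{K\wedge L}=\seg{K}\wedge\seg{L}$, I would write $v_i:=d_q\iota(w_i)$ and $\xi:=\seg{\zeta_{X_1}}$ and compute
\be
\ell\bigl(\zeta_{W\circ\iota}(q)\bigr)=2\,h_{d_q\iota^*(\xi(\iota(q))^{\wedge k})}(\omega_q)=2\,h_{\xi(\iota(q))^{\wedge k}}(v_1\wedge\dots\wedge v_k),
\ee
where the last equality is \cref{prop:suppfctprop}-(2), the transpose of $d_q\iota^*$ sending $\omega_q$ to $v_1\wedge\dots\wedge v_k$.

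Finally, since $v_1,\dots,v_k$ is an orthonormal basis of $T_qS\subset T_{\iota(q)}M$ (so $\|v_1\wedge\dots\wedge v_k\|=1$), \cref{lem:HTandwedge} identifies $\tfrac{2}{k!\uball_k}h_{\xi(\iota(q))^{\wedge k}}(v_1\wedge\dots\wedge v_k)$ with the value of the Holmes--Thompson density $\varphi_k^{HT}$ on $T_qS$, i.e.\ with $(\iota^*\varphi_k^{HT})_q$. Hence $\ell(\zeta_{W\circ\iota}(q))=k!\uball_k\,(\iota^*\varphi_k^{HT})_q$, and integrating over $S$ gives $\EE\#(S\cap Z^{(k)})=k!\uball_k\vol_k^{F^{X_1}}(S)$. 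The substantive input is \cref{lem:HTandwedge}, which already packages \cref{lem:supofsimplek} into the correct normalization; the only real obstacle in the write-up is bookkeeping, namely carrying the wedge power and the pullback consistently through the centering and support-function manipulations and matching the constant $k!\uball_k$.
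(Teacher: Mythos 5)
Your proof is correct and follows essentially the same route as the paper's: form the vector field $(X_1,\dots,X_k)$, apply the wedge and pull-back properties, compute the length of the resulting zonoid in the one-dimensional space $\Lambda^kT^*_qS$ via its support function at the unit volume vector, and identify the result with the Holmes--Thompson density through \cref{lem:HTandwedge}. The only cosmetic difference is that the paper explicitly fixes a Riemannian metric on $S$ making $\iota$ a Riemannian embedding before taking the orthonormal basis, a convention you use implicitly.
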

\begin{proof}
The proof is almost identical to the proof of \cref{propcroftoniftrs} but let us repeat it, if only to compute the constant. Let $X^{(k)}:=(X_1,\ldots,X_k)\randin C^1(M,\R^k)$ and consider $X^{(k)}\circ \iota\randin C^1(S,\R^k)$. Since $S$ is almost surely transversal to $Z^{(k)}=(X^{(k)})^{-1}(0)$, by the pull-back property (\cref{thm:pullback}) it is \zkrok and we have for all $q\in S$ 
\begin{equation}
    \zeta_{X^{(k)}\circ \iota}(q)=\dd_q\iota^* \zeta_{X^{(k)}}(\iota(q))=\dd_q\iota^* \left((\zeta_{X_1}(\iota(q)))^{\wedge k}\right)=\left(\dd_q\iota^* \zeta_{X_1}(\iota(q))\right)^{\wedge k}.
\end{equation}
where the second equality holds because $X^{(k)}:=(X_1,\ldots,X_k)$ and $X_1,\ldots,X_k$ are i.i.d. and the third equality is by definition of the linear maps induced in the exterior algebra. We fix a Riemannian structure on $S$ such that $\iota$ is a Riemannian embedding and we let $\omega_q\in \Lambda^kT^qS$ be the choice of a volume form (if $S$ is not orientable we can work locally). Now we note that $\zeta_{X^{(k)}\circ \iota}(q)$ lives in the one dimensional space $\Lambda^kT_qS$ thus its length is given by:
\begin{align}
    \ell\left(\zeta_{X^{(k)}\circ\iota}(q)\right) &=h_{\zeta_{X^{(k)}\circ\iota}(q)}(\omega_q)+h_{\zeta_{X^{(k)}\circ\iota}(q)}(-\omega_q) \\
    &=h_{\zeta_{X^{(k)}}(\iota(q))}(\dd_q\iota(\omega_q))+h_{\zeta_{X^{(k)}}(\iota(q))}(\dd_q\iota(-\omega_q)) \\
    &=2h_{\seg{\zeta_{X^{(k)}}}(\iota(q))}(\dd_q\iota(\omega_q))    \\
    &=2h_{\seg{\zeta_{X_1}}(\iota(q))^{\wedge k}}(\dd_q\iota(\omega_q))=k!\kappa_k \varphi_k^{HT}(\dd_q\iota(\omega_q)).
\end{align}
Where here $\varphi_k^{HT}$ denotes the Holmes Thompson density for the semi Finsler structure defined by $\seg{\zeta_{X_1}}$. To conclude, we note that $\#(X^{(k)}\circ\iota)^{-1}(0)=\#(S\cap Z^{(k)})$ and thus applying \cref{cor:Evolvol} to the \zkrok field $(X^{(k)}\circ\iota)$ we get 
\begin{equation}
    \EE\#(S\cap X^{(k)})=\int_S\ell\left(\zeta_{f^{(k)}\circ\iota}(q)\right)\,\dd S(q)=k!\kappa_k\int_S\varphi_k^{HT}(\dd_q\iota(\omega_q))\,\dd S(q)
\end{equation}
which is what we wanted.
\end{proof}

If we consider the submanifold $S$ in \cref{prop:FinsCroftonVol} to be again random, given by \zkrok fields, we obtain the following funny formula.

\begin{corollary}
Let $X_1,\ldots,X_k,Y_1,\ldots,Y_{m-k}\randin C^1(M,\R)$ be independent \zkrok fields with $X_1,\ldots,X_k$, respectively $Y_1,\ldots,Y_{m-k},$ identically distributed. Consider $Z_X^{(k)}:=(X_1)^{-1}(0)\cap\cdots\cap (X_k)^{-1}(0)$ and $Z_Y^{(m-k)}:=(Y_1)^{-1}(0)\cap\cdots\cap (Y_{m-k})^{-1}(0)$. Then we have
\begin{equation}
    k!\uball_k\EE\left[ \vol_k^{F^X}\left(Z_Y^{(m-k)}\right)\right]=(m-k)!\uball_{m-k}\EE\left[ \vol_{m-k}^{F^Y}\left(Z_X^{(k)}\right)\right]
\end{equation}
where $\vol_k^{F^X}$, respectively $\vol_{m-k}^{F^Y}$, denotes the Holmes-Thompson volume for the semi Finsler structure defined by $\seg{\zeta_{X_1}}$, respectively by $\seg{\zeta_{Y_1}}$.
\end{corollary}

\begin{proof}
Applying the previous result \cref{prop:FinsCroftonVol} successively to $X_1,\ldots,X_k$, fixing $Z_Y^{(m-k)}$ and to $Y_1,\ldots,Y_{m-k}$ fixing $Z_X^{(k)}$, we get, using the independence assumption, that both sides are equal to $\EE\#(Z_X^{(k)}\cap Z_Y^{(m-k)})$.
\end{proof}

\section{Examples}\label{sec:ex}
\subsection{Abundance of {\zkrok} fields}\label{sub:abu}

The following result shows that \zkrok random fields are \emph{dense} in the family of smooth random fields with integrable $\mC^1$ norm.
\begin{theorem}\label{thm:abundance}
Let $Y\randin \mC^q(M,\R^k)$ be a random field, with $q\ge 1+\max\{m-k,0\}$\footnote{This is the minimal regularity required for Sard's theorem \cite{sard} to hold.}, such that $\EE\{J_pY\}$ is finite and continuous with respect to $p\in M$. Let $\lambda\randin \R^k$ be an independent random vector with a continuous nowhere vanishing bounded density $\rho_\lambda$.
Then $X:=Y-\lambda$ is \zkrok.
\end{theorem}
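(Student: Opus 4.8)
The plan is to verify the five conditions of \cref{def:zkrok} for $X = Y-\lambda$ one by one, the first four being essentially routine and the fifth being the heart of the matter. Condition \ref{itm:krok1} is immediate since $Y\randin\mC^q\subset\mC^1$ and subtracting the constant (random) vector $\lambda$ preserves the $\mC^1$ class. For \ref{itm:krok2}, observe that $0$ is a regular value of $X=Y-\lambda$ if and only if $\lambda$ is a regular value of $Y$; since $q\ge 1+\max\{m-k,0\}$ is exactly Sard's threshold, for each realization the critical values $C_Y\subset\R^k$ of $Y$ form a null set, and as $\lambda$ is independent of $Y$ with a density we get $\PP\{\lambda\in C_Y\}=\EE\int_{C_Y}\rho_\lambda=0$ by Tonelli. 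For \ref{itm:krok:3} and \ref{itm:krok4}, independence of $Y(p)$ and $\lambda$ gives the convolution density $\rho_{X(p)}(x)=\EE\{\rho_\lambda(Y(p)-x)\}$, and since $p\mapsto Y(p)$ is continuous for each realization and $\rho_\lambda$ is continuous and bounded, dominated convergence shows $(p,x)\mapsto\rho_{X(p)}(x)$ is continuous everywhere (in particular at $(p,0)$).

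The core is \ref{itm:krok5}, and the first step is to identify the regular conditional probability explicitly. Conditioning on $X(p)=x$ forces $\lambda=Y(p)-x$, so integrating out $\lambda$ for fixed $Y$ and changing variables yields, for every bounded measurable $\Phi$,
\be
\EE\kop\Phi(X)\big|X(p)=x\pok=\frac{\EE\kop\Phi(Y-Y(p)+x)\,\rho_\lambda(Y(p)-x)\pok}{\EE\kop\rho_\lambda(Y(p)-x)\pok}.
\ee
Thus one takes $\mu(p,x)$ to be the law of $Y-Y(p)+x$ under the probability obtained by tilting the law of $Y$ by the weight $\rho_\lambda(Y(p)-x)$; the denominator is strictly positive \emph{everywhere} precisely because $\rho_\lambda$ is nowhere vanishing, so no ambiguity arises at $x=0$. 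The crucial observation is that $f\mapsto J_pf=\|\dd_pf^1\wedge\cdots\wedge\dd_pf^k\|$ is invariant under adding a constant, hence $J_p(Y-Y(p)+x)=J_pY$. This gives the clean expression
\be
\EE\kop\a(X)\,J_pX\big|X(p)=x\pok=\frac{\EE\kop\a(Y-Y(p)+x)\,J_pY\,\rho_\lambda(Y(p)-x)\pok}{\EE\kop\rho_\lambda(Y(p)-x)\pok},
\ee
whose total mass at $(p,0)$ is bounded by $\|\rho_\lambda\|_\infty\,\EE\{J_pY\}/\EE\{\rho_\lambda(Y(p))\}<\infty$, giving finiteness.

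It then suffices to check the continuity in the form of \cref{prop:tecnicalzkrok}.(\ref{itm:tecnokrok2}): for bounded continuous $\a$ and $(p_n,x_n)\to(p,0)$, one must pass to the limit in the displayed ratio. The denominator converges by dominated convergence to the strictly positive $\EE\{\rho_\lambda(Y(p))\}$. For the numerator I would write it as $\EE\{h_n g_n\}$ with $g_n:=J_{p_n}Y\ge 0$ and $h_n:=\a(Y-Y(p_n)+x_n)\rho_\lambda(Y(p_n)-x_n)$ uniformly bounded; since $Y\in\mC^1$ we have $g_n\to J_pY$ and $h_n\to\a(Y-Y(p))\rho_\lambda(Y(p))$ almost surely. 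The main obstacle is that $g_n=J_{p_n}Y$ cannot be dominated by a fixed integrable function, so ordinary dominated convergence fails; the remedy is a generalized dominated convergence argument: the hypothesis that $p\mapsto\EE\{J_pY\}$ is continuous gives $\EE g_n\to\EE g$, which together with $g_n\to g$ a.s. and $g_n\ge0$ forces $g_n\to g$ in $L^1$ (Scheff\'e), and then $|\EE\{h_ng_n\}-\EE\{hg\}|\le\|h_n\|_\infty\EE|g_n-g|+|\EE\{h_ng\}-\EE\{hg\}|\to0$. Dividing the two limits recovers $\EE\{\a(X)J_pX\mid X(p)=0\}$, completing the verification of \ref{itm:krok5} and hence the theorem.
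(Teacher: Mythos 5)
Your proposal is correct and follows essentially the same route as the paper: the same explicit regular conditional probability $\mu(p,x)$ obtained by tilting the law of $Y-Y(p)+x$ by the weight $\rho_\lambda(Y(p)-x)$, and the same use of the continuity of $p\mapsto\EE\{J_pY\}$ to pass to the limit in the numerator via a generalized dominated convergence argument. The only (cosmetic) difference is that you package that last step through Scheff\'e's lemma plus a triangle inequality, whereas the paper invokes a Fatou-type lemma for $0\le f_n\le g_n$ with $\EE g_n\to\EE g$; your version has the minor advantage of handling signed $\a$ without splitting into positive and negative parts.
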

\begin{proof}
    Let us show the validity of the \zkrok hypotheses one by one.
    \begin{enumerate}[wide]
        \item Clearly $X\randin \mC^1$.
        \item Observe that $0$ is a critical value of $Y-x$ if and only if $x$ is a critical value of $Y.$ By Sard's theorem, the set of such points has Lebesgue measure zero and since the law of $\lambda$ is absolutely continuous with respect to Lebesgue,
        we obtain  \zkrok-\cref{itm:krok2} by integrating first with respect to $\lambda$ then with respect to $Y$. 
        \item We can express the density of the random vector $X(p)\randin \R^k$ as follows:
        \bega
        \rho_{X(p)}(x)
        \int_{\R^k}\rho_\lambda(t-x)d[Y(p)](t)=\EE\kop\rho_\lambda \tyu Y(p)-x \uyt\pok.
        \eega
        The latter expectation is taken with respect to the randomness of $Y$. Notice that $\rho_{X(p)}(x)>0$ for all $x\in \R^k$ because $\rho_\lambda$ is assumed to have the same property.
        \item 
        The continuity of $\rho_{X(p)}(x)$ can be shown using the Dominated Convergence Theorem since  $\rho_\lambda$ is uniformly bounded.
        \item Let $\mathbbm{1}_B$ be the characteristic function of a Borel set $B\subset \mC^1(M,\R^k)$. For any $(p,x)\in M\times \R^k$ we define the probability measure 
        \be\label{eq:abudefmu} 
        \mu(p,x)(B):=\frac{\EE\kop\mathbbm{1}_B\tyu Y-Y(p)+x\uyt \rho_{\lambda}\tyu Y(p)-x\uyt\pok}{\rho_{X(p)}(x)}
        \ee
        To see that $\mu(p,\cdot)(\cdot)$ is a regular conditional probability (see \cref{sub:remarkrok}) for $X$ given $X(p)$, let us take a Borel subset $V\subset \R^k$ and compute
        \bega 
        \PP\kop X\in B; X(p)\in V\pok &= \int_{\mC^1(M,\R^k)}\tyu\int_{\R^k}\mathbbm{1}_{B}(f-t)\mathbbm{1}_{V}(f(p)-t)\rho_\lambda(t)d\R^k(t)\uyt d[Y](f)
        \\
        &=
        \int_{\mC^1(M,\R^k)}\tyu\int_{V}\mathbbm{1}_{B}(f-f(p)+x)\rho_\lambda(f(p)-x)d\R^k(x)\uyt d[Y](f)
        \\
        &=
        \int_V \EE\kop\mathbbm{1}_B\tyu Y-Y(p)+x\uyt \rho_{\lambda}\tyu Y(p)-x\uyt\pok\frac{\rho_{X(p)}(x)}{\rho_{X(p)}(x)}
        d\R^k(x)
        \\
        &=
        \int_V\mu(p,x)(B)d[X(p)](x).
        \eega 
        Finally, we prove \zkrok-\ref{itm:krok5} by showing point (\ref{itm:tecnokrok2}) of \cref{prop:tecnicalzkrok}. Let $\a$ be a bounded and continuous functional on $\mC^1(M,\R^k)$ and let $(p_n,x_n)\to (p,0)$ in $M\times \R^k$. Then 
        \be 
        \EE\kop (J_{p_n}X)\a(X)|X(p_n)=x_n\pok=\frac{\EE\kop (J_{p_n}Y)\a(Y-Y(p_n)+x_n)\rho_{\lambda}\tyu Y(p_n)-x_n\uyt\pok}{\rho_{X(p_n)}(x_n)}.
        \ee
        We already proved that the denominator is continuous and never vanishing. The convergence of the numerator can be proved using the following version of the Dominated Convergence Theorem, which is a corollary of Fatou's lemma.
        \begin{lemma}\label{lem:dominco}
            Let $0\le f_n\le g_n$ be random variables such that $f_n\to f$ and $g_n\to g$ almost surely. Assume that $\EE\{g_n\}\to \EE\{g\}$, then $\EE\{f_n\}\to \EE\{f\}$.
        \end{lemma}
        To conclude, we apply \cref{lem:dominco} with $f_n=J_{p_n}Y\a(Y-Y(p_n)+x_n)\rho_{\lambda}(Y(p_n)-x_n)$ and $g_n=(J_{p_n}Y) C$, where $C>0$ is a constant such that $\a(f)\rho_{\lambda}(x)\le C$ for all $f$ and $x$. Here we are using the crucial hypothesis that $\EE\{J_{p_n}Y\}\to \EE\{J_{p}Y\}$.
    \end{enumerate}
\end{proof}

\begin{remark}
If we used the alternative weaker version of {\zkrok} hypotheses discussed in \cref{rem:weakzkrok}, the requirement that $\rho_\lambda$ is nonvanishing could be dropped.
\end{remark}
\subsection{Random level sets}\label{ex:randlev}
    Let $\varphi\in C^\infty(M,\R^k)$ be a fixed function and let $\lambda\randin \R^k$ be a random vector whose law admits a \emph{continuous} density $\rho_\lambda:\R^k\to\R$. Then the random field
    \begin{equation}
        X:=\varphi-\lambda\randin C^\infty(M,\R^k)
    \end{equation}
     is \zkrok.
    Indeed, this is a special case of \cref{thm:abundance} except for the fact that we don't need to assume nothing but the continuity of $\rho_\lambda$. So, \zkrok-\cref{itm:krok2} follows from Sard's theorem; $X(p)$ admits the continuous density given for every $x\in\R^k$ by $\rho_{X(p)}(x)=\rho_\lambda(\varphi(p)-x)$ and this gives \zkrok-\cref{itm:krok:3} and \zkrok-\cref{itm:krok4}.
    

    Finally, to prove \zkrok-\cref{itm:krok5}, we let $\mu(p,x)$ be the Dirac delta measure $\mu(p,x)=\delta_{\varphi-\varphi(p)+x}$, which corresponds to \eqref{eq:abudefmu} in this case. Reasoning as in the proof of \cref{thm:abundance}, one can check that this is a regular conditional probability for $X$ given $X(p)$, but this time it is automatic to see that $\mu$ satisfies \zkrok-\cref{itm:krok5}, even if $\rho_\lambda$ is not bounded or if it has zeroes.
    
    Note that in that case, we have 
    \begin{equation}\label{eq:"random"vectorRLS}
        (\dd_pX^1\wedge\cdots\wedge \dd_p X^k|X(p)=0)=\dd_p\varphi^1\wedge\cdots\wedge \dd_p \varphi^k
    \end{equation}
    almost surely. Thus, we obtain for all $p\in M$:
\begin{equation}
    \zeta_X(p)=\rho_{\lambda}(\varphi(p))[0,\dd_p\varphi^1\wedge\cdots\wedge \dd_p \varphi^k].
\end{equation}
    In particular, notice that the zonoid is $\{0\}$ at critical points of $\varphi$ and thus is $\{0\}$ everywhere if $\varphi$ is constant.
    
    In this setting, \cref{thm:Evol} translates into the coarea formula for the function $f(p)=J_p\f\cdot \rho_\lambda(\f(p))$, while \cref{thm:Ecurrent} yields:
    \be 
    \int_{\R^k}\rho_{\lambda}(t)\tyu\int_{\f^{-1}(t)}\omega|_{\ker d\f}\uyt d\R^k(t)
    =
    \int_M \rho_{\lambda}(\varphi(p))  \dd_p\varphi^1\wedge\cdots\wedge \dd_p \varphi^k\wedge
    \omega.
    \ee
    Moreover, in the case where $k=1$, the semi Finsler structure defined by $X$ (see \cref{sec:Finsler}) is given for all $v\in T_pM$ by
    \begin{equation}
        F^X_p(v)=\frac{\rho_\lambda(\varphi(p))}{2}|d_p\varphi(v)|.
    \end{equation}
    Then, if $\gamma:[0,1]\to M$ is a smooth curve that is transversal to $\varphi$, one can see that its length for this semi Finsler structure is given by $\ell^{F^X}(\gamma)=\frac{1}{2} \PP\left(\lambda\in [\varphi(\gamma(0)),\varphi(\gamma(1))]\right).$
\subsection{Finite dimensional fields}\label{sub:finifields}

Let us detail the case where the random field lives in a finite dimensional subspace of $C^\infty(M,\R^k)$. This example could help the reader to understand better the \zkrok conditions and the construction of the zonoid section.

\begin{proposition}\label{prop:finitezkrokcond}
     Let $\mathcal{F}\subset C^\infty(M,\R^k)$ be a subspace of dimension $n<\infty$ endowed with a scalar product and such that for all $p\in M,$ the map $ev_p:\F\to\R^k$, $\varphi\mapsto\varphi(p)$ is surjective. Let $X\randin \mathcal{F}$ be a random function whose law admits a \emph{continuous} density $\rho_X:\mathcal{F}\to \R$ such that $\rho_X(0)>0$ and such that when $\|\varphi\|\to\infty$, we have $\rho_X(\varphi)=O(\|\varphi\|^{-\alpha})$ for some $\alpha>n$. Then $X$ is \zkrok.
\end{proposition}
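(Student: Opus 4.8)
The plan is to verify directly the five conditions of \cref{def:zkrok}. The two structural ingredients are the evaluation maps $A_p:=ev_p\colon \mathcal F\to\R^k$, which by the ampleness hypothesis are surjective of constant rank $k$, and the global evaluation $ev\colon M\times\mathcal F\to\R^k$, $ev(p,\varphi)=\varphi(p)$. For fixed $p$ set $K_p:=\ker A_p$, a subspace of dimension $n-k$, so that each fiber $A_p^{-1}(x)$ is an affine translate of $K_p$. Conditions \zkrok-\ref{itm:krok1}, \zkrok-\ref{itm:krok:3}, \zkrok-\ref{itm:krok4} are then statements about pushing forward and disintegrating the absolutely continuous law $\rho_X\,d\varphi$ along these affine fibrations, \zkrok-\ref{itm:krok2} is a transversality statement, and \zkrok-\ref{itm:krok5} is the genuine technical point.

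For \zkrok-\ref{itm:krok1}, since $\mathcal F$ is finite dimensional the inclusion $\mathcal F\hookrightarrow \mC^1(M,\R^k)$ is continuous (on $\mathcal F$ the $\mC^1$-topology agrees with the norm topology), so $X$ is a random element of $\mC^1(M,\R^k)$. For \zkrok-\ref{itm:krok2}, I would first note that $ev$ is a submersion: its differential already surjects onto $\R^k$ through the $\mathcal F$-variable, because $\dot\varphi\mapsto\dot\varphi(p)=A_p(\dot\varphi)$ is onto. Hence $W:=ev^{-1}(0)$ is a smooth submanifold of $M\times\mathcal F$, and by the standard parametric transversality computation a point $\varphi\in\mathcal F$ is a regular value of the projection $\pi\colon W\to\mathcal F$ exactly when $0$ is a regular value of $\varphi$. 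Sard's theorem applied to $\pi$ shows that the set of $\varphi$ for which $0$ is not a regular value has measure zero in $\mathcal F$; since $[X]$ is absolutely continuous this set is $[X]$-null, giving \zkrok-\ref{itm:krok2}.

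For \zkrok-\ref{itm:krok:3}--\ref{itm:krok4} I would disintegrate $\rho_X$ along $A_p$ via the coarea formula: $X(p)=A_p(X)$ has density
\begin{equation}
  \rho_{X(p)}(x)=\frac{1}{\sqrt{\det(A_pA_p^{*})}}\int_{A_p^{-1}(x)}\rho_X\,d\mathcal H^{\,n-k},
\end{equation}
which is finite since $\rho_X(\varphi)=O(\|\varphi\|^{-\alpha})$ with $\alpha>n>n-k$ is integrable on the $(n-k)$-dimensional fiber. The regular conditional probability is the normalized restriction of $\rho_X$ to the fiber,
\begin{equation}
  \mu(p,x)(B)=\frac{1}{\rho_{X(p)}(x)\sqrt{\det(A_pA_p^{*})}}\int_{A_p^{-1}(x)}\one_B\,\rho_X\,d\mathcal H^{\,n-k},
\end{equation}
well defined and with positive, continuous denominator near $x=0$ because $\rho_X(0)>0$ and $\rho_X$ is continuous; joint continuity of $\rho_{X(p)}(x)$ at $(p,0)$ (hence \zkrok-\ref{itm:krok4}) then follows by dominated convergence, the moving fibers being handled exactly as below.

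The main obstacle is \zkrok-\ref{itm:krok5}, which I would check through point \ref{itm:tecnokrok2} of \cref{prop:tecnicalzkrok}: for bounded continuous $\alpha$ and $(p_n,x_n)\to(p,0)$ one must show $\EE\{\alpha(X)J_{p_n}X\mid X(p_n)=x_n\}\to\EE\{\alpha(X)J_pX\mid X(p)=0\}$, i.e. convergence of the ratio of the fiber integrals $\int_{A_{p_n}^{-1}(x_n)}\alpha\,J_{p_n}\,\rho_X$ over $\int_{A_{p_n}^{-1}(x_n)}\rho_X$. Here the decay exponent is exactly sharp: $J_p(\varphi)=\|d_p\varphi^1\wedge\cdots\wedge d_p\varphi^k\|$ is homogeneous of degree $k$ in $\varphi$, so the numerator integrand is $O(\|\varphi\|^{k-\alpha})$, and its integrability over the $(n-k)$-dimensional fiber is equivalent to $\alpha>n$ --- precisely the hypothesis, which is also what makes the total mass $\EE\{J_pX\mid X(p)=x\}$ finite. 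To run dominated convergence with the fibers $A_{p_n}^{-1}(x_n)$ varying, I would trivialize the kernel bundle $p\mapsto K_p$ over a compact neighborhood of $p$ (possible since $A_p$ has constant rank $k$), transport all integrals to the fixed model space $K_{p}$ by a continuous family of linear parametrizations with continuous nonvanishing Jacobian, and dominate the resulting integrands by an integrable multiple of $\|\psi\|^{k-\alpha}$, using that the constants in $J_q\le C\|\cdot\|^{k}$ and in the decay estimate are uniform on compacta. Dominated convergence gives convergence of numerator and of the (positive) denominator separately, and passing to the ratio completes the verification of \zkrok-\ref{itm:krok5}.
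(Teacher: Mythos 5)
Your proposal is correct and follows essentially the same route as the paper's own proof: parametric transversality (which you re-derive via Sard applied to the projection from $ev^{-1}(0)$) for \zkrok-\ref{itm:krok2}, disintegration of $\rho_X$ along the affine fibers $ev_p^{-1}(x)$ for \zkrok-\ref{itm:krok:3}--\ref{itm:krok4}, the normalized restriction of $\rho_X$ to the fiber as regular conditional probability, and the degree-$k$ homogeneity of $J_p$ combined with the decay $\alpha>n$ plus dominated convergence over a continuously trivialized family of fibers for \zkrok-\ref{itm:krok5}. The only cosmetic differences are that you carry the coarea normalization $\sqrt{\det(A_pA_p^*)}$ explicitly (the paper absorbs it into the fiber measure, and it cancels in the conditional probability anyway) and that the paper parametrizes the moving fibers by orthogonal maps $g_j\in O(n)$ rather than a general continuous linear trivialization.
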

\begin{proof}
Let us detail the \zkrok conditions one by one.

For \zkrok.\cref{itm:krok2}, the trick is to use the \emph{parametric transversality theorem}, see \cite[Theorem~2.7]{Hirsch}. Indeed, consider the function $\Phi:  \F\times M\to \R$ given by $\Phi(\varphi,p)=\varphi(p)$. Then its differential at $(\varphi,p)$ is given by $ev_p\oplus \dd_p\varphi $. By assumption this is surjective and thus the map $\Phi$ is transversal to zero, i.e. $0$ is a regular value of $\Phi$. The parametric transversality theorem then tells us that for almost all $\varphi\in\F$, the map $\varphi\mapsto \varphi(p)$ is transversal to $0$, i.e. for almost all $\varphi\in\F$, $0$ is a regular value of $\varphi$ which is what we wanted.

The law of $X(p)$ is the push forward of the law of $X$ by the linear map $ev_p:\F\to \R^k$. Suppose $B\subset \R^k$ is a Borel subset of measure $0$. Then $\PP(X(p)\in B)=\PP(X\in ev_p^{-1}(B))$. Let us denote 
\begin{equation}
    \F_p:=\ker(ev_p)=\set{\varphi\in\F}{\varphi(p)=0}.
\end{equation}
Then the space $ev_p^{-1}(x)$ is an affine subspace parallel to $\F_p$ which, by the surjectivity of $ev_p$, is of dimension $n-k$. Thus $ev_p^{-1}(B)\cong B\times \F_p$ is of Lebesgue measure zero in $\F$. Since the law of $X$ is, by assumption, absolutely continuous with respect to the Lebesgue measure on $X$, we obtain that $\PP(X\in ev_p^{-1}(B))=0$ and thus $\PP(X(p)\in B)=0$. This proves that the law of $X(p)$ is absolutely continuous with respect to Lebesgue on $\R^k$ and thus admits a density $\rho_{X(p)}:\R^k\to\R$ and this proves the property \zkrok-\cref{itm:krok:3}.

We can compute this density, we have for all $p\in M$ and $x\in\R^k$:
    \begin{equation}\label{eq:rhof(p)falpha}
        \rho_{X(p)}(x)=\int_{ev_p^{-1}(x)}\rho_X (\varphi) \dd\varphi.
    \end{equation}
    To prove the continuity requirement \zkrok-\ref{itm:krok4}, we can use the assumption of the behavior at infinity of $\rho_X$ and dominated convergence. Indeed, with the Euclidean structure, we can assume $\F=\R^n$. Let $p\in M$, we can assume that $\F_p=\R^{n-k}\subset \R^n$ is the space spanned by the $n-k$ first coordinates. Then we write $\rho_X(y,x)$ with $y\in \R^{n-k}$ and $x\in \R^k$. Let now $p_j\to p$ and $x_j\to 0$, let $g_j\in O(n)$ be such that $g_j^{-1}(\F_{p_j})=\F_p=\R^{n-k}$ then we have 
    \begin{equation}
        \rho_{X(p_j)}(x_j)=\int_{\R^{n-k}}\rho_X (g_j(y),x_j) \dd y.
    \end{equation}
    On $\R^{n-k}$, the function $y\mapsto\|y\|^{-\alpha}$ is integrable at infinity if and only if $\alpha>n-k$. Thus under our assumption $y\mapsto \rho_X (g_j(y),x_j)$ is dominated by an integrable function uniformly on $j$ and by dominated convergence we get \zkrok-\cref{itm:krok4}.
    \item We define $\mu(p,x)$ to be the probability measure on $\F$ with support on the affine space $ev_p^{-1}(x)$ that admits the continuous density $\rho_{X,p,x}:ev_p^{-1}(x)\to \R$ that is $0$ if $\rho_{X(p)}(x)= 0$ and else is given by 
    \begin{equation}\label{eq:mupxforfinitetype}
        \rho_{X,p,x}:=\frac{1}{\rho_{X(p)}(x)} \rho_X|_{ev_p^{-1}(x)}.
    \end{equation}
    Then $\mu(p,x)$ defines a regular conditional probability for $X$ given $X(p)$. Now let us note that for all $p\in M$, there exists a constant $c=c(p)>0$ such that $J_p\varphi\leq c\|\varphi\|^k$. Thus the function $\varphi\mapsto J_p\varphi \rho_X(\varphi)$ is at infinity an $O\left(\|\varphi\|^{-(\alpha-k)}\right)$ and this is integrable on $ev_p^{-1}(x)\cong\R^{n-k}$ if and only if $\alpha>n$ which is precisely our assumption and this gives us the finiteness condition in \zkrok-\cref{itm:krok5}. To see the continuity, let $\Psi:\F\to \R$ be a bounded continuous function. Let $p_j\to p$ and $x_j\to 0$, we repeat the argument of the previous item to write
    \begin{equation}
       \langle J_p\cdot\mu(p_j,x_j),\Psi\rangle=\frac{1}{\rho_{X(p_j)}(x_j)}\int_{\R^{n-k}}\Psi(g_j (y),x_j)  J_p(g_j(y),x_j)\,\rho_X (g_j(y),x_j) \dd y
    \end{equation}
    for some sequence $g_j\in O(n)$ converging to $\Id$. Since $\rho_X(0)>0$ we get from \cref{eq:rhof(p)falpha} that $\rho_{X(p)}(0)>0$ for every $p\in M$ and we can argue similarly as before: this is dominated by a $O\left(\|\varphi\|^{-(\alpha-k)}\right)$ at infinity which is integrable and we conclude by dominated convergence to obtain \zkrok-\cref{itm:krok5}.
\end{proof}

In that case we can compute explicitly the zonoid section.

\begin{proposition}\label{prop:zonfinitetype}
     Let $X\randin \F\subset C^1(M,\R^k)$ be \zkrok and as in \cref{prop:finitezkrokcond}. For every $p\in M$ and every $w\in \Lambda^kT_pM$ we have 
     \begin{align}\label{eq:finitetypesuppfct}
         h_{\zeta_X(p)}(w)          &=\int_{\F_p} \max\left\{0,(\dd_p\varphi^1\wedge\cdots\wedge \dd_p\varphi^k)(w)\right\} \rho_X(\varphi)\dd\varphi  \label{eq:hzetafinite}\\
         e_X(p)  (w)                &=\int_{\F_p} (\dd_p\varphi^1\wedge\cdots\wedge \dd_p\varphi^k)(w) \rho_X(\varphi)\dd\varphi  \\
     \end{align}
     where recall that $\F_p=\ker(ev_p)=\set{\varphi\in\F}{\varphi(p)=0 }$ and $\rho_X:\F\to\R$ is the density of the law of $X\randin\F$.
\end{proposition}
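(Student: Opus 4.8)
The plan is to unwind \cref{def:zonoidsec} using the explicit regular conditional probability $\mu(p,x)$ produced in the proof of \cref{prop:finitezkrokcond}; the whole statement should then reduce to a substitution in which the normalizing factor $\rho_{X(p)}(0)$ cancels.

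First I would recall two formulas. From the discussion following \cref{def:zonoidsec}, the support function of $\zeta_X(p)$ is, for $w\in\Lambda^kT_pM$,
\[
h_{\zeta_X(p)}(w)=\rho_{X(p)}(0)\,\EE\kop\max\kop0,(\dd_pX^1\wedge\cdots\wedge\dd_pX^k)(w)\pok\,\Big|\,X(p)=0\pok,
\]
while the nigiro, by the second identity in \eqref{eq:krmain}, satisfies
\[
e_X(p)(w)=\rho_{X(p)}(0)\,\EE\kop(\dd_pX^1\wedge\cdots\wedge\dd_pX^k)(w)\,\Big|\,X(p)=0\pok.
\]

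Next I would insert the conditional law at $x=0$. By \eqref{eq:mupxforfinitetype} with $x=0$, the measure $\mu(p,0)=[X\mid X(p)=0]$ is supported on $\F_p=ev_p^{-1}(0)$ and has density $\rho_X/\rho_{X(p)}(0)$ there; recall that $\rho_{X(p)}(0)>0$ for every $p$ since $\rho_X(0)>0$. Hence for any $\mu(p,0)$-integrable $g\colon\F\to\R$,
\[
\EE\kop g(X)\,\Big|\,X(p)=0\pok=\frac{1}{\rho_{X(p)}(0)}\int_{\F_p}g(\varphi)\,\rho_X(\varphi)\,\dd\varphi.
\]
Applying this once with $g(\varphi)=\max\{0,(\dd_p\varphi^1\wedge\cdots\wedge\dd_p\varphi^k)(w)\}$ and once with $g(\varphi)=(\dd_p\varphi^1\wedge\cdots\wedge\dd_p\varphi^k)(w)$, the prefactor $\rho_{X(p)}(0)$ cancels against the $1/\rho_{X(p)}(0)$ coming from the conditional expectation, giving exactly the two asserted identities.

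The one point deserving attention is the integrability of these integrands against $\rho_X$ over $\F_p$, but this is already contained in the proof of \cref{prop:finitezkrokcond}: both are bounded in absolute value by $\|w\|\,J_p\varphi\le c\,\|w\|\,\|\varphi\|^k$, and the hypothesis $\alpha>n$ makes $\varphi\mapsto J_p\varphi\,\rho_X(\varphi)$ integrable on $\F_p\cong\R^{n-k}$. There is therefore no genuine obstacle: the entire content of the proof is the cancellation of the density factor $\rho_{X(p)}(0)$, and no appeal to Vitale's construction is needed beyond the two support-function and nigiro formulas recalled above.
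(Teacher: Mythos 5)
Your proposal is correct and follows essentially the same route as the paper: the paper's proof simply says that the regular conditional probability $\mu(p,0)$ computed in \cref{eq:mupxforfinitetype} has density $\rho_X/\rho_{X(p)}(0)$ on $\F_p$, so setting $x=0$ and multiplying by $\rho_{X(p)}(0)$ cancels the normalization. Your write-up adds the explicit support-function and nigiro formulas and the integrability check, which the paper leaves implicit.
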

\begin{proof}
    We already did all the work in the proof of \cref{prop:finitezkrokcond}. In particular we computed the measure $\mu(p,x)$ in \cref{eq:mupxforfinitetype}. Letting $x=0$ and multiplying by $\rho_{X(p)}(0)$ gives the result.
\end{proof}


\appendix
\section{Comparison with other typical sets of hypotheses}\label{apx:comp}
We compare the {\zkrok} hypotheses (\cref{def:zkrok}) and \cref{thm:alphaKR} with other versions of Kac-Rice formula reported in \cite[Sec. 11.2]{AdlerTaylor} and \cite[Sec. 6.1.2 ]{AzaisWscheborbook}.
In the textbooks, a more general type of \emph{weight} $\a$ is considered: when $\a=\a(F,Y,p)$ depends also on an additional random field $Y$ (in \cite{AzaisWscheborbook}, while it is called $g$ in \cite{AdlerTaylor}). Here, we will only discuss the case of Theorem \ref{eq:aKR}, see also Remark \ref{rem:tuttiboni}.
\begin{remark}
The passage from the simple Kac-Rice formula, with $\a=1$, to the case when $\a$ is just a measurable function $\a\colon M\to \R$ that does not depend on $F$, is automatic. This is explained in \cite[Remark 2.7]{KRStec}.
\end{remark}
\begin{remark}\label{rem:tuttiboni}
The more general frameworks, i.e. when $\a=\a(Y,F,p)$ depends on an additional random field, can be all covered by assuming that $\a\colon \mC^1(M,\R^k)\times M\randto\R$ is random. Under this perspective, the hypotheses on the additional field $Y$ (in \cite[Theorem 11.2.1]{AdlerTaylor} and in \cite[Theorem 6.10]{AzaisWscheborbook}) can be viewed (and perhaps simplified) as the conditions under which it is possible to separate the randomness of $\a$ and that of $X$, by conditioning on the former and to make rigorous the following line of identities:
\bega  
\EE\kop\sum_{p\in F^{-1}(0)}\alpha(F,p)\pok&=\EE_{\a}\EE_{(X|\a=a)}\kop\sum_{p\in F^{-1}(0)}a(F,p)\pok
\\
&=\EE_\a\int_M\EE\kop a(F,p)J_pF \Big| F(p)=0,\a=a\pok \rho_{F(p)|\a=a}(0)
\\
&=\int_M\EE\kop a(F,p)J_pF \Big| F(p)=0\pok \rho_{F(p)}(0).
\eega 
and to apply \eqref{eq:aKR} in the inmost expectation, thinking of $\a$ as fixed.
\end{remark}
\subsubsection{Adler and Taylor's Expectation Metatheorem}
We compare the hypotheses (a), (b), (c), (d), (e), (f) and (g) in \cite[Theorem 11.2.1]{AdlerTaylor} to the \zkrok conditions.
\begin{enumerate}[{},wide] 
    \item (a) is equivalent to \zkrok-\ref{itm:krok1}
    \item (b) is implied by \zkrok-\ref{itm:krok:3} and \zkrok-\ref{itm:krok4}, together. In the opposite direction, \zkrok-\ref{itm:krok4} requires continuity also with respect to the spacial variable $p\in M$, which corresponds to $t\in T$ in \cite{AdlerTaylor}. Let us call (b+), this slightly stronger version of hypothesis (b).
    \item We will only consider the case in which $g\equiv 1$, thus (e) is always satisfied, while (c) reduces to the condition that the conditional density $p_t(x|\nabla f(t))$ of $f(t)$ given $\nabla f(t)$ exists, it is bounded, and it is continuous at $x=0$, uniformly in $t$. There is no such requirement among the \zkrok conditions.
    \item  Under finiteness of moments (f), condition (d) is comparable to   \zkrok-\ref{itm:krok5}, though none of the two possible implications hold. Indeed, condition (d) concerns only the pointwise distributions of the jet $j^1_pX=(p,X(p),dX(p)$, while \zkrok-\ref{itm:krok5} concerns the distribution of the pairs $(X,X(p))$, but it does not require the existence of the conditional density of $\det dX(p)$ conditioned to $X(p)$. Moreover, it is shown in \cite[Lemma 11.2.11]{AdlerTaylor} that (a), (b) and (d) together imply \zkrok-\ref{itm:krok2}.\footnote{In the in the current version of the book \cite{AdlerTaylor}, the statement of Lemma 11.2.11 includes the hypothesis (g) from Theorem 11.2.1. However, in the document \emph{Correction and Commentary} (downloadable on the book's first author's website) this hypothesis is said to be removable.  
    }
    \item 
    Thus, (a),(b+),(d),(f) presumably imply \zkrok-\ref{itm:krok1}-\ref{itm:krok5}.
    \item We don't see the role of hypothesis (g) (which can be roughly thought as the requirement that $dX$ is Holder-continuous in probability). 
    Indeed, it does not appear in the version of \cite{AzaisWscheborbook}. This might be due to the different argument used in the proof to prove the inequality ``$\ge$''. This is the difficult step in all versions of the proof of Kac-Rice formula (the other inequality can be deduced via the coarea formula and Fatou Lemma). 
\end{enumerate}
\subsubsection{Azais and Wschebor's version of Rice's formula} In the case of zero dimensional submanifolds $k=m$ and $M\subset \R^k$ is an open subset, the {\zkrok} hypotheses (\cref{def:zkrok}) are almost identical to the hypotheses of \cite[Theorem 6.7]{AzaisWscheborbook} for the level $u=0$. 
\begin{enumerate}[(i)]
    \item is equivalent to \zkrok-\ref{itm:krok1}.
    \item is equivalent to the combination of \zkrok-\ref{itm:krok:3} and \zkrok-\ref{itm:krok4}.
    \item is to be compared with the formulation of \zkrok-\ref{itm:krok5} that is given in point (\ref{itm:tecnokrok4}) of \cref{prop:tecnicalzkrok}. In the language of the latter, (iii) says that:
    
    \newtheorem*{aza}{Hypothesis (iii) of \cite[Theorem 6.7]{AzaisWscheborbook}}
    \begin{aza}
    There is a regular conditional probability of $X$ given $X(p)$ such that for any continuous function $\beta\in \mC(\mC^1(M,\R^k); \R)$ and any converging sequence $(p_n,x_n)\to (p_0,x_0)$ in a neighborhood of $M\times \{0\}$ in $M\times \R^k$, we have that
    \be\label{eq:apxlimbeta}
    \EE\kop\beta(X)\Big| X(p_n)=x_n\pok \to \EE\kop\beta(X)\Big|X(p_0)=x_0\pok.
    \ee
    \end{aza}
    The differences between the condition above and ours are three: 
    \begin{enumerate}[1.]
        \item In \cref{prop:tecnicalzkrok}.(\ref{itm:tecnokrok4}) the property should be valid for all sequences $\beta_n\to\beta_0$. From point (\ref{itm:tecnokrok2}) of \cref{prop:tecnicalzkrok} it is clear that this difference is irrelevant.
        \item For Condition (\ref{itm:tecnokrok4}) of \cref{prop:tecnicalzkrok} to be true it is sufficient to verify \eqref{eq:apxlimbeta} when $x_0=0$.
        \item In \cref{prop:tecnicalzkrok}.(\ref{itm:tecnokrok4}) a bound is assumed: $\beta(f)\le CJ_{p_n}f$, while in (iii) there is no restriction on the class of functions $\beta$ for which \eqref{eq:apxlimbeta} should hold. Because of this, condition (iii) seems ill posed in that the expression \eqref{eq:apxlimbeta} may take infinite values even for Gaussian fields, for instance with $\beta(f)=\exp({|f(p)|^3})$, where $p\in M$ is a fixed point.
    \end{enumerate}
    \item is equivalent to \zkrok.
\end{enumerate}
    In conclusion, we can say that \zkrok-\ref{itm:krok5} is a weaker assumption than (iii), while all other hypotheses are equivalent, thus \cref{thm:alphaKR} implies \cite[Theorem 6.7]{AzaisWscheborbook}.
       \section{Source code for symbols}
    The symbol $\randin$ used in this article was made out of two symbols $\subset$ combined into the command \texttt{\textbackslash randin} with the following code
    \begin{Verbatim}[fontsize=\tiny]
        \def\randin{%
  \mathchoice%
    {\raisebox{-.35ex}{$\displaystyle{^\subset}$}\mkern-11.5mu\raisebox{+.45ex}{$\displaystyle{_\subset}$}}
    {\mkern+1mu\raisebox{-.27ex}{$\textstyle{^\subset}$}\mkern-11.7mu\raisebox{+.45ex}{$\textstyle{_\subset}$}}
    {\raisebox{.35ex}{$\scriptstyle\subset$}\mkern-14mu\raisebox{-.15ex}{$\scriptstyle\subset$}}
    {\raisebox{.3ex}{$\scriptscriptstyle\subset$}\mkern-13.5mu\raisebox{-.10ex}{$\scriptscriptstyle\subset$}}
}
    \end{Verbatim}
    
    The symbol $\randto$ is the command \texttt{\textbackslash randto} defined with the following code.
   \begin{Verbatim}[fontsize=\tiny]
\newcommand{\maschera}{\textcolor{white}{\scalebox{0.3}{$\blacktriangle$}}}
\def\FlatOmega{%
  \mathchoice{%
    \displaystyle{\Omega}\mkern-14mu\raisebox{+.166ex}{$\displaystyle{\maschera}$}
     \mkern+7mu\raisebox{+.166ex}{$\displaystyle{\maschera}$}}{% 
    \hbox{$\textstyle{\Omega}$}\mkern-14mu\raisebox{+.166ex}{\hbox{$\textstyle{\maschera}$}}
     \mkern+7mu\raisebox{+.166ex}{\hbox{$\textstyle{\maschera}$}}}{
   \scriptstyle{\Omega}\mkern-14mu\raisebox{+.13ex}{$\scriptstyle{\maschera}$}
    \mkern+5mu\raisebox{+.13ex}{$\scriptstyle{\maschera}$}}{%
   \scriptscriptstyle{\Omega}\mkern-14mu\raisebox{+.13ex}{$\scriptscriptstyle{\maschera}$}
    \mkern+5mu\raisebox{+.13ex}{$\scriptscriptstyle{\maschera}$}}}
\newcommand{\scaledFlatOmega}{{\scalebox{0.8}{$_{\FlatOmega}$}}}
\def\randto{%
  \mathchoice{%
    \raisebox{-.101ex}{$\displaystyle{-}$}\mkern-4.4mu\raisebox{.729ex}{$\displaystyle{\scaledFlatOmega}$}
     \mkern-5.2mu\raisebox{-.101ex}{$\displaystyle\to$}}{%
    \raisebox{-.101ex}{\hbox{$\textstyle{-}$}}\mkern-4.4mu\raisebox{.729ex}{\hbox{$\textstyle{\scaledFlatOmega}$}}
     \mkern-5.2mu\raisebox{-.101ex}{\hbox{$\textstyle\to$}}}{%
    \raisebox{-.101ex}{$\scriptstyle{-}$}\mkern-4.4mu\raisebox{.729ex}{$\scriptstyle{\scaledFlatOmega}$}
     \mkern-5.2mu\raisebox{-.101ex}{$\scriptstyle\to$}}{%
    \raisebox{-.101ex}{$\scriptscriptstyle{-}$}\mkern-4.4mu\raisebox{.729ex}{$\scriptscriptstyle{\scaledFlatOmega}$}
     \mkern-5.2mu\raisebox{-.101ex}{$\scriptscriptstyle\to$}}}
    \end{Verbatim}
    
\bibliographystyle{alpha}
\bibliography{Zonoid_section.bib}

\begin{thebibliography}{MPRW15}

\bibitem[AK18]{KazaAveragezeroes}
Dmitri Akhiezer and Boris Kazarnovskii.
\newblock Average number of zeros and mixed symplectic volume of finsler sets.
\newblock {\em Geometric and Functional Analysis}, 28(6):1517--1547, Dec 2018.

\bibitem[All72]{Allard1972Varifold}
William~K. Allard.
\newblock On the first variation of a varifold.
\newblock {\em Annals of Mathematics}, 95(3):417--491, 1972.

\bibitem[APB10]{paivageod}
Juan-Carlos \`Alvarez-Paiva and Gautier Berck.
\newblock Finsler surfaces with prescribed geodesics, 2010.

\bibitem[APT04]{volumesFinsler}
J.~C. \'{A}lvarez Paiva and A.~C. Thompson.
\newblock Volumes on normed and {F}insler spaces.
\newblock In {\em A sampler of {R}iemann-{F}insler geometry}, volume~50 of {\em
  Math. Sci. Res. Inst. Publ.}, pages 1--48. Cambridge Univ. Press, Cambridge,
  2004.

\bibitem[AT07]{AdlerTaylor}
R.~J. Adler and J.~E. Taylor.
\newblock {\em Random fields and geometry}.
\newblock Springer Monographs in Mathematics. Springer, New York, 2007.

\bibitem[AV75]{ArsVit}
Zvi Artstein and Richard~A. Vitale.
\newblock {A Strong Law of Large Numbers for Random Compact Sets}.
\newblock {\em The Annals of Probability}, 3(5):879 -- 882, 1975.

\bibitem[AW09]{AzaisWscheborbook}
Jean-Marc Azais and Mario Wschebor.
\newblock {\em Level sets and extrema of random processes and fields}.
\newblock John Wiley \& Sons, Inc., Hoboken, NJ, 2009.

\bibitem[BBLM22]{ZA}
Paul Breiding, Peter Bürgisser, Antonio Lerario, and Léo Mathis.
\newblock The zonoid algebra, generalized mixed volumes, and random
  determinants.
\newblock {\em Advances in Mathematics}, 402:108361, 2022.

\bibitem[BCS00]{introRiemannFinsler}
D.~Bao, S.-S. Chern, and Z.~Shen.
\newblock {\em An introduction to {R}iemann-{F}insler geometry}, volume 200 of
  {\em Graduate Texts in Mathematics}.
\newblock Springer-Verlag, New York, 2000.

\bibitem[Ber77]{Berry_1977}
M~V Berry.
\newblock Regular and irregular semiclassical wavefunctions.
\newblock {\em Journal of Physics A: Mathematical and General},
  10(12):2083--2091, dec 1977.

\bibitem[Ber07]{BernigCrofton}
Andreas Bernig.
\newblock Valuations with crofton formula and finsler geometry.
\newblock {\em Advances in Mathematics}, 210(2):733--753, 2007.

\bibitem[BFS14]{bernig2014integral}
Andreas Bernig, Joseph~HG Fu, and Gil Solanes.
\newblock Integral geometry of complex space forms.
\newblock {\em Geometric and Functional Analysis}, 24(2):403--492, 2014.

\bibitem[Bil99]{Billingsley}
Patrick Billingsley.
\newblock {\em Convergence of probability measures}.
\newblock Wiley Series in Probability and Statistics: Probability and
  Statistics. John Wiley \& Sons, Inc., New York, second edition, 1999.
\newblock A Wiley-Interscience Publication.

\bibitem[BKL18]{breiding2018geometry}
Paul Breiding, Khazhgali Kozhasov, and Antonio Lerario.
\newblock On the geometry of the set of symmetric matrices with repeated
  eigenvalues.
\newblock {\em Arnold Mathematical Journal}, 4(3):423--443, 2018.

\bibitem[BL16]{Brgisser2016ProbabilisticSC}
Peter B{\"u}rgisser and Antonio~Marcondes Ler{\'a}rio.
\newblock Probabilistic schubert calculus.
\newblock {\em Journal f{\"u}r die reine und angewandte Mathematik (Crelles
  Journal)}, 2020:1 -- 58, 2016.

\bibitem[BLLP19]{basu2019random}
Saugata Basu, Antonio Lerario, Erik Lundberg, and Chris Peterson.
\newblock Random fields and the enumerative geometry of lines on real and
  complex hypersurfaces.
\newblock {\em Mathematische Annalen}, 374(3):1773--1810, 2019.

\bibitem[Bre10]{brezis2010functional}
H.~Brezis.
\newblock {\em Functional Analysis, Sobolev Spaces and Partial Differential
  Equations}.
\newblock Universitext. Springer New York, 2010.

\bibitem[BS98]{bogachev}
V.I. Bogachev and American~Mathematical Society.
\newblock {\em Gaussian Measures}.
\newblock Mathematical surveys and monographs. American Mathematical Society,
  1998.

\bibitem[BT95]{botttu}
R.~Bott and L.W. Tu.
\newblock {\em Differential Forms in Algebraic Topology}.
\newblock Graduate Texts in Mathematics. Springer New York, 1995.

\bibitem[CCJ19]{canzani2019probabilistic}
Y.~Canzani, L.~Chen, and D.~Jakobson.
\newblock {\em Probabilistic Methods in Geometry, Topology and Spectral
  Theory}.
\newblock Contemporary Mathematics. American Mathematical Society, 2019.

\bibitem[CH20]{CaHa2020}
Yaiza Canzani and Boris Hanin.
\newblock Local universality for zeros and critical points of monochromatic
  random waves.
\newblock {\em Communications in Mathematical Physics}, 378(3):1677--1712,
  2020.

\bibitem[{\c{C}}{\i}n11]{Erhan}
E.~{\c{C}}{\i}nlar.
\newblock {\em Probability and Stochastics}.
\newblock Graduate Texts in Mathematics. Springer New York, 2011.

\bibitem[CM15]{CammarotaM2015}
V.~Cammarota and D.~Marinucci.
\newblock On the limiting behaviour of needlets polyspectra.
\newblock {\em Ann. Inst. Henri Poincar\'{e} Probab. Stat.}, 51(3):1159--1189,
  2015.

\bibitem[CM18]{CamMari2018Berry}
Valentina Cammarota and Domenico Marinucci.
\newblock {A quantitative central limit theorem for the Euler–Poincaré
  characteristic of random spherical eigenfunctions}.
\newblock {\em The Annals of Probability}, 46(6):3188 -- 3228, 2018.

\bibitem[DR18]{DangRiv2018}
Nguyen~Viet Dang and Gabriel Rivi{\`e}re.
\newblock Equidistribution of the conormal cycle of random nodal sets.
\newblock {\em Journal of the European Mathematical Society}, 2018.

\bibitem[Dud02]{dudley}
R.~M. Dudley.
\newblock {\em Real Analysis and Probability}.
\newblock Cambridge Studies in Advanced Mathematics. Cambridge University
  Press, 2 edition, 2002.

\bibitem[FLL15]{FyLeLu}
Yan~V. Fyodorov, Antonio Lerario, and Erik Lundberg.
\newblock On the number of connected components of random algebraic
  hypersurfaces.
\newblock {\em J. Geom. Phys.}, 95:1--20, 2015.

\bibitem[{Gas}20]{Gass2020}
Louis {Gass}.
\newblock {Almost sure asymptotics for Riemannian random waves}.
\newblock {\em arXiv e-prints}, page arXiv:2005.06389, May 2020.

\bibitem[GW14]{GaWe1}
Damien Gayet and Jean-Yves Welschinger.
\newblock Lower estimates for the expected {B}etti numbers of random real
  hypersurfaces.
\newblock {\em J. Lond. Math. Soc. (2)}, 90(1):105--120, 2014.

\bibitem[GW15]{GaWe3}
Damien Gayet and Jean-Yves Welschinger.
\newblock Expected topology of random real algebraic submanifolds.
\newblock {\em J. Inst. Math. Jussieu}, 14(4):673--702, 2015.

\bibitem[GW16]{GaWe2}
Damien Gayet and Jean-Yves Welschinger.
\newblock Betti numbers of random real hypersurfaces and determinants of random
  symmetric matrices.
\newblock {\em J. Eur. Math. Soc. (JEMS)}, 18(4):733--772, 2016.

\bibitem[Hau14]{hausdorff1914grundzüge}
F.~Hausdorff.
\newblock {\em Grundz{\"u}ge der mengenlehre}.
\newblock G{\"o}schens Lehrb{\"u}cherei/Gruppe I: Reine und Angewandte
  Mathematik Series. Von Veit, 1914.

\bibitem[Hir94]{Hirsch}
Morris~W. Hirsch.
\newblock {\em Differential topology}, volume~33 of {\em Graduate Texts in
  Mathematics}.
\newblock Springer-Verlag, New York, 1994.
\newblock Corrected reprint of the 1976 original.

\bibitem[Kaz20]{Kazarnovskii2020}
B.~Ya. Kazarnovskii.
\newblock Average number of roots of systems of equations.
\newblock {\em Functional Analysis and Its Applications}, 54(2):100--109, Apr
  2020.

\bibitem[KKW13]{Wig2013arith}
Manjunath Krishnapur, Par Kurlberg, and Igor Wigman.
\newblock Nodal length fluctuations for arithmetic random waves.
\newblock {\em Annals of Mathematics}, 177(2):699--737, 2013.

\bibitem[KL20]{kozhasov2020number}
Khazhgali Kozhasov and Antonio Lerario.
\newblock On the number of flats tangent to convex hypersurfaces in random
  position.
\newblock {\em Discrete \& Computational Geometry}, 63(1):229--254, 2020.

\bibitem[Kos93]{kostlan:93}
Eric Kostlan.
\newblock On the distribution of roots of random polynomials.
\newblock In {\em From {T}opology to {C}omputation: {P}roceedings of the
  {S}malefest ({B}erkeley, {CA}, 1990)}, pages 419--431. Springer, New York,
  1993.

\bibitem[KSW21]{KabluSartoWig2022}
Zakhar Kabluchko, Andrea Sartori, and Igor Wigman.
\newblock Expected nodal volume for non-gaussian random band-limited functions,
  2021.

\bibitem[KWY21]{WigYesha2021}
Par Kurlberg, Igor Wigman, and Nadav Yesha.
\newblock The defect of toral laplace eigenfunctions and arithmetic random
  waves: Toral defect.
\newblock {\em Nonlinearity}, July 2021.

\bibitem[Let16]{LETENDRE2016}
Thomas Letendre.
\newblock Expected volume and euler characteristic of random submanifolds.
\newblock {\em Journal of Functional Analysis}, 270(8):3047--3110, 2016.

\bibitem[LL16a]{LeLu:gap}
Antonio Lerario and Erik Lundberg.
\newblock Gap probabilities and {B}etti numbers of a random intersection of
  quadrics.
\newblock {\em Discrete Comput. Geom.}, 55(2):462--496, 2016.

\bibitem[LL16b]{Lerariolemniscate}
Antonio Lerario and Erik Lundberg.
\newblock On the geometry of random lemniscates.
\newblock {\em Proc. Lond. Math. Soc. (3)}, 113(5):649--673, 2016.

\bibitem[LM21]{lerario2021probabilistic}
Antonio Lerario and Leo Mathis.
\newblock Probabilistic schubert calculus: Asymptotics.
\newblock {\em Arnold Mathematical Journal}, 7(2):169--194, 2021.

\bibitem[LS19a]{dtgrf}
A.~Lerario and M.~Stecconi.
\newblock Differential topology of {G}aussian random fields.
\newblock {\em Preprint ArXiv:1902.03805}, 2019.

\bibitem[LS19b]{stec2019MaxTyp}
A.~Lerario and M.~Stecconi.
\newblock Maximal and typical topology of real polynomial singularities.
\newblock {\em Ann.Inst.Fourier, in press, arxiv:1906.04444}, 2019.

\bibitem[Maf17]{MAFFUCCI2017}
Riccardo~W. Maffucci.
\newblock Nodal intersections for random waves against a segment on the
  3-dimensional torus.
\newblock {\em Journal of Functional Analysis}, 272(12):5218--5254, 2017.

\bibitem[Mar21]{Ma2021surveyECS}
Domenico Marinucci.
\newblock Some recent developments on the geometry of random spherical
  eigenfunctions, 2021.

\bibitem[MP11]{marinucci_peccati_2011}
Domenico Marinucci and Giovanni Peccati.
\newblock {\em Random Fields on the Sphere: Representation, Limit Theorems and
  Cosmological Applications}.
\newblock London Mathematical Society Lecture Note Series. Cambridge University
  Press, 2011.

\bibitem[MPRW15]{MaPeRoWi2015}
Domenico Marinucci, Giovanni Peccati, Maurizia Rossi, and Igor Wigman.
\newblock Non-universality of nodal length distribution for arithmetic random
  waves.
\newblock {\em Geometric and Functional Analysis}, 26:926--960, 2015.

\bibitem[MRV21]{MarRossVidotto2021}
Domenico Marinucci, Maurizia Rossi, and Anna Vidotto.
\newblock {Non-universal fluctuations of the empirical measure for isotropic
  stationary fields on ${\mathbb{S}^{2}}\times \mathbb{R}$}.
\newblock {\em The Annals of Applied Probability}, 31(5):2311 -- 2349, 2021.

\bibitem[MRW20]{MaRoWi2020Berry}
Domenico Marinucci, Maurizia Rossi, and Igor Wigman.
\newblock {The asymptotic equivalence of the sample trispectrum and the nodal
  length for random spherical harmonics}.
\newblock {\em Annales de l'Institut Henri Poincaré, Probabilités et
  Statistiques}, 56(1):374 -- 390, 2020.

\bibitem[MSS14]{MSSzoneq}
Ilya Molchanov, Michael Schmutz, and Kaspar Stucki.
\newblock {Invariance properties of random vectors and stochastic processes
  based on the zonoid concept}.
\newblock {\em Bernoulli}, 20(3):1210 -- 1233, 2014.

\bibitem[MW11a]{MaWi2011defect}
Domenico Marinucci and Igor Wigman.
\newblock The defect variance of random spherical harmonics.
\newblock {\em Journal of Physics A: Mathematical and Theoretical},
  44(35):355206, aug 2011.

\bibitem[MW11b]{MaWi2011excarea}
Domenico Marinucci and Igor Wigman.
\newblock On the area of excursion sets of spherical gaussian eigenfunctions.
\newblock {\em Journal of Mathematical Physics}, 52(9):093301, 2011.

\bibitem[MW14]{MaWi2014}
Domenico Marinucci and Igor Wigman.
\newblock On nonlinear functionals of random spherical eigenfunctions.
\newblock {\em Communications in Mathematical Physics}, 327, 05 2014.

\bibitem[Nic16a]{Nicolaescu2016}
Liviu~I. Nicolaescu.
\newblock A stochastic {G}auss--{B}onnet--{C}hern formula.
\newblock {\em Probability Theory and Related Fields}, 165(1):235--265, Jun
  2016.

\bibitem[Nic16b]{Nicolaescu}
Liviu~I. Nicolaescu.
\newblock A stochastic {G}auss-{B}onnet-{C}hern formula.
\newblock {\em Probab. Theory Related Fields}, 165(1-2):235--265, 2016.

\bibitem[Nic20]{nicolaescu2020lectures}
L.I. Nicolaescu.
\newblock {\em Lectures On The Geometry Of Manifolds (Third Edition)}.
\newblock World Scientific Publishing Company, 2020.

\bibitem[Not21]{MaxNotarnickHermite}
Massimo Notarnicola.
\newblock Matrix hermite polynomials, random determinants and the geometry of
  gaussian fields, 2021.

\bibitem[NPR19]{NourdinPeccatiRossi2019}
I.~Nourdin, G.~Peccati, and M.~Rossi.
\newblock Nodal statistics of planar random waves.
\newblock {\em Comm. Math. Phys.}, 369(1):99--151, 2019.

\bibitem[NS09]{NazarovSodin2009}
F.~Nazarov and M.~Sodin.
\newblock On the number of nodal domains of random spherical harmonics.
\newblock {\em Amer. J. Math.}, 131(5):1337--1357, 2009.

\bibitem[NS16a]{NazarovSodin2016}
F.~Nazarov and M.~Sodin.
\newblock Asymptotic laws for the spatial distribution and the number of
  connected components of zero sets of {G}aussian random functions.
\newblock {\em Zh. Mat. Fiz. Anal. Geom.}, 12(3):205--278, 2016.

\bibitem[NS16b]{NicSavale}
Liviu~I. Nicolaescu and Nikhil Savale.
\newblock The {G}auss-{B}onnet-{C}hern theorem: a probabilistic perspective.
\newblock {\em Probab. Theory Related Fields}, 369(4):2951--2986, 2016.

\bibitem[Par05]{Parth}
K.R. Parthasarathy.
\newblock {\em Probability Measures on Metric Spaces}.
\newblock Ams Chelsea Publishing. Academic Press, 2005.

\bibitem[PF08]{PaivaGelfCrof}
J.~C.~{\'A}lvarez Paiva and E.~Fernandes.
\newblock Gelfand transforms and crofton formulas.
\newblock {\em Selecta Mathematica}, 13(3):369, Feb 2008.

\bibitem[RW16]{WigRud2016}
Zeev Rudnick and Igor Wigman.
\newblock Nodal intersections for random eigenfunctions on the torus.
\newblock {\em Amer. J. Math}, 138(6):1605--1644, December 2016.

\bibitem[Sar42]{sard}
Arthur Sard.
\newblock {The measure of the critical values of differentiable maps}.
\newblock {\em Bulletin of the American Mathematical Society}, 48(12):883 --
  890, 1942.

\bibitem[Sch01]{SchneiderCrofton}
R.~Schneider.
\newblock Crofton formulas in hypermetric projective finsler spaces.
\newblock {\em Archiv der Mathematik}, 77(1):85--97, Jul 2001.

\bibitem[Sch14]{bible}
Rolf Schneider.
\newblock {\em Convex bodies: the {B}runn-{M}inkowski theory}, volume 151 of
  {\em Encyclopedia of Mathematics and its Applications}.
\newblock Cambridge University Press, Cambridge, expanded edition, 2014.

\bibitem[Spi79]{spivak}
Michael Spivak.
\newblock {\em A comprehensive introduction to differential geometry. {V}ol.
  {I}}.
\newblock Publish or Perish, Inc., Wilmington, Del., second edition, 1979.

\bibitem[SS93a]{shsm}
M.~Shub and S.~Smale.
\newblock Complexity of {B}ezout's theorem. {II}. {V}olumes and probabilities.
\newblock In {\em Computational algebraic geometry ({N}ice, 1992)}, volume 109
  of {\em Progr. Math.}, pages 267--285. Birkh\"auser Boston, Boston, MA, 1993.

\bibitem[SS93b]{ShSm1}
Michael Shub and Steve Smale.
\newblock Complexity of {B}\'ezout's theorem. {I}. {G}eometric aspects.
\newblock {\em J. Amer. Math. Soc.}, 6(2):459--501, 1993.

\bibitem[SS93c]{ShSm3}
Michael Shub and Steve Smale.
\newblock Complexity of {B}ezout's theorem. {III}. {C}ondition number and
  packing.
\newblock {\em J. Complexity}, 9(1):4--14, 1993.
\newblock Festschrift for Joseph F. Traub, Part I.

\bibitem[Ste21]{stecconi2021isotropic}
M.~Stecconi.
\newblock Isotropic random spin weighted functions on {$\Bbb S^2$} vs isotropic
  random fields on {$\Bbb{S}^3$}.
\newblock {\em Th.Prob.Math.Stat., in press, arXiv:2108.00736}, 2021.

\bibitem[Ste22]{KRStec}
M.~Stecconi.
\newblock Kac-{R}ice formula for transverse intersections.
\newblock {\em Analysis and Mathematical Physics}, 12(2):44, 2022.

\bibitem[SW19]{SarnakWigman2019}
P.~Sarnak and I.~Wigman.
\newblock Topologies of nodal sets of random band-limited functions.
\newblock {\em Comm. Pure Appl. Math.}, 72(2):275--342, 2019.

\bibitem[Vit91]{Vitale}
Richard~A. Vitale.
\newblock Expected absolute random determinants and zonoids.
\newblock {\em Ann. Appl. Probab.}, 1(2):293--300, 1991.

\bibitem[Wig10]{Wigman_2010}
I.~Wigman.
\newblock Fluctuations of the nodal length of random spherical harmonics.
\newblock {\em Communications in Mathematical Physics}, 298(3):787–831, Jun
  2010.

\bibitem[Wig11]{WigSurvey2010}
Igor Wigman.
\newblock On the nodal lines of random and deterministic laplace
  eigenfunctions, 2011.

\bibitem[Wig22]{WigSurvey2022}
Igor Wigman.
\newblock On the nodal structures of random fields -- a decade of results,
  2022.

\bibitem[Zel09]{Zelditch2009RealAC}
Steve Zelditch.
\newblock Real and complex zeros of riemannian random waves.
\newblock {\em arXiv: Spectral Theory}, 2009.

\end{thebibliography}

\end{document}